\newtheorem{defi}{Definition}[section]
\newtheorem{theo}[defi]{Theorem}
\newtheorem{propo}[defi]{Proposition}
\newtheorem{lem}[defi]{Lemma}
\newtheorem{corol}[defi]{Corollary}
\newtheorem{rem}[defi]{Remark}
\DeclareMathOperator{\diff}{d}
\DeclareMathOperator{\ug}{\boldsymbol{u}}
\DeclareMathOperator{\vg}{\boldsymbol{v}}
\DeclareMathOperator{\phig}{\boldsymbol{\phi}}
\DeclareMathOperator{\Lag}{\mathscr{L}}
\DeclareMathOperator{\HDNLS}{H_{ \rm DNLS}}
\DeclareMathOperator{\Span}{Span}
\DeclareMathOperator{\sinc}{sinc}
\DeclareMathOperator{\domega}{d\omega}
\DeclareMathOperator{\dx}{dx}
\DeclareMathOperator{\ds}{ds}
\DeclareMathOperator{\supp}{supp}
\DeclareMathOperator{\id}{id}
\DeclareMathOperator{\xO}{y}
\newcommand\unupoop[3]{\mathrel{\mathop{#1}\limits_{#2}^{#3} }  }
\begin{document}

\title[Discrete traveling waves for DNLS]{Existence and stability of traveling waves for \\ discrete nonlinear Schr\"odinger equations over long times}

\author{Joackim Bernier}
\address{Univ Rennes, CNRS, IRMAR - UMR 6625, F-35000 Rennes}
\email{Joackim.Bernier@ens-rennes.fr}

\author{Erwan Faou}
\address{INRIA, Univ Rennes, CNRS, IRMAR - UMR 6625, F-35000 Rennes}
\email{Erwan.Faou@inria.fr}

\maketitle

\begin{abstract}
We consider the problem of existence and stability of solitary traveling waves for the one dimensional discrete non linear Schr\"odinger equation (DNLS) with cubic nonlinearity, near the continuous limit.
We construct a family of solutions close to the continuous traveling waves and prove their stability over long times. Applying a modulation method, we also show that we can describe the dynamics near these discrete traveling waves over long times.  
\end{abstract}

\tableofcontents

\section{Introduction}
\subsection{Motivations and main results}
We study existence and stability of solitary traveling waves for the discrete nonlinear Schr\"odinger equation (DNLS) on a grid $h\mathbb{Z}$ of stepsize $h>0$ and with a cubic focusing non linearity. This equation is a differential equation on $\mathbb{C}^{h\mathbb{Z}}$ defined by (see \cite{MR2742565} for details about its derivation)
\begin{equation}
\label{def_DNLS}
\forall g\in h\mathbb{Z}, \quad \ i\partial_t \ug_g = \frac{\ug_{g+h}  - 2 \ug_g + \ug_{g-h}}{h^2} + |\ug_g|^2 \ug_g.
\end{equation}
We focus on this equation near its continuous limit (as $h$ goes to $0$), called non linear Schr\"odinger equation (NLS), defined as the following partial differential equation 
\begin{equation}
\label{def_NLS}
\forall x\in \mathbb{R}, \quad\ i\partial_t u(x) =\partial_x^2 u(x) + |u(x)|^2 u(x).
\end{equation}
We study solutions of DNLS \eqref{def_DNLS} with a behavior close to the continuous traveling waves of NLS \eqref{def_NLS}. Such solitons $u$ are  global solutions of NLS with speed of oscillation $\xi_1$ and speed of advection $\xi_2$, satisfying 
\begin{equation}
\label{def_CTW_abs}
\forall t_0 \in \mathbb{R},\quad  \forall t\in \mathbb{R},\quad\forall x\in \mathbb{R},\quad \ u(t_0 + t,x)=e^{i\xi_1 t} u(t_0,x-\xi_2 t).
\end{equation}
The parameter $\xi = (\xi_1,\xi_2)$ characterizes travelling waves up to gauge transform $u(x) \mapsto e^{i \gamma} u(x)$ and advection $u(x) \mapsto u(x - \xO)$. For NLS they are given explicitly  by their values at time $t=0$
\begin{equation}
\label{def_CTW}
 \forall x\in \mathbb{R},\quad \ \psi_\xi(x) = e^{\frac12 i x \xi_2 } \frac{  \sqrt{2} m_\xi }{\cosh (m_{\xi}x)} \quad \textrm{ with } \quad m_{\xi}=\sqrt{ \xi_1 - \left(\frac{\xi_2}2 \right )^2}.  
\end{equation}
for speed of oscillation $\xi_1$ and speed of advection $\xi_2$ satisfying
\begin{equation}
\label{ine_speed}
\xi_1 > \left( \frac{\xi_2}2 \right)^2 .
\end{equation}

On a grid, the notion of traveling wave is not as clear as on a line, and we cannot  define traveling waves for DNLS as easily as those of NLS by \eqref{def_CTW_abs}. The difficulty comes from the definition of the advection. Indeed, the canonical advection on a grid is only defined when the distance to cross is a multiple of the stepsize $h$.
Of course, we could find some reasonable extensions of \eqref{def_CTW_abs} in the discrete case. For example, a possible definition of discrete traveling waves  could be for solution $\ug$ to DNLS to satisfy
 \begin{equation}
\label{def_DTW}
\forall t_0 \in \mathbb{R}, \quad \forall n\in \mathbb{Z}, \quad \forall g\in h\mathbb{Z}, \ \quad  \ug_g(t_0 + n\tau)= e^{i \xi_1 n\tau} \ug_{g-nh}(t_0) \quad \textrm{ with } \quad \xi_2 \tau = h, 
\end{equation}
for some speeds $\xi_1,\xi_2 \in \mathbb{R}$. Even if this definition seems to be the most natural, it is not the only one possible. For example, we could replace $h$ by $2h$ in this definition or to do things even more complicated, and no canonical choice appears obvious. 
 There is at least one class of solutions that can be defined without ambiguity, the standing waves (i.e. when $\xi_2=0$) which are solutions of the form 
\begin{equation}
\label{def_DSW}
 \forall t_0\in \mathbb{R},\forall t\in \mathbb{R}, \ \ug(t_0+t) = e^{i\xi_1 t}\ug(t).
\end{equation}
for some speed of oscillation $\xi_1\in \mathbb{R}$.


 We define the discrete $L^2$ and $H^1$ norms as follows: for $\vg \in \mathbb{C}^{h\mathbb{Z}}$,
\[ \|\vg\|_{L^2(h\mathbb{Z})}^2 = h\sum_{g \in h\mathbb{Z}} |\vg_g|^2 \quad \textrm{ and } \quad \| \vg \|_{H^1(h\mathbb{Z})}^2 = h\sum_{g \in h\mathbb{Z}} \left| \frac{\vg_g - \vg_{g-h}}{h} \right|^2 + \|\vg\|_{L^2(h\mathbb{Z})}^2. \]
Of course, these norms are equivalents but not uniformly with respect to $h$.  Since  we focus on the continuous limit (i.e. when $h$ goes to $0$), uniformity with respect to $h$ is crucial. 


The discrete $L^2$ norm,  $\| \cdot \|_{L^2(h\mathbb{Z})}^2$ is a constant of the motion of DNLS associated, through Noether Theorem (see, for example, \cite{MR3443560} for details about this Theorem), to its invariance under gauge transform action. As $L^2(h\mathbb{Z})$ is an algebra we can deduce  by Cauchy-Lipschitz Theorem that DNLS is globally well-posed in $L^2(h\mathbb{Z})$. Moreover, DNLS is a Hamiltonian system associated with the  Hamiltonian  
\begin{equation}
\label{def_HDNLS}
\HDNLS(\ug) = \frac{h}2 \sum_{g \in h\mathbb{Z}} \left| \frac{\ug_{g+h} - \ug_g}h \right|^2 - \frac{h}4 \sum_{g \in h\mathbb{Z}} |\ug_g|^4. 
\end{equation}
As we can guess from its expression, this Hamiltonian is very useful to establish some estimates of coercivity with the discrete $H^1$ norm, uniformly with respect to $h$.   

The continuous traveling waves of NLS defined by \eqref{def_CTW} verify a property of stability called orbital stability. If for a given time a solution of NLS is close enough of a traveling wave, then it stays close of this traveling wave for all times, up to an advection and a gauge transform. This property has been first proven by Cazenave and Lions in 1982 in \cite{MR677997}  by a compactness method  and in 1986 by Weinstein in \cite{MR820338} with what we call nowadays the {\it energy-momentum \rm} method. This second method is more quantitative than the first one, and the estimates of stability we give in this article are all based on it. It has been developed by Grillakis, Shatah and Strauss in 1987 in \cite{MR901236} and \cite{MR1081647} (see also \cite{MR3443560} for a very clear presentation of this method).
\begin{theo} Cazenave and Lions \cite{MR677997}, Weinstein \cite{MR820338} \\
\label{Thm_COS}
For each couple of speed $\xi \in \mathbb{R}^2$, such that $\xi_1 > \left( \frac{\xi_2}2 \right)^2$, there exists a constant $c>0$, such that for all solutions $u$ of NLS \eqref{def_NLS} with $\| u(0) - \psi_\xi \|_{H^1(\mathbb{R})} < c$ and $\| u(0)  \|_{L^2(\mathbb{R})}^2 = \|  \psi_\xi \|_{L^2(\mathbb{R})}^2$, for all time $t\in \mathbb{R}$, there exist $\xO,\gamma\in \mathbb{R}$ such that
\[ c \| u(t) - e^{i\gamma}\psi_\xi(x-\xO) \|_{H^1(\mathbb{R})} \leq \| u(0) - \psi_\xi \|_{H^1(\mathbb{R})}. \]
\end{theo}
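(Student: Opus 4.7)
The plan is to apply the energy–momentum method of Weinstein–Grillakis–Shatah–Strauss, which yields a quantitative stability estimate. Introduce the two additional conserved quantities of NLS coming from Noether's theorem: the mass $Q(u) = \|u\|_{L^2(\mathbb{R})}^2$ (gauge invariance) and the momentum $P(u) = \Imag \int_{\mathbb{R}} \bar u \,\partial_x u\,\mathrm{d}x$ (translation invariance). The key observation is that $\psi_\xi$ is, up to signs, a critical point of the augmented functional
\[ \Lag_\xi(u) = H(u) + \tfrac{\xi_1}{2} Q(u) - \tfrac{\xi_2}{2} P(u), \]
that is $\Lag_\xi'(\psi_\xi) = 0$ is precisely the profile equation for $\psi_\xi$.

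Given a solution $u(t)$ close to the orbit of $\psi_\xi$, I would use the implicit function theorem to produce modulation parameters $(\gamma(t), \xO(t))$ such that the remainder $v(t,x) := e^{-i\gamma(t)} u(t, x + \xO(t)) - \psi_\xi(x)$ is $L^2$-orthogonal to the two symmetry directions $i\psi_\xi$ and $\partial_x \psi_\xi$. These two directions span $\ker \Lag_\xi''(\psi_\xi)$, and the orthogonality is well-defined as long as $u(t)$ stays in a tubular neighbourhood of the orbit, which will be propagated by bootstrap.

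The heart of the proof is a coercivity estimate for the Hessian $\Lag_\xi''(\psi_\xi)$, viewed as a self-adjoint operator on $L^2(\mathbb{R},\mathbb{C})$ (with the real scalar product $\Imag$). After reducing $\psi_\xi$ to a standing wave by the Galilean change of unknown $u(x) \mapsto e^{-ix\xi_2/2} u(x)$, the Hessian decouples into two real Schrödinger operators whose spectra can be analysed via Sturm–Liouville theory: exactly one negative eigenvalue, a two-dimensional kernel (already identified above), and the rest of the spectrum bounded away from zero. The two orthogonality conditions kill the kernel contribution; the remaining negative direction is neutralised by the mass constraint $Q(u(0)) = Q(\psi_\xi)$, which at second order forces $\langle \psi_\xi, \mathrm{Re}\,v\rangle_{L^2} = O(\|v\|_{L^2}^2)$, i.e.\ a third orthogonality condition modulo a quadratic error, together with Weinstein's slope condition $\partial_{\xi_1} \|\psi_\xi\|_{L^2(\mathbb{R})}^2 > 0$. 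One concludes that $\langle \Lag_\xi''(\psi_\xi) v, v\rangle \geq c \|v\|_{H^1(\mathbb{R})}^2$ on the admissible codimension-three subspace.

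To close the argument, conservation of $H, Q, P$ gives $\Lag_\xi(u(t)) = \Lag_\xi(u(0))$, and a Taylor expansion around $\psi_\xi$ combined with coercivity yields
\[ \| v(t)\|_{H^1(\mathbb{R})}^2 \lesssim \|v(0)\|_{H^1(\mathbb{R})}^2 + \|v(t)\|_{H^1(\mathbb{R})}^3, \]
and a standard continuity argument absorbs the cubic term and produces the quantitative stability bound. The main obstacle is the coercivity step: one must verify the spectral picture of $\Lag_\xi''(\psi_\xi)$ precisely — exactly one negative eigenvalue, kernel reduced to the symmetry directions, a spectral gap — and check Weinstein's slope condition. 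For the cubic focusing NLS on $\mathbb{R}$ both facts are classical and explicit thanks to the closed-form expression \eqref{def_CTW}, but they are precisely the pieces that will need discrete analogues in the subsequent sections of the paper.
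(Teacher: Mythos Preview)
The paper does not give its own proof of this theorem; it is quoted as a classical background result of Cazenave--Lions and Weinstein, and only the references are supplied. Your outline is a correct sketch of Weinstein's energy--momentum argument, and it is precisely the method the paper invokes and then adapts to the discrete setting: the Lagrange functional $\Lag_\xi$, the coercivity of $\diff^2\Lag_\xi(\psi_\xi)$ on $\Span(\psi_\xi,i\psi_\xi,\partial_x\psi_\xi)^{\perp_{L^2}}$ (your codimension-three subspace; cf.\ Lemma~\ref{est_wein}), the reduction by Galilean boost and dilation to the standing wave $\psi_{(1,0)}$, the modulation construction via the implicit function theorem (cf.\ Lemma~\ref{lem_why_orb}), the mass adjustment to handle the negative direction (cf.\ Theorem~\ref{thm_change_variable} and the slope condition $\partial_{\xi_1}\|\psi_\xi\|_{L^2}^2>0$), and the Taylor expansion plus bootstrap to close (cf.\ Lemma~\ref{lem_orb_stab}). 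So your proposal aligns with the approach the paper cites and later mirrors in the discrete case.

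One small point of presentation: your definition of $P(u)$ and the sign in $\Lag_\xi$ differ from the paper's convention $\Lag_\xi(u)=H(u)+\tfrac{\xi_1}{2}\|u\|_{L^2}^2+\tfrac{\xi_2}{2}\langle i\partial_x u,u\rangle_{L^2}$, but this is only a sign convention in the momentum and does not affect the argument.
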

This result does not give any information on the exact position of the solution.  To remedy this problem, \it modulational stability \rm  methods have been developed, which allows to follow very precisely this solution (see \cite{MR783974} or \cite{MR2094474}). 

If we try to apply energy-momentum method to construct orbitally stable traveling waves for DNLS, the main difficulty comes from the definition of the advection on the grid. We discuss this problem in detail in section \ref{Sec_2}. 
However this problem is easily solved when considering standing waves (i.e. $\xi_2=0$) with symmetric perturbations for which the solution, remaining symmetric for all times, cannot move. 
In $2010$, Bambusi and Penati proved in \cite{MR2576378} the existence of standing waves of DNLS  looking like those of NLS. In fact, they constructed two kinds of standing waves. Each ones are real valued and symmetric but the first ones, called {\it Sievers-Takeno modes \rm} or {\it onsite \rm}, are centered in $0$ whereas the second ones, called {\it Page modes \rm} or {\it off-site, \rm} are centered in $\frac{h}2$. In $2013$, in \cite{MR3018143}, Bambusi, Faou and Gr\'ebert, studying fully discrete approximation in time and space of NLS standing waves, gave some results of their orbital stability.
The construction of these standing waves is also realized in a $2016$ paper of Jenkinson and Weinstein (see \cite{MR3460749}), with another kind of approximations. If we focus only on the \it onsite \rm standing waves, we summarized a piece of these results in the following theorem.
\begin{theo} Existence and orbital stability of standing waves \\
\label{Thm_DSW}
For all $\xi_1>0$, there exists $h_0,C,c>0$ such that for all $h<h_0$, there exists a unique $\phig_{\xi_1}^h \in H^1(h\mathbb{Z};\mathbb{R})$ symmetric, centered in $0$,  and $\zeta_1 \in \mathbb{R}$, such that 
\begin{itemize}
\item $e^{i\zeta_1 t}\phig_{\xi_1}^h$ is a solution of DNLS,
\item $|\zeta_1 - \xi_1|+\| \phig^{h}_{\xi_1} - \psi_{(\xi_1,0) \ | h\mathbb{Z}} \|_{H^1(h\mathbb{Z})} \leq C h^2$,
\item If $\ug$ is a solution of DNLS such that $\ug(0)$ is symmetric, centered in $0$, and $\|\ug(0) - \phig_{\xi_1}^h \|_{H^1(h\mathbb{Z})}<c$  then for all $t\in \mathbb{R}$, there exists $\gamma\in \mathbb{R}$ such that
\[ \| \ug(t) - e^{i\gamma} \phig_{\xi_1}^h \|_{H^1(h\mathbb{Z})} \leq C \|\ug(0) - \phig_{\xi_1}^h \|_{H^1(h\mathbb{Z})} . \]
\end{itemize}
\end{theo}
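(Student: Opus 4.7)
I would recast the existence part as a nonlinear fixed-point problem in the closed subspace $\mathcal{X}_h \subset H^1(h\mathbb{Z};\mathbb{R})$ of real symmetric sequences centered at $0$: $e^{i\zeta_1 t}\phig$ solves DNLS iff
\begin{equation*}
F_h(\phig,\zeta_1) := -\Delta^h \phig - |\phig|^2\phig + \zeta_1\phig = 0,\qquad (\Delta^h \phig)_g := \tfrac{\phig_{g+h}-2\phig_g+\phig_{g-h}}{h^2}.
\end{equation*}
Writing $\psi_{\xi_1} := \psi_{(\xi_1,0)}$ for the continuous profile, Taylor expansion yields the consistency estimate $F_h(\psi_{\xi_1}|_{h\mathbb{Z}},\xi_1) = O_{H^1(h\mathbb{Z})}(h^2)$. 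The partial linearization $D_\phig F_h$ at this point is the discrete Schr\"odinger operator $L_+^h := -\Delta^h - 3\psi_{\xi_1}^2 + \xi_1$, a natural discretization of $L_+ := -\partial_x^2 - 3\psi_{\xi_1}^2 + \xi_1$. The kernel of $L_+$ on $L^2(\mathbb{R})$ is spanned by $\partial_x\psi_{\xi_1}$, which is odd, so $L_+$ is boundedly invertible on symmetric functions. A quantitative implicit function theorem---applied jointly in $(\phig,\zeta_1)$ with a scalar normalization (e.g.\ fixing $\|\phig\|_{L^2(h\mathbb{Z})} = \|\psi_{\xi_1}\|_{L^2(\mathbb{R})}$) to eliminate the one-parameter scaling family of standing waves---then produces a unique $\phig_{\xi_1}^h$ and $\zeta_1$ satisfying $|\zeta_1-\xi_1|+\|\phig_{\xi_1}^h-\psi_{\xi_1}|_{h\mathbb{Z}}\|_{H^1(h\mathbb{Z})}\leq Ch^2$, conditional on the uniform-in-$h$ invertibility of $L_+^h$ on $\mathcal{X}_h$.

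For orbital stability I would apply the discrete energy--momentum method to the conserved action $\mathcal{S}_h(\ug) := \HDNLS(\ug) + \tfrac{\zeta_1}{2}\|\ug\|_{L^2(h\mathbb{Z})}^2$, of which $\phig_{\xi_1}^h$ is a critical point. Since DNLS commutes with $g\mapsto -g$, symmetric data generate symmetric trajectories, automatically suppressing the (odd) advection direction that makes the general grid problem delicate. Writing $\ug(t) = e^{i\gamma(t)}(\phig_{\xi_1}^h + \vg(t))$ with $\gamma(t)$ chosen so that $\vg(t)\perp_{L^2(h\mathbb{Z})} i\phig_{\xi_1}^h$, the conservation of $\mathcal{S}_h$ combined with a second-order expansion at $\phig_{\xi_1}^h$ gives
\begin{equation*}
\mathcal{S}_h(\ug(0))-\mathcal{S}_h(\phig_{\xi_1}^h) = \tfrac{1}{2}\langle \mathcal{S}_h''(\phig_{\xi_1}^h)\vg(t),\vg(t)\rangle + O(\|\vg(t)\|_{H^1(h\mathbb{Z})}^3).
\end{equation*}
The Hessian splits on real and imaginary parts as $L_+^h$ and $L_-^h := -\Delta^h - (\phig_{\xi_1}^h)^2 + \zeta_1$; the orthogonality condition together with conservation of $\|\ug\|_{L^2(h\mathbb{Z})}$ neutralizes the gauge mode $i\phig_{\xi_1}^h$ (the unique kernel direction of $L_-$ on symmetric functions), and a uniform-in-$h$ spectral gap for $L_\pm^h$ then yields the coercivity $\langle \mathcal{S}_h''(\phig_{\xi_1}^h)\vg,\vg\rangle \gtrsim \|\vg\|_{H^1(h\mathbb{Z})}^2$. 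A standard bootstrap closes the argument.

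The hard part, common to both halves, is to propagate the spectral features of the continuous operators $L_\pm$ (non-degenerate kernel on symmetric functions, spectral gap above $0$) to their discrete analogues $L_\pm^h$ uniformly as $h\to 0$. Because $L^2(h\mathbb{Z})$ itself depends on $h$, abstract perturbation is unavailable; I would instead compare the discrete and continuous operators via a piecewise affine interpolation, using uniform discrete Sobolev and Gagliardo--Nirenberg inequalities together with the exponential decay of $\psi_{\xi_1}$ to control the far-field contributions, and reducing the low-energy spectral analysis to a finite-dimensional matter. Once this uniform spectral control is in place, the remainder of the scheme parallels the classical Weinstein energy--momentum argument restricted to the symmetric subspace.
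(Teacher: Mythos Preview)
The paper does not prove Theorem~\ref{Thm_DSW}; it is stated in the introduction as a summary of prior results (Bambusi--Penati \cite{MR2576378}, Bambusi--Faou--Gr\'ebert \cite{MR3018143}, Jenkinson--Weinstein \cite{MR3460749}), so there is no proof in the paper to compare yours against. Your plan is sound and in line with what those references do: an implicit-function construction around the restricted continuous profile, followed by the energy--momentum method on the symmetric subspace, with the crux being a uniform-in-$h$ spectral gap for the discrete linearizations $L_\pm^h$.

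It is still worth contrasting your outline with the machinery the paper builds for its own main result (Theorem~\ref{Thm_DTW}, which specializes to and extends Theorem~\ref{Thm_DSW} when $\xi_2=0$). Rather than piecewise-affine interpolation and a direct spectral comparison of $L_\pm^h$ with $L_\pm$, the paper pulls the entire problem onto the band-limited space $BL^2_h$ via Shannon interpolation (Section~\ref{Sec_2}), constructs critical points $\eta_\xi^h$ of an \emph{exact} Lagrangian $\Lag_\xi^h$ for a homogeneous modification of DNLS (Theorem~\ref{thm_ExTW}), and obtains the uniform coercivity by perturbing Weinstein's continuous estimate through a high/low frequency splitting (Lemma~\ref{lem_cor_psi}). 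This bypasses the issue you flag---that the Hilbert spaces themselves vary with $h$---because $BL^2_h\subset L^2(\mathbb{R})$ and all norms are continuous Sobolev norms. Your approach is more self-contained on the lattice; the paper's approach is less elementary but scales to the genuinely moving case $\xi_2\neq 0$, which is its real target.
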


Note that the same theorem holds, for the \it off site \rm standing waves. We just need to write "symmetric, centered in $\frac{h}2$" instead of "symmetric, centered in $0$" and "$\psi_{(\xi_1,0)}(.-\frac{h}2)_{| h\mathbb{Z}} $" instead of "$\psi_{(\xi_1,0) \ | h\mathbb{Z}} $".

Usually, it is enough to prove existence and orbital stability of NLS standing waves to get some orbitally stable traveling wave. Indeed, NLS is invariant by \it Galilean transformation \rm, defined by
\[  u(t,x) \mapsto e^{i \frac{v}{2} (x-v t)+ i \left( \frac{v}{2}\right)^2 t } u(t,x- v t).\]
However, it seems there is no such transformation for DNLS. So we cannot apply the same strategy.

The second reason why existence of orbitally stable traveling waves for DNLS seems very uncertain is more experimental. If we assume that DNLS admits a moving traveling wave (i.e. $\xi_2\neq 0$) that is orbitally stable and looking  like a continuous traveling wave, $\psi_\xi$, then the solution of DNLS generated by the discretization of $\psi_\xi$ on $h\mathbb{Z}$, should look like $\psi_\xi$ for all times, up to an advection and a gauge transform. But there are some reasonable numerical simulations for which it is not what is observed (see \cite{MR3460749}). In fact, the speed of this solution seems going to $0$ as $t$ goes to infinity. In the literature, this phenomenon is usually called Peierls-Nabarro barrier (see  \cite{MR3460749}, \cite{MR2742565} and \cite{MR2365587}). A rigorous proof of this phenomenon seems to be an open problem. However, it is really difficult to observe when $h$ is small enough (in fact, stability for exponentially long times is expected, see  \cite{MR2365587}).

Before stating our main results, let us first formulate an easy corollary of them, showing that there exists quasi-traveling waves to DNLS close to the continuous limit, for times of order $\mathcal{O}(h^{-2})$, preventing the phenomena described above to appear before this time scale.

\begin{theo}
\label{thm_result_easy}
 For all $\varepsilon >0$ and for all $\xi\in \mathbb{R}^2$ such that $\xi_1 > \left(  \frac{\xi_2}2 \right)^2$, there exist $h_0,C,T_0>0$ such that 
 $$
T_0 = \infty \quad \mbox{when} \quad \xi_2 = 0\qquad\mbox{and}\qquad T_0 \to \infty \quad\mbox{when the speed}\quad \xi_2 \to 0, 
 $$
 and such that 
 if $h<h_0$, $\xO_0,\gamma_0\in \mathbb{R}$ and $\ug$ is the solution of DNLS such that
\[ \forall g \in h\mathbb{Z}, \ \quad   \ug_g(0) = e^{i\gamma_0} \psi_{\xi}(g-\xO_0) , \]
then, there exist $\gamma,\xO\in C^1(\mathbb{R})$ satisfying $\gamma(0) = \gamma_0$ and $\xO(0) = \xO_0$ such that, for all $t\geq 0$, 
\[ \forall t \leq T_0 h^{-2+\varepsilon}, \qquad \sup_{g \in h \mathbb{Z}} \left| \ug_g(t) - e^{i\gamma(t)} \psi_{\xi}(g-\xO(t)) \right| \leq C h^2\]
and 
\[ \forall t \leq T_0 h^{-2+\varepsilon}, \qquad |\dot \gamma (t) - \xi_1 | + |\dot \xO (t) - \xi_2 | \leq C h^2.\]
\end{theo}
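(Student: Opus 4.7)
The plan is to derive Theorem \ref{thm_result_easy} as an application of two main results proven later in the paper: the construction of a family of approximate discrete traveling wave profiles $\Phi^h_\xi$ satisfying $\|\Phi^h_\xi - \psi_\xi|_{h\mathbb{Z}}\|_{H^1(h\mathbb{Z})} \leq Ch^2$, and a modulational stability theorem for DNLS solutions whose initial data is $H^1(h\mathbb{Z})$-close to such a $\Phi^h_\xi$, valid on $[0, T_0(\xi_2)h^{-2+\varepsilon}]$ with $T_0 \to \infty$ as $\xi_2 \to 0$ and $T_0 = \infty$ when $\xi_2 = 0$ (the latter being a refinement of Theorem \ref{Thm_DSW}).

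First I would verify the initial-data hypothesis of the modulational stability result. Since $\psi_\xi$ is smooth and exponentially decaying, the $H^1$ approximation property of $\Phi^h_\xi$ yields directly
\[ \| \ug(0) - e^{i\gamma_0}\Phi^h_\xi(\cdot - \xO_0) \|_{H^1(h\mathbb{Z})} \leq Ch^2, \]
so the stability theorem applies and produces $C^1$ functions $\gamma(t), \xO(t)$ with $\gamma(0) = \gamma_0$, $\xO(0) = \xO_0$, together with a remainder $w(t)$ such that $\ug_g(t) = e^{i\gamma(t)}\Phi^h_\xi(g - \xO(t)) + w_g(t)$ and $\|w(t)\|_{H^1(h\mathbb{Z})} \leq Ch^2$ on the claimed time window.

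The pointwise estimate follows from the uniform (in $h$) discrete Sobolev embedding $H^1(h\mathbb{Z}) \hookrightarrow L^\infty(h\mathbb{Z})$: combining
\[ \|w(t)\|_{L^\infty(h\mathbb{Z})} \leq C\|w(t)\|_{H^1(h\mathbb{Z})} \leq Ch^2 \quad \text{and} \quad \|\Phi^h_\xi - \psi_\xi|_{h\mathbb{Z}}\|_{L^\infty(h\mathbb{Z})} \leq Ch^2 \]
gives $\sup_g |\ug_g(t) - e^{i\gamma(t)}\psi_\xi(g - \xO(t))| \leq Ch^2$ via the triangle inequality. The bounds on $\dot\gamma, \dot\xO$ come from the modulation equations themselves: differentiating the symplectic orthogonality conditions imposed on $w$ yields an ODE system for $(\gamma, \xO)$ whose right-hand side is $(\xi_1, \xi_2)$ up to terms of size $\|w\|_{H^1(h\mathbb{Z})} + h^2 = O(h^2)$.

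The genuine difficulty lies not here but in the two inputs. For $\xi_2 \neq 0$, the construction of $\Phi^h_\xi$ must overcome the Peierls-Nabarro obstruction — the naive ansatz \eqref{def_DTW} generically has no solution close to $\psi_\xi$ — so $\Phi^h_\xi$ can only be an \emph{approximate} solution, with a residual under the DNLS flow whose size in $h$ and $\xi_2$ must be carefully quantified; this residual is what ultimately governs $T_0(\xi_2)$. The modulational stability, in turn, requires a uniform-in-$h$ discrete analogue of the Grillakis-Shatah-Strauss energy-momentum coercivity around $\Phi^h_\xi$, which is delicate because DNLS admits no discrete Galilean invariance. Once both of these are established, the present corollary reduces to a Taylor expansion and the triangle inequality.
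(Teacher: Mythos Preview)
Your proposal is correct and matches the paper's approach: the paper states that Theorem~\ref{thm_result_easy} is a straightforward application of Theorem~\ref{thm_stab_reg} (the modulational stability result on the $h^{-2+\varepsilon}$ timescale) applied with $v=\psi_\xi$, combined with the approximation $\|\eta_\xi^h-\psi_\xi\|_{H^1(\mathbb{R})}\leq \kappa h^2$ and Sobolev embedding for the $L^\infty$ bound. Your $\Phi^h_\xi$ corresponds to the paper's $\eta_\xi^h$, and your triangle-inequality argument is exactly how the corollary is derived.
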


The proof of Theorem \ref{thm_result_easy} is a straightforward application of Theorem \ref{thm_stab_reg} (or Theorem \ref{Thm_DTW} if $\xi_2=0$). It would be possible to write the same result with the discrete $H^1$ norm instead of the $L^{\infty}$ norm. 

 To obtain this result, the strategy is to construct a  function close to the continuous solitary wave $\psi_\xi$ for given parameters $\xi = (\xi_1,\xi_2)$, which define solitary waves of a modified version of DNLS essentially defined by removing the aliasing terms. This typically gives bound for time scales of order $\mathcal{O}(h^{-1})$ for orbital stability in $H^1(h\mathbb{Z})$. Moreover as the aliasing terms are small for regular functions, we can combine this analysis with a result of control of discrete Sobolev norms of DNLS to reach the time scale $\mathcal{O}(h^{-2})$. We give now the details of our results. 
The first one is a result of existence and stability in $H^1$ of discrete traveling waves for times of order $h^{-1}$.

\begin{theo}
\label{Thm_DTW}
Let $\Omega$ be a relatively compact open subset of $\left\{ \xi \in \mathbb{R}^2 \ | \ \xi_1>\left( \frac{\xi_2}2 \right)^2 \right\}$.\\
There exist $h_0,\kappa, r,\ell>0$ such that for all $h<h_0$, for all $\xi \in \Omega$, there exists $\eta_\xi^h \in H^{\infty}( \mathbb{R})$ with
\begin{equation}
\label{err_cons_thm}
  \| \eta^{h}_{\xi} - \psi_{\xi} \|_{H^1(\mathbb{R})} \leq \kappa h^2, 
\end{equation}
satisfying the following property.

If $\vg \in H^1(h\mathbb{Z})$ is an approximation of $\eta_\xi^h$ up to a gauge transform or an advection, i.e.
\[ \exists \gamma_0,\xO_0 \in \mathbb{R}, \  \quad \| \vg - (e^{i\gamma_0}\eta_\xi^h(\cdot-\xO_0))_{| h\mathbb{Z}} \|_{H^1(h\mathbb{Z})} \leq r, \]
then there exist $\gamma,\xO \in C^1(\mathbb{R})$ with $\gamma(0) = \gamma_0$ and $\xO(0) = \xO_0$ such that if $T>0$ and $\ug$, the solution of DNLS with $\ug(0)=\vg$, satisfy
\begin{equation}
\label{boot}
  \forall t \in (0,T), \quad  \ \delta(t) := \| \ug(t) - (e^{i\gamma(t)}\eta_\xi^h(\cdot-\xO(t)))_{| h\mathbb{Z}} \|_{H^1(h\mathbb{Z})} \leq r,
\end{equation}
then we have for all $t\in (0,T),$
\begin{equation}
\label{modulation_control}
 | \dot{ \gamma }(t) -\xi_1 | +  | \dot{ \xO }(t) -\xi_2 | \leq \kappa \ (\delta(0)+\delta(t) + e^{-\frac{\ell}h }),
\end{equation}
and 
\begin{equation}
\label{stab_H1}
\delta(t) \leq \kappa \ e^{\frac{h |\xi_2| t}{\ell^2}} ( \delta(0) + e^{-\frac{\ell}h } ).
\end{equation}
\end{theo}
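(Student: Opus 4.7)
The plan is to adapt the energy-momentum / modulation method of Grillakis-Shatah-Strauss to the discrete setting. The main obstruction is that, unlike NLS, DNLS has no exact continuous advection symmetry, so the natural Lyapunov functional will only be almost conserved. The key idea is to approximate DNLS by a dealiased variant which does possess such a symmetry, using that the discrepancy between both flows is exponentially small on band-limited profiles; the resulting drift of the Lyapunov functional produces the $e^{h|\xi_2|t/\ell^2}$ factor in \eqref{stab_H1}.

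First I would construct $\eta^h_\xi$ as the exact traveling wave, with parameters $\xi$, of a suitably Fourier-truncated version of DNLS that admits full continuous translation symmetry. The associated stationary profile equation is a nonlocal $O(h^2)$ perturbation of the one satisfied by $\psi_\xi$, obtained from the Taylor expansion of the discrete Laplacian. The implicit function theorem about $\psi_\xi$, combined with the nondegeneracy of the NLS linearization modulo the tangent space of the orbit, produces a smooth branch $\eta^h_\xi$ satisfying \eqref{err_cons_thm}, with uniform smoothness in $h$ and exponentially decaying Fourier transform. In particular, the residual between DNLS and its dealiased counterpart applied to $(e^{i\gamma}\eta^h_\xi(\cdot-\xO))_{\mid h\mathbb{Z}}$ is of size $e^{-\ell/h}$ in $H^1(h\mathbb{Z})$, for some $\ell>0$ depending only on $\Omega$.

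Next I would set up the modulation. Define the error $\varepsilon(t) := \ug(t) - (e^{i\gamma(t)}\eta^h_\xi(\cdot-\xO(t)))_{\mid h\mathbb{Z}}$ and fix $(\gamma(t),\xO(t))$ by imposing two real orthogonality conditions on $\varepsilon(t)$ with respect to discrete versions of the infinitesimal gauge and translation generators at $\eta^h_\xi(\cdot-\xO(t))$. Provided $\delta(t)\leq r$, the implicit function theorem yields $\gamma,\xO\in C^1(0,T)$ with $\gamma(0)=\gamma_0$, $\xO(0)=\xO_0$. Differentiating the orthogonality conditions along the DNLS flow and substituting the equation, the leading $\xi_1,\xi_2$ terms cancel and the remainder is bounded by $\delta(t)$ plus the $O(e^{-\ell/h})$ aliasing residual, which gives \eqref{modulation_control}.

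The core of the proof is the Lyapunov estimate. I would introduce $\mathcal{L}^h := \HDNLS + \xi_1 M^h + \xi_2 P^h$ where $M^h$ is the discrete mass and $P^h$ is a carefully chosen discrete momentum, tuned so that $\eta^h_\xi$ is a critical point of $\mathcal{L}^h$ up to an $e^{-\ell/h}$ defect, and so that the Poisson bracket $\{\HDNLS, P^h\}$ is of order $h$ on regular profiles; concretely $P^h$ should be an approximation of $-i\partial_x/2$ whose aliasing mismatch with a genuine DNLS invariant is of order $h$. The Hessian of the continuous analogue $\mathcal{L}$ at $\psi_\xi$ is coercive on the symplectic orthogonal of the tangent space of the orbit (the classical spectral property of cubic NLS), and a perturbation argument uniform in $h$ transfers this coercivity to $\mathcal{L}^h$ at $\eta^h_\xi$. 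Combining the controlled drift of $\mathcal{L}^h$ with coercivity one obtains
\begin{equation*}
\delta(t)^2 \lesssim \delta(0)^2 + e^{-2\ell/h} + \frac{h|\xi_2|}{\ell^2}\int_0^t \delta(s)^2\,ds,
\end{equation*}
and Gronwall's inequality yields \eqref{stab_H1}. The main obstacle is the simultaneous construction of $P^h$ and the proof of uniform-in-$h$ coercivity of the Hessian of $\mathcal{L}^h$ at $\eta^h_\xi$: one must ensure that $P^h$ both generates an approximate advection on $\eta^h_\xi$ and has Poisson bracket with $\HDNLS$ of order $h$, and then control the spectrum of the discrete linearized operator as a perturbation of the well-understood continuous one, uniformly for $h\in(0,h_0)$.
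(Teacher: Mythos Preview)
Your outline is correct and matches the paper's strategy closely: construct $\eta_\xi^h$ as an exact traveling wave of a dealiased (translation-invariant) Hamiltonian via the implicit function theorem and coercivity inherited from the NLS linearization, set up $C^1$ modulation parameters by orthogonality, and control the drift of a Lagrange functional under the true DNLS flow to close a Gronwall inequality of the form $\delta(t)^2 \lesssim \delta(0)^2 + e^{-2\ell/h} + h|\xi_2|\int_0^t \delta(s)^2\,ds$.

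The one point where the paper is more concrete than your sketch is the choice of $P^h$, which you flag as the main obstacle. The paper sidesteps the need to \emph{construct} a discrete momentum: it transports DNLS to the band-limited space $BL^2_h\subset L^2(\mathbb{R})$ via Shannon interpolation (a symplectic isometry), where the continuous momentum $\langle i\partial_x u,u\rangle$ is available verbatim and generates an exact advection. The Lagrangian is then taken with the \emph{dealiased} Hamiltonian $H_h$ rather than $\HDNLS$, so that $\eta_\xi^h$ is an exact critical point; the aliasing discrepancy $H_{\rm DNLS}^h-H_h=-\tfrac12\int\cos(2\pi x/h)|u|^4$ is moved into the drift term. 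The $h|\xi_2|$ rate then appears transparently: $\partial_t\langle i\partial_x u,u\rangle$ under DNLS equals $-\tfrac{2\pi}{h}\int\sin(2\pi x/h)|u|^4$, and the $1/h$ is compensated by an $h^2$ gain from Lemma~\ref{lem_estA} (two of the four Fourier factors in the aliased quartic must live at frequency $\gtrsim 1/h$). This is exactly your heuristic ``$\{\HDNLS,P^h\}=O(h)$ on regular profiles,'' made explicit. Your formulation with $\HDNLS+\xi_1 M^h+\xi_2 P^h$ and an $e^{-\ell/h}$ defect at the critical point is an equivalent bookkeeping; the Shannon-interpolation viewpoint simply makes the choice of $P^h$ canonical and the uniform-in-$h$ coercivity proof cleaner (it becomes a direct perturbation of Weinstein's estimate on $H^1(\mathbb{R})$ restricted to $BL^2_h$, with a low/high frequency splitting to handle the $\sinc^2$ symbol of $\Delta_h$).
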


The functions $\eta_\xi^h$ are constructed in the third section and estimates  \eqref{modulation_control} and \eqref{stab_H1} are proven in the fourth section. Now, we discuss this result. We focus on inequalities  \eqref{modulation_control} and \eqref{stab_H1}.
\begin{itemize}
\item If we remove the exponential terms, it is a result stronger than the classical inequality of orbital stability (see Theorem \ref{Thm_COS}) as it includes a result of modulation.
\item The exponential terms "$e^{-\frac{\ell}h }$" means that any discretization of $\eta_\xi^h$ is not exactly a traveling wave of DNLS. 
\item The time dependent exponential term means that the estimate of stability holds while $t |\xi_2|$ is smaller than $h^{-1}$. In particular, if we focus on standing waves (i.e. $\xi_2 = 0$), we get an estimate of stability for all times. Since our perturbation does not need to be symmetric, it is an extension of the previous results (see Theorem \ref{Thm_DSW}).
\item If $\ug(0)$ is a discretization of $\eta_\xi^h$ (i.e. if $\delta(0)=0$) then the estimate of stability holds longer. Indeed, while $t |\xi_2|$ is smaller than $\frac{\ell^3}{h^2}$ (up to a multiplicative constant) , then the bootstrap \eqref{boot} condition is satisfied. In particular, we deduce of the second inequality  that at the end of this time, $\ug$ has crossed the distance $\frac{\ell^3}{h^2}$ (up to a multiplicative constant), still looking like $\eta_\xi^h$.
\end{itemize}

Now, we discuss some consequences and applications of the proof of Theorem \eqref{Thm_DTW}. These extensions are linked to the two relevant exponents for $h$ in this theorem.

First, there is a control of $\eta^{h}_{\xi} - \psi_{\xi}$ by $\mathcal{O}(h^2)$ (see \eqref{err_cons_thm}). This error is a consistency error. It is due to the approximation of the second derivative by a finite difference formula of order $2$. Such an estimate depends on the finite difference operator used to approximate second derivative in space. For example, if we consider the generalization of DNLS \eqref{def_DNLS} called Discret Self-Trapping equation (DST, see \cite{MR805707})
\begin{equation}
\label{def_DST}
\forall g\in h\mathbb{Z},\quad  \ i\partial_t \ug_g = \frac{1}{h^2} \sum_{k \in \mathbb{Z}} a_k \ug_{g-kh}  + |\ug_g|^2 \ug_g,
\end{equation}
where $(a_k)_{k\in \mathbb{Z}}\in L^{1}(\mathbb{Z};\mathbb{R})$ is a symmetric sequence (i.e. $a_k=a_{-k}$ for all $k$), consistent of order $2n$, $n\in \mathbb{N}^*$,
\begin{equation}
\label{def_cons}
 \forall u \in H^{\infty}(\mathbb{R}), \quad \ \frac1{h^2}\sum_{k \in \mathbb{Z}} a_k u(hk) \unupoop{=}{h\to 0}{} \partial_x^2 u(0) + \mathcal{O}(h^{2n+2}), 
\end{equation}
and satisfying the estimate of stability
\begin{equation}
\label{def_stab}
 \exists \alpha>0,\quad \forall \omega \in (0,\pi), \quad  \ -\sum_{k \in \mathbb{Z}} a_k \cos(k\omega) \geq \alpha \omega^2   
\end{equation}
then Theorem \eqref{Thm_DTW} holds for DST and we can replace \eqref{err_cons_thm} by $\| \eta^{h}_{\xi} - \psi_{\xi} \|_{H^1(\mathbb{R})} \leq \kappa h^{2n}$. In particular, this extension includes usual pseudo spectral method and the usual high order discrete second derivatives (see \cite{finites_diff} for details about these formulas) whose non-zero terms are given by
\[ a_{\pm k} =  \frac{2(-1)^{k+1}}{k^2} \frac{C_{2n}^{n-k}}{C_{2n}^{n}}  ,\quad  \textrm{ if } \quad 0<k<n, \quad  \textrm{ and } \quad a_0 = - 2\sum_{j=1}^n \frac1{j^2}.   \]


Second, there is the right exponential term $e^{\frac{h |\xi_2| t}{\ell^2}}$ giving the stability estimates for times of order $h^{-1}$. As the error terms come mainly from aliasing effects, the control of stability for times larger than $\frac1h$ essentially relies on a control of higher Sobolev norms for long times uniformly with respect to $h$. 
More precisely, we define the discrete homogeneous Sobolev norm $ \| \cdot \|_{\dot H^n(h\mathbb{Z})}$ by
\begin{equation}
\label{def_high_Sobolev}
\| \ug \|_{\dot H^n(h\mathbb{Z})}^2 = \langle (-\Delta_h)^n \ug , \ug \rangle_{L^2(h\mathbb{Z})}, \quad \textrm{ with } \quad  (\Delta_h \ug)_g =  \frac{\ug_{g+h}  - 2 \ug_g + \ug_{g-h}}{h^2},
\end{equation}
and the Sobolev norm by
\[  \| \ug \|_{H^n(h\mathbb{Z})}^2 = \sum_{k =0}^n \| \ug \|_{\dot H^k(h\mathbb{Z})}^2. \]
Then we have the following version of Theorem \ref{Thm_DTW} (see Remark \ref{rem_if_reg} for its proof).
\begin{theo} In Theorem \ref{Thm_DTW}, the inequality \eqref{stab_H1} can be replaced by
\label{thm_stab_Hn}
\begin{equation}
\label{stab_Hn}
   \forall n\in \mathbb{N}^*, \ \delta(t) \leq \kappa \ \left(  \delta(0) + e^{-\frac{\ell}h}+\sqrt{t|\xi_2|} h^{n - \frac12} \sup_{0<s<t} \| \ug(s) \|_{\dot{H}^n(h\mathbb{Z})} \right).
\end{equation}
\end{theo}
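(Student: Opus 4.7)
The plan is to revisit the final Gronwall-type step in the proof of Theorem~\ref{Thm_DTW} and sharpen the bound on the aliasing error under the assumption that $\ug$ has extra Sobolev regularity. In the energy--momentum argument leading to \eqref{stab_H1}, one controls a coercive Lyapunov functional $\mathcal F(\ug)\asymp\delta^2$ whose time derivative has the schematic form
\[
\tfrac{d}{dt}\mathcal F(\ug)\;=\;\bigl\langle \mathcal F'(\ug),\ i(\partial_x^2-\Delta_h)\ug\bigr\rangle\ +\ (\text{modulation}+\text{nonlinear remainder}).
\]
Under only an $H^1(h\mathbb{Z})$ bound on $\ug$, one estimates $\|(\partial_x^2-\Delta_h)\ug\|_{L^2(h\mathbb{Z})}\lesssim h\,\|\ug\|_{H^1(h\mathbb{Z})}$ and combines this with the factor $|\xi_2|$ coming from the advection modulation to produce a source term of size $\tfrac{h|\xi_2|}{\ell^2}\delta^2$. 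Its Gronwall integration is precisely what generates the exponential factor $e^{h|\xi_2| t/\ell^2}$ in \eqref{stab_H1}.

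The improvement comes from a classical consistency estimate in Fourier space: on the Nyquist interval $(-\pi/h,\pi/h)$ the symbol of $\partial_x^2-\Delta_h$ vanishes to order $2n$ at $\omega=0$ and is bounded by $h^{2n-2}\omega^{2n}$ (for $n=1$ for the second-order scheme of \eqref{def_DNLS}, and larger for the higher-order schemes \eqref{def_DST}--\eqref{def_stab}, which the same argument covers). Plancherel on $(-\pi/h,\pi/h)$ then yields, for every $n\in\mathbb N^{*}$,
\[
\bigl\|(\partial_x^2-\Delta_h)\ug\bigr\|_{L^2(h\mathbb{Z})}\ \lesssim\ h^{\,n-\frac12}\,\|\ug\|_{\dot H^{n}(h\mathbb{Z})},
\]
uniformly in $h$. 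Substituting this into the time derivative of $\mathcal F$ and applying Cauchy--Schwarz, the Gronwall source changes from $\tfrac{h|\xi_2|}{\ell^{2}}\delta(s)^{2}$ into $|\xi_2|\,h^{2n-1}\,\|\ug(s)\|_{\dot H^{n}(h\mathbb{Z})}^{2}$, which no longer depends on $\delta$ and therefore enters additively rather than multiplicatively. Integrating in time gives
\[
\delta(t)^{2}\ \lesssim\ \delta(0)^{2}+e^{-2\ell/h}+t|\xi_2|\,h^{2n-1}\sup_{0<s<t}\|\ug(s)\|_{\dot H^{n}(h\mathbb{Z})}^{2},
\]
and taking the square root yields \eqref{stab_Hn}.

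The main obstacle I expect is the bookkeeping of the nonlinear remainder: the cubic term $|\ug|^{2}\ug$ also produces an aliasing contribution, and one has to check that it too can be absorbed into $|\xi_2|\,h^{2n-1}\,\|\ug\|_{\dot H^{n}(h\mathbb{Z})}^{2}$ rather than into a fresh $\delta^{2}$ source. This uses the smoothness of $\eta_\xi^{h}$ (which belongs to every discrete Sobolev space with norms uniform in $h$, by \eqref{err_cons_thm} and elliptic regularity for the modified DNLS) together with the algebra property of $H^{n}(h\mathbb{Z})$ for $n\geq 1$, so that the cubic nonlinearity on $\ug=\eta_\xi^{h}+(\ug-\eta_\xi^{h})$ splits into a piece whose aliasing is sharpened by the Fourier estimate above and a piece absorbed as usual by the quadratic coercivity of $\mathcal F$. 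The remaining ingredients — the modulation equations \eqref{modulation_control}, the bootstrap, and the orthogonality conditions — carry over verbatim from the proof of Theorem~\ref{Thm_DTW}, with the sharper inequality simply inserted at the final step.
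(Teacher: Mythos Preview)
Your identification of the error term is incorrect, and this is a genuine gap. In the proof of Theorem~\ref{Thm_DTW} the Lyapunov functional is $\Lag_\zeta^h$ (see~\eqref{def_LDNLS/AC}), whose kinetic part $\tfrac12\int\bigl|\tfrac{u(x+h)-u(x)}{h}\bigr|^{2}\dx$ already uses the \emph{discrete} Laplacian --- the same one appearing in the DNLS Hamiltonian~\eqref{def_HDNLS_cont}. The linear part of the DNLS flow therefore conserves the kinetic energy of $\Lag_\zeta^h$ exactly, and no $(\partial_x^2-\Delta_h)$ term appears when you differentiate $\Lag_\zeta^h(u(t))$ in time. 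The non-conservation comes entirely from the \emph{nonlinear} aliasing: by Lemma~\ref{lem_HDNLS}, $H_{\rm DNLS}^h-H_h=-\tfrac12\int\cos(\tfrac{2\pi x}{h})|u|^{4}\dx$, and this single highly oscillatory quartic term is responsible both for $E_1$ and, through the momentum drift, for $E_2$ in the decomposition~\eqref{nougaro}. What you describe in your last paragraph as a secondary ``cubic aliasing remainder'' is in fact the entire mechanism; your proposed linear consistency bound is simply not a term in the energy balance.

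The correct sharpening, carried out in Remark~\ref{rem_if_reg}, is a high-frequency estimate on the quartic form $E_3(u)=\int e^{2i\pi x/h}|u|^{4}\dx$. Lemma~\ref{lem_estA} forces at least two of the four Fourier factors to live at frequencies $|\omega|\geq\pi/(3h)$, so that
\[
|E_3(u)|\ \lesssim\ \|\widehat{u}\,\mathbb{1}_{|\omega|\geq\pi/(3h)}\|_{L^2}^{2}\,\|\widehat u\|_{L^1}^{2}\ \lesssim\ h^{2n}\,\|u\|_{\dot H^n}^{2}.
\]
Inserting this into $E_2(t)=-\xi_2\tfrac{\pi}{h}\int_0^t(E_3-\overline{E_3})\,\ds$ produces the additive source $|\xi_2|\,t\,h^{2n-1}\sup_{0<s<t}\|u(s)\|_{\dot H^n}^{2}$; the $E_1$ contribution keeps its old bound~\eqref{estimation_of_E1}, whose $h^2\delta(t)^2$ piece is absorbed into the coercivity~\eqref{true_coer} for $h$ small. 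Taking a square root yields~\eqref{stab_Hn} directly, with no Gr\"onwall step. Your final displayed inequality happens to have the right shape, but the route you propose to reach it does not exist in this proof.
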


 With such an estimate, we see that to obtain stability over exponentially long times, it would be enough to prove a control of the growth of the homogeneous Sobolev norm of the type $C t^{\alpha}$, with $\alpha$ independent of $n$ and $h$ and $C$ independent of $h$. Note that for the continuous case, it is indeed the case for the solutions of NLS for which the $H^s$ norms are uniformly bounded in times by using integrability arguments (see for example \cite{MR2996998}). Note that such bounds hold for linear Schr\"odinger equation with a smooth potential in $t$ and $x$ (see \cite{MR1753490}).


For DNLS, it is possible to obtain polynomial control of the growth of Sobolev norms by using the \it higher modified energy \rm    
 method. The following result was obtained in \cite{growth_sobolev}  by the first author: 
\begin{theo}[Growth of discrete Sobolev norms, see \cite{growth_sobolev}] 
\label{thm_growth}
  For all $n\in \mathbb{N}^*$, there exists $C>0$, such that for all $h>0$,  if $\ug$ is a solution of DNLS  then for all $t\in \mathbb{R}$
  \begin{equation}
  \label{ineq_growth}
 \| \ug(t) \|_{\dot{H}^{n}(h\mathbb{Z})} \leq C \left[ \| \ug(0) \|_{\dot{H}^{n}(h\mathbb{Z})} +   M_{\ug(0)}^{\frac{2n+1}3}   +  |t|^{\frac{n-1}2} M_{\ug(0)}^{\frac{4n-1}3}  \right] ,  
\end{equation}
where
\[  M_{\ug(0)} = \|\ug(0) \|_{\dot{H}^1(h\mathbb{Z})} +\|\ug(0) \|_{L^2(h\mathbb{Z})}^3  .\]
\end{theo}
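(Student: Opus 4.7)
The plan is to adapt the method of higher modified energies (originally developed by Colliander--Keel--Staffilani--Takaoka--Tao for continuous NLS) to the discrete setting on $h\mathbb{Z}$. Define $E_n(\ug) := \|\ug\|_{\dot H^n(h\mathbb{Z})}^2$ and look for a corrected quantity $\tilde E_n(\ug) = E_n(\ug) + R_n(\ug)$, with $R_n$ a quartic functional in $\ug,\bar\ug$ chosen so that $\tilde E_n \sim E_n$ up to lower-order terms and, more importantly, so that $\frac{d}{dt} \tilde E_n$ along the DNLS flow is a six-linear expression --- genuinely smaller than the quartic $\frac{d}{dt} E_n$. The construction of $R_n$ is carried out in the semi-discrete Fourier variables on the torus $[-\pi/h, \pi/h]$: after writing $\frac{d}{dt} E_n = -2\,\Imag \langle (-\Delta_h)^n \ug , |\ug|^2 \ug\rangle_{L^2(h\mathbb{Z})}$ as a quartic form supported on $\omega_1 - \omega_2 + \omega_3 - \omega_4 \equiv 0 \pmod{2\pi/h}$, one divides the symbol by the linear resonance function $\Omega$ (the discrete analogue of $\omega_1^2 - \omega_2^2 + \omega_3^2 - \omega_4^2$) away from the resonant set; this determines $R_n$ up to a cut-off at quasi-resonances.

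The main technical work then consists in three estimates, all uniform in $h$. First, a bound on $|R_n(\ug)|$ in terms of lower Sobolev norms via discrete Gagliardo--Nirenberg inequalities such as $\|\ug\|_{L^\infty(h\mathbb{Z})} \lesssim \|\ug\|_{\dot H^1(h\mathbb{Z})}^{1/2}\|\ug\|_{L^2(h\mathbb{Z})}^{1/2}$, valid with constants independent of $h$. Second, an estimate of the six-linear form $\frac{d}{dt} \tilde E_n$ in the same spirit, interpolating intermediate $\dot H^k$ norms between $\|\ug\|_{L^2}$ and $\|\ug\|_{\dot H^n}$; the powers $(2n+1)/3$ and $(4n-1)/3$ of $M_{\ug(0)}$ appearing in \eqref{ineq_growth} then arise from these interpolations together with the two available conservation laws: the mass $\|\ug\|_{L^2(h\mathbb{Z})}$ and the Hamiltonian \eqref{def_HDNLS}, which controls $\|\ug\|_{\dot H^1}$ via the uniform-in-$h$ embedding $\|\ug\|_{L^4(h\mathbb{Z})}^4 \lesssim \|\ug\|_{\dot H^1(h\mathbb{Z})}\|\ug\|_{L^2(h\mathbb{Z})}^3$, giving $M_{\ug(t)} \lesssim M_{\ug(0)}$ (the cube on $\|\ug(0)\|_{L^2}$ in the definition of $M_{\ug(0)}$ is precisely the one that makes $M$ invariant under the natural scaling of the cubic focusing DNLS). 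Third, integrating the resulting differential inequality in time and interpolating $\dot H^n$ between $\dot H^{n-1}$ and $\dot H^{n+1}$ (or iterating the modified-energy construction once more) yields the polynomial rate \eqref{ineq_growth}.

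The main obstacle is the quasi-resonant region where $\Omega$ is small, since the naive $R_n$ diverges there. In the continuous setting this is handled by exploiting that full resonances force $\omega_1 = \omega_2$ or $\omega_1 = \omega_4$, reducing the resonant contribution to a multilinear form depending only on $\|\ug\|_{L^2}$ and $\|\ug\|_{\dot H^n}$, which is then absorbed using mass conservation. On the discrete torus the analysis is more delicate because of aliased resonances coming from the $2\pi/h$-periodicity of the symbol, and the key issue is to verify, uniformly in $h$, that both the contribution from this exact resonant set and from the quasi-resonant neighborhood where $R_n$ must be cut off remain of lower order. Granting this, the modified-energy scheme combined with a bootstrap and Gronwall argument gives \eqref{ineq_growth} with constants independent of $h$.
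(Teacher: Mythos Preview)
The paper does not actually prove this statement: Theorem~\ref{thm_growth} is quoted verbatim from the companion paper~\cite{growth_sobolev} (``The following result was obtained in~\cite{growth_sobolev} by the first author''), and no argument appears here beyond the one-line remark that the proof proceeds ``by using the \emph{higher modified energy} method.'' So there is nothing in the present paper to compare your proposal against line by line.

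That said, your sketch is consistent with what the paper announces as the method. The ingredients you list --- a quartic correction $R_n$ built on the Fourier side by dividing by the resonance function, the discrete Gagliardo--Nirenberg inequalities uniform in $h$, conservation of mass and Hamiltonian to control $M_{\ug(t)}$, and a time-integration/bootstrap to close --- are the standard components of the I-team/modified-energy machinery, and the scaling homogeneity you identify for $M_{\ug(0)}$ matches the paper's comment that ``the exponents of the $\ug(0)$ norms are natural and correspond to an homogeneous estimate preserved by scalings in $h$.'' Your proposal is a plausible outline, but it remains only an outline: the step you flag yourself --- handling the aliased resonances from the $2\pi/h$-periodicity uniformly in $h$ --- is exactly where the substance of~\cite{growth_sobolev} lies, and you have not actually carried it out. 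Whether a single cut-off plus the exact-resonance cancellation suffices, or whether a more careful frequency decomposition is needed, cannot be judged from what you wrote; you would need to consult~\cite{growth_sobolev} to see how this is resolved.
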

The exponents of the $\ug(0)$ norms are natural and correspond to an homogeneous estimate preserved by scalings in $h$. 
As a corollary of Theorem \eqref{thm_stab_Hn} and Theorem \ref{thm_growth}, we get an extension of Theorem \ref{thm_result_easy} for smooth perturbations of $\eta_\xi^h$. It is a result of stability for times of order $h^{-2}$ for such perturbations.
\begin{theo}
\label{thm_stab_reg} Let $\Omega$ be a relatively compact open subset of $\left\{ \xi \in \mathbb{R}^2 \ | \ \xi_1>\left( \frac{\xi_2}2 \right)^2 \right\}$ and $h_0,\kappa, r,\ell>0$ be the constants given in Theorem \ref{Thm_DTW}.

For all $\varepsilon,s>0$, there exists $n\in \mathbb{N}^*$ such that for all $\rho>0$, there exist $C,T_0>0$ with
 \begin{equation}
 \label{rossini}
T_0 = \infty \quad \mbox{when} \quad \xi_2 = 0\qquad\mbox{and}\qquad T_0 \to \infty \quad\mbox{when the speed}\quad \xi_2 \to 0, 
 \end{equation}
 and $h_1\in (0,h_0)$, such that for all $h<h_1$, $\xi\in \Omega$ and for all $v \in H^n(\mathbb{R})$, if 
\[ \| v \|_{\dot{H}^n(\mathbb{R})} \leq \rho \quad \textrm{ and } \quad \|  \psi_\xi - v  \|_{H^1(\mathbb{R})} \leq \frac{r}{2(1+\kappa)}\]
then any solution $\ug$ of DNLS such that 
\[   \exists \xO_0,\gamma_0\in \mathbb{R},\quad  \forall g\in h\mathbb{Z},\quad  \ \ug_g(0) = e^{i\gamma_0}v(g - \xO_0)\]
satisfies, for all $t\geq 0$ such that ,
\begin{equation}
\label{eq_delta_hs}
\forall\, t \leq T_0 h^{-2+\varepsilon},\qquad 
  \| \ug(t) - (e^{i\gamma(t)}\eta_\xi^h(\cdot-\xO(t)))_{| h\mathbb{Z}} \|_{H^1(h\mathbb{Z})} \leq C \left( \|  \eta_\xi^h - v \|_{H^1(\mathbb{R})}  + h^{s} \right) 
\end{equation}
where $\gamma,\xO \in C^1(\mathbb{R})$ satisfy $\gamma(0) = \gamma_0$, $\xO(0) = \xO_0$ and
\begin{equation}
\label{eq_mod_hs}
\forall\, t \leq T_0 h^{-2+\varepsilon},\qquad 
 | \dot{ \gamma }(t) -\xi_1 | +  | \dot{ \xO }(t) -\xi_2 | \leq C \left( \|  \eta_\xi^h - v  \|_{H^1(\mathbb{R})}  + h^{s} \right).
\end{equation}
\end{theo}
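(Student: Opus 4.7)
The plan is a bootstrap argument combining Theorem \ref{Thm_DTW}, the refined stability bound of Theorem \ref{thm_stab_Hn}, and the polynomial Sobolev-growth estimate of Theorem \ref{thm_growth}. First, I would check that Theorem \ref{Thm_DTW} applies to the given initial data. Setting $\vg = \ug(0) = (e^{i\gamma_0}v(\cdot-\xO_0))_{|h\mathbb{Z}}$, standard comparison between the discrete norm $\|\cdot\|_{H^1(h\mathbb{Z})}$ and the continuous norm $\|\cdot\|_{H^1(\mathbb{R})}$ for $H^2$-regular functions, together with \eqref{err_cons_thm}, gives
\[ \|\vg - (e^{i\gamma_0}\eta_\xi^h(\cdot-\xO_0))_{|h\mathbb{Z}}\|_{H^1(h\mathbb{Z})} \leq \|v - \eta_\xi^h\|_{H^1(\mathbb{R})} + O(h^2) \leq \tfrac{r}{2(1+\kappa)} + \kappa h^2 + O(h^2), \]
which is below $r$ for $h \leq h_1$ small enough. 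Theorem \ref{Thm_DTW} then produces modulation parameters $\gamma,\xO \in C^1$ with $\gamma(0) = \gamma_0$, $\xO(0) = \xO_0$, and \eqref{modulation_control} holds as long as the bootstrap \eqref{boot} is satisfied.

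Second, to invoke the refined bound \eqref{stab_Hn} I would control the discrete $\dot H^n$ norm via Theorem \ref{thm_growth}. The hypothesis $\|v\|_{\dot H^n(\mathbb{R})} \leq \rho$, combined with consistency of finite differences, yields $\|\ug(0)\|_{\dot H^n(h\mathbb{Z})} \leq C(n)\rho$; moreover $M_{\ug(0)} \leq C(\rho,\Omega)$ since $v$ is $H^1$-close to $\psi_\xi$ and $\psi_\xi$ has uniformly bounded $H^1$ norm on the relatively compact set $\Omega$. Theorem \ref{thm_growth} then delivers
\[ \sup_{0 < s \leq t} \|\ug(s)\|_{\dot H^n(h\mathbb{Z})} \leq C(\rho,\Omega,n)\bigl(1 + t^{(n-1)/2}\bigr). \]

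Third, the bootstrap itself: let $T^{\ast} = \sup\{T > 0 : \delta(s) \leq r \text{ for all } s \in (0,T)\}$, which is positive by the first step. For $t < T^{\ast}$, combining \eqref{stab_Hn} with the Sobolev bound yields
\[ \delta(t) \leq \kappa\bigl(\delta(0) + e^{-\ell/h}\bigr) + C(\rho,\Omega,n)\sqrt{t|\xi_2|}\,h^{n - 1/2}\bigl(1 + t^{(n-1)/2}\bigr). \]
For $t \leq T_0 h^{-2+\varepsilon}$, the last term is dominated by $C(\rho,\Omega,n)\,T_0^{n/2}\sqrt{|\xi_2|}\,h^{n\varepsilon/2 - 1/2}$. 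Choosing $n$ so that $n\varepsilon/2 - 1/2 \geq s$ (concretely $n \geq \lceil(2s+1)/\varepsilon\rceil + 1$) makes this quantity bounded by $C\,T_0^{n/2}\sqrt{|\xi_2|}\,h^{s}$. Taking $T_0$ small enough (proportional to a negative power of $|\xi_2|$, so that \eqref{rossini} holds) and $h_1$ small enough, the right-hand side stays below $r/2$, closing the bootstrap and extending $T^{\ast} \geq T_0 h^{-2+\varepsilon}$. Estimate \eqref{eq_delta_hs} is exactly this bound; \eqref{eq_mod_hs} follows by inserting it into \eqref{modulation_control}.

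The main obstacle is the exponent-balancing in the last step: the gain $h^{n-1/2}$ supplied by \eqref{stab_Hn} must beat the polynomial-in-$t$ factor from Theorem \ref{thm_growth} at the long time scale $t = T_0 h^{-2+\varepsilon}$, forcing $n$ to grow like $(2s+1)/\varepsilon$. The explicit $\sqrt{|\xi_2|}$ factor in \eqref{stab_Hn} is precisely what allows $T_0 \to \infty$ as $\xi_2 \to 0$; at $\xi_2 = 0$ this factor vanishes and \eqref{stab_H1} of Theorem \ref{Thm_DTW} already yields stability for all times, thus giving \eqref{rossini}. Careful bookkeeping of the constants, to guarantee uniformity in $h$ and in $\xi \in \Omega$, completes the proof.
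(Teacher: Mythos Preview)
Your proposal is correct and follows essentially the same route as the paper's proof: verify the hypotheses of Theorem~\ref{Thm_DTW} on the sampled initial data, feed the discrete Sobolev-growth bound of Theorem~\ref{thm_growth} into the refined stability estimate \eqref{stab_Hn}, balance the resulting powers of $h$ against the time scale $T_0 h^{-2+\varepsilon}$, and close the bootstrap. The exponent condition $n\varepsilon/2 - 1/2 \geq s$ you extract matches the paper's choice $n_0 \geq (1+2s)/\varepsilon$, and your choice of $T_0$ as a negative power of $|\xi_2|$ is exactly what the paper does (explicitly $T_0 = \min(|\xi_2|^{-1},|\xi_2|^{-1/n})$). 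One small refinement: in your first step the aliasing error for $\delta(0)$ should use the full $H^n$-regularity of $v$, yielding $O(h^{n-1})$ rather than $O(h^2)$; this is needed so that the $\kappa\,\delta(0)$ contribution in the final bound is itself of order $h^s$ when $s>2$ (the paper imposes the extra condition $n-1\geq s$ for precisely this reason).
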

This Theorem is proven in Appendix (see Section \ref{sec_app_proof}). Note that if we can prove a control on the growth of high Sobolev norms by $\mathcal{O}(t^{\alpha (n-1)})$ with $\alpha<\frac12$, then we would adapt Theorem \ref{thm_stab_reg} to reach a stability time of order $h^{-\alpha+\varepsilon}$.

\subsection{Notations} Sometimes some notations could be ambiguous, so in this subsection we clarify them.
\begin{itemize}

\item In all this paper, we consider $\mathbb{C}$ as an $\mathbb{R}$ Euclidian space of dimension $2$ equipped with the scalar product "$\cdot$" defined by
\[ \forall z_1,z_2\in \mathbb{C}, \ z_1\cdot z_2 = \Re( z_1 \overline{z_2}) = \Re z_1\Re z_2 + \Im z_1 \Im z_2. \]
Consequently, $L^2(\mathbb{R};\mathbb{C})$ scalar product is defined by
\[ \forall u_1,u_2 \in L^2(\mathbb{R};\mathbb{C}), \ \langle u_1 , u_2 \rangle_{L^2(\mathbb{R})} = \int u_1(x)\cdot u_2(x) \dx  .\]
In particular, we consider all the Fr\'echet differentials as $\mathbb{R}$ linear applications. 
\item If $u: \mathbb{R} \to \mathbb{C}$ is a real function and $h>0$, we define the \it discrete seconde derivative \rm of $u$ by
\[ \forall x \in \mathbb{R}, \ \Delta_h u (x) = \frac{ u(x+h) + u(x-h) - 2 u(x) }{h^2}. \]

\item We define the cardinal sine function on $\mathbb{R}$ by $\sinc(x) := \dfrac{\sin(x)}x$.

\item As usual when we consider second derivative, we identify the continuous bilinear forms with the operators from the space to its topological dual space. More precisely, if $E$ is a normed vector space and $b$ is a continuous bilinear form on $E$, we identify $b$ with the operator $\widetilde{b}:E\to E'$ defined by $b(x,y) = \widetilde{b}(x,y)$, $x,y\in E$. Consequently, it makes sense to try to invert $b$.

\item If $M\in M_n(\mathbb{R})$ is a square matrix of length $n$ then $\| M\|_{p}$ is the matrix norm of $M$ associated to the $\ell^p$ norm on $\mathbb{R}^n$. Similarly, if $\xi \in \mathbb{R}^2$, $|\xi| := \sqrt{\xi_1^2 + \xi_2^2}$ is the $\ell^2$ norm of $\xi$.

\item If $\mathscr{E}$ is a set then $\mathbb{1}_{\mathscr{E}}$ is the characteristic function of $\mathscr{E}$.

\end{itemize}

\subsection*{Acknoledgements} The authors are glad to thank Dario Bambusi, Beno\^it Gr\'ebert and Alberto Maspero for their helpful comments and discussions during the preparation of this work. 

\section{Aliasing generating inhomogeneity}

\label{Sec_2}
In this section, we explain why DNLS can be interpreted as an inhomogeneous equation on $\mathbb{R}$ and why we cannot apply directly the  \it energy-momentum \rm method to get stable traveling waves. This section is also an introduction to most of the tools used in the this paper.

The \it energy-momentum method \rm is a way to construct orbitally stable equilibria of a Hamiltonian system, relatively to a Lie group action. It has been used by Weinstein in \cite{MR820338} to prove the orbital stability of the traveling waves of NLS. Then it has been developed, in the general context of Hamiltonian systems by Grillakis, Shatah, Strauss in \cite{MR901236},\cite{MR1081647}. A clear and rigorous presentation of the method and its formalism in a general setting is given in the paper \cite{MR3443560} of De Bi\`evre, Genoud, and Rota Nodari.

A crucial part of this method is based on Noether theorem, requiring to identify invariant Lie group actions with Hamiltonian flows.  
For DNLS, the Lie group actions are defined by gauge transform $\ug \mapsto e^{i \gamma} \ug$ and discrete advection $\ug \mapsto (\ug_{g+a})_{g \in h \mathbb{Z}}$. The gauge transform is clearly the flow of the Hamiltonian $\| \ug \|_{L^2(h\mathbb{Z})}^2$ but the discrete advection is only defined for a countable set of values $a \in h\mathbb{Z}$ and cannot naturally be associated with a Hamiltonian.

First, we need to extend the advection for any values $a \in \mathbb{R}$ and then try to identify this extension with the flow of an Hamiltonian.
Then we are going to see that the Hamiltonian of DNLS (see \eqref{def_HDNLS}) is not invariant by this advection, and that the error is driven by aliasing terms.

\subsection{Shannon's advection}

There are   natural ways to define an advection, denoted by  $\tau_a$,  on the grid $h\mathbb{Z}$.  For a given interpolation operator $\mathcal{I}_h : L^2(h\mathbb{Z}) \to L^2(\mathbb{R})$ we can carry the advection on $\mathbb{R}$ to the grid $h\mathbb{Z}$ by making the following diagram  commute 
\begin{equation}
\label{def_Shannon_advection}
\xymatrixcolsep{5pc} \xymatrix{
L^2(h\mathbb{Z}) \ar[d]_{   \mathcal{I}_h  } \ar[r]^{ \tau_a } & L^2(h\mathbb{Z}) \ar[d]_{ \mathcal{I}_h }\\
L^2(\mathbb{R}) \ar[r]^{u \mapsto  u(\, \cdot\, -a)}        & L^2(\mathbb{R)} }
\end{equation}
In general, this construction does not work, as the advection of an interpolation is not necessary an interpolation (see, for example with a finite element interpolation). However, there exists a classical interpolation called \it Shannon interpolation \rm for which this construction can be applied.
Let us define the \it discrete Fourier transform \rm $\mathcal{F}_h$ and Fourier Plancherel transform $\mathscr{F}$
\begin{equation}
\label{def_discrete_FT}
\mathcal{F}_h : \left\{  \begin{array}{llll} L^2(h\mathbb{Z}) & \to & L^2(\mathbb{R} / \frac{2\pi}h \mathbb{Z}) \\
																\ug & \mapsto &\displaystyle \omega \mapsto h\sum_{g \in h\mathbb{Z}} \ug_g e^{ i g \omega}
\end{array} \right. \quad\mbox{and}\quad
\qquad \mathscr{F} : \left\{  \begin{array}{llll} L^2(\mathbb{R}) & \to & L^2(\mathbb{R}) \\
																u & \mapsto & \displaystyle \omega\mapsto \int_{\mathbb{R}} u(x) e^{i x \omega} \dx
\end{array}  \right. 
\end{equation}
where the last integral is defined by extending the operator defined on $L^1(\mathbb{R})\cap L^2(\mathbb{R})$. We also use the notation $\widehat{u} = \mathscr{F} u$. 
The \it Shannon interpolation \rm, denoted  by $\mathcal{I}_h$, is defined through the following diagram
\begin{equation}
\label{def_Shannon_interpolation}
\xymatrixcolsep{5pc} \xymatrix{  L^2(h\mathbb{Z})  \ar[r]^{ \mathcal{F}_h } \ar@/_1pc/[rrr]_{\mathcal{I}_h} & L^2(\mathbb{R} / \frac{2\pi}h \mathbb{Z})  \ar[r]^{ u \mapsto \mathbb{1}_{(-\frac{\pi}h,\frac{\pi}h)} u }  & 
 L^2(\mathbb{R})  \ar[r]^{ \mathscr{F}^{-1} } &  L^2(\mathbb{R}) } .
\end{equation}

With this construction, this interpolation clearly enjoys some useful properties.
\begin{propo} $\mathcal{I}_h$ is an isometry between $L^2(h\mathbb{Z})$ and its image in $L^2(\mathbb{R})$. This image is denoted $BL^2_h$. It is the 
subspace of $L^2(\mathbb{R})$ whose Fourier transform support is a subset of $[-\frac{\pi}h,\frac{\pi}h]$, i.e.
\[ BL^2_h = \{ u \in L^2(\mathbb{R}) \ | \ {\rm Supp } \ \widehat{u} \subset [-\frac{\pi}h,\frac{\pi}h]\}. \]
Moreover, the \it Shannon advection \rm $\tau_a$ is well defined through \eqref{def_Shannon_advection}. \end{propo}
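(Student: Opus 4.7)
The proposition has three parts: (i) $\mathcal{I}_h$ is an isometry, (ii) its image equals the space $BL^2_h$ of $L^2(\mathbb{R})$ functions whose Fourier transform is supported in $[-\pi/h,\pi/h]$, and (iii) the Shannon advection is well defined through \eqref{def_Shannon_advection}. I would prove them in this order by tracking what each of the three arrows in the factorization \eqref{def_Shannon_interpolation} does.

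For (i), the middle arrow $f \mapsto \mathbb{1}_{(-\pi/h,\pi/h)}f$ preserves $L^2$ norms once $L^2(\mathbb{R}/\frac{2\pi}{h}\mathbb{Z})$ is identified with $L^2((-\pi/h,\pi/h))$ via this fundamental domain. For the outer arrows, Plancherel for Fourier series (with the author's normalization of $\mathcal{F}_h$) yields $\|\mathcal{F}_h \mathbf{u}\|_{L^2(\mathbb{R}/\frac{2\pi}{h}\mathbb{Z})}^2 = 2\pi\,\|\mathbf{u}\|_{L^2(h\mathbb{Z})}^2$, while Plancherel on $\mathbb{R}$ yields $\|\mathscr{F}^{-1}f\|_{L^2(\mathbb{R})}^2 = (2\pi)^{-1}\|f\|_{L^2(\mathbb{R})}^2$. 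The two $2\pi$ factors cancel, giving $\|\mathcal{I}_h \mathbf{u}\|_{L^2(\mathbb{R})} = \|\mathbf{u}\|_{L^2(h\mathbb{Z})}$.

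For (ii), the inclusion $\mathcal{I}_h(L^2(h\mathbb{Z})) \subset BL^2_h$ is immediate from the middle step of \eqref{def_Shannon_interpolation}, which forces $\widehat{\mathcal{I}_h\mathbf{u}}$ to vanish outside $[-\pi/h,\pi/h]$. For the reverse inclusion, given $u \in BL^2_h$ I would view $\widehat{u}$, supported in $[-\pi/h,\pi/h]$, as an element of $L^2(\mathbb{R}/\frac{2\pi}{h}\mathbb{Z})$ by trivial periodization (there is no overlap since the support sits strictly inside a fundamental domain). The classical bijectivity of $\mathcal{F}_h$ from $L^2(h\mathbb{Z})$ onto $L^2(\mathbb{R}/\frac{2\pi}{h}\mathbb{Z})$ then provides a preimage $\mathbf{u}$, and unwinding \eqref{def_Shannon_interpolation} gives $\mathcal{I}_h \mathbf{u} = u$.

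For (iii), the spatial translation $u\mapsto u(\cdot-a)$ multiplies $\widehat{u}$ by $e^{ia\omega}$, which preserves the support of $\widehat{u}$ and hence preserves $BL^2_h$. Combined with (ii), this shows that $\mathcal{I}_h\mathbf{u}(\cdot-a)$ lies in $\mathcal{I}_h(L^2(h\mathbb{Z}))$, and the injectivity of $\mathcal{I}_h$ from (i) yields a unique $\tau_a\mathbf{u} \in L^2(h\mathbb{Z})$ making \eqref{def_Shannon_advection} commute. The only point requiring care throughout is the bookkeeping of the $2\pi$ normalization factors between the two Fourier transforms, but this is a routine verification rather than a genuine obstacle.
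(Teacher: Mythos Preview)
Your proposal is correct and follows the same approach as the paper. In fact the paper's proof only addresses part (iii) explicitly---showing that advection preserves the Fourier support and hence $BL^2_h$---while treating (i) and (ii) as immediate consequences of the definition \eqref{def_Shannon_interpolation}; your version simply spells out those two parts in more detail.
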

\begin{proof} We just need to verify that the advection of a Shannon interpolation is an interpolation. So let $u\in BL^2_h$. Since we have
\[ \forall \omega \in \mathbb{R}, \ \quad  \widehat{ u(\cdot-a)}(\omega) = e^{-i\omega a} \widehat{u}(\omega), \]
it is clear that ${\rm Supp } \ \widehat{u(\cdot-a)} = {\rm Supp } \ \widehat{u}$. Consequently, we have proven that $u(\cdot-a) \in BL^2_h$.
\end{proof}

Since Fourier transform support of Shannon interpolations is bounded, $BL_2^h$ functions are very regular functions (they are entire function). Consequently, when we deal with $BL_2^h$ functions we will not justify the algebraic calculations.
 
We now check that this advection  is generated by a Hamiltonian flow. Introducing some formalism, since Shannon interpolation is a $\mathbb{C}$ linear isometry, we prove in the following Lemma that it is a symplectomorphism between $\left( L^2(h\mathbb{Z};\mathbb{C}), \langle i. , . \rangle_{L^2(h\mathbb{Z};\mathbb{C})} \right)$ and $\left( BL^2_h, \langle i. , . \rangle_{L^2(\mathbb{R};\mathbb{C})} \right)$ preserving the Hamiltonian structure. \begin{lem} 
\label{lem_symplecto}
Let $I$ be an open subset of $\mathbb{R}$, $\ug\in C^1(I; L^2(h\mathbb{Z};\mathbb{C}))$ and $H \in C^1(L^2(h\mathbb{Z};\mathbb{C});\mathbb{R})$. Defining $u = \mathcal{I}_h \ug$, the following properties are equivalents
\begin{equation}
\label{stat_1}
\forall t\in I,\quad  \forall \vg \in L^2(h\mathbb{Z};\mathbb{C}), \quad \ \langle i\partial_t \ug(t) , \vg \rangle_{L^2(h\mathbb{Z})} = \diff H(\ug(t))(\vg),
\end{equation}
and
\begin{equation}
\label{stat_2}
 \forall t\in I, \quad \forall v \in BL^2_h,\quad  \ \langle i\partial_t u(t) , v \rangle_{L^2(R)} = \diff (H\circ I_h^{-1})(u(t))(v).  
\end{equation}
\end{lem}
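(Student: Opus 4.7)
The plan is to prove both implications at once by establishing a term-by-term correspondence between the two displayed identities via the isometry $\mathcal{I}_h$. The key point is that $\mathcal{I}_h: L^2(h\mathbb{Z};\mathbb{C}) \to BL^2_h$ is a $\mathbb{C}$-linear bijective isometry, so it transports inner products, commutes with multiplication by $i$, and, being a (topological) linear isomorphism onto $BL^2_h$, commutes with $\partial_t$ (differentiation in the target $L^2(\mathbb{R})$ agrees with the image of differentiation in $L^2(h\mathbb{Z})$) and permits a chain rule for $H \circ \mathcal{I}_h^{-1}$.

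First I would observe that if $\ug \in C^1(I;L^2(h\mathbb{Z};\mathbb{C}))$ then $u = \mathcal{I}_h \ug \in C^1(I; BL^2_h)$ with $\partial_t u = \mathcal{I}_h(\partial_t \ug)$, because $\mathcal{I}_h$ is a bounded $\mathbb{R}$-linear operator; and since $\mathcal{I}_h$ is in fact $\mathbb{C}$-linear, $i\,\partial_t u = \mathcal{I}_h(i \partial_t \ug)$. Next, given any $v \in BL^2_h$, there is a unique $\vg \in L^2(h\mathbb{Z};\mathbb{C})$ with $v = \mathcal{I}_h \vg$. Using the isometry property, the left-hand side of \eqref{stat_2} rewrites as
\[
\langle i\partial_t u(t), v\rangle_{L^2(\mathbb{R})} = \langle \mathcal{I}_h(i\partial_t \ug(t)), \mathcal{I}_h \vg\rangle_{L^2(\mathbb{R})} = \langle i\partial_t \ug(t), \vg\rangle_{L^2(h\mathbb{Z})},
\]
which is the left-hand side of \eqref{stat_1}.

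For the right-hand side, I would apply the chain rule to $H \circ \mathcal{I}_h^{-1}: BL^2_h \to \mathbb{R}$. Since $\mathcal{I}_h^{-1}: BL^2_h \to L^2(h\mathbb{Z};\mathbb{C})$ is $\mathbb{R}$-linear and continuous, its Fr\'echet differential at any point is itself, so
\[
\diff(H \circ \mathcal{I}_h^{-1})(u(t))(v) = \diff H(\mathcal{I}_h^{-1} u(t))\bigl(\mathcal{I}_h^{-1} v\bigr) = \diff H(\ug(t))(\vg).
\]
Combining the two identities shows that \eqref{stat_2} at the test vector $v = \mathcal{I}_h\vg$ is literally the same scalar equation as \eqref{stat_1} at $\vg$. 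Since $\mathcal{I}_h$ is a bijection $L^2(h\mathbb{Z};\mathbb{C}) \to BL^2_h$, quantifying over all $\vg \in L^2(h\mathbb{Z};\mathbb{C})$ is the same as quantifying over all $v \in BL^2_h$, which proves the equivalence in both directions simultaneously.

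There is essentially no obstacle here: the lemma is really a bookkeeping statement that $\mathcal{I}_h$ intertwines the complex structure, the $L^2$ pairing, and differentiation. The only minor point to be careful about is that everything is in the $\mathbb{R}$-Fr\'echet-differential sense defined in the Notations paragraph (so that one does not accidentally invoke $\mathbb{C}$-differentiability of $H$, which is not assumed), but once that is noted the chain rule applies without change because $\mathcal{I}_h^{-1}$ is $\mathbb{R}$-linear and bounded.
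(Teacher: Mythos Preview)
Your proof is correct and follows essentially the same approach as the paper: both use the bijectivity of $\mathcal{I}_h$ to match test vectors, its $\mathbb{C}$-linear isometry property to identify the left-hand sides, and the chain rule for $H\circ\mathcal{I}_h^{-1}$ to identify the right-hand sides. The only cosmetic difference is that the paper writes the isometry step via the adjoint $\mathcal{I}_h^*=\mathcal{I}_h^{-1}$ and proves one implication before noting the converse is identical, whereas you argue both directions at once.
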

\begin{proof}
Assume \eqref{stat_1} and $v\in BL^2_h$. Since $I_h$ is bijective, there exists $\vg \in L^2(h\mathbb{Z};\mathbb{C})$ such that 
$v = \mathcal{I}_h \vg$. So we have
\[ \diff (H\circ I_h^{-1})(u(t))(v) = \diff (H\circ I_h^{-1})(u(t))(\mathcal{I}_h \vg) =  \diff H(\ug (t))( \vg) = \langle i\partial_t \ug(t) , \vg \rangle_{L^2(h\mathbb{Z})}.  \]
However, we have
\[ \langle i\partial_t u(t) , v \rangle_{L^2(R)} = \langle \mathcal{I}_h^* i \mathcal{I}_h\partial_t \ug(t) , \vg \rangle_{L^2(h\mathbb{Z})},\]
where $\mathcal{I}_h^*$ is the adjoint operator of $\mathcal{I}_h$. But $\mathcal{I}_h$ is $\mathbb{C}$ linear so we have
\[  i \mathcal{I}_h\partial_t \ug(t) =  \mathcal{I}_h i\partial_t \ug(t).\]
Furthermore, it is an isometry so we have $\mathcal{I}_h^* = \mathcal{I}_h^{-1}.$ Consequently, we get
\[ \langle i\partial_t u(t) , v \rangle_{L^2(R)} =  \langle i\partial_t \ug(t) , \vg \rangle_{L^2(h\mathbb{Z})}.\]
So we have proven \eqref{stat_2}.
Conversely, we can prove that \eqref{stat_1} is a consequence of \eqref{stat_2} using the same equalities.
\end{proof}

Applying Lemma \ref{lem_symplecto} to identify Shannon advection with a Hamiltonian flow, we just need to identify the canonical advection on $BL^2_h$.
\begin{lem}
\label{lem_momentum} Let $\mathcal{M}:BL^2_h \to \mathbb{R}$ be the momentum defined by
\[ \forall u \in BL^2_h, \quad  \ \mathcal{M}(u) =  \langle i\partial_x u,u \rangle_{L^2(\mathbb{R})}.\]
If $u\in C^1(\mathbb{R};BL^2_h )$ then the following properties are equivalent
\begin{equation}
\label{stat_3}
\forall t\in \mathbb{R},\quad  \ u(t,x) = u(0,x+2t),
\end{equation}
and
\begin{equation}
\label{stat_4}
 \forall t\in \mathbb{R}, \quad \forall v \in BL^2_h, \quad \ \langle i\partial_t u(t) , v \rangle_{L^2(R)} = \diff \mathcal{M}(u(t))(v).  
\end{equation}
\end{lem}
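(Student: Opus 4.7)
The strategy is to reduce the Hamiltonian equation \eqref{stat_4} to a transport equation on $BL^2_h$ by computing the differential of the quadratic momentum $\mathcal{M}$.

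First, since $\mathcal{M}$ is the continuous quadratic form $u\mapsto \langle i\partial_x u, u\rangle_{L^2(\mathbb{R})}$ on $BL^2_h$, expanding $\mathcal{M}(u+v)$ and retaining the linear terms in $v$ gives
\[ \diff\mathcal{M}(u)(v) = \langle i\partial_x v, u\rangle_{L^2(\mathbb{R})} + \langle i\partial_x u, v\rangle_{L^2(\mathbb{R})}. \]
Since $u,v\in BL^2_h$ have Fourier transforms supported in $[-\pi/h,\pi/h]$, the functions $u,v,\partial_x u,\partial_x v$ all belong to $L^2(\mathbb{R})$ and $u\overline{v}\in W^{1,1}(\mathbb{R})$ with vanishing trace at infinity. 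An integration by parts combined with the identities $\overline{\int v\,\overline{\partial_x u}\,\dx}=\int (\partial_x u)\overline{v}\,\dx$ and $\Re(i\,\cdot)=-\Im$ then yields the symmetry $\langle i\partial_x v,u\rangle_{L^2(\mathbb{R})}=\langle i\partial_x u,v\rangle_{L^2(\mathbb{R})}$, hence
\[ \diff\mathcal{M}(u)(v) = 2\,\langle i\partial_x u, v\rangle_{L^2(\mathbb{R})}. \]

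Next, \eqref{stat_4} rewrites, for every $t$ and every $v\in BL^2_h$, as $\langle i(\partial_t u(t) - 2\partial_x u(t)),v\rangle_{L^2(\mathbb{R})}=0$. Setting $f(t):=\partial_t u(t)-2\partial_x u(t)$, one has $f(t)\in BL^2_h$ because $BL^2_h$ is stable under $\partial_x$ and by assumption $\partial_t u(t)\in BL^2_h$. Testing against $v:=if(t)\in BL^2_h$ gives $\|f(t)\|_{L^2(\mathbb{R})}^2=0$, so \eqref{stat_4} is equivalent to the transport equation $\partial_t u=2\partial_x u$ in $BL^2_h$.

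Finally, this transport equation has the unique $C^1$ solution $u(t,x)=u(0,x+2t)$, which remains in $BL^2_h$ since $\mathscr{F}[u(0,\cdot+2t)](\omega)=e^{2it\omega}\widehat{u(0)}(\omega)$ has the same support as $\widehat{u(0)}$. This establishes \eqref{stat_3}, and the converse direction follows at once by differentiating $u(t,x)=u(0,x+2t)$ in $t$ and running the previous computation backwards. The only delicate point is the symmetrization $\langle i\partial_x v,u\rangle_{L^2(\mathbb{R})}=\langle i\partial_x u,v\rangle_{L^2(\mathbb{R})}$ under the real scalar product convention $z_1\cdot z_2=\Re(z_1\overline{z_2})$: the sign produced by integration by parts must cancel the sign produced by the complex conjugation inside $\Re$, which is precisely what occurs.
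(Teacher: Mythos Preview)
Your proof is correct and follows essentially the same approach as the paper: compute $\diff\mathcal{M}(u)(v)=2\langle i\partial_x u,v\rangle_{L^2}$, reduce \eqref{stat_4} to the transport equation $\partial_t u=2\partial_x u$, and integrate. The only cosmetic difference is that the paper substitutes $v\mapsto iv$ at the outset to pass directly to $\langle \partial_t u,v\rangle=2\langle\partial_x u,v\rangle$ and then invokes that $BL^2_h$ is a Hilbert space, whereas you keep the factor of $i$ and test against $v=if(t)$; your added justification of the symmetrization $\langle i\partial_x v,u\rangle=\langle i\partial_x u,v\rangle$ is a welcome detail that the paper leaves implicit.
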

\begin{proof}
Assume \eqref{stat_4} and let $t\in \mathbb{R}$, $v\in BL^2_h$. We have
\[  \langle \partial_t u(t) , v \rangle_{L^2(R)} = \langle i\partial_t u(t) , iv \rangle_{L^2(R)} = \diff \mathcal{M}(u(t))(iv) = 2 \langle i\partial_x u(t) , iv \rangle_{L^2(R)} =  2 \langle \partial_x u(t) , v \rangle_{L^2(R)}.\]
So since $(BL^2_h, \| \cdot \|_{L^2(\mathbb{R})})$ is a Hilbert space, we have
\[ \forall t,x\in \mathbb{R}, \quad  \ \partial_t u(t,x) = 2\partial_x u(t,x) .\]
Consequently, we have $u(t,x) = u(0,x+2t)$. The converse is obvious.
\end{proof}

Applying Lemma \ref{lem_symplecto} and Lemma \ref{lem_momentum}, we deduce that Shannon's advection the flow of the Hamiltonian $-\frac12\mathcal{M}\circ I_h^{-1}$.

\subsection{The aliasing error}

In this subsection, we show that the DNLS Hamiltonian is not invariant by Shannon's advection. We recall some classical properties of Shannon interpolation, see for example \cite{MR3156669} for more details. 

\begin{propo} 
\label{prop_seq_is_rest}
If $\ug \in L^2(h\mathbb{Z})$ then $\mathcal{I}_h \ug_{| \ h \mathbb{Z}} = \ug$. \end{propo}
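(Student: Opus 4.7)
The plan is to unwind the definition of $\mathcal{I}_h$ given by the diagram \eqref{def_Shannon_interpolation} and identify the values of $\mathcal{I}_h \ug$ at grid points with the Fourier coefficients that $\ug$ produces via $\mathcal{F}_h$. Everything reduces to the observation that the two appearances of ``Fourier inversion'' in the definition of $\mathcal{I}_h$ -- Fourier series on the dual torus $\mathbb{R}/\frac{2\pi}{h}\mathbb{Z}$ and the continuous Fourier transform restricted to the band $(-\pi/h,\pi/h)$ -- produce, up to normalisation, the same integral when tested against $e^{-ig\omega}$ for $g \in h\mathbb{Z}$.

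First, I would note that by the very definition of $\mathcal{F}_h$ in \eqref{def_discrete_FT}, the identity $\mathcal{F}_h \ug(\omega) = h \sum_{g \in h\mathbb{Z}} \ug_g \, e^{ig\omega}$ is literally the Fourier series expansion of $\mathcal{F}_h \ug \in L^2(\mathbb{R}/\tfrac{2\pi}{h}\mathbb{Z})$ with respect to the orthogonal family $\{ e^{ig\omega}\}_{g \in h\mathbb{Z}}$. Exploiting the orthogonality relation
\[
 \int_{-\pi/h}^{\pi/h} e^{i(g-g')\omega}\, d\omega \;=\; \frac{2\pi}{h}\,\delta_{g,g'}, \qquad g,g' \in h\mathbb{Z},
\]
one recovers the inversion formula
\[
 \ug_g \;=\; \frac{1}{2\pi}\int_{-\pi/h}^{\pi/h} \mathcal{F}_h \ug(\omega)\, e^{-ig\omega}\, d\omega.
\]

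Next, I would apply $\mathscr{F}^{-1}$ to $\mathbb{1}_{(-\pi/h,\pi/h)}\mathcal{F}_h \ug$. Since this function is compactly supported and lies in $L^1(\mathbb{R}) \cap L^2(\mathbb{R})$, the inverse Fourier transform is represented by the standard convergent integral
\[
 (\mathcal{I}_h \ug)(x) \;=\; \frac{1}{2\pi}\int_{-\pi/h}^{\pi/h} \mathcal{F}_h \ug(\omega)\, e^{-ix\omega}\, d\omega, \qquad x \in \mathbb{R},
\]
and $\mathcal{I}_h \ug$ is continuous (actually entire) in $x$, so pointwise evaluation is meaningful. Specialising to $x = g \in h\mathbb{Z}$ and comparing with the previous display yields exactly $(\mathcal{I}_h \ug)(g) = \ug_g$.

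There is essentially no serious obstacle: the argument is a direct computation. The only minor point requiring care is the legitimation of pointwise evaluation of an \emph{a priori} only $L^2$ function, but this is automatic because $\mathrm{Supp}\, \widehat{\mathcal{I}_h \ug} \subset [-\pi/h, \pi/h]$ is compact, and the interchange between $\mathscr{F}^{-1}$ and the $L^2$-convergent Fourier series of $\mathcal{F}_h \ug$ is justified against the bounded test function $e^{-ig\omega}\mathbb{1}_{(-\pi/h,\pi/h)}$.
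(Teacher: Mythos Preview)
Your argument is correct: you unwind the definition of $\mathcal{I}_h$ and recognise that evaluating the inverse Fourier integral at a grid point $g\in h\mathbb{Z}$ reproduces exactly the Fourier-series inversion formula for $\mathcal{F}_h\ug$, yielding $\ug_g$. The normalisations match, and the pointwise evaluation is justified as you note.

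The paper takes a slightly different packaging of the same computation: instead of writing the inverse Fourier integral and comparing with Fourier-series inversion, it invokes the cardinal-series representation
\[
\mathcal{I}_h\ug(x)=\sum_{g\in h\mathbb{Z}}\ug_g\,\sinc\!\Big(\pi\,\frac{x-g}{h}\Big),
\]
from which the claim follows because $\sinc(\pi k)=\delta_{k,0}$ for $k\in\mathbb{Z}$. This is really the same calculation one step further along (the sinc function is precisely $\mathscr{F}^{-1}$ of $\mathbb{1}_{(-\pi/h,\pi/h)}e^{ig\omega}$, up to normalisation), so neither approach is materially more general or more elementary. Your version has the mild advantage of being entirely self-contained from the diagram \eqref{def_Shannon_interpolation}, without needing to state or justify the convergence of the sinc series separately.
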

This proposition is just a corollary of the following decomposition, where the series converges in $L^{\infty}(\mathbb{R}) \cap L^2(\mathbb{R})$,
\[ \forall x \in \mathbb{R},\quad  \ \mathcal{I}_h \ug(x) =   \sum_{g\in h\mathbb{Z}} \ug_g \sinc(\pi\ \frac{x-g}h).\]
\begin{corol}
\label{corol_strong}
 The Shannon interpolation of $\ug$ is the only function in $ L^2(\mathbb{R})$ with Fourier transform support included in  $[-\frac{\pi}{h},\frac{\pi}h]$ and whose values on $h\mathbb{Z}$ are those of $\ug$.
\end{corol}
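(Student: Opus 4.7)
The statement is a uniqueness claim: any $u \in L^2(\mathbb{R})$ whose Fourier transform is supported in $[-\pi/h,\pi/h]$ and whose restriction to $h\mathbb{Z}$ coincides with $\ug$ must equal $\mathcal{I}_h \ug$. The plan is to reduce this immediately to the two facts already established about Shannon interpolation: first, that $\mathcal{I}_h$ is a bijection from $L^2(h\mathbb{Z})$ onto $BL^2_h$ (this is the content of the proposition preceding the definition \eqref{def_Shannon_interpolation}, which identifies $BL^2_h$ as the image of $\mathcal{I}_h$), and second, that for any sequence $\vg \in L^2(h\mathbb{Z})$ one has $\mathcal{I}_h \vg_{\,|\, h\mathbb{Z}} = \vg$ (Proposition \ref{prop_seq_is_rest}).

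Concretely, I would argue by subtraction. Suppose $u \in L^2(\mathbb{R})$ satisfies $\supp \widehat{u} \subset [-\pi/h, \pi/h]$ and $u_{|h\mathbb{Z}} = \ug$. Then $w := u - \mathcal{I}_h \ug$ lies in $BL^2_h$ (both terms do, and $BL^2_h$ is a linear subspace of $L^2(\mathbb{R})$), and by Proposition \ref{prop_seq_is_rest} its restriction to $h\mathbb{Z}$ vanishes. Since $\mathcal{I}_h : L^2(h\mathbb{Z}) \to BL^2_h$ is surjective, there is some $\wg \in L^2(h\mathbb{Z})$ with $w = \mathcal{I}_h \wg$. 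Applying Proposition \ref{prop_seq_is_rest} once more gives $\wg = w_{|h\mathbb{Z}} = 0$, and therefore $w = \mathcal{I}_h \wg = 0$, i.e.\ $u = \mathcal{I}_h \ug$.

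There is essentially no obstacle here: the corollary is a formal consequence of the bijectivity of $\mathcal{I}_h$ onto $BL^2_h$ together with the sampling identity in Proposition \ref{prop_seq_is_rest}. The only point that deserves a line of comment is why $u \in BL^2_h$ in the first place, but this is precisely the way $BL^2_h$ is defined in the preceding proposition (functions in $L^2(\mathbb{R})$ whose Fourier transform is supported in $[-\pi/h, \pi/h]$), so no additional work is needed.
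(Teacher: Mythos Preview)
Your proof is correct and is exactly the argument the paper has in mind: the corollary is stated without proof in the paper, as an immediate consequence of the bijectivity of $\mathcal{I}_h$ onto $BL^2_h$ and the sampling identity of Proposition~\ref{prop_seq_is_rest}, and your subtraction argument spells this out cleanly.
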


Now, we detail a classical property of Shannon interpolation that is crucial in this paper.
\begin{propo}
\label{prop_per}
 If $u\in H^1(\mathbb{R})$ then $\ug :=u_{| \ h\mathbb{Z}} \in L^2(h\mathbb{Z})$ and for all $\omega \in (-\frac{\pi}h,\frac{\pi}h)$ we have
 \begin{equation}
 \label{aliasing_series}
\widehat{\mathcal{I}_h \ug}(\omega) = \sum_{k \in \mathbb{Z}} \widehat{u}(\omega + \frac{2\pi}h k).  
\end{equation}
 \end{propo}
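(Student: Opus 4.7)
The plan is to proceed in three short steps. First I establish that $\ug := u_{|h\mathbb{Z}}$ lies in $L^2(h\mathbb{Z})$: by the one-dimensional Sobolev embedding $H^1(\mathbb{R}) \hookrightarrow C(\mathbb{R})$, pointwise values of $u$ are well-defined, and writing $u(g) = u(x) - \int_g^x u'(y)\,dy$ and using $(a+b)^2 \leq 2a^2+2b^2$ together with Cauchy--Schwarz yields $h|u(g)|^2 \leq 2\int_g^{g+h}|u|^2 + 2h^2 \int_g^{g+h}|u'|^2$. Summing over $g \in h\mathbb{Z}$ gives the trace bound $\|\ug\|_{L^2(h\mathbb{Z})}^2 \leq 2\|u\|_{L^2(\mathbb{R})}^2 + 2h^2\|u'\|_{L^2(\mathbb{R})}^2$.

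Next I reduce the aliasing identity to a statement on the torus $\mathbb{R}/(2\pi/h)\mathbb{Z}$. By the definition \eqref{def_Shannon_interpolation} of the Shannon interpolation, $\widehat{\mathcal{I}_h\ug}$ is the restriction of $\mathcal{F}_h\ug$ to $(-\pi/h,\pi/h)$ (and zero outside). Hence it suffices to prove $\mathcal{F}_h\ug = V$ in $L^2((-\pi/h,\pi/h))$, where $V(\omega) := \sum_{k\in\mathbb{Z}} \widehat{u}(\omega + 2\pi k/h)$. The $L^2$ meaning of the right-hand side is obtained via Cauchy--Schwarz in the sum over $k$,
\begin{equation*}
|V(\omega)|^2 \leq \Big(\sum_{k\in\mathbb{Z}} \langle \omega + 2\pi k/h\rangle^{-2}\Big)\Big(\sum_{k\in\mathbb{Z}} \langle \omega + 2\pi k/h\rangle^2 |\widehat{u}(\omega + 2\pi k/h)|^2\Big),
\end{equation*}
where the first factor is bounded uniformly in $\omega \in (-\pi/h,\pi/h)$, and integrating the second factor over $(-\pi/h,\pi/h)$ gives $\|u\|_{H^1(\mathbb{R})}^2$ after the obvious change of variables. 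In particular $V \in L^2((-\pi/h,\pi/h))$ and the defining series converges in $L^2$.

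The final step is to match Fourier coefficients on the torus. For any $g \in h\mathbb{Z}$, Fubini (justified by the previous bound, since $\widehat{u} \in L^1$ for $u \in H^1$) combined with the observation $e^{ig \cdot 2\pi k/h} = 1$ yields
\begin{equation*}
\frac{h}{2\pi}\int_{-\pi/h}^{\pi/h} V(\omega)\, e^{-ig\omega}\, d\omega = \frac{h}{2\pi} \sum_{k\in\mathbb{Z}} \int_{-\pi/h + 2\pi k/h}^{\pi/h + 2\pi k/h} \widehat{u}(\omega')\, e^{-ig\omega'}\, d\omega' = \frac{h}{2\pi} \int_{\mathbb{R}} \widehat{u}(\omega)\, e^{-ig\omega}\, d\omega = h\, u(g),
\end{equation*}
the last equality being the Fourier inversion formula. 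These are exactly the Fourier coefficients of $\mathcal{F}_h\ug = h\sum_{g} u(g)e^{ig\omega}$, so $V = \mathcal{F}_h\ug$ in $L^2(\mathbb{R}/(2\pi/h)\mathbb{Z})$ and the claimed aliasing formula follows.

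The only mildly delicate point is the $L^2$-convergence of the aliased sum, which rests on the summability of $\langle \omega + 2\pi k/h\rangle^{-2}$ in $k$; everything else amounts to bookkeeping with Plancherel and Fubini. An alternative would be to establish the formula first for Schwartz $u$ via classical Poisson summation and then extend by density using the trace bound of the first step, but the direct route above avoids any approximation argument.
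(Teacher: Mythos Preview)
Your proof is correct and follows essentially the same route as the paper: both establish $L^2$-convergence of the aliased sum via Cauchy--Schwarz, then compute that the periodization of $\widehat{u}$ recovers $u$ at grid points via the same change-of-variables and $e^{-ig\cdot 2\pi k/h}=1$ trick. You phrase the conclusion as ``$V$ and $\mathcal{F}_h\ug$ have the same torus Fourier coefficients'' while the paper says ``the function with Fourier transform $\mathbb{1}_{(-\pi/h,\pi/h)}V$ agrees with $u$ on $h\mathbb{Z}$, so by uniqueness it is $\mathcal{I}_h\ug$''; these are equivalent formulations of the same computation, and you additionally supply the trace bound $\|\ug\|_{L^2(h\mathbb{Z})}\lesssim \|u\|_{H^1}$ that the paper leaves implicit.
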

\begin{proof} First observe that the series \eqref{aliasing_series} converges in $L^2(-\frac{\pi}h,\frac{\pi}h)$. Indeed, using Cauchy Schwarz inequality, we have
\begin{align*}
 \sum_{k \in \mathbb{Z}\setminus \{0\}} \| \widehat{u}(\omega + \frac{2\pi}h k) \|_{L^2(-\frac{\pi}h,\frac{\pi}h)} &\leq \sum_{k \in \mathbb{Z} \setminus \{0\}}  \| \widehat{\partial_x u}(\omega + \frac{2\pi}h k) \|_{L^2(-\frac{\pi}h,\frac{\pi}h)}  \frac{h}{ |2k-1|\pi} \\
 &\leq  \sqrt{2\pi} \| \partial_x u \|_{L^2(\mathbb{R})}  \sqrt{ \sum_{k \in \mathbb{Z}\setminus \{0\}}  \frac{h^2}{ (2k-1)^2\pi^2} } .
\end{align*}
Now define $v\in BL^2_h$ through its Fourier transform
\[ \widehat{v}(\omega) = \mathbb{1}_{(-\frac{\pi}h,\frac{\pi}h)}  \sum_{k \in \mathbb{Z}} \widehat{u}(\omega + \frac{2\pi}h k).\]
If we prove that the values of $v$ on $h\mathbb{Z}$ are the same as the values of $u$ then we conclude the proof with Corollary \ref{corol_strong}.  Using   inverse Fourier transform formula and continuity of Fourier Plancherel transform, we get for $j\in \mathbb{Z}$,
\begin{align*}
v(hj) &= \frac1{2\pi} \int_{\mathbb{R}} \widehat{v}(\omega) e^{-i\omega h j} {\rm d\omega} = \frac1{2\pi}  \sum_{k \in \mathbb{Z}}  \int_{-\frac{\pi}h- \frac{2\pi}h k}^{\frac{\pi}h-  \frac{2\pi}h k} \widehat{u}(\omega) e^{-i(\omega- \frac{2\pi}h k ) h j} {\rm d\omega} \\
		&=  \frac1{2\pi}  \sum_{k \in \mathbb{Z}}  \int_{-\frac{\pi}h- \frac{2\pi}h k}^{\frac{\pi}h-  \frac{2\pi}h k} \widehat{u}(\omega) e^{-i\omega h j} {\rm d\omega} =  \frac1{2\pi}  \int_{\mathbb{R}}  \widehat{u}(\omega) e^{-i\omega h j}  {\rm d\omega} = u(hj).
\end{align*}
\end{proof}
We now express the DNLS Hamiltonian in terms of Shannon interpolation: 
\begin{lem} For all $\ug \in L^2(h\mathbb{Z})$, let $u=\mathcal{I}_h \ug$,  then we have
\label{lem_HDNLS}
\begin{equation}
\label{lem_eq_HDNLS}
\HDNLS(\ug) = \frac12 \int_{\mathbb{R}}  \left| \frac{u(x+h) - u(x)}h \right|^2 \dx - \frac14 \int_{\mathbb{R}} \left( 1+2\cos(\frac{2\pi x}h)\right)|u(x)|^4 \dx.
\end{equation}
\end{lem}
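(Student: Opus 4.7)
The plan is to prove the two terms of the lemma separately, handling the quadratic (``kinetic'') term via the isometry property of the Shannon interpolation, and the quartic term via Poisson summation combined with the compact Fourier support of powers of $u$.

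\textbf{Kinetic term.} I will first argue that the sequence $(\ug_{g+h} - \ug_g)_{g \in h\mathbb{Z}}$ coincides with the restriction of $u(\cdot + h) - u(\cdot)$ to $h\mathbb{Z}$. Since $u \in BL_h^2$ and the Fourier support is invariant under translation ($\widehat{u(\cdot+h)}(\omega) = e^{i\omega h}\widehat{u}(\omega)$), the function $u(\cdot+h) - u(\cdot)$ still lies in $BL_h^2$. Its values on $h\mathbb{Z}$ are $\ug_{g+h}-\ug_g$ by Proposition~\ref{prop_seq_is_rest}, so by Corollary~\ref{corol_strong} it \emph{is} the Shannon interpolation of $(\ug_{g+h} - \ug_g)_g$. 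Since $\mathcal{I}_h$ is an isometry,
\[
h\sum_{g\in h\mathbb{Z}} |\ug_{g+h}-\ug_g|^2 = \int_{\mathbb{R}} |u(x+h)-u(x)|^2\,\dx,
\]
which after dividing by $2h^2$ yields the first term of \eqref{lem_eq_HDNLS}.

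\textbf{Quartic term.} Set $f := |u|^4$. Because $u \in BL_h^2$ is an entire $L^2$-function, $u \in L^2(\mathbb{R})\cap L^\infty(\mathbb{R})$, hence $f \in L^1(\mathbb{R})\cap L^\infty(\mathbb{R})$. The Fourier transform $\widehat{|u|^2}$ is (up to a constant) the convolution of $\widehat{u}$ and $\widehat{\bar u}$, both supported in $[-\pi/h,\pi/h]$, so $\widehat{|u|^2}$ is supported in $[-2\pi/h,2\pi/h]$; iterating, $\widehat{f}$ is supported in $[-4\pi/h,4\pi/h]$. Applying the Poisson summation formula (the same ingredient used in the proof of Proposition~\ref{prop_per}) gives
\[
h\sum_{g\in h\mathbb{Z}} f(g) = \sum_{k\in\mathbb{Z}} \widehat{f}\!\left(\tfrac{2\pi k}{h}\right).
\]
Only the values $k \in \{-2,-1,0,1,2\}$ can contribute because of the support of $\widehat f$. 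Moreover, $\widehat{f}(\pm 4\pi/h) = 0$: indeed, as a convolution $\widehat{|u|^2}\!*\!\widehat{|u|^2}$ evaluated at $\pm 4\pi/h$, the two factors force the integration variable to lie in the single point $\{\pm 2\pi/h\}$, giving a zero-measure contribution. Using the convention $\widehat{f}(\omega) = \int f(x) e^{ix\omega}\,\dx$ and combining the terms $k = \pm 1$ into a cosine yields
\[
h\sum_{g\in h\mathbb{Z}} |\ug_g|^4 \;=\; \widehat{f}(0) + \widehat{f}\!\left(\tfrac{2\pi}{h}\right) + \widehat{f}\!\left(-\tfrac{2\pi}{h}\right) \;=\; \int_{\mathbb{R}} \bigl(1 + 2\cos(\tfrac{2\pi x}{h})\bigr)|u(x)|^4\,\dx,
\]
which, divided by $4$ and carrying the minus sign, gives the second term.

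\textbf{Main obstacle.} The conceptually easy part is the kinetic term; the essential content is the quartic one, where the delicate point is the justification of the Poisson summation identity for $f = |u|^4$ under the bare regularity $\ug \in L^2(h\mathbb{Z})$, and in particular the vanishing of the would-be aliasing contributions at the endpoints $\pm 4\pi/h$. Since $\widehat{|u|^2}$ is a genuine $L^2$ function with compact support (so its convolution with itself is continuous and the single-point intersection argument applies), this vanishing is clean, and the remaining aliasing terms at $\pm 2\pi/h$ are precisely what produces the obstructive weight $1+2\cos(2\pi x/h)$ that prevents the Hamiltonian from being invariant under Shannon's advection, as announced at the start of Section~\ref{Sec_2}.
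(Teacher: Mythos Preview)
Your proof is correct. The kinetic term is handled exactly as in the paper, via the isometry of $\mathcal{I}_h$. For the quartic term the paper takes a slightly different route: instead of applying Poisson summation to $f=|u|^4$, it writes $h\sum_g |\ug_g|^4 = \langle u,\mathcal{I}_h(|\ug|^2\ug)\rangle_{L^2}$ via the isometry and then applies the aliasing formula of Proposition~\ref{prop_per} to the function $|u|^2u$, whose Fourier support lies in $[-3\pi/h,3\pi/h]$; this leaves only the contributions $k\in\{-1,0,1\}$ directly, with no need for the extra endpoint argument at $\pm 4\pi/h$. Your symmetric approach via $|u|^4$ is perfectly valid---the zero-measure argument for $\widehat{|u|^4}(\pm 4\pi/h)=0$ is clean since $\widehat{|u|^2}\in L^2$ with compact support---but the paper's asymmetric splitting buys one fewer step. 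Both approaches rest on the same Poisson-type identity underlying Proposition~\ref{prop_per}.
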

\begin{proof}
Since the Shannon interpolation $\mathcal{I}_h$ is an isometry between $L^2(h\mathbb{Z};\mathbb{C})$ and $L^2(\mathbb{R};\mathbb{C})$, we have
\[  h\sum_{g \in h\mathbb{Z}}   \left| \frac{\ug_{g+h}- \ug_{g}}h \right|^2  = \int_{\mathbb{R}}  \left| \frac{u(x+h) - u(x)}h \right|^2 \dx.\]
Now we calculate the nonlinear part. First, we use the same argument of isometry to prove that
\begin{equation}
\label{I_HDNLS}
h\sum_{g \in h\mathbb{Z}} |{\ug_g}|^4 = \langle \ug , |\ug|^2 \ug \rangle_{L^2(h\mathbb{Z})} =  \langle u , \mathcal{I}_h(|\ug|^2 \ug) \rangle_{L^2(\mathbb{R})}. 
\end{equation}
But we deduce from Proposition \ref{prop_per} that for $\omega\in \mathbb{R}$
\[  \mathscr{F} \mathcal{I}_h(|\ug|^2 \ug)(\omega)  =  \mathbb{1}_{(-\frac{\pi}h,\frac{\pi}h)}(\omega) \sum_{k \in \mathbb{Z}} \widehat{|u|^2 u}(\omega +\frac{2\pi}h k). \]
However, since $u\in BL^2_h$, we have
\[ \supp \widehat{|u|^2 u} \subset \supp \widehat{u} + \supp \widehat{u}  + \supp \widehat{\bar{u}} \subset \left[ -\frac{3\pi}h,\frac{3\pi}{h} \right].\]
Consequently, if $k \notin  \{-1,0,1\}$ the term in the sum is zero. Furthermore, it is clear that for any $v\in L^2(\mathbb{R})$, $\gamma\in \mathbb{R}$, $ \widehat{v}(\cdot + \gamma) = \widehat{e^{i \gamma x} v}$.
So we have
\[  \mathscr{F} \mathcal{I}_h(|\ug|^2 \ug)(\omega) =  \mathbb{1}_{(-\frac{\pi}h,\frac{\pi}h)}(\omega) \mathscr{F} \left[  \left( 1+2\cos(\frac{2\pi x}h)\right) |u|^2 u \right](\omega).   \]
We conclude by plugging this relation in \eqref{I_HDNLS}. 
\end{proof}

We this Lemma \ref{lem_HDNLS}, we can observe that $\HDNLS$ is not invariant by advection. This default of invariance is due to an inhomogeneity generated by  aliasing errors.

\subsection{The flow of DNLS in the space of the Shannon interpolations}
\label{stop_discrete}
Thanks to Shannon interpolation, we identify functions defined on a grid with functions of $BL_2^h$. We will now see that it is equivalent to consider the flow of DNLS on a grid, or consider the Hamiltonian flow on $BL_2^h$ associated with the Hamiltonian 
\begin{equation}
\label{def_HDNLS_cont}
\forall u \in BL^2_h, \ H_{\rm DNLS}^h(u):=\frac12 \int_{\mathbb{R}}  \left| \frac{u(x+h) - u(x)}h \right|^2 \dx - \frac14 \int_{\mathbb{R}} \left( 1+2\cos(\frac{2\pi x}h)\right)|u(x)|^4 \dx.
\end{equation}
Applying Lemma \ref{lem_symplecto}, we obtain: 
\begin{lem} 
\label{lem_DNLS_BL2}
Let $h>0$, $\ug \in C^1(\mathbb{R}; L^2(\mathbb{R}))$ and $u = \mathcal{I}_h(\ug)$. Then $\ug$ is a solution of DNLS (see \eqref{def_DNLS}) if and only if
\[ \forall t\in \mathbb{R},\quad \forall v\in BL^2_h, \quad \ \langle i \partial_t u(t) , v \rangle_{L^2(\mathbb{R})}  = \diff H_{\rm DNLS}^h(u(t))(v).\]
\end{lem}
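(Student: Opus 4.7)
\begin{pro}[Plan]
The strategy is to reduce the statement to Lemma \ref{lem_symplecto} with the Hamiltonian $H = \HDNLS$ defined in \eqref{def_HDNLS}. Lemma \ref{lem_HDNLS} already provides the identity $H_{\rm DNLS}^h \circ \mathcal{I}_h = \HDNLS$, equivalently $H_{\rm DNLS}^h = \HDNLS \circ \mathcal{I}_h^{-1}$, so once we verify that DNLS itself is the Hamiltonian system associated to $\HDNLS$ on $L^2(h\mathbb{Z};\mathbb{C})$ (with symplectic form $\langle i\cdot,\cdot\rangle_{L^2(h\mathbb{Z})}$), the equivalence claimed in the lemma will follow directly from the symplectomorphism property of $\mathcal{I}_h$ established in Lemma \ref{lem_symplecto}.

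First, I would compute $\diff \HDNLS(\ug)(\vg)$ for $\ug,\vg \in L^2(h\mathbb{Z};\mathbb{C})$. A direct computation using the definition \eqref{def_HDNLS}, together with a discrete summation by parts — which is justified and boundary-term-free since the sums are absolutely convergent for $\ug,\vg\in L^2(h\mathbb{Z})$ (recall that $L^2(h\mathbb{Z})$ is an algebra so $|\ug|^2 \ug\in L^2(h\mathbb{Z})$) — yields
\begin{equation*}
\diff \HDNLS(\ug)(\vg) = - h\sum_{g\in h\mathbb{Z}} (\Delta_h \ug)_g \cdot \vg_g - h\sum_{g\in h\mathbb{Z}} |\ug_g|^2 \ug_g \cdot \vg_g,
\end{equation*}
where $\Delta_h$ is the discrete Laplacian defined in \eqref{def_high_Sobolev}. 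Comparing this with the pointwise form of DNLS \eqref{def_DNLS}, one sees that $\ug \in C^1(\mathbb{R};L^2(h\mathbb{Z}))$ solves \eqref{def_DNLS} if and only if, for every $\vg\in L^2(h\mathbb{Z};\mathbb{C})$,
\begin{equation*}
 \langle i\partial_t \ug(t), \vg \rangle_{L^2(h\mathbb{Z})} = \diff \HDNLS(\ug(t))(\vg),
\end{equation*}
which is precisely the statement \eqref{stat_1} of Lemma \ref{lem_symplecto} applied to $H = \HDNLS$. The converse implication just uses that $\vg$ can be chosen to concentrate on a single grid point (up to a standard $\mathbb{C}$-linearity argument splitting real and imaginary test directions), which recovers the pointwise equation.

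With this identification in hand, the conclusion is immediate: apply Lemma \ref{lem_symplecto} with $H = \HDNLS$, which transforms \eqref{stat_1} into \eqref{stat_2} with $H\circ \mathcal{I}_h^{-1} = H_{\rm DNLS}^h$ by Lemma \ref{lem_HDNLS}. This yields the weak formulation stated in Lemma \ref{lem_DNLS_BL2}, and the equivalence is two-way because Lemma \ref{lem_symplecto} is an equivalence. The only potentially delicate point is checking that $u = \mathcal{I}_h \ug \in C^1(\mathbb{R}; BL^2_h)$ whenever $\ug\in C^1(\mathbb{R};L^2(h\mathbb{Z}))$, but this is clear since $\mathcal{I}_h$ is a bounded linear isometry (so it commutes with $\partial_t$), and there is no genuine obstacle since no $H^1$ regularity is needed to make sense of either side of the equation.
\end{pro}
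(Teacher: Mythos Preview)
Your proof is correct and follows exactly the paper's approach: the paper's proof is the single line ``Applying Lemma \ref{lem_symplecto}, we obtain'', and you have simply spelled out the implicit ingredients (that DNLS is the Hamiltonian flow of $\HDNLS$ on $L^2(h\mathbb{Z})$, and that $H_{\rm DNLS}^h = \HDNLS\circ \mathcal{I}_h^{-1}$ by Lemma \ref{lem_HDNLS}).
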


We conclude with the following result showing that discrete Sobolev norms are equivalent to continuous Sobolev norms on $BL_2^h$:  
\begin{lem}
\label{lem_compare_norms}
Let $\ug\in L^2(h\mathbb{Z})$ and $u=\mathcal{I}_h \ug \in BL^2_h$. Then we have
\[ \frac{2}{\pi} \| u \|_{H^1(\mathbb{R})}  \leq \| \ug \|_{H^1(h\mathbb{Z})} \leq \| u \|_{H^1(\mathbb{R})}. \]
\end{lem}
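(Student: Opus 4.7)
The plan is to move both norms to the Fourier side and reduce the claim to an elementary inequality between the symbol of the discrete derivative and that of the continuous derivative on the Brillouin zone.

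First I would recall (and verify in one line of Plancherel) that for $\ug \in L^2(h\mathbb{Z})$
\[
\|\ug\|_{L^2(h\mathbb{Z})}^2 \;=\; \frac{1}{2\pi}\int_{-\pi/h}^{\pi/h} |\mathcal{F}_h\ug(\omega)|^2\,d\omega,
\]
and that by the very definition of the Shannon interpolation (see \eqref{def_Shannon_interpolation}) one has $\widehat{u}=\mathbb{1}_{(-\pi/h,\pi/h)}\,\mathcal{F}_h\ug$. This immediately gives $\|u\|_{L^2(\mathbb{R})}=\|\ug\|_{L^2(h\mathbb{Z})}$, which is also the isometry property stated earlier.

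Next I would treat the homogeneous $H^1$ seminorm. The backward finite difference $(\ug_g-\ug_{g-h})/h$ has discrete Fourier symbol $(1-e^{ih\omega})/h$, whose modulus squared equals $\frac{4\sin^2(h\omega/2)}{h^2}$. Hence by the same Plancherel identity,
\[
\|\ug\|_{\dot H^1(h\mathbb{Z})}^2 \;=\; \frac{1}{2\pi}\int_{-\pi/h}^{\pi/h}\frac{4\sin^2(h\omega/2)}{h^2}\,|\mathcal{F}_h\ug(\omega)|^2\,d\omega,
\]
while on the continuous side $\|u\|_{\dot H^1(\mathbb{R})}^2 = \frac{1}{2\pi}\int_{-\pi/h}^{\pi/h}\omega^2 |\mathcal{F}_h\ug(\omega)|^2\,d\omega$, since $\widehat{u}$ is supported in $(-\pi/h,\pi/h)$.

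On the Brillouin zone one has $h\omega/2 \in (-\pi/2,\pi/2)$, so the standard inequality $\frac{2|\theta|}{\pi}\leq|\sin\theta|\leq|\theta|$ for $|\theta|\leq\pi/2$, applied with $\theta=h\omega/2$ and then squared, gives pointwise on $(-\pi/h,\pi/h)$
\[
\frac{4}{\pi^2}\,\omega^2 \;\leq\; \frac{4\sin^2(h\omega/2)}{h^2} \;\leq\; \omega^2.
\]
Integrating against $|\mathcal{F}_h\ug(\omega)|^2/(2\pi)$ yields $\frac{4}{\pi^2}\|u\|_{\dot H^1(\mathbb{R})}^2 \leq \|\ug\|_{\dot H^1(h\mathbb{Z})}^2 \leq \|u\|_{\dot H^1(\mathbb{R})}^2$; combining with the $L^2$ isometry and taking square roots gives exactly $\frac{2}{\pi}\|u\|_{H^1(\mathbb{R})}\leq\|\ug\|_{H^1(h\mathbb{Z})}\leq\|u\|_{H^1(\mathbb{R})}$. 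There is no real obstacle here: the only thing to be careful about is the correct normalization of $\mathcal{F}_h$ in the Plancherel identity, since an error there would propagate into a wrong constant.
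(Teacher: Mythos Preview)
Your proof is correct and follows essentially the same route as the paper: both pass to the Fourier side, identify the discrete $\dot H^1$ seminorm with $\frac{1}{2\pi}\int_{-\pi/h}^{\pi/h}\frac{4\sin^2(h\omega/2)}{h^2}|\widehat u(\omega)|^2\,d\omega$, and compare the symbol with $\omega^2$ on the Brillouin zone. The only cosmetic difference is that the paper phrases the pointwise bound as $\sinc^2(\omega h/2)\in[(2/\pi)^2,1]$ while you invoke the equivalent Jordan inequality $\tfrac{2|\theta|}{\pi}\le|\sin\theta|\le|\theta|$ directly.
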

\begin{proof}
By construction, we know that  $\| \ug \|_{L^2(h\mathbb{Z})} = \| u \|_{L^2(\mathbb{R})} $. So we just need to focus on the other part of the $H^1(h\mathbb{Z})$ norm. Indeed, applying Shannon isometry and Fourier Plancherel isometry, we have
\begin{align*}
  \| \ug \|_{\dot{H^1}(h\mathbb{Z})}^2 &= \sum_{g \in h\mathbb{Z}} \left| \frac{ \ug_{g+h} - \ug_g}{h}\right|^2 = \int_{\mathbb{R}} \left| \frac{ u(x+h) - u(x)}{h}\right|^2 \dx 
 												    = \frac1{2\pi}  \int_{\mathbb{R}} \frac{4}{h^2} \sin^2 \left( \frac{\omega h}2\right) |\widehat{u}(\omega)|^2 \domega 
												   \\&  =  \frac1{2\pi}  \int_{-\frac{\pi}h}^{\frac{\pi}h} \sinc^2 \left( \frac{\omega h}2\right) |\omega \widehat{u}(\omega)|^2 \domega 
												    \in  \frac1{2\pi}\int_{-\frac{\pi}h}^{\frac{\pi}h}  |\omega \widehat{u}(\omega)|^2 \domega \ [\sinc^2(\frac{\pi}2),1]  
												    = \| \partial_x u \|_{L^2(\mathbb{R})}^2 \  \left[ \left( \frac2{\pi} \right)^2,1\right].
\end{align*}
\end{proof}
Similarly, we can prove that for high order homogeneous Sobolev norms (see \eqref{def_high_Sobolev}), we have for all $\ug \in L^2(h\mathbb{Z};\mathbb{C})$
and $u = \mathcal{I}_h(\ug)$, 
\begin{equation}
\label{eq_dis_cont_sobolev}
 \left( \frac{2}{\pi} \right)^{n}   \| u \|_{\dot{H}^n(\mathbb{R})} \leq \| \ug \|_{\dot{H}^n(h\mathbb{Z})} \leq \| u \|_{\dot{H}^n(\mathbb{R})}. 
\end{equation}

\section{Traveling waves of the homogeneous Hamiltonian}

In the previous subsection, we have seen that the Hamiltonian of DNLS is not invariant by Shannon's advection. This default of invariance is due to an inhomogeneity generated by an aliasing error (the highly oscillatory terms in \eqref{def_HDNLS_cont}), preventing a faire use of energy-momentum method to get stable traveling waves. Let us introduce the following perturbation of the DNLS Hamiltonian, obtained by  removing these  aliasing terms: 
\begin{equation}
\label{def_HDNLS/AC}
\forall u \in BL^2_h,\quad  \ H_h(u) = \frac12 \int_{\mathbb{R}}  \left| \frac{u(x+h) - u(x)}h \right|^2 \dx - \frac14 \| u\|_{L^4(\mathbb{R})}^4.
\end{equation}
This new Hamiltonian is clearly invariant by gauge and advection transform, and we will be able to apply the energy-momentum method. Moreover, for smooth function, it is very close to the DNLS Hamiltonian.

In the first subsection, we construct, with a perturbative method, critical points of Lagrange functions associated with \eqref{def_HDNLS/AC}. These critical points are the functions $\eta_\xi^h$ of Theorem \ref{Thm_DTW}. They are traveling waves for the dynamic associated to this homogeneous Hamiltonian. In the second subsection, we focus on their regularity and their orbital stability.

In all this section, we only consider speeds $\xi$ in $\Omega$, a relatively compact open subset of $\left\{ \xi \in \mathbb{R}^2 \ | \ \xi_1>\left( \frac{\xi_2}2 \right)^2 \right\}$.

\subsection{Construction of the traveling waves}
\label{sub_31}
 Let us introduce the Lagrange function $\Lag_\xi^h : BL^2_h \to \mathbb{R}$ defined  by
\begin{equation}
\label{def_LDNLS/AC}
\forall u \in BL^2_h, \quad  \ \Lag^{h}_\xi(u) = H_h(u) + \frac{\xi_1}2 \| u \|_{L^2(\mathbb{R})}^2 + \frac{\xi_2}2 \langle i \partial_x   u, u \rangle_{L^2(\mathbb{R})} .
\end{equation}
 We prove in the following lemma that traveling waves generated by $H_h$ are critical points of $\Lag_\xi^h$. 
\begin{propo}
\label{prop_what_TW_DNLSA}
Let $\xi \in \mathbb{R}^2$, $h>0$ and $u \in C^1(\mathbb{R};BL^2_h)$ be such that
\[ \forall t\in \mathbb{R},\quad \forall x\in \mathbb{R},\quad  \ u(t,x) = e^{i\xi_1 t} u(0,x-\xi_2t). \]
Then the following properties are equivalents
\begin{equation}
\label{stat5}
\forall t\in \mathbb{R},\quad  \forall v\in BL^2_h,\quad  \ \langle i\partial_t u(t),v \rangle_{L^2(\mathbb{R})} = \diff H_h(u(t))(v), 
\end{equation}
and
\begin{equation}
\label{stat6}
\diff \Lag_\xi^h(u(0)) =0. 
\end{equation}
\end{propo}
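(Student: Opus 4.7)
The plan is to reduce everything to an identity at time $t=0$ by exploiting the gauge and advection invariance of $H_h$, and then to observe that the identity at $t=0$ is nothing but $d\Lag_\xi^h(u(0))=0$. Concretely, the strategy has three steps: (i) compute $i\partial_t u(t)$ explicitly from the ansatz; (ii) compute $d\Lag_\xi^h(u(0))$ and match it with $i\partial_t u(0)$; (iii) transport the identity at $t=0$ to arbitrary $t$.

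For step (i), differentiating $u(t,x)=e^{i\xi_1 t}u(0,x-\xi_2 t)$ in time and multiplying by $i$ gives
\[
i\partial_t u(t) \;=\; G_t\bigl(-\xi_1\, u(0) \,-\, i\xi_2\, \partial_x u(0)\bigr),
\]
where $G_t w := e^{i\xi_1 t}\,w(\cdot-\xi_2 t)$. The map $G_t$ is an $\mathbb R$-linear isometry of $BL^2_h$ with inverse $G_{-t}$: the factor $e^{i\xi_1 t}$ is a mere global phase, so it does not alter the Fourier support, and the advection preserves $BL^2_h$ by construction of Shannon's advection. At $t=0$ this yields $i\partial_t u(0)=-\xi_1 u(0)-i\xi_2\partial_x u(0)$.

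For step (ii), I compute the Fréchet differential of $\Lag_\xi^h$ with respect to the real pairing "$\cdot$". Using that $i\partial_x$ is symmetric for this real scalar product (an integration by parts, exactly as used in Lemma \ref{lem_momentum}), one finds, for any $v\in BL^2_h$,
\[
d\Lag_\xi^h(u(0))(v) \;=\; dH_h(u(0))(v) \;+\; \xi_1\,\langle u(0), v\rangle_{L^2(\mathbb R)} \;+\; \xi_2\,\langle i\partial_x u(0), v\rangle_{L^2(\mathbb R)}.
\]
Substituting the formula for $i\partial_t u(0)$ obtained in step (i), this rewrites as $d\Lag_\xi^h(u(0))(v)=dH_h(u(0))(v)-\langle i\partial_t u(0),v\rangle_{L^2(\mathbb R)}$, so that \eqref{stat6} is literally \eqref{stat5} evaluated at $t=0$.

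Step (iii) is the transfer of the identity in time. Both terms in $H_h$ (see \eqref{def_HDNLS/AC}) are manifestly invariant under gauge transform and advection, hence $H_h\circ G_t=H_h$. Since $G_t$ is $\mathbb R$-linear, the chain rule gives $dH_h(u(t))(v)=dH_h(u(0))(G_{-t}v)$. Combining with the $G_t$-covariance of step (i) and the isometry property $\langle G_t w,v\rangle = \langle w,G_{-t}v\rangle$, condition \eqref{stat5} at time $t$ tested against $v$ becomes condition \eqref{stat5} at time $0$ tested against $w:=G_{-t}v$; since $v\mapsto G_{-t}v$ is a bijection of $BL^2_h$, the two are equivalent. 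The reverse implication (from "for all $t$" back to $t=0$) is obvious. No step is really an obstacle; the only point requiring care is to keep in mind the convention from the notation section, namely that Fréchet differentials are $\mathbb R$-linear for the real pairing, so $i\partial_x$ plays the role of a real-symmetric operator rather than a complex-antisymmetric one.
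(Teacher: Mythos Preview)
Your proof is correct and follows essentially the same approach as the paper: compute $\partial_t u$ from the ansatz, identify the extra terms as the differential of the mass and momentum pieces of $\Lag_\xi^h$, and then use the invariance under gauge transform and advection to pass between $t=0$ and general $t$. The only cosmetic difference is that the paper transports the condition $\diff\Lag_\xi^h(u(t))=0$ using the invariance of the full Lagrangian $\Lag_\xi^h$, whereas you transport \eqref{stat5} directly using the invariance of $H_h$ together with the explicit $G_t$-covariance of $i\partial_t u$; this amounts to the same argument.
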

\begin{proof} By a straightforward calculation, we have, for all $t,x\in \mathbb{R}$, 
\[ \partial_t u(t,x) =  i \xi_1 u(t,x) - \xi_2 \partial_x u(t,x) .\]
Consequently, testing this relation against $v\in BL^2_h$, we get for all $t,x\in \mathbb{R}$, 
\[  \langle i\partial_t u(t),v \rangle_{L^2(\mathbb{R})} = - \diff \left( \frac{\xi_1}2 \| \cdot \|_{L^2(\mathbb{R})}^2 + \frac{\xi_2}2 \langle i \partial_x   \cdot \, , \cdot \rangle_{L^2(\mathbb{R})} \right) (u(t))(v) . \]
So \eqref{stat5} is clearly equivalent to
\begin{equation}
 \forall t\in \mathbb{R}, \  \diff \Lag_\xi^h(u(t)) =0.
\end{equation}
In particular \eqref{stat5} $\Rightarrow$  \eqref{stat6} is obvious.

Conversely, to prove  \eqref{stat6} $\Rightarrow$ \eqref{stat5}, we just need to prove that if $u_0\in BL^2_h$ is a critical point of $\Lag_\xi^h$ and $\gamma,\xO\in \mathbb{R}$ then $e^{i\gamma}u_0(.-\xO)$ is also a critical point of $\Lag_\xi^h$. 
Define $T_{\gamma,\xO} : BL^2_h \to BL^2_h$ by
\[ \forall v\in BL^2_h, \quad \  T_{\gamma,\xO} v =e^{i\gamma} v(.-\xO). \]
Since $\Lag_\xi^h$ is invariant by gauge transform and advection, we have
\[ \forall v\in BL^2_h, \quad \  \Lag_\xi^h(T_{\gamma,\xO} v) = \Lag_\xi^h(v). \]
 Calculating the derivative with respect to $v$ in $u_0$, we get
\[ \forall v\in BL^2_h, \ \quad \diff \Lag_\xi^h(T_{\gamma,\xO} u_0)(T_{\gamma,\xO} v) = \diff \Lag_\xi^h(u_0)(v)=0. \]
Since $T_{\gamma,\xO}$ is an invertible operator on $BL^2_h$ (because $T_{\gamma,\xO}^{-1} = T_{-\gamma,-\xO}$), $T_{\gamma,\xO} u_0$ is also a critical point $\Lag_\xi^h$.
\end{proof}
In the following Theorem, we construct critical points of the Lagrange functions $\Lag_\xi^h$ as perturbations of the continuous traveling waves $\psi_\xi$ embedded in $BL^2_h$. 
\begin{theo}
\label{thm_ExTW}
There exist $h_0,C,\rho,\alpha>0$ such that for all $h<h_0$ and for all $\xi\in \Omega$, there exists $\eta_\xi^h \in BL^2_h$ satisfying
\begin{enumerate}[a)]
\item $\diff \Lag_\xi^h(\eta_\xi^h)=0,$
\item $\| \eta_\xi^h - \psi_\xi \|_{H^1(\mathbb{R})} \leq C h^2$,
\item $\forall x\in \mathbb{R}, \ \overline{\eta_\xi^h}(-x) = \eta_\xi^h(x)$,
\item if $u \in BL^2_h$ is such that $\| u - \eta_\xi^h \|_{H^1(\mathbb{R})}<\rho$, $ \overline{u}(-x) = u(x)$ for all $x\in \mathbb{R}$ and $\diff \Lag_\xi^h(u)=0$ then $u= \eta_\xi^h$,
\item if  $v\in BL^2_h \cap \Span( \eta_\xi^h,i\eta_\xi^h,\partial_x \eta_\xi^h )^{\perp_{L^2}}$, then we have
\[  \diff^2 \Lag_\xi^h(\eta_\xi^h)(v,v) \geq \alpha \|v\|_{H^1(\mathbb{R})}^2.\]
Furthermore, $\xi \mapsto \eta_\xi^h$ is $C^1$ and for all $h<h_0$, for all $\xi \in \Omega$, we have
\[ \forall \zeta \in \mathbb{R}^2, \quad \ \| \diff_{\xi} \eta_\xi^h(\xi)(\zeta) - \diff_{\xi} \psi_\xi(\xi)(\zeta)  \|_{H^1(\mathbb{R})} \leq C |\zeta| h^2 .\]
\end{enumerate}
\end{theo}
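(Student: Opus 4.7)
The plan is to solve $\diff \Lag_\xi^h(u) = 0$ by an implicit function theorem / Newton iteration restricted to the symmetric subspace $E_h := \{ u \in BL^2_h \,:\, \overline{u}(-\cdot) = u \}$. Restricting to $E_h$ is the key device for killing the two symmetries (gauge and translation) of $\Lag_\xi^h$: both zero modes $i\psi_\xi$ and $\partial_x \psi_\xi$ of the continuous Hessian at $\psi_\xi$ are antisymmetric (as direct computation from $\overline{\psi_\xi}(-\cdot) = \psi_\xi$ shows), while $\psi_\xi$ itself is symmetric. Since $\Lag_\xi^h$ is invariant under the involution $u \mapsto \overline{u}(-\cdot)$, any critical point lying in $E_h$ is automatically a critical point on all of $BL^2_h$: the gradient lies in the symmetric eigenspace, so its vanishing on $E_h$ forces its vanishing on $E_h^\perp$ as well.

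First I replace $\psi_\xi$ by $\widetilde\psi_\xi^h := \Pi_h \psi_\xi \in BL^2_h \cap E_h$, where $\Pi_h$ denotes the $L^2$-orthogonal projection onto $BL^2_h$ given by Fourier truncation to $[-\pi/h,\pi/h]$. Since $\psi_\xi$ is analytic with $\widehat{\psi_\xi}$ decaying exponentially, $\| \widetilde\psi_\xi^h - \psi_\xi \|_{H^k(\mathbb{R})} \le C_k e^{-\ell_0/h}$ for every $k$, uniformly for $\xi \in \Omega$. Writing out the Euler-Lagrange equation for $\Lag_\xi^h$ as $-\Delta_h u - \Pi_h(|u|^2u) + \xi_1 u + i \xi_2 \partial_x u = 0$ in $BL^2_h$, and combining $\Delta_h \phi = \partial_x^2 \phi + O(h^2)$ pointwise for smooth $\phi$ with the continuous stationary identity $-\partial_x^2 \psi_\xi - |\psi_\xi|^2 \psi_\xi + \xi_1 \psi_\xi + i \xi_2 \partial_x \psi_\xi = 0$, I obtain the consistency estimate
\[
  \| \diff \Lag_\xi^h(\widetilde\psi_\xi^h) \|_{H^{-1}(\mathbb{R})} \le C h^2,
\]
uniformly in $\xi \in \Omega$.

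Next I compare the Hessian $\diff^2 \Lag_\xi^h(\widetilde\psi_\xi^h)$ with its continuous counterpart $\diff^2 \Lag_\xi(\psi_\xi)$, which by classical Weinstein theory is a self-adjoint operator on $H^1(\mathbb{R})$ with kernel exactly $\Span(i\psi_\xi,\partial_x\psi_\xi) \subset E^\perp$, one negative direction $\psi_\xi$, and a positive spectral gap on $\Span(\psi_\xi, i\psi_\xi, \partial_x\psi_\xi)^{\perp_{L^2}}$. Restricted to $E \cap H^1$, the kernel is therefore trivial and the Hessian is an isomorphism onto $E \cap H^{-1}$. A Fourier-side estimate shows that replacing $\partial_x^2$ by $\Delta_h$ and $\psi_\xi$ by $\widetilde\psi_\xi^h$ perturbs the quadratic form by $O(h^2)$ in $H^1 \to H^{-1}$ operator norm, uniformly in $\xi \in \Omega$, so $\diff^2 \Lag_\xi^h(\widetilde\psi_\xi^h)$ is invertible on $E_h \cap H^1$ with uniformly bounded inverse. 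A quantitative Newton iteration starting from $\widetilde\psi_\xi^h$ then produces $\eta_\xi^h \in E_h$ satisfying $\diff \Lag_\xi^h(\eta_\xi^h) = 0$ with $\| \eta_\xi^h - \widetilde\psi_\xi^h \|_{H^1} \le C h^2$, unique inside a fixed $H^1$-ball of radius $\rho$ in $E_h$. Combined with the exponential bound on $\widetilde\psi_\xi^h - \psi_\xi$, this yields (b); (a), (c), (d) follow by construction, after upgrading $E_h$-critical to $BL^2_h$-critical via the symmetry argument of the first paragraph.

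Property (e) is a direct perturbation: Weinstein's coercivity on $\Span(\psi_\xi, i\psi_\xi, \partial_x \psi_\xi)^{\perp_{L^2}}$ with constant $\alpha_0 > 0$ transfers to $\diff^2 \Lag_\xi^h(\eta_\xi^h)$ modulo $O(h^2)$ perturbations of both the bilinear form and the three orthogonality directions, giving the claim with $\alpha = \alpha_0/2$ for $h$ small. The $C^1$ dependence on $\xi$ is built into the implicit function theorem with uniformly invertible Hessian, and the estimate on $\diff_\xi \eta_\xi^h - \diff_\xi \psi_\xi$ follows by differentiating the fixed-point equation in $\xi$ and applying the same $O(h^2)$ consistency bound to the $\xi$-derivative of the source. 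The hardest step will be the quantitative uniform-in-$h$ coercivity: one must rule out spurious near-zero modes of $\diff^2 \Lag_\xi^h(\widetilde\psi_\xi^h)|_{E_h}$ as $h \to 0$, which I plan to handle via a resolvent identity using the $H^1 \to H^{-1}$ smallness of the discrete-vs-continuous perturbation against the continuous spectral gap on the symmetric subspace.
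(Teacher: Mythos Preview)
Your overall strategy---restrict to the symmetric subspace $E_h$ to eliminate the two zero modes, take the Fourier-truncated soliton $\widetilde\psi_\xi^h = \Pi_h\psi_\xi$ as approximate solution, and run a quantitative inverse function theorem---is exactly the paper's approach, and the consistency estimate $\|\diff\Lag_\xi^h(\widetilde\psi_\xi^h)\|_{H^{-1}} \le Ch^2$ is correct.

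However, your invertibility step contains a genuine error. You assert that replacing $\partial_x^2$ by $\Delta_h$ perturbs the quadratic form by $O(h^2)$ in $H^1 \to H^{-1}$ operator norm on $BL^2_h$. This is false: the Fourier symbol of the difference is $\omega^2\bigl(1 - \sinc^2(\omega h/2)\bigr)$, and for $|\omega|$ near $\pi/h$ the factor $1 - \sinc^2(\omega h/2)$ is close to $1 - 4/\pi^2 \approx 0.59$, not small. Hence the perturbation is $O(1)$ in this norm, uniformly in $h$, and neither a resolvent identity nor a Neumann series based on this smallness can work. The same obstruction blocks your proposed proof of property~(e).

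The paper (Lemma~\ref{lem_cor_psi}) resolves this with a low/high frequency decomposition $v = v_\ell + v_b$ at a fixed cutoff $\omega_0$ independent of $h$. On $v_\ell$ the discrete-versus-continuous perturbation \emph{is} small (since $|\omega h|$ stays bounded) and Weinstein's coercivity transfers directly. On $v_b$ one does not compare to the continuous operator at all: instead one uses that the discrete kinetic symbol satisfies $\frac{4}{h^2}\sin^2(\omega h/2) \ge \sinc^2(\theta)\,\omega^2$ on $(-\pi/h,\pi/h)$, which already dominates the potential and momentum terms because the Poincar\'e-type bound $\|v_b\|_{L^2} \le \omega_0^{-1}\|\partial_x v_b\|_{L^2}$ makes those lower-order contributions negligible once $\omega_0$ is large. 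The cross term $\diff^2\Lag_\xi^h(\psi_\xi^h)(v_\ell,v_b)$ reduces to the nonlinear part only (the quadratic symbols decouple on disjoint frequency supports) and is again controlled by $\omega_0^{-1}$. This splitting is the missing ingredient in your proposal; once you insert it, the rest of your outline goes through essentially as written.
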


The remainder of this section is devoted to the proof of this Theorem.  It is divided in three steps. 
 The idea of the proof is to apply, for each value of $\xi$, the inverse function Theorem to solve $\diff \Lag_\xi^h(u)=0$. We give an adapted version of this result, see Theorem \ref{Thm_Inv_loc}, proven in Appendix. 
Moreover, we have to pay attention to symmetries and establish estimates uniform with respect to $\xi \in \Omega$ and $h$ small enough.

\underline{Step 1: Identify the function to invert}

First, we need a point around which apply the inverse function Theorem. To do this, we consider the orthogonal projection of the continuous traveling wave $\psi_\xi$ on $BL^2_h$ (for the $L^2(\mathbb{R})$ norm) denoted by $\psi_\xi^h$. Using Fourier Plancherel transform we observe that $\psi^h_\xi$ and $\psi_\xi$ are linked by their Fourier transform through the relation
\begin{equation}
\label{ortho_proj}
 \widehat{\psi^h_\xi} = \mathbb{1}_{(-\frac{\pi}h,\frac{\pi}h)} \widehat{\psi_\xi}.
\end{equation}
Sometimes it is useful to extend this notation for $h=0$ with $\psi^0_\xi =\psi_\xi $.

Now, we have to take care about the symmetries of the problem. Indeed, since the set of the critical points of $\Lag_\xi^h$ is stable under advection and gauge transform, we expect that the differential of $\diff \Lag_\xi^h$ is not invertible in this critical point. However, there is a classical trick to avoid the problem generated by these symmetries. To explain this trick we need to introduce an operator on $BL^2_h$
\[  S_h : \left\{ \begin{array}{cccc} BL_h^2 & \to & BL_h^2 \\
												u & \mapsto & (x\mapsto\overline{u(-x)}).
\end{array}    \right. \]
This symmetry is natural for our problem because $ \Lag_\xi^h$ is invariant under its action.
\begin{lem}
\label{lem_IS}
For all $h>0$, for all $\xi\in \mathbb{R}^2$, for all $u\in BL^2_h$, we have
\[ \Lag_\xi^h(S_h(u)) = \Lag_\xi^h(u). \]
\end{lem}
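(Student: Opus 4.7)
The plan is to verify, term by term, that each of the three summands defining $\Lag_\xi^h(u)$ is invariant under $S_h$. The key observation is that $S_h$ decomposes as reflection $u(x)\mapsto u(-x)$ followed by pointwise complex conjugation, both of which are $\mathbb{R}$-linear isometries of $L^2(\mathbb{R};\mathbb{C})$ whose Fourier-side actions are easy to describe.

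I would first dispatch the pure-norm parts: since $|S_h u(x)| = |u(-x)|$, the change of variable $y = -x$ immediately yields $\|S_h u\|_{L^p(\mathbb{R})} = \|u\|_{L^p(\mathbb{R})}$ for $p\in\{2,4\}$, covering both the $\frac{\xi_1}{2}\|\cdot\|_{L^2}^2$ term and the quartic part of $H_h$. For the finite difference contribution, I would use the algebraic identity
\[ S_h u(x+h) - S_h u(x) = \overline{u(-x-h) - u(-x)} \]
together with the substitution $y = -x-h$ to obtain
\[ \int_{\mathbb{R}} \left|\frac{S_h u(x+h) - S_h u(x)}{h}\right|^2 dx = \int_{\mathbb{R}} \left|\frac{u(y+h) - u(y)}{h}\right|^2 dy, \]
which gives $H_h(S_h u) = H_h(u)$.

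The momentum term $\frac{\xi_2}{2}\langle i\partial_x u, u\rangle_{L^2(\mathbb{R})}$ is the main subtlety, because a reflection alone reverses the sign of the momentum; what saves us is that the conjugation hidden in $S_h$ reverses it a second time. I would present this either by a direct calculation using $\partial_x (S_h u)(x) = -\overline{u'(-x)}$, the convention $z_1\cdot z_2 = \Re(z_1\overline{z_2})$, and the identity
\[ \bigl(i\partial_x S_h u(x)\bigr)\cdot S_h u(x) = \Re\bigl(-i\,\overline{u'(-x)}\,u(-x)\bigr) = \Im\bigl(\overline{u'(-x)}\,u(-x)\bigr), \]
followed by the substitution $y = -x$, or, more conceptually, by observing that $\widehat{S_h u}(\omega) = \overline{\widehat{u}(\omega)}$. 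The latter identity both makes the inclusion $S_h(BL^2_h) \subset BL^2_h$ immediate (the support of $\widehat{S_h u}$ is the reflection of the support of $\widehat{u}$) and yields the momentum invariance via Plancherel, since $\langle i\partial_x v, v\rangle_{L^2} = \int \omega |\widehat{v}(\omega)|^2 \frac{d\omega}{2\pi}$ depends only on $|\widehat{v}|^2$. The only real obstacle is sign bookkeeping in the direct approach; the Fourier viewpoint makes the sign cancellation transparent.
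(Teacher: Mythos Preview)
Your proposal is correct and is precisely the ``straightforward calculation'' the paper alludes to without writing out; the paper's own proof consists of the single sentence ``It can be proven by a straightforward calculation.'' Your term-by-term verification, including the observation that $\widehat{S_h u}(\omega)=\overline{\widehat{u}(\omega)}$ to handle the momentum term cleanly, is exactly the intended argument made explicit.
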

\begin{proof}
It can be proven by a straightforward calculation.
\end{proof}
This operator induces a decomposition of $BL^2_h$ very well adapted to our problem
\[  BL_h^2 = {\rm Ker}(\id - S_h) \oplus  {\rm Ker}(\id + S_h).   \]
This decomposition is also a topological decomposition because these subspaces are closed for the $\| \cdot  \|_{H^1(\mathbb{R})}$ norm. In all the paper, these spaces are always implicitly equipped with this norm.

The continuous traveling waves is invariant under this symmetry. Indeed, we can verify (see \eqref{def_CTW}) that
\[ \forall x\in \mathbb{R}, \quad \ \overline{\psi_\xi(-x)}= \psi_\xi(x). \]
Consequently, we expect $\eta_\xi^h$ to be invariant under the action of $S_h$. 
The space ${\rm Ker}(\id - S_h)$ is not invariant under advection or gauge transform, so we avoid the previous difficulty. Moreover, we have the following result
\begin{lem}
\label{lem_invsym}
For all $h>0$, for all $\xi\in \mathbb{R}^2$, for all $u\in {\rm Ker}(\id - S_h)$, for all $v\in {\rm Ker}(\id + S_h)$, we have
\[ \diff \Lag_\xi^h(u)(v) =0.\]
\end{lem}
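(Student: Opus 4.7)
The plan is to exploit the invariance stated in Lemma \ref{lem_IS} by differentiating it and then using the eigenspace decomposition induced by the involution $S_h$. The key subtlety to keep in mind is that, as stressed in the Notations, all Fréchet differentials are considered $\mathbb{R}$-linear (this is essential because $S_h$ involves complex conjugation and is only $\mathbb{R}$-linear, not $\mathbb{C}$-linear).

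First I would observe that $S_h$ is an $\mathbb{R}$-linear continuous involution on $BL_h^2$, so $dS_h(u) = S_h$ at every point $u$. Applying the chain rule to the identity $\mathcal{L}_\xi^h \circ S_h = \mathcal{L}_\xi^h$ given by Lemma \ref{lem_IS} yields, for every $u, w \in BL_h^2$,
\begin{equation*}
d\mathcal{L}_\xi^h(S_h u)(S_h w) = d\mathcal{L}_\xi^h(u)(w).
\end{equation*}

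Now let $u \in \mathrm{Ker}(\id - S_h)$ and $v \in \mathrm{Ker}(\id + S_h)$, so that $S_h u = u$ and $S_h v = -v$. Plugging $w = v$ into the identity above gives
\begin{equation*}
d\mathcal{L}_\xi^h(u)(-v) = d\mathcal{L}_\xi^h(u)(v).
\end{equation*}
By $\mathbb{R}$-linearity of the differential in the direction, the left-hand side equals $-d\mathcal{L}_\xi^h(u)(v)$. Hence $2\, d\mathcal{L}_\xi^h(u)(v) = 0$, which gives the claim.

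There is really no obstacle here: the whole argument reduces to differentiating the symmetry identity and combining the two eigenvalue equations. The only place where one could slip up is forgetting that $S_h$ is not $\mathbb{C}$-linear, which would make the chain rule look ill-defined; working with $\mathbb{R}$-linear Fréchet differentials, as prescribed in the Notations, removes this concern entirely.
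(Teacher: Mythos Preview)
Your proof is correct and follows essentially the same approach as the paper: both differentiate the invariance identity of Lemma \ref{lem_IS} and then use that $S_h u = u$ and $S_h v = -v$ to conclude $\diff \Lag_\xi^h(u)(v) = -\diff \Lag_\xi^h(u)(v)$. The only cosmetic difference is that the paper writes the invariance as $\Lag_\xi^h(u+v) = \Lag_\xi^h(u-v)$ before differentiating in $v$, whereas you apply the chain rule to $\Lag_\xi^h \circ S_h = \Lag_\xi^h$ directly.
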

\begin{proof}
Applying Lemma \ref{lem_IS}, we get
\[  \Lag_\xi^h(u-v) = \Lag_\xi^h(u+v). \]
Then, if we compute the derivative with respect to $v\in  {\rm Ker}(\id + S_h)$, we get
\[ \diff \Lag_\xi^h(u)(v) = - \diff \Lag_\xi^h(u)(v).\]
\end{proof}
With this lemma, we see that   a critical point of $\diff \Lag^h_{\xi \ |  {\rm Ker}(\id - S_h)}$ is a critical point of $\Lag^h_{\xi}$.
Hence we will apply  the inverse function Theorem \ref{Thm_Inv_loc} in the point $\psi_\xi^h$ which is in ${\rm Ker}(\id - S_h)$ (it is a straightforward calculation), and to the function $\diff \Lag^h_{\xi \ |  {\rm Ker}(\id - S_h)}$. 


 \underline{Step 2: Invertibility of the derivative}

 Now, we want to prove that $\diff^2 \Lag^h_{\xi \ |  {\rm Ker}(\id - S_h)}(\psi_\xi^h)$ is invertible and to estimate the norm of its invert uniformly with respect to $\xi \in \Omega$ and $h$ small enough. The strategy of the proof is to establish that $\diff^2 \Lag_\xi^h( \psi_\xi^h)$ is negative in the direction of $\psi_\xi^h$ and positive in the direction $L^2$-orthogonal to $\psi_\xi^h$ in ${\rm Ker}(\id - S_h)$. Then it will be possible to conclude using a classical lemma of functional analysis (see Lemma \ref{lem_exRiesz} ).

We are going to establish most of our estimates from the continuous limit. So we need to introduce the continuous Lagrange function associated to NLS, defined on $H^1(\mathbb{R})$ by
\[ \Lag_\xi(u) = \frac1{2}\|\partial_x u\|_{L^2(\mathbb{R})}^2 -\frac14 \| u\|_{L^4(\mathbb{R})}^4 + \frac{\xi_1}2 \|u\|_{L^2(\mathbb{R})}^2 + \frac{\xi_2}2 \langle i\partial_x u,u \rangle_{L^2(\mathbb{R})}. \]
Of course, as expected, we can verify that $\psi_\xi$ is a critical point of $\Lag_\xi$. We will have to  compare precisely $\psi_\xi^h$ and $\psi_\xi$. So we need a precise control of the regularity of $\psi_\xi$.
\begin{lem} 
\label{reg_CTW}
There exist $C>0$ and $\varepsilon>0$ such that for all $\xi\in \Omega$ and all $\omega \in \mathbb{R}$
\[ |\widehat{\psi_\xi}(\omega)|\leq C e^{- \varepsilon |\omega|}. \]
\end{lem}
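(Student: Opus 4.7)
The plan is to compute $\widehat{\psi_\xi}$ in closed form, which will reveal an exponential decay whose rate depends only on $\xi$ through $m_\xi$ and $\xi_2$, and then use the compactness hypothesis on $\Omega$ to make the rate uniform.

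I would start from the explicit formula \eqref{def_CTW}, writing $\psi_\xi(x) = \sqrt{2}\,m_\xi\,e^{i\xi_2 x/2}/\cosh(m_\xi x)$. Using the classical identity
\[ \int_{\mathbb{R}} \frac{e^{i\omega y}}{\cosh y}\,dy \;=\; \frac{\pi}{\cosh(\pi\omega/2)}, \]
together with the scaling rule and the modulation property of the Fourier Plancherel transform defined in \eqref{def_discrete_FT}, one obtains the closed form
\[ \widehat{\psi_\xi}(\omega) \;=\; \frac{\sqrt{2}\,\pi}{\cosh\!\bigl(\pi(\omega + \xi_2/2)/(2m_\xi)\bigr)}. \]

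Next, I would invoke the elementary inequality $\cosh(z) \geq \tfrac12 e^{|z|}$, valid for every $z\in\mathbb{R}$, which yields
\[ |\widehat{\psi_\xi}(\omega)| \;\leq\; 2\sqrt{2}\,\pi\, \exp\!\Bigl(-\tfrac{\pi}{2 m_\xi}\,|\omega + \xi_2/2|\Bigr). \]
Because $\Omega$ is relatively compact inside the open set $\{\xi_1 > (\xi_2/2)^2\}$, the compact $\overline{\Omega}$ stays bounded away from the parabola $\xi_1 = (\xi_2/2)^2$; hence the continuous function $\xi \mapsto m_\xi$ admits a uniform upper bound $m_{\max}$ on $\overline{\Omega}$ (and a strictly positive lower bound, though only the upper bound is relevant here), and there exists $M>0$ with $|\xi_2|\leq 2M$ for every $\xi\in\overline{\Omega}$.

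Finally, I would conclude by a trivial splitting. For $|\omega| \leq 4M$, the crude bound $|\widehat{\psi_\xi}(\omega)| \leq \sqrt{2}\pi$ can be absorbed into an exponential by enlarging $C$ so that $C e^{-4M\varepsilon} \geq \sqrt{2}\pi$. For $|\omega| > 4M$, the triangle inequality gives $|\omega + \xi_2/2| \geq |\omega| - M \geq |\omega|/2$, so the previous estimate produces $|\widehat{\psi_\xi}(\omega)| \leq 2\sqrt{2}\pi\,e^{-\varepsilon|\omega|}$ with $\varepsilon := \pi/(4m_{\max})$. The argument has no genuine obstacle; the only point that requires a moment of care is the uniformity of the upper bound on $m_\xi$ over $\xi\in\Omega$, which is precisely what the relative compactness hypothesis provides.
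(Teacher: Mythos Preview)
Your proof is correct and is precisely the ``direct through formula \eqref{def_CTW}'' approach that the paper alludes to without spelling out; the paper's own proof is just a two-line remark that the result follows either from the explicit formula or from an elliptic regularity argument as in Theorem \ref{thm_anareg}. Your computation of $\widehat{\psi_\xi}$, the use of $\cosh z \geq \tfrac12 e^{|z|}$, and the uniform bound on $m_\xi$ via compactness of $\overline{\Omega}$ are all in order.
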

\begin{proof}
It is a classical result of elliptic regularity. Here we can see it directly through formula \eqref{def_CTW}. We also could prove it directly with the same ideas as in  Theorem \ref{thm_anareg} below.
\end{proof}

First, we prove, through the following lemma, that $\diff^2 \Lag_\xi^h( \psi_\xi^h)$ is negative in the direction of $\psi_\xi^h$.
\begin{lem}
\label{lem_neg}
There exist $\alpha>0$ and $h_0>0$ such that for all $h<h_0$ and all $\xi\in \Omega$ we have
\[ \diff^2 \Lag_\xi^h( \psi_\xi^h)( \psi_\xi^h, \psi_\xi^h) \leq - \alpha \| \psi_\xi^h\|_{H^1(\mathbb{R})}^2.  \]
\end{lem}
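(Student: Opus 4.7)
My plan is to reduce the inequality to its continuous counterpart, which is the classical identity
\[ \diff^2 \Lag_\xi(\psi_\xi)(\psi_\xi,\psi_\xi) \;=\; -2\,\|\psi_\xi\|_{L^4(\mathbb{R})}^4 \;<\; 0, \]
and then to bound the errors coming from replacing $\psi_\xi$ by its Fourier truncation $\psi_\xi^h$ and $\partial_x$ by the finite-difference operator $D_h u(x) := (u(x+h)-u(x))/h$. Decompose $\Lag_\xi^h = A_\xi^h - N$, where $A_\xi^h(u) := \tfrac12\|D_h u\|_{L^2}^2 + \tfrac{\xi_1}{2}\|u\|_{L^2}^2 + \tfrac{\xi_2}{2}\langle i\partial_x u,u\rangle_{L^2}$ is quadratic and $N(u) := \tfrac14\|u\|_{L^4(\mathbb{R})}^4$ is quartic. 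A direct computation (using $u\cdot u = |u|^2$) yields
\[ \diff^2\Lag_\xi^h(u)(v,v) \;=\; 2A_\xi^h(v) \;-\; 2\int_{\mathbb{R}}(u\cdot v)^2\,\dx \;-\; \int_{\mathbb{R}}|u|^2|v|^2\,\dx, \]
which specializes at $u=v=\psi_\xi^h$ to $2A_\xi^h(\psi_\xi^h) - 3\|\psi_\xi^h\|_{L^4(\mathbb{R})}^4$.

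For NLS, the Euler--Lagrange identity $\diff\Lag_\xi(\psi_\xi)(\psi_\xi) = 0$ rearranges to $2A_\xi(\psi_\xi) = \|\psi_\xi\|_{L^4(\mathbb{R})}^4$, so plugging in gives $\diff^2\Lag_\xi(\psi_\xi)(\psi_\xi,\psi_\xi) = -2\|\psi_\xi\|_{L^4(\mathbb{R})}^4$. Since $\Omega$ is relatively compact inside $\{\xi_1>(\xi_2/2)^2\}$ and $\xi \mapsto \psi_\xi$ is continuous, $\inf_{\xi\in\Omega}\|\psi_\xi\|_{L^4(\mathbb{R})}^4 \geq 4c$ for some $c>0$ and $\sup_{\xi\in\Omega}\|\psi_\xi\|_{H^k(\mathbb{R})} < \infty$ for every $k$. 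The continuous target is therefore uniformly strictly negative on $\Omega$.

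It remains to show $2A_\xi^h(\psi_\xi^h) - 3\|\psi_\xi^h\|_{L^4(\mathbb{R})}^4 \;\longrightarrow\; -2\|\psi_\xi\|_{L^4(\mathbb{R})}^4$ uniformly in $\xi\in\Omega$ as $h\to 0$. The relation $\widehat{\psi_\xi^h} = \mathbb{1}_{(-\pi/h,\pi/h)}\widehat{\psi_\xi}$ coming from \eqref{ortho_proj}, combined with Lemma~\ref{reg_CTW} (whose constants $C,\varepsilon$ are uniform on $\Omega$), gives $\|\psi_\xi - \psi_\xi^h\|_{H^k(\mathbb{R})} \leq C_k\,e^{-\varepsilon\pi/(2h)}$ for every $k\in\mathbb{N}$. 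The $L^2$, momentum and $L^4$ contributions then converge at exponential rate (the $L^4$ one via $H^1\hookrightarrow L^4$ in dimension one), and for the finite-difference kinetic part, Plancherel together with the Taylor expansion $(4/h^2)\sin^2(\omega h/2) = \omega^2 + O(h^2\omega^4)$ on $(-\pi/h,\pi/h)$ yields
\[ \Bigl|\|D_h\psi_\xi^h\|_{L^2}^2 - \|\partial_x\psi_\xi\|_{L^2}^2\Bigr| \;\leq\; C h^2 \sup_{\xi\in\Omega}\|\psi_\xi\|_{H^2(\mathbb{R})}^2 \;+\; C\,e^{-\varepsilon\pi/(2h)}. \]

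Combining everything, there exists $h_0>0$ such that $\diff^2\Lag_\xi^h(\psi_\xi^h)(\psi_\xi^h,\psi_\xi^h) \leq -2c$ for all $h<h_0$ and $\xi\in\Omega$; since Fourier truncation cannot increase $H^1$ norms, $\|\psi_\xi^h\|_{H^1(\mathbb{R})}^2 \leq \sup_{\xi\in\Omega}\|\psi_\xi\|_{H^1(\mathbb{R})}^2 =: C'$, and the claim follows with $\alpha := 2c/C'$. The only mildly delicate point in this strategy is the $\xi$-uniform control of both the spectral-truncation error and the consistency error for $D_h$; fortunately Lemma~\ref{reg_CTW} already provides a single exponential-decay rate $\varepsilon$ across all of $\Omega$, so this uniformity is handed to us essentially for free by compactness.
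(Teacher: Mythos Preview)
Your proof is correct and follows essentially the same approach as the paper: both arguments compute the continuous value $\diff^2\Lag_\xi(\psi_\xi)(\psi_\xi,\psi_\xi)=-2\|\psi_\xi\|_{L^4}^4$ from the Euler--Lagrange identity, use compactness of $\Omega$ for uniform strict negativity, establish uniform convergence of $\diff^2\Lag_\xi^h(\psi_\xi^h)(\psi_\xi^h,\psi_\xi^h)$ to this limit by splitting off the $O(h^2)$ finite-difference consistency error and controlling the remaining terms via the exponential decay of Lemma~\ref{reg_CTW}, and finally invoke $\|\psi_\xi^h\|_{H^1}\leq\|\psi_\xi\|_{H^1}$ to pass to the stated inequality. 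Your write-up is slightly more explicit in decomposing $\Lag_\xi^h$ into its quadratic and quartic parts, but the underlying argument is the same.
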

\begin{proof}
If $u\in H^1(\mathbb{R})$ we have
\[ \diff^2  \Lag_\xi(u)(u,u) = \diff  \Lag_\xi(u)(u) - 2 \| u\|_{L^4(\mathbb{R})}^4.    \]
Consequently, since $\psi_\xi$ is a critical point of $ \Lag_\xi$, we have
\[ \diff^2 \Lag_\xi( \psi_\xi)( \psi_\xi, \psi_\xi) = - 2  \| \psi_\xi \|_{L^4(\mathbb{R})}^4 .\]
However, $\xi \mapsto \| \psi_\xi \|_{L^4(\mathbb{R})}^4$ and $\xi \mapsto \| \psi_\xi \|_{H^1(\mathbb{R})}^2$ are continuous positive maps on $\overline{\Omega}$. So, there exists $\alpha>0$ such that, for all $\xi \in \Omega$,
\[ \diff^2 \Lag_\xi( \psi_\xi)( \psi_\xi, \psi_\xi) = - 2  \| \psi_\xi \|_{L^4(\mathbb{R})}^4 \leq  - \alpha  \| \psi_\xi \|_{H^1(\mathbb{R})}^2 .\]
Since $ \| \psi_\xi^h \|_{H^1(\mathbb{R})}^2 \leq  \| \psi_\xi \|_{H^1(\mathbb{R})}^2$ (see \eqref{ortho_proj}), to conclude this proof it is enough to prove that $\Lag_\xi^h( \psi_\xi^h)( \psi_\xi^h, \psi_\xi^h)$ goes to $ \diff^2 \Lag_\xi( \psi_\xi)( \psi_\xi, \psi_\xi)$ when $h$ goes to $0$, uniformly with respect to $\xi \in \Omega$.
We can write 
\begin{align}
\nonumber
 \diff^2 \Lag_\xi^h( \psi_\xi^h)( \psi_\xi^h, \psi_\xi^h) = & \diff^2 \Lag_\xi( \psi_\xi)( \psi_\xi, \psi_\xi) + \int_{\mathbb{R}}  \left| \frac{\psi_\xi^h(x+h) - \psi_\xi^h(x)}h \right|^2 - |\partial_x \psi_\xi^h|^2 \dx \\\label{liszt}	
 																			&+\diff^2 \Lag_\xi( \psi_\xi^h)( \psi_\xi^h, \psi_\xi^h) -  \diff^2 \Lag_\xi( \psi_\xi)( \psi_\xi, \psi_\xi) .
\end{align}

First, with Fourier Plancherel isometry, we control by the classical estimate of consistency, the term generated by the discretization of the second derivative
\begin{align*}
\left| \int_{\mathbb{R}}   |\partial_x \psi_\xi^h|^2 - \left| \frac{\psi_\xi^h(x+h) - \psi_\xi^h(x)}h \right|^2 \dx \right| &= \frac1{2\pi} \int_{-\frac{\pi}h}^{\frac{\pi}h} \left[ \omega^2 - \frac{4}h^2 \sin^2\left( \frac{\omega h}2 \right)  \right] |\widehat{\psi_\xi}(\omega)|^2 \domega \\
							&\leq  \frac1{2\pi} \int_{-\frac{\pi}h}^{\frac{\pi}h} \frac{ 1 -  \sinc^2\left( \frac{\omega h}2 \right)}{\omega^2} \omega^4 | \widehat{\psi_\xi}(\omega)|^2 \domega \\
							&\leq \sup_{\omega \in \mathbb{R}} \frac{ 1 -  \sinc^2\left( \frac{\omega h}2 \right)}{\omega^2} \|  \partial_x^2 \psi_\xi \|_{L^2(\mathbb{R})}^2 \\
							&= \left(  \frac{h}2 \right)^2 \sup_{\omega \in \mathbb{R}} \frac{ 1 -  \sinc^2\left( \omega \right)}{\omega^2} \|  \partial_x^2 \psi_\xi \|_{L^2(\mathbb{R})}^2.
\end{align*}
Furthermore, we deduce from Lemma \ref{reg_CTW} that $\|  \partial_x^2 \psi_\xi \|_{L^2(\mathbb{R})}^2$ can be estimated uniformly with respect to $\xi \in \Omega$.

The convergence of the second term in \eqref{liszt} is easier. 
Indeed, we deduce from Lemma \ref{reg_CTW} that $\psi_\xi^h$ goes to $\psi_\xi$ when $h$ goes to $0$, uniformly with respect to $\xi \in \Omega$.  We conclude because it is clear that the map $u\mapsto \diff^2 \Lag_\xi( u)(u,u) $ is Lipschitz on bounded subsets of $H^1(\mathbb{R})$, uniformly with respect to $\xi \in \Omega$.
\end{proof}

Now, we give the most important lemma of this proof, establishing the coercivity property of  the discrete Lagrange functions uniformly with respect to the parameters.
\begin{lem}
\label{lem_cor_psi}
There exist $\alpha>0$ and $h_0>0$ such that for all $\xi \in \Omega$ and all $h<h_0$ we have
\begin{equation}
\label{haydn} \forall v\in BL^2_h \cap \Span(i \psi_\xi^h, \partial_x \psi_\xi^h, \psi_\xi^h)^{\perp_{L^2}}, \quad \  \diff^2 \Lag_\xi^h( \psi_\xi^h)( v,v) \geq \alpha \|v\|_{H^1(\mathbb{R})}^2. \end{equation}
\end{lem}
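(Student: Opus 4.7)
The plan is to transfer the classical coercivity of the continuous Hessian $\diff^2\Lag_\xi(\psi_\xi)$ to its discrete counterpart $\diff^2\Lag_\xi^h(\psi_\xi^h)$. The two forms differ only through the replacement of $\partial_x$ by $D_h v(x):=h^{-1}(v(x+h)-v(x))$ and of $\psi_\xi$ by $\psi_\xi^h$. Thanks to the exponential Fourier decay of $\psi_\xi$ recorded in Lemma \ref{reg_CTW}, $\psi_\xi^h$ approximates $\psi_\xi$ to exponential accuracy in $1/h$ in every Sobolev norm, so the second substitution produces only exponentially small errors; the genuine obstacle is the mismatch between the two kinetic forms, which can be comparable to $\|\partial_x v\|_{L^2}^2$ for $v\in BL_h^2$ concentrated near the frequency cutoff $\pi/h$.

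The starting point is the classical coercivity of Weinstein~\cite{MR820338} and Grillakis--Shatah--Strauss~\cite{MR901236}: the self-adjoint operator associated with $\diff^2\Lag_\xi(\psi_\xi)$ has a one-dimensional negative subspace, a two-dimensional kernel spanned by $i\psi_\xi$ and $\partial_x\psi_\xi$, and positive essential spectrum bounded below by $m_\xi^2$. This provides $\alpha_0>0$, uniform on the compact set $\overline{\Omega}$ by continuity of $\xi\mapsto\psi_\xi$ in every $H^k(\mathbb{R})$, such that $\diff^2\Lag_\xi(\psi_\xi)(w,w)\geq\alpha_0\|w\|_{H^1(\mathbb{R})}^2$ for every $w\in H^1(\mathbb{R})$ that is $L^2$-orthogonal to $\{\psi_\xi,i\psi_\xi,\partial_x\psi_\xi\}$. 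Given a $v\in BL_h^2$ orthogonal in $L^2$ to $\{\psi_\xi^h,i\psi_\xi^h,\partial_x\psi_\xi^h\}$, I would project it onto the continuous orthogonal complement, writing $v=w+\sum_{j=1}^3\beta_j f_j$ with $(f_1,f_2,f_3)=(\psi_\xi,i\psi_\xi,\partial_x\psi_\xi)$ and $w\perp_{L^2}f_j$. Since $\|\psi_\xi-\psi_\xi^h\|_{H^k(\mathbb{R})}$ decays exponentially in $1/h$ for every $k$, the coefficients $\beta_j$ and hence $\|v-w\|_{H^1(\mathbb{R})}$ are exponentially small multiples of $\|v\|_{L^2}$.

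Expanding the quadratic form then yields
\[
\diff^2\Lag_\xi^h(\psi_\xi^h)(v,v) = \diff^2\Lag_\xi(\psi_\xi)(w,w) + R_{\mathrm{exp}}(v) + R_{\mathrm{kin}}(v),
\]
where $R_{\mathrm{exp}}$ collects all contributions from the substitutions $\psi_\xi\leftrightarrow\psi_\xi^h$ and from the projection $v\mapsto w$ (exponentially small multiples of $\|v\|_{H^1(\mathbb{R})}^2$), while $R_{\mathrm{kin}}(v):=\int|D_h v|^2-\int|\partial_x v|^2$ is nonpositive and not small in general: by Fourier--Plancherel, $|R_{\mathrm{kin}}(v)|$ can be as large as a fixed fraction of $\|\partial_x v\|_{L^2}^2$ when $\widehat v$ concentrates near $\pi/h$.

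Absorbing $R_{\mathrm{kin}}$ is the main step I expect to require care. I would proceed by a low/high frequency dichotomy: for a small fixed $\delta>0$, decompose $v=v_\ell+v_h$ with $\widehat{v_\ell}$ supported in $|\omega|\leq\delta/h$ and $\widehat{v_h}$ in $\delta/h<|\omega|\leq\pi/h$. On the low part, the expansion $1-\sinc^2(\omega h/2)=O((\omega h)^2)$ gives $|R_{\mathrm{kin}}(v_\ell)|\leq C\delta^2\|\partial_x v_\ell\|_{L^2}^2$, which becomes smaller than $(\alpha_0/2)\|v_\ell\|_{H^1(\mathbb{R})}^2$ once $\delta$ is small enough. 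On the high part, the Bernstein-type bound $\|v_h\|_{L^2}\leq(h/\delta)\|\partial_x v_h\|_{L^2}$ implies that the potentially negative nonlinear contributions $\int|\psi_\xi^h|^2|v_h|^2$ and $2\int(\psi_\xi^h\cdot v_h)^2$ are bounded by $C(h/\delta)^2\|\partial_x v_h\|_{L^2}^2$, and are thus absorbed by the positive kinetic term $\int|D_h v_h|^2\geq(2/\pi)^2\|\partial_x v_h\|_{L^2}^2$ for $h$ small enough. Cross terms between $v_\ell$ and $v_h$ are controlled by Cauchy--Schwarz together with the concentration of $\widehat{\psi_\xi^h}$ at low frequencies, and the same concentration implies that the orthogonality of $v$ against $\{\psi_\xi^h,i\psi_\xi^h,\partial_x\psi_\xi^h\}$ passes to $v_\ell$ up to exponentially small defects. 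Combining both regimes delivers $\diff^2\Lag_\xi^h(\psi_\xi^h)(v,v)\geq\alpha\|v\|_{H^1(\mathbb{R})}^2$ for some $\alpha\in(0,\alpha_0)$ uniform in $\xi\in\Omega$ and $h<h_0$.
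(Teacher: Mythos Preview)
Your proposal is correct and follows essentially the same route as the paper: a low/high frequency splitting of $v$, the continuous Weinstein coercivity on the low-frequency piece (after transferring the $L^2$-orthogonality conditions, which costs only exponentially small defects thanks to Lemma~\ref{reg_CTW}), and direct domination by the discrete kinetic term on the high-frequency piece, with the nonlinear cross terms handled by Cauchy--Schwarz.

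The one genuine technical difference is the choice of cutoff. You split at $|\omega|=\delta/h$, a fixed fraction of the Nyquist frequency, whereas the paper splits at a \emph{fixed} threshold $\omega_0=2\theta/h_0$ independent of $h$. Your choice makes the orthogonality defects of $v_\ell$ against $\{\psi_\xi,i\psi_\xi,\partial_x\psi_\xi\}$ exponentially small in $1/h$, so the projection $v_\ell\mapsto w$ is essentially harmless and the continuous coercivity applies directly. The paper's fixed cutoff only yields defects of size $Ce^{-\varepsilon\omega_0}$ (a small constant, not decaying with $h$), which is why it invokes the robustness Lemma~\ref{perturb_coer} to tolerate approximate orthogonality. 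In exchange, with the paper's fixed $\omega_0$ the high-frequency estimate needs $\omega_0$ large (achieved by taking $h_0$ small), while with your scaling the high-frequency estimate needs $h$ small. Both mechanisms work and lead to the same conclusion; your version trades the black-box perturbation lemma for a slightly sharper use of the exponential localisation of $\widehat{\psi_\xi}$. One presentational remark: your opening paragraph introduces $w$ and the global remainder $R_{\mathrm{kin}}(v)$ as if you will absorb $R_{\mathrm{kin}}(v)$ directly into $\alpha_0\|v\|_{H^1}^2$, which cannot work when $\alpha_0$ is small; the actual argument you then give (diagonal treatment of $\diff^2\Lag_\xi^h(\psi_\xi^h)$ on $v_\ell$ and $v_h$ separately, as in the paper's decomposition~\eqref{the_dec}) is the right one, and it would be cleaner to go to it immediately.
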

\begin{proof}
We are going to establish this estimate by a perturbation of the continuous case. Indeed, for the continuous Lagrangian this result has been proved by Weinstein in \cite{MR820338}. There exists $\alpha>0$ such that for all $\xi\in \Omega$  
\[ \forall u\in H^1(\mathbb{R}) \cap \Span(i \psi_\xi, \partial_x \psi_\xi, \psi_\xi)^{\perp_{L^2}}, \quad \  \diff^2 \Lag_\xi( \psi_\xi)( v,v) \geq \alpha \|v\|_{H^1(\mathbb{R})}^2. \]
 Literally, it is not exactly the result of Weinstein. We explain, in Lemma \ref{est_wein} of the Appendix how to get this estimate from the original result.
Moreover, this result can be slightly extended to obtain the existence of two constants $c_1,c_2>0$ such that for all $\xi\in \Omega$,
\begin{align}
\nonumber
\mbox{if} \quad \| u - \psi_\xi\|_{H^1(\mathbb{R})}<c_1&\quad\mbox{and} \quad  \max \left( | \langle \psi_\xi, v \rangle_{L^2(\mathbb{R})} | , |\langle i\psi_\xi, v \rangle_{L^2(\mathbb{R})}| , |\langle \partial_x \psi_\xi, v \rangle_{L^2(\mathbb{R})}| \right)<c_2 \|v\|_{H^1} \\
\label{est_wein_perturb}
&\mbox{then} \quad 
\diff^2\Lag_\xi(u)(v,v)\geq \frac{\alpha}8 \|v\|_{H^1(\mathbb{R})}^2. 
\end{align}

This result is a consequence of Lemma \ref{perturb_coer} given in Appendix.  With its formalism we take $E = H^1(\mathbb{R})$, $b = \diff^2\Lag_\xi$ and $X = \Span(i \psi_\xi, \partial_x \psi_\xi, \psi_\xi)$. This last family is free because $\psi_\xi$ is not a plane wave. Consequently, the associated Gram matrix is invertible. Finally, we just need to verify that the constants $c_1$ and $c_2$ given by the lemma can be controlled uniformly with respect to $\xi \in \overline{\Omega}$. But it is a direct consequence of the estimate proven in Lemma \ref{perturb_coer} since the Gram matrix is a continuous function of $\xi \in \overline{\Omega}$.

Now, we focus on estimate \eqref{haydn} of Lemma \ref{lem_cor_psi}. Let $h_0>0$ be  small enough to get that for all $h<h_0$ and all $\xi \in \Omega$, we have $\| \psi_\xi^h - \psi_\xi \|_{H^1(\mathbb{R})}<c_1$.
 Let us fix $h<h_0$, $\xi \in \Omega$ and consider a direction $v\in BL^2_h \cap \Span( \psi_\xi^h,i\psi_\xi^h,\partial_x \psi_\xi^h)^{\perp_{L^2}}$. We decompose $v$ as 
\[ v = v_{\ell} + v_{b} \quad \textrm{ with }\quad  \widehat{v_{\ell}} = \mathbb{1}_{(-\omega_0,\omega_0)} \widehat{v} \quad \textrm{ and }\quad  \omega_0 = \frac{2\theta}{h_0} \]
where $\theta \in (0,\frac{\pi}2)$ is a constant (independent of $h,\xi$ and $h_0$) that we will determine later. 
Consider the following decomposition 
\begin{equation}
\label{the_dec}
 \diff^2 \Lag_\xi^h (\psi_\xi^h)(v,v)  = \diff^2 \Lag_\xi^h (\psi_\xi^h)(v_{\ell},v_{\ell})   + \diff^2 \Lag_\xi^h (\psi_\xi^h)(v_{b},v_{b})   + 2 \diff^2 \Lag_\xi^h (\psi_\xi^h)(v_{b},v_{\ell}).
\end{equation}
We estimate separately each one of these terms as follows:

\begin{itemize}
\item For the first one, we deduce from Lemma \ref{reg_CTW} and the constraint on $v$ that there exists $\varepsilon,C>0$ (independent of $\xi$) such that
\[ \max \left( | \langle \psi_\xi, v_{\ell} \rangle_{L^2(\mathbb{R})} | , |\langle i\psi_\xi, v_{\ell} \rangle_{L^2(\mathbb{R})}| , |\langle \partial_x \psi_\xi, v_{\ell} \rangle_{L^2(\mathbb{R})}| \right) \leq C e^{-\varepsilon \omega_0 } \|v\|_{H^1(\mathbb{R})}.\]
Consequently, if $h_0$ is small enough to get $C e^{-\varepsilon \omega_0 }<c_2$, we can apply \eqref{est_wein_perturb} to get
\[ \diff^2\Lag_\xi( \psi_{\xi}^h )(v_{\ell},v_{\ell})\geq \frac{\alpha}8 \| v_{\ell} \|_{H^1(\mathbb{R})}^2. \]
Hence we have
\begin{align*}
 \diff^2\Lag_\xi^h( \psi_{\xi}^h )(v_{\ell},v_{\ell})&\geq \frac{\alpha}8 \| v_{\ell} \|_{H^1(\mathbb{R})}^2 +  \diff^2\Lag_\xi^h( \psi_{\xi}^h )(v_{\ell},v_{\ell})-\diff^2\Lag_\xi( \psi_{\xi}^h )(v_{\ell},v_{\ell})\\
 & = \frac{\alpha}8 \| v_{\ell} \|_{H^1(\mathbb{R})}^2 + \frac1{2\pi} \int_{\mathbb{R}}   \left[ \frac4{h^2}\sin^2\left(  \frac{ \omega h}2\right)-\omega^2 \right]  |\widehat{v_{\ell}}(\omega)|^2 \domega \\
 &= \frac{\alpha}8 \| v_{\ell} \|_{H^1(\mathbb{R})}^2 - \frac1{2\pi} \int_{|\omega|<\omega_0}   \left[ 1 - \sinc^2(\frac{\omega h}2) \right]  |\omega \widehat{v_{\ell}}(\omega)|^2 \domega \\
 & \geq \frac{\alpha}8 \| v_{\ell} \|_{H^1(\mathbb{R})}^2 - \left[ 1-\sinc^2(\theta) \right] \|v_{\ell}\|_{H^1(\mathbb{R})}^2
\end{align*}

Choosing $\theta \in (0,\frac{\pi}2)$ to have $1-\sinc^2(\theta)  < \frac{ \alpha }{16}$, we get
\[  \diff^2\Lag_\xi^h( \psi_{\xi}^h )(v_{\ell},v_{\ell}) \geq \frac{\alpha}{16} \|v_{\ell}\|_{H^1(\mathbb{R})}^2   .\]
\item For the second term, we use Fourier Plancherel isometry to get
\begin{align*}
\diff^2\Lag_\xi^h( \psi_{\xi}^h )(v_{b},v_{b}) & \geq \frac1{2\pi} \int_{\mathbb{R}} \sinc^2\left(  \frac{ \omega h}2\right) \omega^2 |\widehat{v_b}(\omega)|^2 \domega - 3 \|\psi_\xi^h\|_{L^\infty(\mathbb{R})}^2 \| v_b\|^2_{L^2(\mathbb{R})} - \frac{|\xi_2|}2 \|\partial_x v_b\|_{L^2(\mathbb{R})} \|v_b\|_{L^2(\mathbb{R})}\\  
																&\geq \sinc^2(\theta) \| \partial_x v_b\|_{L^2(\mathbb{R})}^2 - 3 \|\psi_\xi^h\|_{L^\infty(\mathbb{R})}^2 \| v_b\|^2_{L^2(\mathbb{R})} - \frac{|\xi_2|}2 \|\partial_x v_b\|_{L^2(\mathbb{R})} \|v_b\|_{L^2(\mathbb{R})}.
\end{align*}
However, applying Fourier Plancherel isometry we get
\[   \|v_b\|_{L^2(\mathbb{R})}^2 = \frac1{2\pi} \int_{|\omega|>\omega_0} |\widehat{v}(\omega)|^2 \domega \leq \frac1{\omega_0^2} \frac1{2\pi} \int_{|\omega|>\omega_0} |\omega \widehat{v}(\omega)|^2 \domega = \frac1{\omega_0^2} \| \partial_x v_b \|_{L^2(\mathbb{R})}^2. \]
Consequently, we have
\begin{align*}
\diff^2\Lag_\xi^h( \psi_{\xi}^h )(v_{b},v_{b})  &\geq \left( \sinc^2(\theta) -  \frac{3 \|\psi_\xi^h\|_{L^\infty(\mathbb{R})}^2}{\omega_0^2} -  \frac{|\xi_2|}{2\omega_0}\right)  \|\partial_x v_b\|_{L^2(\mathbb{R})}^2 \\
&\geq \left( \sinc^2(\theta) -  \frac{3 \|\psi_\xi^h\|_{L^\infty(\mathbb{R})}^2}{\omega_0^2} -  \frac{|\xi_2|}{2\omega_0}\right) \frac{\omega_0^2}{1+\omega_0^2} \|  v_b\|_{H^1(\mathbb{R})}^2
\end{align*}
Since these quantities can be controlled uniformly with respect to $\xi\in \Omega$, if $h_0$ is small enough, we have for all $\xi \in \Omega$
\[  \diff^2\Lag_\xi^h( \psi_{\xi}^h )(v_{b},v_{b})  \geq \frac12\sinc^2(\theta)  \|v_b\|_{H^1(\mathbb{R})}^2.  \]
\item For the third term, since the frequency supports of $v_{\ell}$ and $v_b$ are disjoint, we get
\begin{align*}
\diff^2 \Lag_\xi^h (\psi_\xi^h)(v_{b},v_{\ell}) &= \diff^2 \frac{\| \cdot \|_{L^4}^4}4(\psi_\xi^h)(v_b,v_l)\\
																&\geq -3 \|\psi_\xi^h\|_{L^{\infty}(\mathbb{R})}^2 \|v_b\|_{L^2(\mathbb{R})}\|v_{\ell}\|_{L^2(\mathbb{R})} \\
																&\geq -3 \|\psi_\xi^h\|_{L^{\infty}(\mathbb{R})}^2 \|v_{\ell}\|_{H^1(\mathbb{R})} \frac{\|v_b\|_{H^1(\mathbb{R})}}{\sqrt{1+\omega_0^2}}\\ 
																&\geq -\frac{ 3 \|\psi_\xi^h\|_{L^{\infty}(\mathbb{R})}^2}{2 \sqrt{1+\omega_0^2}} \left( \|v_{\ell}\|_{H^1(\mathbb{R})}^2 + \|v_b\|_{H^1(\mathbb{R})}^2 \right).
\end{align*}
 Controlling this quantity uniformly with respect to $\xi\in \Omega$, we deduce that if $h_0$ is small enough then
 \[ \diff^2 \Lag_\xi^h (\psi_\xi^h)(v_{b},v_{\ell}) \geq - \frac{\beta}2  \left( \|v_{\ell}\|_{H^1(\mathbb{R})}^2 + \|v_b\|_{H^1(\mathbb{R})}^2 \right), \]
 with $\beta = \min ( \frac12 \sinc^2(\theta), \frac{\alpha}{16})$.
\end{itemize}
Applying these three estimates, we deduce that there exists an $h_0>0$ such that if $h<h_0$ and $\xi \in \Omega$ then for all $v\in BL^2_h \cap \Span( \psi_\xi^h,i\psi_\xi^h,\partial_x \psi_\xi^h)^{\perp_{L^2}}$, we have
\[ \diff^2 \Lag_\xi^h (\psi_\xi^h)(v,v) \geq \frac{\beta}2  \left( \|v_{\ell}\|_{H^1(\mathbb{R})}^2 + \|v_b\|_{H^1(\mathbb{R})}^2 \right) =  \frac{\beta}2 \| v\|_{H^1(\mathbb{R})}^2  .\]
\end{proof}

Before focusing on the invertibility of $\diff^2 \Lag^h_{\xi \ |  {\rm Ker}(\id - S_h)}(\psi_\xi^h)$, we give a small but useful lemma (particularly to control uniformly the norm of the inverse).
\begin{lem}
\label{lem_d2_unif_bound}
For all $r>0$, there exists $C>0$ such that for all $h>0$ and all $\xi \in \Omega$, we have for all $u,v,w\in BL^2_h$ with $\|w\|_{H^1(\mathbb{R})}<r$
\[ |\diff^2 \Lag_\xi^h(w)(u,v)|\leq C \|u\|_{H^1(\mathbb{R})} \|v\|_{H^1(\mathbb{R})}. \]
\end{lem}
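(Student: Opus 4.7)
The strategy is a straightforward term-by-term expansion of $\diff^2 \Lag^h_\xi(w)$ as a sum of three bilinear forms coming from the three summands in the definition \eqref{def_LDNLS/AC} of $\Lag^h_\xi$, together with the kinetic and quartic pieces hidden in $H_h$ (see \eqref{def_HDNLS/AC}). I would bound each of these contributions separately and uniformly in $h$ and $\xi$, then sum.

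The plan is as follows. First, the two terms $\frac{\xi_1}{2}\|u\|_{L^2}^2$ and $\frac{\xi_2}{2}\langle i\partial_x u, u\rangle_{L^2}$ are quadratic in $u$, so their Hessians are the constant bilinear forms $\xi_1\langle u,v\rangle_{L^2}$ and $\xi_2\langle i\partial_x u, v\rangle_{L^2}$. Since $\overline{\Omega}$ is compact, $|\xi_1|$ and $|\xi_2|$ are uniformly bounded, and Cauchy--Schwarz in $L^2$ and $H^1$ gives a bound by $C(\Omega)\|u\|_{H^1(\mathbb{R})}\|v\|_{H^1(\mathbb{R})}$. For the discrete kinetic term $\frac{1}{2}\int|h^{-1}(u(x+h)-u(x))|^2\dx$, the Hessian is the constant bilinear form $\mathrm{Re}\int h^{-1}(u(x+h)-u(x))\cdot h^{-1}\overline{(v(x+h)-v(x))}\dx$, which by Fourier--Plancherel equals $\frac{1}{2\pi}\int \frac{4}{h^2}\sin^2(\omega h/2)\,\widehat{u}(\omega)\overline{\widehat{v}(\omega)}\,\domega$. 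The multiplier satisfies $\frac{4}{h^2}\sin^2(\omega h/2)\le \omega^2$ (as already used in the proof of Lemma \ref{lem_neg}), so Cauchy--Schwarz yields a bound by $\|\partial_x u\|_{L^2}\|\partial_x v\|_{L^2}$, uniformly in $h$.

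For the quartic term $-\tfrac14\|u\|_{L^4}^4$, the Hessian at $w$ applied to $(u,v)$ is a linear combination of integrals of the form $\int |w|^2 u\cdot v\,\dx$ and $\int (w\cdot u)(w\cdot v)\,\dx$, each bounded by $3\|w\|_{L^\infty(\mathbb{R})}^2\|u\|_{L^2(\mathbb{R})}\|v\|_{L^2(\mathbb{R})}$. Since $w\in BL^2_h\subset H^1(\mathbb{R})$, the one-dimensional Sobolev embedding $H^1(\mathbb{R})\hookrightarrow L^\infty(\mathbb{R})$ (whose constant is independent of $h$) gives $\|w\|_{L^\infty(\mathbb{R})}\le C_0\|w\|_{H^1(\mathbb{R})}\le C_0 r$, so this term is bounded by $3C_0^2 r^2 \|u\|_{H^1(\mathbb{R})}\|v\|_{H^1(\mathbb{R})}$.

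Summing the three contributions yields the claimed inequality with a constant $C = C(\Omega,r)$ independent of $h$ and $\xi \in \Omega$. There is no real obstacle: the only thing to check carefully is that no bound introduces negative powers of $h$ without compensation, which is guaranteed because the kinetic Fourier multiplier is controlled by $\omega^2$ and because the Sobolev embedding constant is $h$-independent.
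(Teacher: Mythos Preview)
Your proposal is correct and follows essentially the same approach as the paper: the paper also bounds the four contributions term by term, using $|\sin\omega|\le|\omega|$ for the kinetic multiplier, the bound $3\|w\|_{L^\infty}^2\|u\|_{L^2}\|v\|_{L^2}$ for the quartic Hessian, and the one-dimensional Sobolev inequality $\|w\|_{L^\infty}^2\le\|w\|_{L^2}\|\partial_x w\|_{L^2}$ to control $\|w\|_{L^\infty}$ by $r$. Your write-up is simply a more detailed version of the same computation.
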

\begin{proof}
Since $|\sin (\omega)|\leq |\omega|$, we observe that, for all $u,v\in BL^2_h$
\begin{multline*}
| \diff^2 \Lag^h_{\xi} (w )| \leq \| \partial_x u \|_{L^2(\mathbb{R})} \| \partial_x v \|_{L^2(\mathbb{R})} + 3\|w\|_{L^{\infty}(\mathbb{R})}^2 \| u \|_{L^2(\mathbb{R})} \| v \|_{L^2(\mathbb{R})}\\  + \xi_1 \| u \|_{L^2(\mathbb{R})} \| v \|_{L^2(\mathbb{R})} + |\xi_2|  \| \partial_x u \|_{L^2(\mathbb{R})} \| v \|_{L^2(\mathbb{R})}.
\end{multline*}
The result is thus a simple consequence of the classical Sobolev inequality, 
\[ \|w\|_{L^{\infty}(\mathbb{R})}^2 \leq \| w \|_{L^2(\mathbb{R})}  \|\partial_x w\|_{L^2(\mathbb{R})} .\]
\end{proof}

In the following concluding Lemma, we prove the invertibility of $\diff^2 \Lag^h_{\xi \ |  {\rm Ker}(\id - S_h)}(\psi_\xi^h)$ and control the norm of its inverse uniformly with respect to $\xi \in \Omega$ and $h$ small enough. 
\begin{lem}
\label{lem_invert_univ}
There exist $h_0>0$ and $C>0$ such that for all $\xi\in \Omega$ and all $h<h_0$, $\diff^2 \Lag^h_{\xi \ |  {\rm Ker}(\id - S_h)}(\psi_\xi^h)$ is invertible and the norm of its inverse is smaller than $C$.
\end{lem}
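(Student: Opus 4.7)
The plan is to exploit the symmetry decomposition $BL^2_h = {\rm Ker}(\id - S_h) \oplus {\rm Ker}(\id + S_h)$ to reduce the problem to the situation of a single negative direction spanned by $\psi_\xi^h$ inside ${\rm Ker}(\id - S_h)$, and then apply a Schur-complement argument together with the uniform coercivity already established.

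First I would check that the two eigenspaces of $S_h$ are actually $L^2(\mathbb{R})$-orthogonal: for $u \in {\rm Ker}(\id - S_h)$ and $v \in {\rm Ker}(\id + S_h)$, the substitution $x\mapsto -x$ in $\int u\,\overline{v}\,\dx$ combined with $u(-x)=\overline{u(x)}$ and $v(-x)=-\overline{v(x)}$ yields $\int u\,\overline{v}\,\dx = -\overline{\int u\,\overline{v}\,\dx}$, so its real part vanishes. Next, direct computation shows $i\psi_\xi^h,\partial_x \psi_\xi^h \in {\rm Ker}(\id + S_h)$ while $\psi_\xi^h \in {\rm Ker}(\id - S_h)$. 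Consequently, every $v\in {\rm Ker}(\id - S_h)$ is automatically $L^2$-orthogonal to both $i\psi_\xi^h$ and $\partial_x\psi_\xi^h$, so the only constraint that remains from Lemma~\ref{lem_cor_psi} is orthogonality to $\psi_\xi^h$.

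Write $K = {\rm Ker}(\id - S_h)$, denote $b = \diff^2 \Lag_\xi^h(\psi_\xi^h)$, and set
\[
F = K \cap \Span(\psi_\xi^h)^{\perp_{L^2}}, \qquad  K = \mathbb{R}\,\psi_\xi^h \;\oplus_{L^2}\; F.
\]
By the previous paragraph, Lemma~\ref{lem_cor_psi} gives $b(w,w)\geq \alpha\|w\|_{H^1}^2$ for all $w\in F$, while Lemma~\ref{lem_neg} gives $b(\psi_\xi^h,\psi_\xi^h)\leq -\alpha \|\psi_\xi^h\|_{H^1}^2$; since $\|\psi_\xi^h\|_{H^1}$ is bounded below uniformly in $\xi\in\Omega$ and $h<h_0$ (because $\psi_\xi^h\to \psi_\xi$ in $H^1$ uniformly and $\Omega$ is relatively compact), this negative lower bound is uniform. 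Lemma~\ref{lem_d2_unif_bound} provides the matching upper bound $|b(u,v)|\leq C\|u\|_{H^1}\|v\|_{H^1}$.

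To solve $b(v,\cdot) = f$ on $K$, I decompose $v = \lambda \psi_\xi^h + w$ with $w\in F$. Testing against $F$ gives a Lax--Milgram equation for $w$ in terms of $\lambda$ (coercivity coming from $b|_{F\times F}$), which has a unique solution $w=B_F^{-1}(\tilde f - \lambda\, \eta)$ with $B_F$ the operator associated with $b|_{F\times F}$ on the Hilbert space $F$ equipped with the $H^1$ scalar product, $\eta\in F$ the Riesz representative of $w'\mapsto b(\psi_\xi^h,w')$, and $\tilde f\in F$ that of $f|_F$. Testing against $\psi_\xi^h$ and using the symmetry of $b$ gives a scalar equation
\[
\lambda \bigl[\,b(\psi_\xi^h,\psi_\xi^h) - \langle \eta, B_F^{-1}\eta\rangle_{H^1} \bigr]
= f(\psi_\xi^h) - \langle \eta, B_F^{-1}\tilde f\rangle_{H^1}.
\]
Since $B_F$ is positive, $\langle \eta, B_F^{-1}\eta\rangle_{H^1}\geq 0$, so the bracket is bounded above by $-\alpha\|\psi_\xi^h\|_{H^1}^2$, uniformly in $\xi$ and $h$. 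This gives a uniform bound on $\lambda$ and hence on $w$, proving invertibility of $b$ on $K$ with a uniform bound on the inverse; this is essentially an application of the functional-analytic Lemma~\ref{lem_exRiesz} to the present ``one negative direction, coercive on orthogonal'' situation. The main obstacle, and the reason one must bother with this Schur-complement reduction rather than simply invoking Lax--Milgram on $K$, is that $b$ is not coercive on all of $K$; the negative direction $\psi_\xi^h$ has to be isolated and handled separately, with the pairing term $b(\psi_\xi^h,w)$ absorbed via the $B_F^{-1}$ correction above.
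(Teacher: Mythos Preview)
Your proposal is correct and follows essentially the same approach as the paper: the paper applies Lemma~\ref{lem_exRiesz} with $E={\rm Ker}(\id-S_h)$, $E_m=\Span(\psi_\xi^h)$ and $E_p=\Span(\psi_\xi^h)^{\perp_{L^2}}\cap{\rm Ker}(\id-S_h)$, invoking Lemma~\ref{lem_neg}, Lemma~\ref{lem_cor_psi} (after the same observation that $i\psi_\xi^h,\partial_x\psi_\xi^h\in{\rm Ker}(\id+S_h)$) and Lemma~\ref{lem_d2_unif_bound} for the three required bounds. Your Schur-complement computation is precisely the content of the proof of Lemma~\ref{lem_exRiesz}, so the only difference is that you spell that argument out inline rather than citing it as a black box.
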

\begin{proof} We use Lemma \ref{lem_exRiesz} of the Appendix, by taking $E =  {\rm Ker}(\id - S_h)$ (equipped with $\|\cdot \|_{H^1(\mathbb{R})}$ norm), $T = \diff^2 \Lag^h_{\xi \ |  {\rm Ker}(\id - S_h)}(\psi_\xi^h)$, $E_p = \Span (\psi_\xi^h)^{\perp_{L^2}} \cap {\rm Ker}(\id - S_h)$ and $E_m = \Span (\psi_\xi^h)$.

To get the coercivity estimate  on $E_m$ we apply Lemma \ref{lem_neg}, while coercivity on $E_p$ is obtained from Lemma \ref{lem_cor_psi} after noticing that \[   {\rm Ker}(\id - S_h) \subset BL^2_h \cap \Span( \psi_\xi^h,i\psi_\xi^h,\partial_x \psi_\xi^h)^{\perp_{L^2}}, \]
which is obvious since $i\psi_\xi^h,\partial_x \psi_\xi^h \in {\rm Ker}(\id + S_h) \subset {\rm Ker}(\id - S_h)^{\perp_{L^2}}$.

Applying Lemma \ref{lem_exRiesz}, we obtain the invertibility of $\diff^2 \Lag^h_{\xi \ |  {\rm Ker}(\id - S_h)}(\psi_\xi^h)$ and an explicit control of the norm of its inverse in terms of $\alpha_p$, $\alpha_m$ and $\| T \|$.  However, with Lemma \ref{lem_neg} and Lemma \ref{lem_cor_psi}, we have a uniform control of $\alpha_p$ and $\alpha_m$  with respect to $\xi\in \Omega$ and $h$ small enough, the uniform control of  $\|T\|$ being given by Lemma \ref{lem_d2_unif_bound}.
\end{proof}

\underline{Step 3: The resolution and its consequences}

We now want to apply the inverse function theorem \ref{Thm_Inv_loc} to $\diff \Lag^h_{\xi \ |  {\rm Ker}(\id - S_h)}$ in $\psi_\xi^h$. In the following Lemma, we focus on the last assumption required, i.e. $\diff^2 \Lag_\xi^h$ is a Lipschitz function. 
\begin{lem}
\label{est_lips}
For all $R>0$ there exists $k>0$ such that for all $\xi\in \Omega$, $h>0$, $u_1,u_2,v,w\in BL^2_h$, with $\|u_1\|_{H^1(\mathbb{R})}<R$ and $\|u_2\|_{H^1(\mathbb(\mathbb{R}))}< R$, we have
\[ \| \diff^2 \Lag_\xi^h(u_1)(v,w) -  \diff^2 \Lag_\xi^h(u_2)(v,w) \| \leq k \| u_1 - u_2 \|_{H^1(\mathbb{R})} \| v \|_{H^1(\mathbb{R})} \| w \|_{H^1(\mathbb{R})} .\]
\end{lem}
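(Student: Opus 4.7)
The plan is to reduce the estimate to controlling the third derivative of the quartic term only. Since
\[\Lag_\xi^h(u) = \frac12 \int_{\mathbb{R}} \left| \frac{u(x+h)-u(x)}{h}\right|^2 \dx + \frac{\xi_1}{2}\|u\|_{L^2}^2 + \frac{\xi_2}{2}\langle i\partial_x u,u\rangle_{L^2} - \frac14 \|u\|_{L^4}^4,\]
the first three terms are quadratic in $u$, so their contribution to $u\mapsto \diff^2 \Lag_\xi^h(u)$ is constant, and therefore cancels in the difference. Denoting $P(u) = -\frac14 \|u\|_{L^4}^4 = -\frac14 \int (u\cdot u)^2 \dx$, it remains only to prove
\[\bigl|\diff^2 P(u_1)(v,w) - \diff^2 P(u_2)(v,w)\bigr| \leq k\, \|u_1-u_2\|_{H^1(\mathbb{R})} \|v\|_{H^1(\mathbb{R})} \|w\|_{H^1(\mathbb{R})}.\]

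A direct computation using the real scalar product on $\mathbb{C}$ gives
\[\diff^2 P(u)(v,w) = -\int_{\mathbb{R}} \bigl[|u|^2 (v\cdot w) + 2(u\cdot v)(u\cdot w)\bigr] \dx.\]
The difference is then rewritten using the telescoping identities
\[|u_1|^2 - |u_2|^2 = (u_1-u_2)\cdot(u_1+u_2)\]
and
\[(u_1\cdot v)(u_1\cdot w) - (u_2\cdot v)(u_2\cdot w) = \bigl((u_1-u_2)\cdot v\bigr)(u_1\cdot w) + (u_2\cdot v)\bigl((u_1-u_2)\cdot w\bigr),\]
so the integrand is a sum of terms of the form $(u_1-u_2) \cdot a \cdot b \cdot c$ where each of $a,b,c$ is one of $u_1, u_2, v, w$.

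The conclusion then follows by Hölder's inequality distributing the five factors as, e.g., $L^\infty \cdot L^\infty \cdot L^\infty \cdot L^2 \cdot L^2$, combined with the one-dimensional Sobolev embedding $H^1(\mathbb{R}) \hookrightarrow L^\infty(\mathbb{R})$, which yields $\|a\|_{L^\infty} \leq C\|a\|_{H^1}$. Since $\|u_1\|_{H^1}, \|u_2\|_{H^1} < R$, we obtain the desired Lipschitz estimate with a constant $k$ depending only on $R$ (and in particular independent of $\xi \in \Omega$ and $h>0$, since $P$ does not depend on either parameter). I do not expect a genuine obstacle: once the quadratic/quartic decomposition is used, the estimate is a routine application of Hölder and 1D Sobolev; the only bookkeeping to watch is the use of the real scalar product structure on $\mathbb{C}$ inherited from the paper's conventions.
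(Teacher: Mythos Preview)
Your argument is correct and matches the paper's: the paper simply observes that $\diff^3 \Lag_\xi^h = -\tfrac14\,\diff^3\|\cdot\|_{L^4}^4$ is bounded on bounded subsets of $H^1(\mathbb{R})$ and invokes the mean value inequality, which is exactly what your telescoping computation makes explicit. (Minor slip: you have four factors, not five, so the H\"older split should read e.g.\ $L^\infty\cdot L^\infty\cdot L^2\cdot L^2$; this does not affect the argument.)
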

\begin{proof}
We use mean value inequality. Indeed $\diff^3 \Lag_\xi^h = -\frac14 \diff^3 \| \cdot \|_{L^4(\mathbb{R})}^4$ is clearly a bounded function on bounded subsets of $H^1(\mathbb{R})$. 
\end{proof}

Applying Lemma \ref{est_lips} and Lemma \ref{lem_invert_univ}, we deduce that assumptions of the inverse function Theorem \ref{Thm_Inv_loc} are fulfilled. In the following Proposition, we give its conclusion.
\begin{propo}
\label{prop_apply_inv_thm}
There exist $h_0,r,\lambda,C>0$ such that if $h<h_0$ and $\xi\in \Omega$ then
\begin{itemize}
\item $\diff \Lag^h_{\xi \ |  {\rm Ker}(\id - S_h)}$ is a $C^1$ diffeomorphism from $\{ u \in  {\rm Ker}(\id - S_h) \ | \ \| u - \psi_\xi^h \|_{H^1(\mathbb{R})} < r  \}$ onto its image,
\item if $u \in  {\rm Ker}(\id - S_h) $ and $\| u - \psi_\xi^h \|_{H^1(\mathbb{R})} < r $ then $ \| \diff^2 \Lag^h_{\xi \ |  {\rm Ker}(\id - S_h)}(u)^{-1}\|_{\scriptscriptstyle \mathscr{L}( {\rm Ker}(\id - S_h)';{\rm Ker}(\id - S_h) )} \leq C$,
\item if $\rho<r$ and $\Phi \in  {\rm Ker}(\id - S_h)'$ with $\| \Phi - \diff \Lag^h_{\xi \ |  {\rm Ker}(\id - S_h)}(\psi_\xi^h)  \|_{{\rm Ker}(\id - S_h)'} < \lambda \rho$ then there exists $ u \in  {\rm Ker}(\id - S_h)$ such that $\| u - \psi_\xi^h \|_{H^1(\mathbb{R})} < \rho$ and
\[  \diff \Lag^h_{\xi \ |  {\rm Ker}(\id - S_h)}(u) = \Phi. \]
\end{itemize}
\end{propo}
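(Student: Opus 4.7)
The plan is to apply the quantitative inverse function Theorem \ref{Thm_Inv_loc} to the $C^1$ map
$F_{\xi,h} := \diff \Lag^h_{\xi \ | \ {\rm Ker}(\id - S_h)}$,
viewed as a map from ${\rm Ker}(\id - S_h)$ equipped with the $H^1(\mathbb{R})$ norm into its topological dual, around the base point $\psi_\xi^h$. Everything needed to apply this theorem with constants uniform in $\xi \in \Omega$ and $h < h_0$ has already been established in the previous lemmas; what remains is essentially to collect those ingredients and read off the three conclusions, while keeping careful track of which constants are uniform.

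First I would recall the two key inputs. Lemma \ref{lem_invert_univ} gives the invertibility of $\diff F_{\xi,h}(\psi_\xi^h)$ with a uniform upper bound $C_0$ on the operator norm of its inverse, and Lemma \ref{est_lips} provides, on any fixed $H^1$-ball, a uniform Lipschitz constant $k$ for the map $u \mapsto \diff^2 \Lag^h_\xi(u)$. Plugging these constants into Theorem \ref{Thm_Inv_loc} produces a radius $r$ (of the form $r = c/(k C_0^2)$ up to a universal factor) on which $F_{\xi,h}$ is a $C^1$ diffeomorphism onto its image, together with a surjectivity rate $\lambda > 0$ such that the image of the ball of radius $\rho < r$ contains the ball of radius $\lambda \rho$ centered at $F_{\xi,h}(\psi_\xi^h)$. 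Choosing $r$ and $\lambda$ depending only on $C_0$ and $k$, and shrinking $h_0$ finitely many times, yields conclusions (i) and (iii).

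For conclusion (ii), the uniform bound on $\|\diff^2\Lag^h_{\xi \ | \ {\rm Ker}(\id - S_h)}(u)^{-1}\|$ on the $H^1$-ball of radius $r$ around $\psi_\xi^h$ follows from a Neumann-series perturbation argument: for any such $u$,
\begin{equation*}
\diff^2 \Lag^h_\xi(u) = \diff^2 \Lag^h_\xi(\psi_\xi^h) \circ \bigl(\id + \diff^2 \Lag^h_\xi(\psi_\xi^h)^{-1} \bigl(\diff^2 \Lag^h_\xi(u) - \diff^2 \Lag^h_\xi(\psi_\xi^h)\bigr)\bigr),
\end{equation*}
and Lemma \ref{est_lips} bounds the perturbation inside the parentheses by $C_0 k r$ in operator norm. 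Shrinking $r$ once more so that $C_0 k r \le 1/2$ makes this factor invertible via Neumann series and gives $\|\diff^2 \Lag^h_\xi(u)^{-1}\| \le 2 C_0 =: C$ uniformly in $u$, $\xi$ and $h$.

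There is no real obstacle in this step; all the analytic work has been done in Lemmas \ref{lem_invert_univ} and \ref{est_lips}. The only care required is to track which constants depend on $(\xi,h)$ and to reduce $h_0$ finitely many times so that a single triple $(r, \lambda, C)$ works simultaneously for all $\xi \in \Omega$ and $h < h_0$.
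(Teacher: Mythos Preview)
Your proposal is correct and follows essentially the same approach as the paper: collect the uniform invertibility bound from Lemma \ref{lem_invert_univ} and the uniform Lipschitz bound from Lemma \ref{est_lips}, then read off the three conclusions from the quantitative inverse function Theorem \ref{Thm_Inv_loc}. The only cosmetic difference is that you spell out the Neumann-series argument for conclusion (ii) explicitly, whereas in the paper this bound is already part of Theorem \ref{Thm_Inv_loc} (its second bullet gives $\|\diff g(x)^{-1}\|\le r/(\beta(r-\|x\|))$, which becomes uniform after halving $r$); your argument is exactly the computation hidden there.
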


To apply this result to  $\Phi=0$, we will show that the norm of $ \diff \Lag^h_{\xi \ |  {\rm Ker}(\id - S_h)}(\psi_\xi^h) $ is small when $h \to 0$, uniformly in $\xi \in \Omega$.  It is exactly, what we establish in the following Lemma, which also explains the error term "$h^2$" in Theorem \ref{Thm_DTW}.
\begin{lem}
\label{lem_cons_error}
For all $h_0>0$ there exists $M>0$ such that if $h<h_0$ and $\xi \in \Omega$ then
\[  \forall v\in BL^2_h, \quad \ |\diff \Lag_\xi^h(\psi_\xi^h)(v)| \leq M h^2 \|v\|_{H^1(\mathbb{R})}.  \]
\end{lem}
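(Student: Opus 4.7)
The plan is to compare $\diff\Lag_\xi^h(\psi_\xi^h)$ with $\diff\Lag_\xi(\psi_\xi)$, which vanishes because $\psi_\xi$ is a critical point of the continuous Lagrangian. For $v\in BL^2_h$, I would use the telescoping decomposition
\[
\diff\Lag_\xi^h(\psi_\xi^h)(v) \;=\; \underbrace{\bigl(\diff\Lag_\xi^h-\diff\Lag_\xi\bigr)(\psi_\xi^h)(v)}_{=:(A)} \;+\; \underbrace{\bigl(\diff\Lag_\xi(\psi_\xi^h) - \diff\Lag_\xi(\psi_\xi)\bigr)(v)}_{=:(B)},
\]
and estimate each term separately. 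The first term quantifies the finite-difference consistency error, while the second measures how much the orthogonal projection onto $BL^2_h$ moves the critical point of $\Lag_\xi$.

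For $(A)$, the nonlinear and momentum parts of $\Lag_\xi^h$ and $\Lag_\xi$ are identical, so only the kinetic contribution survives. Using Plancherel and the fact that both $\psi_\xi^h$ and $v$ belong to $BL^2_h$ (Fourier support in $[-\pi/h,\pi/h]$), I would rewrite
\[
(A) \;=\; \frac{1}{2\pi}\int_{-\pi/h}^{\pi/h} \omega^2\bigl(\sinc^2(\omega h/2)-1\bigr)\;\widehat{\psi_\xi^h}(\omega)\cdot\widehat{v}(\omega)\;\domega,
\]
exactly as in the consistency computation already carried out in the proof of Lemma~\ref{lem_neg}. The elementary bound $|1-\sinc^2(y)|\leq C y^2$ valid on $[-\pi/2,\pi/2]$ gives the pointwise estimate $|\omega^2(\sinc^2(\omega h/2)-1)|\leq C h^2\omega^4$ throughout $[-\pi/h,\pi/h]$, and Cauchy--Schwarz then yields
\[
|(A)| \;\leq\; C h^2 \,\|\psi_\xi^h\|_{\dot H^3(\mathbb{R})}\, \|v\|_{\dot H^1(\mathbb{R})} \;\leq\; C' h^2 \|v\|_{H^1(\mathbb{R})},
\]
where the uniform $\dot H^3$ bound on $\psi_\xi^h$ (and indeed on every Sobolev norm of $\psi_\xi$) comes from the exponential decay $|\widehat{\psi_\xi}(\omega)|\leq C e^{-\varepsilon|\omega|}$ granted by Lemma~\ref{reg_CTW}.

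For $(B)$, since $\diff\Lag_\xi(\psi_\xi)=0$, the mean-value inequality gives
\[
|(B)| \;\leq\; \Bigl(\sup_{\|u\|_{H^1}\leq R}\|\diff^2\Lag_\xi(u)\|\Bigr)\, \|\psi_\xi^h-\psi_\xi\|_{H^1(\mathbb{R})}\, \|v\|_{H^1(\mathbb{R})},
\]
where the supremum is finite and uniform in $\xi\in\Omega$ by a direct inspection of $\diff^2\Lag_\xi$ together with the Sobolev embedding $H^1(\mathbb{R})\hookrightarrow L^\infty(\mathbb{R})$. The key observation is that $\|\psi_\xi^h-\psi_\xi\|_{H^1}$ is in fact \emph{exponentially} small in $1/h$: by Plancherel,
\[
\|\psi_\xi^h-\psi_\xi\|_{H^1(\mathbb{R})}^2 \;=\; \frac{1}{2\pi}\int_{|\omega|>\pi/h}(1+\omega^2)|\widehat{\psi_\xi}(\omega)|^2\,\domega \;\leq\; C e^{-\varepsilon\pi/h},
\]
again by Lemma~\ref{reg_CTW}, and this is negligible in front of $h^2$.

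Adding the two estimates yields the claim. The only subtlety will be ensuring uniformity of every constant with respect to $\xi\in\Omega$, but at each step this follows from the uniform exponential decay of $\widehat{\psi_\xi}$ on $\overline\Omega$ provided by Lemma~\ref{reg_CTW}, combined with the compactness of $\overline\Omega$; no new ingredient beyond the material already developed for the proof of Lemma~\ref{lem_neg} is needed.
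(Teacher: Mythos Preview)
Your proof is correct and follows essentially the same approach as the paper: subtract the vanishing quantity $\diff\Lag_\xi(\psi_\xi)(v)$, isolate the finite-difference consistency error in the kinetic term (controlled by $Ch^2$ times a high Sobolev norm of $\psi_\xi$ via Plancherel and $|1-\sinc^2(y)|\leq Cy^2$), and handle the remaining difference by the mean-value inequality combined with the exponential smallness of $\|\psi_\xi-\psi_\xi^h\|_{H^1}$ from Lemma~\ref{reg_CTW}. The only cosmetic differences are that the paper telescopes so that the quadratic terms cancel exactly (leaving only the $L^4$ part in the second piece) and places all four derivatives on $\psi_\xi$ rather than splitting them as $\dot H^3\times\dot H^1$.
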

\begin{proof} The arguments are very  similar to the proof of Lemma \ref{lem_neg}. The key point is the estimate of the consistency error associated to the discretization of the second derivative by finite differences.

Since $\psi_\xi$ is a critical point of $\Lag_\xi$, we deduce from the definition of $\psi_\xi^h$ (see \eqref{ortho_proj}) that
\begin{align}
\nonumber
 \diff \Lag_\xi^h(\psi_\xi^h)(v) &= \diff \Lag_\xi^h(\psi_\xi^h)(v)  -\diff \Lag_\xi(\psi_\xi)(v) \\
 										  & \label{hindemith} = \langle (\partial_x^2 - \Delta_h) \psi_\xi   ,v   \rangle_{L^2(\mathbb{R})} + \diff \frac{\|\cdot \|_{L^4(\mathbb{R})}^4}4 (\psi_\xi)(v)   - \diff \frac{\|\cdot\|_{L^4(\mathbb{R})}^4}4 (\psi_\xi^h)(v) .
\end{align}

To estimate the first term, we use Fourier Plancherel isometry to get
\begin{multline*}
 |\langle (\partial_x^2 - \Delta_h) \psi_\xi   ,v   \rangle_{L^2(\mathbb{R})}| = \left| \frac1{2\pi} \int_{\mathbb{R}} \left[ \frac4{h^2}\sin^2\left(  \frac{\pi \omega h}2\right)-\omega^2 \right] \widehat{\psi_\xi}(\omega).\widehat{v}(\omega)\domega \right|\\
 													 \leq \sup_{\omega\in \mathbb{R}} \left| \frac{\sinc^2\left(  \frac{ \omega h}2\right)-1}{\omega^2} \right| \|\partial_x^4 \psi_\xi \|_{L^2(\mathbb{R})} \|v\|_{L^2(\mathbb{R})} 
 													= \left( \frac{h}2 \right)^2 \sup_{\omega\in \mathbb{R}} \left| \frac{\sinc^2(\omega)-1}{\omega^2} \right| \|\partial_x^4 \psi_\xi \|_{L^2(\mathbb{R})} \|v\|_{L^2(\mathbb{R})} .
\end{multline*}
As we can see from Lemma \ref{reg_CTW}, $\|\partial_x^4 \psi_\xi \|_{L^2(\mathbb{R})}$ is clearly bounded uniformly with respect to $\xi\in \Omega$.

To control the second term in \eqref{hindemith},  we use mean value inequality and Lemma \ref{reg_CTW} to get some constants $M,C>0$ independent of $h$ and $\xi \in \Omega$ such that
\[ \left| \diff \frac{\|.\|_{L^4(\mathbb{R})}^4}4 (\psi_\xi)(v)   - \diff \frac{\|.\|_{L^4(\mathbb{R})}^4}4 (\psi_\xi^h)(v) \right| \leq M \|\psi_\xi - \psi_\xi^h \|_{L^2(\mathbb{R})} \|v\|_{L^2(\mathbb{R})} \leq C e^{-\frac{\pi \varepsilon}h}  \|v\|_{L^2(\mathbb{R})},\]
which shows the result, provided $h < h_0$ small enough.  
\end{proof}

Applying Lemma \ref{lem_cons_error}, if $h_0$ is smaller than $\sqrt{\frac{\lambda r}{2M}}$ we can choose $\Phi = 0$ in Proposition \ref{prop_apply_inv_thm} and we  denote by $\eta_\xi^h$ the corresponding critical point of $\Lag^h_{\xi \ |  {\rm Ker}(\id - S_h)}$.  As shown in the first step, $\eta_\xi^h$ is thus a critical point of $\Lag^h_\xi$, and with Proposition \ref{prop_apply_inv_thm}, we have proven the points $a)$ to $d)$ of Theorem \ref{thm_ExTW}. It remains to show the coercivity estimate $e)$ and the regularity with respect to $\xi$.

To obtain the coercivity estimate, we just have to perturb the estimate of Lemma \ref{lem_cor_psi} with Lemma \ref{perturb_coer} presented in Appendix. This is given by the following result 
 
\begin{lem} 
\label{lem_coer_near}
There exist $\alpha>0$, $h_0>0$ and $\rho>0$ such that for all $\xi \in \Omega$, $h<h_0$ and $u \in BL^2_h$ such that $\| u - \psi_\xi^h \|_{H^1(\mathbb{R})}< \rho$, we have
\begin{equation}
\label{bach} \forall v\in BL^2_h \cap \Span(i u, \partial_x u, u)^{\perp_{L^2}}, \  \quad  \diff^2 \Lag_\xi^h( u)( v,v) \geq \alpha \|v\|_{H^1(\mathbb{R})}^2. 
\end{equation}
\end{lem}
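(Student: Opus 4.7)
The plan is to deduce the coercivity at a nearby point $u$ from the coercivity at $\psi_\xi^h$ (Lemma \ref{lem_cor_psi}) by applying the abstract perturbation result Lemma \ref{perturb_coer}, exactly as was done in the proof of Lemma \ref{lem_cor_psi} to pass from \eqref{est_wein_perturb} to the discrete setting. More precisely, I would fix as base data the bilinear form $b_0:=\diff^2 \Lag_\xi^h(\psi_\xi^h)$ on $E:=BL_h^2$ (equipped with $\|\cdot\|_{H^1(\mathbb{R})}$) and the finite-dimensional subspace $X_0:=\Span(\psi_\xi^h,i\psi_\xi^h,\partial_x\psi_\xi^h)$. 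Lemma \ref{lem_cor_psi} gives a constant $\alpha_0>0$, uniform in $\xi\in\Omega$ and $h<h_0$, such that $b_0(v,v)\ge \alpha_0\|v\|_{H^1(\mathbb{R})}^2$ on $X_0^{\perp_{L^2}}\cap BL_h^2$.

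The perturbation consists in replacing $b_0$ by $b_u:=\diff^2\Lag_\xi^h(u)$ and $X_0$ by $X_u:=\Span(u,iu,\partial_x u)$. Three quantitative ingredients are needed:

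\noindent (i) closeness of the bilinear forms, which is given by Lemma \ref{est_lips} with $R=\sup_{\xi\in\Omega,h<h_0}\|\psi_\xi^h\|_{H^1}+1$ (this supremum is finite by Lemma \ref{reg_CTW} and \eqref{ortho_proj}): for $\|u-\psi_\xi^h\|_{H^1}<\rho\le 1$,
\[
 |b_u(v,w)-b_0(v,w)|\le k\,\rho\,\|v\|_{H^1(\mathbb{R})}\|w\|_{H^1(\mathbb{R})};
\]

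\noindent (ii) closeness of the subspaces, since the generators of $X_u$ depend linearly and continuously (in $H^1$, hence in $L^2$) on $u$, so the orthogonal projectors onto $X_u^{\perp_{L^2}}$ and $X_0^{\perp_{L^2}}$ differ by $\mathcal{O}(\rho)$;

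\noindent (iii) uniform invertibility of the Gram matrix of $(\psi_\xi^h,i\psi_\xi^h,\partial_x\psi_\xi^h)$ with respect to the $L^2$ inner product. For the continuous family $(\psi_\xi,i\psi_\xi,\partial_x\psi_\xi)$ the Gram matrix is invertible for each $\xi$, as already used at the end of the proof of Lemma \ref{lem_cor_psi} (because $\psi_\xi$ is not a plane wave), and depends continuously on $\xi\in\overline{\Omega}$; its inverse is therefore uniformly bounded on $\overline{\Omega}$. Lemma \ref{reg_CTW} combined with \eqref{ortho_proj} yields $\|\psi_\xi^h-\psi_\xi\|_{H^2(\mathbb{R})}\to 0$ as $h\to 0$, uniformly in $\xi\in\Omega$, which transfers the uniform bound to the discrete Gram matrices, provided $h_0$ is small enough.

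With these three inputs, applying Lemma \ref{perturb_coer} produces $\rho>0$ and $\alpha\in(0,\alpha_0/2]$, depending only on $\alpha_0$, on the Lipschitz constant $k$, on the uniform bound for the inverse Gram matrix, and on $\sup\|\psi_\xi^h\|_{H^1}$ — all of which are controlled uniformly in $\xi\in\Omega$ and $h<h_0$ — such that
\[
 \|u-\psi_\xi^h\|_{H^1(\mathbb{R})}<\rho\;\Longrightarrow\;\forall v\in BL_h^2\cap X_u^{\perp_{L^2}},\quad b_u(v,v)\ge \alpha\|v\|_{H^1(\mathbb{R})}^2.
\]
The main (and only) delicate point is this uniformity: one must verify that, in the concrete form in which Lemma \ref{perturb_coer} is stated in the Appendix, every constant appearing on the right-hand side is a continuous function of the reference data $(b_0,X_0)$ and therefore can be majorized uniformly for $\xi\in\Omega$ and $h<h_0$. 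Once this bookkeeping is done, \eqref{bach} follows immediately, and possibly shrinking $\rho$ and $h_0$ if necessary completes the proof.
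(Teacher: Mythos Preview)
Your proposal is correct and follows essentially the same route as the paper: start from the coercivity of Lemma \ref{lem_cor_psi}, apply the abstract perturbation Lemma \ref{perturb_coer} with $E=BL_h^2$, $X=\Span(\psi_\xi^h,i\psi_\xi^h,\partial_x\psi_\xi^h)$ and $b=\diff^2\Lag_\xi^h$, and check the uniform control of the Gram matrix, of $\|\diff^2\Lag_\xi^h(\psi_\xi^h)\|$ (Lemma \ref{lem_d2_unif_bound}) and of the Lipschitz constant (Lemma \ref{est_lips}). The only cosmetic difference is that the paper keeps the subspace $X_0$ fixed and uses directly that $v\in\Span(u,iu,\partial_x u)^{\perp_{L^2}}$ implies $|\langle \psi_\xi^h,v\rangle_{L^2}|=|\langle \psi_\xi^h-u,v\rangle_{L^2}|\le\|u-\psi_\xi^h\|_{H^1}\|v\|_{H^1}\le c_2\|v\|_{H^1}$ (and similarly for $i\psi_\xi^h$, $\partial_x\psi_\xi^h$), which is precisely the hypothesis of Lemma \ref{perturb_coer}; your point (ii) about projectors expresses the same thing.
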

\begin{proof}
The proof is very similar to the first part of the proof of Lemma \ref{lem_cor_psi}, but we need to track precisely the dependence of the constant with respect to $h$.

First, applying Lemma \ref{lem_cor_psi}, we know that there exists $h_0>0$ and $\alpha>0$ such that for all $h<h_0$ and all $\xi\in \Omega$ we have
\[ \forall v\in BL^2_h \cap \Span(i \psi_\xi^h, \partial_x \psi_\xi^h, \psi_\xi^h)^{\perp_{L^2}}, \ \quad  \diff^2 \Lag_\xi^h( \psi_\xi^h)( v,v) \geq \alpha \|v\|_{H^1(\mathbb{R})}^2. \]

We want to apply Lemma \ref{perturb_coer} in $\psi_\xi^h$ in order to perturb this estimate and prove that there exist $h_0>0$, $c_1,c_2>0$ such that for all $\xi \in \Omega$ and all $h<h_0$, if 
\[ \| u - \psi_\xi^h\|_{H^1(\mathbb{R})}\leq c_1 \quad \textrm{ and }  \quad  \max \left( | \langle \psi_\xi^h, v \rangle_{L^2(\mathbb{R})} | , |\langle i\psi_\xi^h, v \rangle_{L^2(\mathbb{R})}| , |\langle \partial_x \psi_\xi^h, v \rangle_{L^2(\mathbb{R})}| \right)\leq c_2 \|v\|_{H^1(\mathbb{R})},  \] then 
\[  \diff^2 \Lag_\xi^h( u)( v,v) \geq \alpha \|v\|_{H^1(\mathbb{R})}^2 \geq \frac{\alpha}2 \| v\|_{H^1(\mathbb{R})}^2. \]
To do this, we  apply Lemma \ref{perturb_coer} in $\psi_\xi^h$ with $E = BL^2_h$, $X = \Span(i \psi_\xi^h, \partial_x \psi_\xi^h, \psi_\xi^h)$ and $b = \diff^2\Lag_\xi^h$. The Gram matrix is 
\[ G_{\xi}^h = \begin{pmatrix} \| \psi_\xi^h \|_{L^2(\mathbb{R})}^2 & \langle i \psi_\xi^h , \partial_x \psi_\xi^h \rangle_{L^2(\mathbb{R})} & 0 \\
							 \langle i \psi_\xi^h , \partial_x \psi_\xi^h \rangle_{L^2(\mathbb{R})}  & 	 \| \partial_x \psi_\xi^h \|_{L^2(\mathbb{R})}^2 & 0 \\
				0 & 0 &	\| \psi_\xi^h \|_{L^2(\mathbb{R})}^2
\end{pmatrix}.\]
To prove that the constants $c_1,c_2>0$ --explicitly given by Lemma \ref{perturb_coer}-- are independent of $\xi \in \Omega$ and $h$ small enough, we have to control uniformly the inverse of $G_{\xi}^h$, the norm of $\psi_\xi^h$ in $H^1(\mathbb{R})$, the norm of $\diff^2\Lag_\xi^h(\psi_\xi^h)$ and prove that $\diff^2\Lag_\xi^h$ is uniformly Lipschitz.

The control of $\psi_\xi^h$ in $H^1(\mathbb{R})$ is obvious, and Lemma \ref{est_lips} shows that $\diff^2\Lag_\xi^h$ is uniformly Lipschitz. 
In Lemma \ref{lem_d2_unif_bound}, we have proven that the norm $\diff^2\Lag_\xi^h(\psi_\xi^h)$ is uniformly bounded with respect to $h$ and $\xi \in \Omega$. So we just need to focus on the Gram matrix.

   As explained in the proof of Lemma \ref{lem_cor_psi} $G_\xi^{0} = G_\xi$ is invertible. Furthermore, $(h,\xi) \mapsto G_{\xi}^h$ is a continuous function on $\mathbb{R}_+ \times \overline{\Omega}$, so there exists $h_0>0$ and $M>0$ such that for all $h<h_0$ and all $\xi \in \Omega$,  $G_{\xi}^h$ is invertible and $\| (G_{\xi}^h)^{-1}\|_{\infty} \leq M$.

To prove \eqref{bach}, let us set $\rho = \min(c_1,c_2)$ and consider $h<h_0$ and $\xi \in \Omega$. Let $u,v\in BL^2_h$ be such that $\| u - \psi_\xi^h \|_{H^1(\mathbb{R})}< \rho$ and $v\in \Span(i u, \partial_x u, u)^{\perp_{L^2}}$. Then, we have
 \[ \max \left( | \langle \psi_\xi^h, v \rangle_{L^2(\mathbb{R})} | , |\langle i\psi_\xi^h, v \rangle_{L^2(\mathbb{R})}| , |\langle \partial_x \psi_\xi^h, v \rangle_{L^2(\mathbb{R})}| \right)\leq \| u - \psi_\xi^h \|_{H^1(\mathbb{R})} \|v\|_{H^1(\mathbb{R})} \leq c_2 \|v\|_{H^1(\mathbb{R})}.  \]
 Consequently, we can apply the result of Lemma \ref{perturb_coer} to get
 \[  \diff^2 \Lag_\xi^h( u)(v,v) \geq \alpha \|v\|_{H^1(\mathbb{R})}^2 \geq \frac{\alpha}2 \| v\|_{H^1(\mathbb{R})}^2. \]
 which shows the result. 
\end{proof}

The following Lemma concludes the proof of Theorem \ref{thm_ExTW}. It shows that $\xi \mapsto \eta_\xi^h$ is $C^1$ and that its derivative with respect to $\xi$ is a good approximation of the derivative of $\psi_\xi$ with respect to $\xi$.
\begin{lem}
\label{lem_reg_C1}
Let $h_0,r,\lambda,C>0$ be the constants given in Proposition \ref{prop_apply_inv_thm} and $M>0$ be the constant associated with $h_0>0$ given in Lemma \ref{lem_cons_error}. Let $h_1 := \min(h_0, \sqrt{\frac{\lambda r}{2M}} )$ and for any $h<h_1$ and $\xi \in \Omega$, let  $\eta_\xi^h$ denotes the critical point of $\Lag_\xi^h$ at a distance smaller than $r$ from $\psi_\xi^h$.
There exists $k>0$ such that for all $h<h_1$, for all $\xi \in \Omega$, $\xi \mapsto \eta_\xi^h$ is $C^1$ and 
\[  \| \diff_\xi \psi_\xi(\zeta)-\diff_\xi \eta_\xi^h(\zeta) \|_{H^1(\mathbb{R})} \leq k |\zeta| h^2 .\]
\end{lem}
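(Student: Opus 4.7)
The plan is to use the implicit function theorem to obtain $C^1$ regularity of $\xi\mapsto \eta_\xi^h$, then compare the resulting formula for $\diff_\xi\eta_\xi^h(\zeta)$ with its continuous analogue obtained by differentiating $\diff\Lag_\xi(\psi_\xi)=0$.

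First I consider $F(\xi,u):=\diff\Lag^h_{\xi|{\rm Ker}(\id-S_h)}(u)$, which is jointly $C^1$ on $\Omega\times V$ since $\Lag^h_\xi$ is polynomial in $u$ and affine in $\xi$. The partial derivative $\partial_u F(\xi,\eta_\xi^h)=A_h:=\diff^2\Lag^h_{\xi|{\rm Ker}(\id-S_h)}(\eta_\xi^h)$ is invertible with uniformly bounded inverse: this is the analogue of Lemma~\ref{lem_invert_univ}, obtained via Lemma~\ref{lem_exRiesz} with $E_m=\Span(\eta_\xi^h)$ and $E_p=\Span(\eta_\xi^h)^{\perp_{L^2}}\cap{\rm Ker}(\id-S_h)$. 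Coercivity on $E_p$ follows from Lemma~\ref{lem_coer_near} (noting that $i\eta_\xi^h,\partial_x\eta_\xi^h\in{\rm Ker}(\id+S_h)$ are automatically $L^2$-orthogonal to ${\rm Ker}(\id-S_h)$), while negativity on $E_m$ is inherited from Lemma~\ref{lem_neg} by continuity, using $\|\eta_\xi^h-\psi_\xi^h\|_{H^1(\mathbb R)}\leq Ch^2$ and Lemmas~\ref{est_lips}--\ref{lem_d2_unif_bound}. The implicit function theorem then yields
\[
\diff_\xi\eta_\xi^h(\zeta)=-A_h^{-1}B^h(\zeta),\qquad B^h(\zeta)(v):=\zeta_1\langle \eta_\xi^h,v\rangle_{L^2(\mathbb R)}+\zeta_2\langle i\partial_x\eta_\xi^h,v\rangle_{L^2(\mathbb R)}.
\]
The analogous continuous identity $\diff^2\Lag_\xi(\psi_\xi)(\phi,w)=-B(\zeta)(w)$, valid for all $w\in H^1(\mathbb R)$, is obtained by differentiating $\diff\Lag_\xi(\psi_\xi)=0$ in $\xi$, where $\phi:=\diff_\xi\psi_\xi(\zeta)$ and $B(\zeta)(w):=\zeta_1\langle\psi_\xi,w\rangle_{L^2(\mathbb R)}+\zeta_2\langle i\partial_x\psi_\xi,w\rangle_{L^2(\mathbb R)}$.

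To compare $\phi$ and $\phi^h:=\diff_\xi\eta_\xi^h(\zeta)$, I split $\phi=P_h\phi+(\phi-P_h\phi)$, where $P_h$ is Shannon orthogonal projection onto $BL_h^2$. Since $P_h$ commutes with $S_h$ (conjugation on the Fourier side), $P_h\phi\in{\rm Ker}(\id-S_h)$; moreover, because $\widehat{\psi_\xi}$ (hence $\widehat\phi$) has exponential decay uniformly in $\xi\in\Omega$ by Lemma~\ref{reg_CTW}, one has $\|\phi-P_h\phi\|_{H^1(\mathbb R)}=O(|\zeta|\,e^{-c/h})$. It remains to bound $w^h:=P_h\phi-\phi^h\in{\rm Ker}(\id-S_h)$, which by the uniform invertibility of $A_h$ reduces to estimating the linear form $v\mapsto A_h(w^h)(v)$. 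Using the two identities,
\[
A_h(w^h)(v)=\bigl[A_h(P_h\phi)(v)-\diff^2\Lag_\xi(\psi_\xi)(\phi,v)\bigr]+\bigl[B^h(\zeta)(v)-B(\zeta)(v)\bigr].
\]
The second bracket equals $\zeta_1\langle\eta_\xi^h-\psi_\xi,v\rangle_{L^2}+\zeta_2\langle i\partial_x(\eta_\xi^h-\psi_\xi),v\rangle_{L^2}$, bounded by $C|\zeta|h^2\|v\|_{H^1(\mathbb R)}$ via point b) of Theorem~\ref{thm_ExTW}. The first bracket telescopes through the intermediate forms $\diff^2\Lag^h_\xi(\psi_\xi^h)(P_h\phi,v)$, $\diff^2\Lag_\xi(\psi_\xi^h)(P_h\phi,v)$ and $\diff^2\Lag_\xi(\psi_\xi)(P_h\phi,v)$: shifts of base point (discrete or continuous) cost $O(h^2|\zeta|\|v\|_{H^1})$ by Lemma~\ref{est_lips} together with $\|\eta_\xi^h-\psi_\xi^h\|_{H^1},\|\psi_\xi^h-\psi_\xi\|_{H^1}\leq Ch^2$, and the last shift $\diff^2\Lag_\xi(\psi_\xi)(P_h\phi-\phi,v)$ is exponentially small.

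The main obstacle is the remaining piece $[\diff^2\Lag^h_\xi(\psi_\xi^h)-\diff^2\Lag_\xi(\psi_\xi^h)](P_h\phi,v)$, which is purely the consistency error between the finite-difference kinetic energy and $\int|\partial_x\cdot|^2$. On the Fourier side its integrand is $[4h^{-2}\sin^2(\omega h/2)-\omega^2]\,\widehat{P_h\phi}(\omega)\,\overline{\widehat v(\omega)}$ on $[-\pi/h,\pi/h]$. The naive Taylor bound $|4h^{-2}\sin^2(\omega h/2)-\omega^2|\leq Ch^2\omega^4$ combined with a symmetric Cauchy--Schwarz only yields $O(h^2)\|P_h\phi\|_{\dot H^2}\|v\|_{\dot H^2}$, which collapses to $O(h)$ after a Bernstein bound $\|v\|_{\dot H^2}\leq(\pi/h)\|v\|_{H^1}$ on $BL_h^2$. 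The fix is to expand $4h^{-2}\sin^2(\omega h/2)-\omega^2=-h^2\omega^4/12+R(h,\omega)$ with $|R(h,\omega)|\leq Ch^4\omega^6$ on $[-\pi/h,\pi/h]$, and to apply Cauchy--Schwarz asymmetrically, placing all extra powers of $\omega$ on $\widehat{P_h\phi}$ (every $\dot H^n$ norm of $\phi$ is bounded by $C|\zeta|$ uniformly in $h$, by Lemma~\ref{reg_CTW}) and leaving only $\|v\|_{\dot H^1}$ on the test-function side. This yields $|[\diff^2\Lag^h_\xi-\diff^2\Lag_\xi](P_h\phi,v)|\leq Ch^2|\zeta|\|v\|_{H^1(\mathbb R)}$. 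Altogether $\|A_h(w^h)\|_{{\rm Ker}(\id-S_h)'}\leq Ch^2|\zeta|$, and the uniform bound on $\|A_h^{-1}\|$ from the first step concludes $\|w^h\|_{H^1(\mathbb R)}\leq kh^2|\zeta|$, hence $\|\phi-\phi^h\|_{H^1(\mathbb R)}\leq kh^2|\zeta|$.
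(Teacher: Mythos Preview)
Your proof is correct and follows essentially the same route as the paper: implicit function theorem for $C^1$ regularity, then compare the linearized equations satisfied by $\diff_\xi\eta_\xi^h(\zeta)$ and (the projection of) $\diff_\xi\psi_\xi(\zeta)$, and invert the uniformly bounded $A_h$. Two minor remarks: the uniform invertibility of $A_h=\diff^2\Lag^h_{\xi|{\rm Ker}(\id-S_h)}(\eta_\xi^h)$ is already contained in the second bullet of Proposition~\ref{prop_apply_inv_thm}, so you need not redo the Lemma~\ref{lem_exRiesz} argument; and your treatment of the consistency term is more elaborate than necessary --- since $|\omega^2-4h^{-2}\sin^2(\omega h/2)|\le Ch^2\omega^4$, a single asymmetric Cauchy--Schwarz placing all four powers of $\omega$ on $\widehat{P_h\phi}$ gives directly $Ch^2\|P_h\phi\|_{\dot H^4}\|v\|_{L^2}$, without the intermediate Taylor expansion into $-h^2\omega^4/12+R$.
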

\begin{proof}
Let $h<h_1$ and $\xi \in \Omega$. The function $(u,\zeta)\mapsto \diff \Lag^h_{\zeta \ |  {\rm Ker}(\id - S_h)}(u)$ is clearly a $C^1$ function. Applying Proposition \ref{prop_apply_inv_thm}, its derivative with respect to $u$ in $(\eta_\xi^h,\xi)$ is invertible. By construction, $(\eta_\xi^h,\xi)$ is a zero point of this function. So we can apply the implicit function theorem.

There exists $\rho>0$ such that $B(\xi,\rho) \subset \Omega$ and $\Gamma\in C^1(B(\xi,\rho) ; {\rm Ker}(\id - S_h)) $ such that
\[ \forall \zeta \in B(\xi,\rho),\quad  \ \diff \Lag^h_{\zeta \ |  {\rm Ker}(\id - S_h)}(\Gamma(\zeta))=0. \]
To prove that $\Gamma(\zeta)= \eta_\zeta^h$, it is enough to prove that  $\| \Gamma(\zeta) - \psi_{\zeta}^h \|_{H^1(\mathbb{R})}< r$. But by construction of $h_1$, we deduce of Proposition \ref{prop_apply_inv_thm} that
\[  \| \Gamma(\xi) - \psi_{\xi}^h \|_{H^1(\mathbb{R})}= \| \eta_\xi^h - \psi_{\xi}^h \|_{H^1(\mathbb{R})}\leq \frac{r}2.\]
Furthermore, $\zeta \mapsto \Gamma(\zeta)$ and $\zeta \mapsto \psi_{\zeta}^h$ are continuous functions. So there exists $\widetilde{\rho}<\rho$ such that,
\[  \forall \zeta \in B(\xi, \widetilde{\rho}), \  \quad  \| \Gamma(\xi) - \Gamma(\zeta) \|_{H^1(\mathbb{R})} +  \| \psi_{\zeta}^h - \psi_{\xi}^h \|_{H^1(\mathbb{R})} \leq \frac{r}4. \]
Applying the triangle inequality for $\zeta\in B(\xi, \widetilde{\rho})$, we thus obtain
\[  \| \Gamma(\zeta) - \psi_{\zeta}^h \|_{H^1(\mathbb{R})}\leq \frac34 r < r.\]
Since we have proven in Proposition \ref{prop_apply_inv_thm} that $\diff \Lag^h_{\zeta \ |  {\rm Ker}(\id - S_h)}$ is invective on $\{ u \in  {\rm Ker}(\id - S_h) \ | \ \| u - \psi_\zeta^h \|_{H^1(\mathbb{R})} < r  \}$, we get $\Gamma(\zeta) = \eta_\zeta^h$ for all $\zeta\in B(\xi, \widetilde{\rho})$. Consequently, $\zeta \mapsto \eta_\zeta^h$ is $C^1$.

Now, we have to prove that $\diff_\xi \eta_\xi^h$ is an approximation of $\diff_\xi \psi_\xi$. First, we introduce some constants $c,\varepsilon>0$ such that for all $\xi \in \Omega$ and all $\zeta \in \mathbb{R}^2$, we have
\begin{equation}
\label{reg_ana_der}
 \forall \omega \in \mathbb{R}, \quad \ |\widehat{ \diff_\xi \psi_\xi(\zeta) }(\omega)| \leq c|\zeta|e^{-\varepsilon |\omega|}.
\end{equation}
There are several ways to establish this property. The most direct is probably to deduce it from the explicit formula of $\psi_\xi$ (see \eqref{def_CTW}). But it can also be proven with elliptic regularity as in Theorem \ref{thm_anareg} below.

Then, we deduce from the definition of $\psi_\xi^h$ that for all $h>0$, $\xi \mapsto \psi_\xi^h$ is $C^1$ and there exists $k>0$ such that
\begin{equation}
\label{approx_deriv}
 \forall h>0,\quad \forall \xi \in \Omega,\quad  \forall \zeta \in \mathbb{R}^2, \quad \  \| \diff_\xi \psi_\xi(\zeta) - \diff_\xi \psi_\xi^h(\zeta) \|_{H^1(\mathbb{R})}  \leq  k|\zeta|e^{-\frac{\varepsilon \pi}{2h} }.
\end{equation} So we just need to prove that $\diff_\xi \eta_\xi^h$ is an approximation of $\diff_\xi \psi_\xi^h$ of order $2$ in $h$. To compare these quantities, we are going to prove that they are almost solutions of the same linear equation.

Since $\eta_\xi^h$ is a critical point of $\Lag_\xi^h$, it satisfies for all $v\in {\rm Ker}(\id - S_h)$, $\diff \Lag_{\xi \ |  {\rm Ker}(\id - S_h)}^h (\eta_\xi^h)(v)=0$. So we can calculate the derivative with respect to $\xi$ to obtain that 
\[ \forall \zeta \in \mathbb{R}^2, \quad \forall v\in {\rm Ker}(\id - S_h),\quad  \ \diff^2 \Lag_{\xi \ |  {\rm Ker}(\id - S_h)}^h (\eta_\xi^h)(v,\diff_\xi \eta_\xi^h(\zeta)) + b_{\zeta}^h[\eta_\xi^h](v) = 0,   \]
where $b_{\zeta}^h[u] \in ({\rm Ker}(\id - S_h))'$ is defined for $u \in  {\rm Ker}(\id - S_h)$ by
\[  b_{\zeta}^h[u](v) := \zeta_1 \langle u , v \rangle_{L^2(\mathbb{R})} + \zeta_2 \langle i \partial_x u , v  \rangle_{L^2(\mathbb{R})}.  \]
Similarly, we define $E_{\xi,\zeta}^h \in {\rm Ker}(\id - S_h)'$ by
\[ \forall \zeta \in \mathbb{R}^2, \quad  \forall v\in {\rm Ker}(\id - S_h), \quad  \ \diff^2 \Lag_{\xi \ |  {\rm Ker}(\id - S_h)}^h (\psi_\xi^h)(v,\diff_\xi \psi_\xi^h(\zeta)) + b_{\zeta}^h[\psi_\xi^h](v)  = E_{\xi,\zeta}^h(v). \]
Then, we get (in  ${\rm Ker}(\id - S_h)'$), for all $\zeta \in \mathbb{R}^2$ ,
\begin{align*}
  \diff^2 \Lag_{\xi \ |  {\rm Ker}(\id - S_h)}^h (\eta_\xi^h)(\diff_\xi \psi_\xi^h(\zeta)-\diff_\xi \eta_\xi^h(\zeta)) =&  \left[ \diff^2 \Lag_{\xi \ |  {\rm Ker}(\id - S_h)}^h (\eta_\xi^h) - \diff^2 \Lag_{\xi \ |  {\rm Ker}(\id - S_h)}^h (\psi_\xi^h) \right](\diff_\xi \psi_\xi^h(\zeta)) \\   
  &+b_{\zeta}^h[\eta_\xi^h - \psi_\xi^h] +  E_{\xi,\zeta}^h(v).
\end{align*}
However, we have proven in Proposition \ref{prop_apply_inv_thm} that $ \diff^2 \Lag_{\xi \ |  {\rm Ker}(\id - S_h)}^h (\eta_\xi^h)$ is invertible and that the norm of its invert is smaller than $C$. So we just need to control the three right terms of the last equality.
\begin{itemize}
\item Applying \eqref{reg_ana_der} and \eqref{approx_deriv}, for all $h>0$ and all $\xi \in \Omega$, we have $\|\diff_\xi \psi_\xi^h(\zeta)\|_{H^1(\mathbb{R})}\leq 2|\zeta|k$. So applying Lemma \ref{est_lips}, there exists $\kappa>0$, such that for all $h<h_1$, all $\xi \in \Omega$, all $\zeta \in \mathbb{R}^2$ and all $v\in {\rm Ker}(\id - S_h)$,
\begin{multline*}
 \left| \left[ \diff^2 \Lag_{\xi \ |  {\rm Ker}(\id - S_h)}^h (\eta_\xi^h) - \diff^2 \Lag_{\xi \ |  {\rm Ker}(\id - S_h)}^h (\psi_\xi^h) \right](\diff_\xi \psi_\xi^h(\zeta))(v) \right| \\
\leq \kappa \|  \eta_\xi^h - \psi_\xi^h \|_{H^1(\mathbb{R})} |\zeta| \|v\|_{H^1(\mathbb{R})} \leq \frac{M\kappa}{\lambda} h^2 |\zeta| \|v\|_{H^1(\mathbb{R})}.
\end{multline*}

\item The estimate of the second term is obvious. Indeed, for all $h<h_1$, all $\xi \in \Omega$, all $\zeta \in \mathbb{R}^2$ and all $v\in {\rm Ker}(\id - S_h)$ we have
$$
|b_{\zeta}^h[\eta_\xi^h - \psi_\xi^h](v)| \leq |\zeta| ( \| \eta_\xi^h - \psi_\xi^h \|_{L^2(\mathbb{R})} \| v\|_{L^2(\mathbb{R})} + \| \partial_x (\eta_\xi^h - \psi_\xi^h) \|_{L^2(\mathbb{R})} \|  \partial_x  v\|_{L^2(\mathbb{R})}  )\leq 2 \frac{M}{\lambda} h^2 |\zeta| \|v\|_{H^1(\mathbb{R})}.
$$

\item The bound on the term $E_{\xi,\zeta}^h$ is more difficult to obtain. First, we have to identify it. Since $\psi_\xi$ is a critical point of $\Lag_\xi$, it satisfies $\diff \Lag_\xi(\psi_\xi)(v) =0$ for all $v\in H^1(\mathbb{R})$. By calculating its derivative with respect to $\xi$, we get for $\zeta\in \mathbb{R}^2$,
\[ \diff^2 \Lag_\xi(\psi_\xi)(v, \diff_\xi \psi_\xi(\zeta)) +  \zeta_1 \langle \psi_\xi , v \rangle_{L^2(\mathbb{R})} + \zeta_2 \langle i \partial_x \psi_\xi , v  \rangle_{L^2(\mathbb{R})} =0 .\]
In particular, we can choose $v\in {\rm Ker}(\id - S_h)$. Consequently, we get
\begin{align*}
 \diff^2 \Lag_\xi^h(\psi_\xi^h)(v, \diff_\xi \psi_\xi^h(\zeta)) +  b_{\zeta}^h[\psi_\xi^h] &+ \langle ( \Delta_h - \partial_x^2) \diff_\xi \psi_\xi^h(\zeta) , v \rangle_{L^2(\mathbb{R})}\\
 & + \diff^2 \frac{\|\cdot\|_{L^4(\mathbb{R})}^4}4 (\psi_\xi^h)(v, \diff_\xi \psi_\xi^h(\zeta)) - \diff^2 \frac{\|\cdot\|_{L^4(\mathbb{R})}^4}4 (\psi_\xi)(v, \diff_\xi \psi_\xi(\zeta)) = 0. 
\end{align*}
So we have
\[ E_{\xi,\zeta}^h(v) =  \diff^2 \frac{\|\cdot\|_{L^4(\mathbb{R})}^4}4 (\psi_\xi)(v, \diff_\xi \psi_\xi(\zeta)) - \diff^2 \frac{\|\cdot\|_{L^4(\mathbb{R})}^4}4 (\psi_\xi^h)(v, \diff_\xi \psi_\xi^h(\zeta)) + \langle ( \partial_x^2 - \Delta_h ) \diff_\xi \psi_\xi^h(\zeta) , v \rangle_{L^2(\mathbb{R})}. \]

To estimate  $\langle ( \partial_x^2 - \Delta_h ) \diff_\xi \psi_\xi^h(\zeta) , v \rangle_{L^2(\mathbb{R})}$ we use the same method as in Lemma \ref{lem_cons_error} and we can find an universal constant $C_{\rm univ}>0$ such that
\begin{equation}
\label{level_1}
  \left| \langle ( \partial_x^2 - \Delta_h ) \diff_\xi \psi_\xi^h(\zeta) , v \rangle_{L^2(\mathbb{R})} \right| \leq C_{\rm univ} h^2 \|\diff_\xi \psi_\xi^h(\zeta)\|_{H^2(\mathbb{R})} \| v\|_{L^2(\mathbb{R})}.
\end{equation}
On the other hand, we have
\begin{multline*}
\left| \diff^2 \|\cdot\|_{L^4(\mathbb{R})}^4 (\psi_\xi)(v, \diff_\xi \psi_\xi(\zeta))  - \diff^2 \|\cdot\|_{L^4(\mathbb{R})} (\psi_\xi^h)(v, \diff_\xi \psi_\xi^h(\zeta))  \right| \\
\leq  12 \| \psi_\xi +  \psi_\xi^h \|_{L^4(\mathbb{R})}  \| \psi_\xi -  \psi_\xi^h \|_{L^4(\mathbb{R})} \| v\|_{L^4(\mathbb{R})} \|  \diff_\xi \psi_\xi(\zeta) \|_{L^4(\mathbb{R})} \\
+ 12  \| \diff_\xi \psi_\xi(\zeta)- \diff_\xi \psi_\xi^h(\zeta) \|_{L^4(\mathbb{R})} \| \psi_\xi^h \|_{L^4(\mathbb{R})}^2 \| v\|_{L^4(\mathbb{R})}.
\end{multline*}
Applying Gagliardo-Nirenberg inequality, \eqref{reg_ana_der} and Lemma \ref{reg_CTW}, it is clear that $ \| \psi_\xi +  \psi_\xi^h \|_{L^4(\mathbb{R})} $, $\| \psi_\xi^h \|_{L^4(\mathbb{R})}^2$, $|\zeta|^{-1}\|  \diff_\xi \psi_\xi(\zeta) \|_{L^4(\mathbb{R})}$ and $|\zeta|^{-1} \|\diff_\xi \psi_\xi^h(\zeta)\|_{H^2(\mathbb{R})}$ are bounded uniformly with respect to $\xi \in \Omega$ and $h<h_1$. \\
Consequently, by using \eqref{approx_deriv}, there exist $\ell>0$, $\kappa>0$ such that for all $h<h_1$, all $\xi \in \Omega$ and all $\zeta \in \mathbb{R}^2$, we have
\[ |E_{\xi,\zeta}^h(v)| \leq \kappa \left( h^2 + e^{-\frac{\ell}h} \right) \leq \kappa h^2 \left( 1 + \left( \frac2{e\ell} \right)^2 \right), \]
which concludes the proof of the Lemma. 
\end{itemize}
\end{proof}

\subsection{Gevrey uniform regularity, Lyapunov stability and some adjustments}

The discrete traveling waves constructed in Theorem \ref{thm_ExTW} enjoy most of the properties of the  continuous traveling waves $\psi_\xi$. In this subsection, we analyse some of these properties useful to prove Theorem \ref{Thm_DTW}.

First, we  study their regularity. Of course, since they belong to $BL^2_h$ they are entire functions but we can give a control of them in Gevrey norms uniformly with respect to $h$ and $\xi$.
\begin{theo}
\label{thm_anareg}
There exists $h_0>0$ such that for all $M>0$, there exist $C,\varepsilon>0$ such that for all $ h<h_0$ and all $\xi \in \Omega$, if $u\in BL_2^h$ satisfies $\|u\|_{H^1(\mathbb{R})}\leq M$ then
\begin{equation}
\label{poulenc}\diff \Lag_\xi^h(u) = 0 \ \Rightarrow \  \forall \omega\in \mathbb{R}, \ |\widehat{u}(\omega)| < C e^{-\varepsilon |\omega|}.\end{equation}
\end{theo}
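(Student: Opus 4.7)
The plan is to cast the critical point condition as an elliptic equation in Fourier variables with a coercive principal symbol, then to upgrade $u$ from $H^1$ to analytic regularity through a generating-function bootstrap on its Sobolev norms $a_n:=\|\partial_x^n u\|_{L^2}$.

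First I would rewrite $\diff\Lag_\xi^h(u)=0$ on $v\in BL^2_h$, using Lemma~\ref{lem_HDNLS}, as the pointwise identity
\[
m(\omega)\,\widehat u(\omega)=\widehat{|u|^2u}(\omega),\qquad \omega\in(-\pi/h,\pi/h),
\]
with symbol $m(\omega)=\frac{4}{h^2}\sin^2(\omega h/2)+\xi_1+\xi_2\omega$. The first key step is the uniform coercivity $m(\omega)\geq c_0(1+\omega^2)$ on $(-\pi/h,\pi/h)$ for $h<h_0$ and $\xi\in\overline\Omega$. This is proved by splitting the frequency range at a fixed $\theta\in(0,\pi)$ chosen so that $\sinc^2(\theta/2)>\xi_2^2/(4\xi_1)$ uniformly on $\overline\Omega$ (possible because $\xi_1>(\xi_2/2)^2$ is uniformly strict on the relatively compact $\Omega$): on $|\omega h|\leq\theta$, $\sigma(\omega)\geq\sinc^2(\theta/2)\,\omega^2$ makes $m(\omega)$ comparable to $1+\omega^2$; on $\theta/h<|\omega|<\pi/h$, $\sigma(\omega)\gtrsim h^{-2}$ dominates $|\xi_2\omega|=O(h^{-1})$ and is itself comparable to $\omega^2$. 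The resulting elliptic estimate $(1+\omega^2)|\widehat u|\leq c_0^{-1}|\widehat{|u|^2u}|$ yields via the standard $H^s$-algebra bootstrap that $u\in H^s$ for every $s$ with norms uniform in $h,\xi$, and in particular $a_{n+2}\leq c_0^{-1}\|\partial_x^n(|u|^2u)\|_{L^2}$ for every $n\geq 0$.

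The heart of the argument is the Gevrey-$1$ bootstrap through $A(r):=\sum_{n\geq 0}a_n r^n/n!$. Expanding $\partial^n(|u|^2u)$ by Leibniz and placing the $L^2$-norm on one factor and the $L^\infty$-norm on the other two, the multinomial structure $n!/(k_1!k_2!k_3!)$ converts the sum into a product of generating functions,
\[
\sum_n\|\partial^n(|u|^2u)\|_{L^2}\frac{r^n}{n!}\leq\alpha(r)^2\,A(r),\qquad\alpha(r):=\sum_k\|\partial^k u\|_{L^\infty}\frac{r^k}{k!}.
\]
The sharp one-dimensional inequality $\|\partial^k u\|_{L^\infty}\leq\sqrt{2}\,a_k^{1/2}a_{k+1}^{1/2}$ combined with Cauchy--Schwarz gives $\alpha(r)^2\leq 2A(r)A'(r)$, producing the key differential inequality
\[
A''(r)\leq\frac{2}{c_0}A(r)^2A'(r).
\]
Integrating once with $A(0)\leq M$ and $A'(0)\leq M$ yields $A'(r)\leq M+\tfrac{2}{3c_0}A(r)^3$, and separation of variables shows that $A$ stays finite on some maximal interval $[0,R_0)$ with $R_0>0$ depending only on $M$ and $\Omega$.

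Finally, for any $r<R_0$, $A(r)<\infty$ gives $\|e^{r|\omega|}\widehat u\|_{L^2}\leq\sqrt{2\pi}\,A(r)$, and I would re-inject this into the equation: using subadditivity $e^{r|\omega|}\leq\prod_i e^{r|\omega_i|}$ on the convolution representation of $\widehat{|u|^2u}$, together with Young's inequality $\|F*G*H\|_{L^\infty}\leq\|F\|_{L^2}\|G\|_{L^1}\|H\|_{L^2}$ and the Cauchy--Schwarz bound $\|e^{r|\cdot|}\widehat u\|_{L^1}\lesssim\|(1+|\omega|)e^{r|\cdot|}\widehat u\|_{L^2}$, I obtain $(1+\omega^2)e^{r|\omega|}|\widehat u(\omega)|\leq C(M,\Omega)$ pointwise, hence $|\widehat u(\omega)|\leq Ce^{-\varepsilon|\omega|}$ for any $\varepsilon<R_0$. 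The main obstacle is precisely the generating function step: because the cubic nonlinearity is scale-invariant, any weighted Fourier estimate closes only as a \emph{lower} bound on the norms of $u$. What makes the bootstrap succeed is the pairing of the two-derivative gain from $m\gtrsim 1+\omega^2$ (matching $A''$ on the left) with the sharp one-dimensional Sobolev identity (producing $A\cdot A'$ rather than $A^2$ on the right), which together deliver an ODE $A''\lesssim A^2 A'$ whose first integral $A'\lesssim A^3+\mathrm{const}$ admits a genuine finite blow-up radius controlled only by $M$ and $\Omega$.
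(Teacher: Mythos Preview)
Your proof is correct and follows the same overall strategy as the paper: establish the uniform coercivity $m(\omega)\gtrsim 1+\omega^2$ of the Fourier symbol on $(-\pi/h,\pi/h)$, then run a factorial-growth bootstrap to conclude Gevrey-$1$ regularity. The difference lies only in how the bootstrap is packaged. The paper works directly on the Fourier side and proves by induction that $\|\omega^n\widehat u\|_{L^p}\leq C^n n!$ simultaneously for all $1\leq p\leq\infty$; the induction step expands $\omega^N$ multinomially inside the triple convolution $\widehat u*\widehat{\bar u}*\widehat u$, applies Young's inequality, and closes using the crude count $\#\{n_1+n_2+n_3=N\}=\binom{N+2}{2}$. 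Taking $p=\infty$ at the end and optimising over $n$ via Stirling gives the pointwise exponential bound directly, with no re-injection step. Your generating-function approach works purely in $L^2$ on the physical side, replaces the induction by the differential inequality $A''\lesssim A^2A'$, and compensates for tracking fewer norms by the sharp one-dimensional Sobolev bound $\|\partial^k u\|_{L^\infty}^2\leq 2a_ka_{k+1}$; this is what makes the right-hand side $A^2A'$ rather than $A^3$, and hence yields an exactly integrable first integral. The trade-off is that you then need the final re-injection to pass from $\|e^{r|\cdot|}\widehat u\|_{L^2}<\infty$ to a pointwise $L^\infty$ bound on $\widehat u$. Both arguments are standard ways of establishing analytic regularity for semilinear elliptic equations and yield constants depending only on $M$ and $\Omega$; one minor point is that the sign in your symbol should read $-\xi_2\omega$ rather than $+\xi_2\omega$ (compare equation~\eqref{crit_Four}), but this does not affect the coercivity argument.
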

\begin{proof}
To get this result of elliptic regularity, we prove, in the following lemma, a result of coercivity. 
\begin{lem}
\label{coer_ellip}
Let $f:\mathbb{R}\to \mathbb{R}$ be a function continuous in $0$ such that $f(0)=1$. Assume that there exists $m>0$ such that $f\geq m$ on $\mathbb{R}$. Then there exist $\alpha>0$ and $h_0>0$ such that for all $\xi\in \Omega$ and $h<h_0$  we have
\[ \forall \omega\in \mathbb{R}, \quad  \  \omega^2 f(h\omega) + \xi_2 \omega +\xi_1 \geq \alpha \left( 1 + \omega^2  \right).  \] 
\end{lem}
\begin{proof}
First, observe that we have
\[ \omega^2 + \xi_2 \omega +\xi_1 = \left( \omega + \frac{\xi_2}2  \right)^2 + \xi_1 - \left( \frac{\xi_2}2 \right)^2.\]
Consequently, there exists $\beta>0$ such that for all $\xi\in \Omega$, we have
\[ \omega^2 + \xi_2 \omega +\xi_1 \geq \beta \left( 1 + \omega^2 \right).\]

Second observe that there exists $\omega_0>0$ such that, for all $|\omega|>\omega_0$ we have
\[ m \omega^2 + \xi_2 \omega +\xi_1 \geq \frac{m}2 \left( 1 + \omega^2  \right). \]
Consequently, for such $\omega$ and for any $h>0$, we have
\[\omega^2 f(h\omega) + \xi_2 \omega +\xi_1 \geq \frac{m}2 \left( 1 + \omega^2  \right).  \]

Now, since $f$ is continuous in $0$, there exists $\delta>0$ such that if $|\omega|<\delta$ then $|f(\omega)-1|<\frac{\beta}2$. Consequently, if $|\omega|<\omega_0$ and $h<\frac{\delta}{\omega_0}=:h_0$ then we have
\[ \omega^2 f(h\omega) + \xi_2 \omega +\xi_1 =  \omega^2  + \xi_2 \omega +\xi_1 + \omega^2 (f(h\omega) -1)\geq \beta \left( 1 + \omega^2 \right) - \frac{\beta}2\omega^2 \geq \frac{\beta}2  \left( 1 + \omega^2 \right). \]
\end{proof}
We now prove the  elliptic regularity result \eqref{poulenc}. Let us write Equation $\diff \Lag_\xi^h(u)=0$ in terms of the Fourier transform $\widehat{u}$. It is written
\begin{equation}
\label{crit_Four}
\forall \omega \in \left( -\frac{\pi}h, \frac{\pi}h \right), \ \quad   \left( \frac{4}{h^2}\sin^2\left( \frac{\omega h}2\right) - \xi_2 \omega + \xi_1 \right)  \widehat{u}(\omega) = \widehat{u}* \widehat{\bar{u}} *\widehat{u}(\omega).
\end{equation}
Applying Lemma \ref{coer_ellip} to $f(\omega) = \sinc^2\left( \frac{\omega }2\right) + \mathbb{1}_{\left( -\pi, \pi \right)^c}(\omega)$, for which $m=\frac4{\pi^2}$, there exist $h_0>0$ and $\alpha>0$ such that if $\xi \in \Omega$ and $h<h_0$, 
\[ \forall \omega \in \left(-\frac{\pi}h, \frac{\pi}h \right), \quad \  \frac{4}{h^2}\sin^2\left( \frac{\omega h}2\right) - \xi_2 \omega + \xi_1 \geq  \alpha \left( 1 + \omega^2 \right).\]
Hence, we have using \eqref{crit_Four} 
\begin{equation}
\label{crit_Four_ine}
\forall \omega \in \left(-\frac{\pi}h, \frac{\pi}h \right), \ \quad  \alpha \left( 1+\omega^2 \right)  |\widehat{u}(\omega)| \leq |\widehat{u}(\omega)|* |\widehat{\bar{u}}(\omega)| *|\widehat{u}(\omega)|.
\end{equation}
Now, we prove by induction (on $n$) that there exists $C>0$, that only depend of $\alpha$ and $M$ such that
\begin{equation}
\label{ind_hyp}
\forall \ 1\leq p \leq \infty, \ \quad  \| \omega^n  \widehat{u}  \|_{L^p(\mathbb{R})} \leq C^n n! .
\end{equation}
First, we consider the cases $n=1$ and $n=0$. Since we have assumed that $\|u\|_{H^1(\mathbb{R})}\leq M$, we have
\[ \| \widehat{u} \|_{L^1(\mathbb{R})} \leq  \left\| \frac1{\sqrt{1+\omega^2}}  \right\|_{L^2(\mathbb{R})} \| \sqrt{1+\omega^2} \widehat{u}(\omega)   \|_{L^2(\mathbb{R})}= \sqrt2 \pi \|u\|_{H^1(\mathbb{R})}\leq \sqrt2 \pi M . \]
Then, we get from \eqref{crit_Four_ine}
\[ \|\omega \widehat{u} \|_{L^1(\mathbb{R})} \leq \| (1+\omega^2) \widehat{u} \|_{L^1(\mathbb{R})} \leq  \frac1{\alpha} \| \widehat{u} \|_{L^1(\mathbb{R})}^3 \leq \frac{\sqrt8 \pi^3 M^3}{\alpha}.  \]
Furthermore, we also get from \eqref{crit_Four_ine},
\[  \|(1+|\omega|)  \widehat{u} \|_{L^{\infty}(\mathbb{R})} \leq \frac1{\alpha} \left\| \frac{1+|\omega|}{1+\omega^2} \right\|_{L^{\infty}(\mathbb{R})} \| \widehat{u} \|_{L^1(\mathbb{R})}^3.\]
We deduce \eqref{ind_hyp} for $n = 0$ and $1$ and for the other values of $p$ using H\"older inequality.

Now, we assume that \eqref{ind_hyp} is proved for all $0\leq n\leq N+1$. We deduce from \eqref{crit_Four} that for all $\omega\in  \left(-\frac{\pi}h, \frac{\pi}h \right)$, we have
\begin{multline*}
 |\omega|^{N+2} |\widehat{u}(\omega)| \leq |\omega|^{N}(1+\omega^2) |\widehat{u}(\omega)| \\
 														\leq \frac1{\alpha} \left| \omega^N (\widehat{u}* \widehat{\bar{u}} *\widehat{u}(\omega))  \right| 
 														= \frac1{\alpha} \left| \sum_{n_1+n_2+n_3 = N} \frac{N!}{n_1 ! n_2! n_3 !}(\omega^{n_1}\widehat{u})* (\omega^{n_2}\widehat{\bar{u}}) *(\omega^{n_3}\widehat{u})(\omega)  \right|.
\end{multline*}
We deduce from Young convolution inequality that if $\frac{3}q = 2 + \frac1p$ then
\[  \|\omega^{N+2} \widehat{u}\|_{L^p(\mathbb{R})} \leq \frac1{\alpha} \sum_{n_1+n_2+n_3 = N} \frac{N!}{n_1 ! n_2! n_3 !}  \prod_{j=1}^n \|\omega^{n_j} \widehat{u}\|_{L^q(\mathbb{R})} .  \]
Using the induction hypothesis, we obtain
\[  \|\omega^{N+2} \widehat{u}\|_{L^p(\mathbb{R})} \leq \frac1{\alpha} C^N N! \ \# \{ (n_1,n_2,n_3)  \ | \ n_1+n_2+n_3 = N \} = \frac1{2\alpha C^2} C^{N+2} (N+2)! \]
So, if $C$ is chosen large enough to ensure $2\alpha C^2 \geq 1$, we obtain the result by induction.

Choosing 
$p=\infty$ in \eqref{ind_hyp},    we get
\[ \forall n\in \mathbb{N}, \quad \forall \omega\in \mathbb{R}^{*}, \quad \ |\widehat{u}(\omega)|\leq \left(\frac{C}{|\omega|} \right)^n n!.\]
But using Stirling formula, we get an universal constant $c>0$ such that
$ n! \leq c e^{-\frac{2n}3} n^n$. 
Consequently, if $|\omega|\geq C$ and $n=\lfloor \frac{\omega}{C} \rfloor$, we have
$  |\widehat{u}(\omega)|\leq c e^{- \frac{|\omega|}{2C}}$, and this shows the result. 
\end{proof}

In the following lemma, we prove that Lagrange functions are Lyapunov functions for the traveling waves of the homogeneous Hamiltonian. These uniform estimates are  discrete versions of the continuous case, see for example Proposition 8.8 of \cite{MR3443560}. They are the key estimates for applying the energy-momentum method.
\begin{lem} 
\label{lem_orb_stab}
Let $h_0,C,\rho,\alpha>0$ be the constants given by Theorem \ref{thm_ExTW}. There exist $r,\beta,h_1>0$ such that for all $h<h_1$, all $\xi \in \Omega$, all $u \in BL^2_h \cap \Span(i\eta_\xi^h,\partial_x \eta_\xi^h)$, if $\| u - \eta_\xi^h \|_{H^1(\mathbb{R})}\leq r$ and $\| u \|_{L^2(\mathbb{R})}^2= \| \eta_\xi^h \|_{L^2(\mathbb{R})}^2$ then
\begin{equation}
	\label{est_Lyap}
	\beta \| u -\eta_\xi^h \|_{H^1(\mathbb{R})}^2 \leq \Lag_\xi^h(u) -\Lag_\xi^h(\eta_\xi^h).
\end{equation} 
\end{lem}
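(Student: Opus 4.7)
The plan is to apply the standard energy-momentum argument: expand $\Lag_\xi^h(u)$ around the critical point $\eta_\xi^h$ to second order and invoke the coercivity of $d^2\Lag_\xi^h(\eta_\xi^h)$ supplied by Theorem~\ref{thm_ExTW}(e). I interpret the hypothesis $u\in\Span(i\eta_\xi^h,\partial_x\eta_\xi^h)$ as meaning the $L^2$-orthogonal complement of that span (the symmetry directions), which is the standard orbital stability setup.

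First, I would write $v=u-\eta_\xi^h$ and Taylor expand
\[
\Lag_\xi^h(u) - \Lag_\xi^h(\eta_\xi^h) = \tfrac{1}{2}\,d^2\Lag_\xi^h(\eta_\xi^h)(v,v) + R(v),
\]
where the linear term vanishes by Theorem~\ref{thm_ExTW}(a). The remainder $R(v)$ is a cubic and quartic polynomial in $v$ built only from the $L^4$ part of $\Lag_\xi^h$ (the quadratic pieces are exact), so Sobolev embedding gives $|R(v)|\le K\|v\|_{H^1}^3$ with $K$ uniform in $\xi\in\Omega$ and $h<h_1$ via the uniform bounds on $\|\eta_\xi^h\|_{H^1}$ implied by Theorem~\ref{thm_ExTW}(b).

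Next, I would unpack the orthogonality structure of $v$. From the elementary identities $\langle \eta_\xi^h,i\eta_\xi^h\rangle_{L^2}=0$ and $\langle \eta_\xi^h,\partial_x\eta_\xi^h\rangle_{L^2}=\tfrac{1}{2}\int\partial_x|\eta_\xi^h|^2 = 0$, the hypothesis on $u$ yields $\langle v,i\eta_\xi^h\rangle_{L^2}=\langle v,\partial_x\eta_\xi^h\rangle_{L^2}=0$. The mass constraint $\|u\|_{L^2}^2=\|\eta_\xi^h\|_{L^2}^2$ gives the nonzero but quadratically small projection
\[
\langle v,\eta_\xi^h\rangle_{L^2} = -\tfrac{1}{2}\|v\|_{L^2}^2.
\]
I then decompose $v=w+a\,\eta_\xi^h$ with $a=\langle v,\eta_\xi^h\rangle_{L^2}/\|\eta_\xi^h\|_{L^2}^2$, so that $w\in BL_h^2\cap \Span(\eta_\xi^h,i\eta_\xi^h,\partial_x\eta_\xi^h)^{\perp_{L^2}}$ and $|a|\lesssim \|v\|_{L^2}^2\le\|v\|_{H^1}^2$. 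Theorem~\ref{thm_ExTW}(e) then delivers $d^2\Lag_\xi^h(\eta_\xi^h)(w,w)\ge \alpha\|w\|_{H^1}^2$, while the cross term $2a\,d^2\Lag_\xi^h(\eta_\xi^h)(\eta_\xi^h,w)$ and the diagonal term $a^2 d^2\Lag_\xi^h(\eta_\xi^h)(\eta_\xi^h,\eta_\xi^h)$ are bounded, using the uniform continuity of $d^2\Lag_\xi^h$ (an analogue of Lemma~\ref{lem_d2_unif_bound} at $\eta_\xi^h$), by $O(\|v\|_{H^1}^3)$ and $O(\|v\|_{H^1}^4)$ respectively. Similarly $\|w\|_{H^1}^2\ge \|v\|_{H^1}^2 - O(\|v\|_{H^1}^3)$.

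Combining these estimates yields
\[
\Lag_\xi^h(u) - \Lag_\xi^h(\eta_\xi^h) \ge \Bigl(\tfrac{\alpha}{2} - C'\|v\|_{H^1}\Bigr)\|v\|_{H^1}^2,
\]
with $C'$ uniform in $\xi\in\Omega$ and $h<h_1$. Choosing $r>0$ small enough that $C'r\le \alpha/4$ gives the claim with $\beta=\alpha/4$. The main obstacle is not the Taylor expansion itself but ensuring that every constant appearing above ($\alpha$, $K$, $C'$, the norm $\|\eta_\xi^h\|_{H^1}$, and the Lipschitz constant of $d^2\Lag_\xi^h$) is uniform in $(h,\xi)\in(0,h_1)\times\Omega$; all of these are already in hand from Theorem~\ref{thm_ExTW}, Lemma~\ref{lem_d2_unif_bound}, and Lemma~\ref{est_lips}, so the argument essentially bootstraps the coercivity at $\eta_\xi^h$ into a Lyapunov inequality.
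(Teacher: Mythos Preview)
Your proof is correct and follows essentially the same route as the paper: both split $u-\eta_\xi^h$ into its component along $\eta_\xi^h$ (which the mass constraint forces to be $O(\|u-\eta_\xi^h\|_{L^2}^2)$) and its component $w$ in $\Span(\eta_\xi^h,i\eta_\xi^h,\partial_x\eta_\xi^h)^{\perp_{L^2}}$, apply the coercivity from Theorem~\ref{thm_ExTW}(e) to $w$, and absorb all remaining terms as $O(\|u-\eta_\xi^h\|_{H^1}^3)$ using the uniform bounds on $d^2\Lag_\xi^h$ and $d^3\Lag_\xi^h$. Your reading of the hypothesis as $u-\eta_\xi^h\perp_{L^2}\Span(i\eta_\xi^h,\partial_x\eta_\xi^h)$ is the intended one.
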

\begin{proof}
Let $h_1<h_0$ and $\varepsilon>0$ be such that
\[ \forall h<h_1,\quad \forall \xi \in \Omega, \quad \ \|\eta_\xi^h \|_{L^2(\mathbb{R})}^2 \geq  \frac{\|\psi_\xi \|_{L^2(\mathbb{R})}^2}2 \geq \frac{\varepsilon}2.\]
Let $r\in(0,1)$ be a positive constant that will be determined later.

Since $\eta_\xi^h$ is bounded in $H^1(\mathbb{R})$, uniformly with respect to $\xi\in \Omega$ and $h<h_0$, there exists a constant $M>0$ such that for all $\xi\in \Omega$, $h<h_0$, $w_1,w_2,w_3 \in BL^2_h$, we have $\|\eta_\xi^h\|_{H^1(\mathbb{R})} \leq M$,
\[  |\diff^2 \Lag_\xi^h(\eta_\xi^h)(w_1,w_2)|\leq M \|w_1 \|_{H^1(\mathbb{R})} \|w_2 \|_{H^1(\mathbb{R})}      \]
and
\[  \sup_{\|\eta_\xi^h - w\|_{H^1(\mathbb{R})}\leq 1} |\diff^3 \Lag_\xi^h(w)(w_1,w_2,w_3)|\leq M \|w_1 \|_{H^1(\mathbb{R})} \|w_2 \|_{H^1(\mathbb{R})} \|w_3 \|_{H^1(\mathbb{R})} .  \]
Indeed, the first estimate has been establish in Lemma \ref{lem_d2_unif_bound} and the second is obvious since $\diff^3 \Lag_\xi^h = \diff^3 \frac{\|\cdot\|_{L^4(\mathbb{R})}^4}4$.

Consider $h<h_1$, $\xi \in \Omega$ and $u\in BL^2_h \cap \Span(i\eta_\xi^h,\partial_x \eta_\xi^h)^{\perp_{L^2}}$ such that $\| u - \eta_\xi^h \|_{H^1(\mathbb{R})}\leq r$ and $\| u \|_{L^2(\mathbb{R})}^2= \| \eta_\xi^h \|_{L^2(\mathbb{R})}^2$. Then we define 
\[ v = \eta_\xi^h + \left[ (u-\eta_\xi^h) - \frac{\eta_\xi^h}{\| \eta_\xi^h \|_{L^2(\mathbb{R})} } \langle u - \eta_\xi^h ,\frac{\eta_\xi^h}{\| \eta_\xi^h \|_{L^2(\mathbb{R})}}\rangle_{L^2(\mathbb{R})} \right].\]
By construction, $v-\eta_\xi^h$ belongs to $\Span(i\eta_\xi^h,\partial_x \eta_\xi^h,\eta_\xi^h)^{\perp_{L^2}}$. Furthermore, $v - \eta_\xi^h$ is a second order perturbation of $u-\eta_\xi^h$ because, since $\| u \|_{L^2(\mathbb{R})}^2= \| \eta_\xi^h \|_{L^2(\mathbb{R})}^2$, we have
\[  \langle \eta_\xi^h , u-\eta_\xi^h   \rangle_{L^2(\mathbb{R})} = -\frac12 \| u -\eta_\xi^h \|_{L^2(\mathbb{R})}^2. \]
So, we get
\[  \| u - v\|_{H^1(\mathbb{R})} = \frac { \| \eta_\xi^h \|_{H^1(\mathbb{R})} }{  2 \| \eta_\xi^h \|_{L^2(\mathbb{R})}^2 }\| u -\eta_\xi^h\|_{L^2(\mathbb{R})}^2 \leq \frac{2M}{\epsilon^2} \| u -\eta_\xi^h\|_{H^1(\mathbb{R})}^2.\]

Now, we can establish our estimate through a Taylor expansion of $\Lag_\xi^h(u)$ around $\eta_\xi^h$. The first order term vanishes since $\eta_\xi^h$ is a critical point of  $\Lag_\xi^h$. The second order term is  controlled by applying the coercivity estimate of $\diff^2 \Lag_\xi^h$ (see \eqref{est_Lyap}),
\begin{align*}
&\Lag_\xi^h(u) - \Lag_\xi^h(\eta_\xi^h) \\
&\geq \diff^2 \Lag_\xi^h(\eta_\xi^h)(u-\eta_\xi^h,u-\eta_\xi^h) - M \|u-\eta_\xi^h\|_{H^1(\mathbb{R})}^3 \\
&=\diff^2 \Lag_\xi^h(\eta_\xi^h)(v-\eta_\xi^h,v-\eta_\xi^h) - \diff^2 \Lag_\xi^h(\eta_\xi^h)(u-v,u-v) + 2  \diff^2 \Lag_\xi^h(\eta_\xi^h)(u-\eta_\xi^h,u-v) - M \|u-\eta_\xi^h\|_{H^1(\mathbb{R})}^3 \\
&\geq \alpha \| v-\eta_\xi^h \|_{H^1(\mathbb{R})}^2  - \|u-\eta_\xi^h\|_{H^1(\mathbb{R})}^3 \left( M \left( \frac{2M}{\epsilon^2} \right)^2 \| u -\eta_\xi^h\|_{H^1(\mathbb{R})}  + 2 M \frac{2M}{\epsilon^2} + M   \right)\\
&= \alpha \| u-\eta_\xi^h \|_{H^1(\mathbb{R})}^2 + \alpha \| v-u \|_{H^1(\mathbb{R})}^2 -   2\alpha \langle v-u , u-\eta_\xi^h \rangle_{H^1(\mathbb{R})} \\
&   \hspace{5cm} - \|u-\eta_\xi^h\|_{H^1(\mathbb{R})}^3 \left( \left( \frac{2M^2}{\epsilon^2} \right)^2 \| u -\eta_\xi^h\|_{H^1(\mathbb{R})}  +  \frac{4M^2}{\epsilon^2} + M   \right)\\
&\geq \| u-\eta_\xi^h \|_{H^1(\mathbb{R})}^2 \left[ \alpha - \| u-\eta_\xi^h \|_{H^1(\mathbb{R})}  \left(  2\alpha \frac{2M}{\epsilon^2} +  \left( \frac{2M^2}{\epsilon^2} \right)^2 \| u -\eta_\xi^h\|_{H^1(\mathbb{R})}  +  \frac{4M^2}{\epsilon^2} + M   \right) \right] \\
&\geq \| u-\eta_\xi^h \|_{H^1(\mathbb{R})}^2 \left[ \alpha - r  \left(  \alpha \frac{4 M}{\epsilon^2} +  \left( \frac{2M^2}{\epsilon^2} \right)^2  +  \frac{4M^2}{\epsilon^2} + M   \right) \right].
\end{align*}
Consequently, to prove the Theorem, we just need to choose
\[ r < \frac{\alpha}2 \left(  \alpha \frac{4 M}{\epsilon^2} +  \left( \frac{2M^2}{\epsilon^2} \right)^2  +  \frac{4M^2}{\epsilon^2} + M   \right)^{-1}.\] 
\end{proof}

The previous lemma provides a stability control for the solutions of the homogeneous Hamiltonian system. To apply it, two strong assumptions are required: $u\in \Span(i\eta_\xi^h,\partial_x \eta_\xi^h)$ and $\| u \|_{L^2(\mathbb{R})}^2 = \| \eta_\xi^h  \|_{L^2(\mathbb{R})}^2$. If $u$ is close enough to $\eta_\xi^h$ there are two classical tricks to get these assumptions. To fulfill the first condition, the idea is to apply a small gauge transform and a small advection to $u$. We focus on this problem in the two following Lemmas. To satisfy the second assumption, the idea is to modify $\xi_1$. It is the object of the last Theorem of this section.

When $\eta_\xi^h$ is well defined through Theorem  \ref{thm_ExTW}, for any $v\in BL^2_h$, we define the matrix $A_{\xi,h}[v]$ by
\begin{equation}
\label{def_A}
A_{\xi,h}[v] :=  \begin{pmatrix}  \langle i\eta_\xi^h , iv  \rangle_{L^2(\mathbb{R})} & -\langle i\eta_\xi^h , \partial_x v   \rangle_{L^2(\mathbb{R})} 
\\ \langle \partial_x \eta_\xi^h , iv   \rangle_{L^2(\mathbb{R})} & -\langle \partial_x \eta_\xi^h , \partial_{x} v   \rangle_{L^2(\mathbb{R})} \end{pmatrix}. 
\end{equation}

We will explain later why this matrix is very useful, but first we give a technical Lemma.
\begin{lem}
\label{lem_matrix_inv}
Let $h_0,C,\rho,\alpha>0$ be the constants given by Theorem \ref{thm_ExTW}. 
There exists $h_1<h_0$, $M>0$ and $\delta>0$ such that for all $h<h_1$, all $\xi \in \Omega$ and all $v\in BL^2_h$ with $\| v- \eta_\xi^h\|_{H^1(\mathbb{R})}<\delta$, $A_{\xi,h}[v]$ is invertible and $\|(A_{\xi,h}[v])^{-1}\|_{\infty}\leq M$.
\end{lem}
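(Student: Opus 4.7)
The plan is to establish a uniform lower bound on $|\det A_{\xi,h}[v]|$ together with a uniform upper bound on its entries, after which Cramer's rule will give both invertibility and the bound on the inverse. Using the symmetry $\langle f, g\rangle_{L^2(\mathbb{R})} = \langle g, f\rangle_{L^2(\mathbb{R})}$ of the real scalar product, I would first rewrite
\[ A_{\xi,h}[\eta_\xi^h] = \begin{pmatrix} \|\eta_\xi^h\|_{L^2(\mathbb{R})}^2 & -a_{\xi,h} \\ a_{\xi,h} & -\|\partial_x \eta_\xi^h\|_{L^2(\mathbb{R})}^2 \end{pmatrix},\qquad a_{\xi,h} := \langle i\eta_\xi^h, \partial_x \eta_\xi^h\rangle_{L^2(\mathbb{R})}, \]
so that $\det A_{\xi,h}[\eta_\xi^h] = a_{\xi,h}^2 - \|\eta_\xi^h\|_{L^2(\mathbb{R})}^2 \|\partial_x \eta_\xi^h\|_{L^2(\mathbb{R})}^2$.

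Next, item (b) of Theorem \ref{thm_ExTW} provides $\eta_\xi^h \to \psi_\xi$ in $H^1(\mathbb{R})$ at rate $\mathcal{O}(h^2)$, uniformly in $\xi \in \Omega$. Writing the continuous soliton as $\psi_\xi(x) = e^{i(\xi_2/2)x}\phi_\xi(x)$ with real profile $\phi_\xi(x) = \sqrt{2}\, m_\xi / \cosh(m_\xi x)$ from \eqref{def_CTW}, a direct computation yields
\[ \|\psi_\xi\|_{L^2(\mathbb{R})}^2 = \|\phi_\xi\|_{L^2(\mathbb{R})}^2,\qquad \|\partial_x \psi_\xi\|_{L^2(\mathbb{R})}^2 = (\xi_2/2)^2 \|\phi_\xi\|_{L^2(\mathbb{R})}^2 + \|\phi_\xi'\|_{L^2(\mathbb{R})}^2, \]
together with $\langle i\psi_\xi, \partial_x\psi_\xi\rangle_{L^2(\mathbb{R})} = (\xi_2/2)\|\phi_\xi\|_{L^2(\mathbb{R})}^2$ (the cross term $\int \phi_\xi \phi_\xi' \, \dx$ vanishes). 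Substituting, the limit determinant collapses to $-\|\phi_\xi\|_{L^2(\mathbb{R})}^2 \|\phi_\xi'\|_{L^2(\mathbb{R})}^2$, which is strictly negative and depends continuously on $\xi \in \overline{\Omega}$. Hence there exists $c_0 > 0$ and $h_1 < h_0$ such that $\det A_{\xi,h}[\eta_\xi^h] \leq -c_0$ for all $h < h_1$ and all $\xi \in \Omega$.

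Finally, I would extend this bound to nearby $v$. Each entry of $A_{\xi,h}[v]$ is a bounded bilinear pairing between $\eta_\xi^h$ (uniformly bounded in $H^1(\mathbb{R})$) and $v$, so $v \mapsto A_{\xi,h}[v]$ is globally Lipschitz with constant independent of $(h,\xi)$. Choosing $\delta > 0$ small enough thus ensures $|\det A_{\xi,h}[v]| \geq c_0/2$ whenever $\|v - \eta_\xi^h\|_{H^1(\mathbb{R})} < \delta$, and Cramer's rule delivers invertibility with $\|A_{\xi,h}[v]^{-1}\|_\infty \leq M$ uniformly. The only delicate point, purely technical, is tracking the uniformity of every constant in both $\xi \in \Omega$ and $h < h_1$, which rests entirely on the uniform $H^1$-convergence provided by Theorem \ref{thm_ExTW} and on the continuity of $\xi \mapsto (\|\phi_\xi\|_{L^2(\mathbb{R})}, \|\phi_\xi'\|_{L^2(\mathbb{R})})$ on the compact set $\overline{\Omega}$.
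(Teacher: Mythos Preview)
Your proof is correct and follows essentially the same route as the paper: pass to the continuous limit $A_{\xi,h}[\eta_\xi^h]\to G_\xi$, show $\det G_\xi<0$ uniformly on $\overline{\Omega}$, then perturb. The only cosmetic differences are that the paper obtains $\det G_\xi<0$ abstractly via Cauchy--Schwarz (strict because $\psi_\xi$ is not a plane wave) rather than your explicit phase/amplitude computation, and it handles the perturbation in $v$ by a Neumann-series factorization $A_{\xi,h}[v]=A_{\xi,h}[\eta_\xi^h]\bigl(I_2+(A_{\xi,h}[\eta_\xi^h])^{-1}A_{\xi,h}[v-\eta_\xi^h]\bigr)$ rather than Cramer's rule.
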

\begin{proof}
Let $h<h_0$, $\xi \in \Omega$ and $v \in BL^2_h$.
Since $v\mapsto  A_{\xi,h}[v]$ is a linear map we have
\begin{equation}
\label{dec_lin}
A_{\xi,h}[v] = A_{\xi,h}[\eta_\xi^h]+ A_{\xi,h}[v-\eta_\xi^h].
\end{equation}
However, since $\| \eta_\xi^h - \psi_\xi \|_{H^1(\mathbb{R})} \leq Ch^2$, $A_{\xi,h}[\eta_\xi^h]$ converges to $G_\xi$, uniformly with respect to $\xi\in \Omega$, as $h$ goes to $0$,  where
\[ G_\xi = \begin{pmatrix} \| \psi_\xi\|_{L^2(\mathbb{R})}^2 & -\langle i\psi_\xi ,\partial_x \psi_\xi   \rangle_{L^2(\mathbb{R})} 
\\ \langle i\psi_\xi, \partial_x \psi_\xi    \rangle_{L^2(\mathbb{R})} & - \| \partial_x \psi_\xi\|_{L^2(\mathbb{R})}^2 \end{pmatrix}.   \]
Applying Cauchy-Schwarz inequality, we have
\[ \det G_\xi = \langle i\psi_\xi, \partial_x \psi_\xi    \rangle_{L^2(\mathbb{R})}^2 -   \| \psi_\xi\|_{L^2(\mathbb{R})}^2 \| \partial_x \psi_\xi\|_{L^2(\mathbb{R})}^2 \leq 0.   \]
But the case of equality is excluded since $\psi_\xi $ is not a plane wave (i.e. $\Span(i\psi_\xi, \partial_x \psi_\xi)$ is a free family). So $G_\xi $ is an invertible matrix. As $\xi \mapsto G_\xi$ is a continuous map on $\overline{\Omega}$, there exists $M>0$ such that for all $\xi \in \Omega$
\[ \|G_\xi ^{-1} \|_{\infty} \leq \frac{M}2. \]
As $A_{\xi,h}[\eta_\xi^h]$ converges to $G_\xi$ when $h \to 0$, there exists $h_1<h_0$ such that for all $h<h_1$ and $\xi \in \Omega$, $A_{\xi,h}[\eta_\xi^h]$ is invertible and
\[ \| (A_{\xi,h}[\eta_\xi^h])^{-1}\|_{\infty} \leq M. \]
Applying the linear decomposition \eqref{dec_lin}, we have
\[ A_{\xi,h}[v] =   A_{\xi,h}[\eta_\xi^h](I_2 + (A_{\xi,h}[\eta_\xi^h])^{-1} A_{\xi,h}[v-\eta_\xi^h]).  \]
However, since $\eta_\xi^h$ is bounded in $H^1(\mathbb{R})$ uniformly with respect to $\xi$ and $h$, there exists $\delta >0$ such that for all $\xi \in \Omega$ and all $h<h_1$, we have
\[ \| (A_{\xi,h}[\eta_\xi^h])^{-1} A_{\xi,h}[v-\eta_\xi^h] \|_{\infty} < \frac1{2\delta} \|v-\eta_\xi^h\|_{H^1(\mathbb{R})}.  \]
Consequently, if $\|v-\eta_\xi^h\|_{H^1(\mathbb{R})}\leq \delta$ then $ A_{\xi,h}[v]$ is invertible and the norm of its invert is bounded by $2M$.
\end{proof}

\begin{lem}
\label{lem_why_orb}
There exists $\lambda,\delta>0$ and $h_1<h_0$, such that for all $\xi \in \Omega$, $h<h_1$, $v\in BL^2_h$, if $\| v- \eta_\xi^h \|_{H^1(\mathbb{R})}< \delta$ then there exists $\gamma,\xO \in \mathbb{R}$ such that 
\[ \max(|\gamma|,|\xO|)\leq \lambda \| v- \eta_\xi^h \|_{H^1(\mathbb{R})} \quad \textrm{ and } \quad e^{i\gamma}v(\,\cdot\, -\xO) -\eta_\xi^h \in \Span(i\eta_\xi^h,\partial_x \eta_\xi^h)^{\perp_{L^2}}. \]
\end{lem}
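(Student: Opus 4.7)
The plan is a standard modulation argument via a quantitative implicit function theorem. Define the map $F_{\xi,h} : \mathbb{R}^2 \times BL^2_h \to \mathbb{R}^2$ by
\[ F_{\xi,h}(\gamma, \xO, v) := \begin{pmatrix} \langle e^{i\gamma} v(\cdot - \xO) - \eta_\xi^h, i\eta_\xi^h \rangle_{L^2(\mathbb{R})} \\ \langle e^{i\gamma} v(\cdot - \xO) - \eta_\xi^h, \partial_x \eta_\xi^h \rangle_{L^2(\mathbb{R})} \end{pmatrix}. \]
The two orthogonality conditions defining membership in $\Span(i\eta_\xi^h, \partial_x \eta_\xi^h)^{\perp_{L^2}}$ amount precisely to $F_{\xi,h}(\gamma, \xO, v) = 0$, and one checks trivially that $F_{\xi,h}(0, 0, \eta_\xi^h) = 0$. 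The goal is therefore to solve this equation in $(\gamma, \xO)$ as a Lipschitz function of $v$ near $\eta_\xi^h$, with all constants uniform in $\xi \in \Omega$ and $h < h_1$.

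The key observation is the identification of the Jacobian. Using $\partial_\gamma (e^{i\gamma}v(\cdot - \xO))|_{(0,0,\eta_\xi^h)} = i\eta_\xi^h$ and $\partial_\xO(e^{i\gamma}v(\cdot - \xO))|_{(0,0,\eta_\xi^h)} = -\partial_x \eta_\xi^h$, together with the symmetry of the real scalar product on $\mathbb{C}$, a direct calculation yields
\[ \partial_{(\gamma, \xO)} F_{\xi,h}(0, 0, \eta_\xi^h) = A_{\xi,h}[\eta_\xi^h]. \]
Lemma \ref{lem_matrix_inv} then provides the invertibility of this matrix, with inverse bounded by some $M > 0$ uniformly in $\xi \in \Omega$ and $h < h_1$.

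I would now apply the inverse function Theorem \ref{Thm_Inv_loc} to $(\gamma, \xO) \mapsto F_{\xi,h}(\gamma, \xO, v)$. The required bounds on its second derivative in $(\gamma, \xO)$ on a neighborhood of the origin are obtained from Cauchy-Schwarz, exploiting the uniform $H^1$-bound on $\eta_\xi^h$ given by Theorem \ref{thm_ExTW} and the fact that $v \mapsto e^{i\gamma}v(\cdot - \xO)$ acts as an $L^2$-isometry. Thanks to the uniform invertibility of the Jacobian and these uniform second-derivative bounds, Theorem \ref{Thm_Inv_loc} supplies, for every $v$ with $\|v - \eta_\xi^h\|_{H^1(\mathbb{R})} < \delta$ (with $\delta$ independent of $\xi$ and $h$), a unique small $(\gamma, \xO)$ solving $F_{\xi,h}(\gamma, \xO, v) = 0$.

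The desired Lipschitz estimate finally comes from the standard bound delivered by the inverse function theorem:
\[ |\gamma| + |\xO| \leq 2M \, \|F_{\xi,h}(0, 0, v)\|_\infty \leq 2M \, \|\eta_\xi^h\|_{H^1(\mathbb{R})} \, \|v - \eta_\xi^h\|_{H^1(\mathbb{R})}, \]
where the second inequality is Cauchy-Schwarz applied to each component of $F_{\xi,h}(0, 0, v) = (\langle v - \eta_\xi^h, i\eta_\xi^h\rangle, \langle v - \eta_\xi^h, \partial_x \eta_\xi^h\rangle)$. I expect the main technical obstacle to be not any single step but the bookkeeping: verifying that the constants furnished by Theorem \ref{Thm_Inv_loc} (the radius of validity and the Lipschitz factor) depend only on $M$ and on $\sup_{\xi \in \Omega,\, h < h_1} \|\eta_\xi^h\|_{H^1(\mathbb{R})}$, so that $\delta$ and $\lambda$ can be taken uniform in $(\xi, h)$.
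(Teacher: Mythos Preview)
Your proposal is essentially the paper's proof: define the two-component orthogonality map, compute its Jacobian in $(\gamma,\xO)$, invoke Lemma \ref{lem_matrix_inv} for uniform invertibility, apply the quantitative inverse function Theorem \ref{Thm_Inv_loc}, and read off the Lipschitz bound from $|F_{\xi,h}(0,0,v)|\lesssim \|v-\eta_\xi^h\|_{H^1}$.

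There is one small but genuine gap. You claim the Lipschitz bound on the Jacobian (equivalently, the bound on the second $(\gamma,\xO)$-derivatives of $F_{\xi,h}$) follows from the uniform $H^1$ bound on $\eta_\xi^h$. This is not sufficient: the entry $-\langle \partial_x\eta_\xi^h,\partial_x(e^{i\gamma}v(\cdot-\xO))\rangle_{L^2}$ has $\xO$-derivative $\langle \partial_x\eta_\xi^h,\partial_x^2(e^{i\gamma}v(\cdot-\xO))\rangle_{L^2}$, and after integration by parts this is controlled by $\|\eta_\xi^h\|_{H^2}\|v\|_{H^1}$, not by $\|\eta_\xi^h\|_{H^1}\|v\|_{H^1}$. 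You cannot shift the extra derivative onto $v$ either, since $\|v\|_{H^2}$ is not uniformly bounded in $h$ (the equivalence of Sobolev norms on $BL^2_h$ degenerates as $h\to 0$). The paper handles exactly this point by invoking the elliptic regularity result Theorem \ref{thm_anareg}, which gives a uniform bound on $\|\eta_\xi^h\|_{H^2}$ (in fact Gevrey bounds). With that one additional input your argument goes through unchanged.
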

\begin{proof}
For this proof, we introduce a notation. If $\gamma,\xO\in \mathbb{R}$ and $v:\mathbb{R}\to \mathbb{R}$ then
\[ T_{\gamma,\xO} v := e^{i\gamma} v(\,\cdot\, -\xO) \]
Let $v\in BL^2_h$. We are going to apply the inverse function Theorem \ref{Thm_Inv_loc} to the following function
\[ g_{\xi,h}^v : \left\{  \begin{array}{cccc}     \mathbb{R}^2 & \to & \mathbb{R}^2 \\
						\begin{pmatrix}
						\gamma \\
						\xO
						\end{pmatrix}
						 & \mapsto & \begin{pmatrix}
						  \langle i\eta_\xi^h , T_{\gamma,\xO}v-\eta_\xi^h   \rangle_{L^2(\mathbb{R})} \\ \langle \partial_x \eta_\xi^h , T_{\gamma,\xO}v-\eta_\xi^h   							\rangle_{L^2(\mathbb{R})}
						 \end{pmatrix}
						   \end{array} \right. .  \]
$g_{\xi,h}^v$ is clearly a $C^1$ function whose Jacobian matrix is given by
\[ {\rm J} g_{\xi,h}^v (\gamma,\xO) = A_{\xi,h}[T_{\gamma,\xO} v]. \]
Applying Lemma \ref{lem_matrix_inv}, we can find $h_1<h_0$, $\delta>0$ and $M>0$ such that if $h<h_1$ and $\|v-\eta_\xi^h\|_{H^1(\mathbb{R})}< \delta$ then $ {\rm J} g_{\xi,h}^v (0,0)$ is invertible and its norm is smaller than $M$.
We want to prove that ${\rm J} g_{\xi,h}^v $ is Lipschitz uniformly with respect to $\xi,h,v$. In fact, since it is a $C^1$ function, we just need to control its derivative. Using integration by parts, there exists a constant $\kappa>0$ such that for all $\xO,\gamma\in \mathbb{R}$ we have
\[ \|\diff {\rm J} g_{\xi,h}^v(\gamma,\xO)\|_{\Lag(\mathbb{R}^2;M_2(\mathbb{R}^2))} \leq \kappa \|\eta_\xi^h\|_{H^2(\mathbb{R})} \| T_{\gamma,\xO} v\|_{H^1(\mathbb{R})}  = \kappa \|\eta_\xi^h\|_{H^2(\mathbb{R})} \| v\|_{H^1(\mathbb{R})}.  \]
But, applying the result of elliptic regularity (Theorem \ref{thm_anareg}), $ \|\eta_\xi^h\|_{H^2(\mathbb{R})} $ is bounded in $H^2(\mathbb{R})$ uniformly with respect to $\xi\in \Omega$ and $h<h_0$. So, there exists $k>0$ such that for all $\xi \in \Omega$, $h<h_0$ and $v\in BL^2_h$ with $\|v-\eta_\xi^h \|_{H^1(\mathbb{R})}< \delta$, we have
\[ \|\diff {\rm J} g_{\xi,h}^v(\gamma,\xO)\|_{\Lag(\mathbb{R}^2;M_2(\mathbb{R}^2))} \leq  k.  \]

Now, we apply the inverse function theorem \ref{Thm_Inv_loc} to $g_{\xi,h}^v$ and we obtain some constants $\lambda>0$ and $r>0$, such that for all $h<h_1$, $\xi \in \Omega$ and $v\in BL^2_h$ with  $\|v-\eta_\xi^h \|_{H^1(\mathbb{R})}\leq R$,
\[   \forall \nu \in \mathbb{R}^2, \quad \ |\nu| \leq r \ \Rightarrow \ \exists \gamma,\xO \in \mathbb{R}, \  g_{\xi,h}^v(\gamma,\xO) = g_{\xi,h}^v(0,0)+\nu \quad \textrm{ and }\quad  \max(|\gamma|,|\xO|) \leq \lambda |\nu|.\]

To prove the lemma, we would like to choose $\nu = -g_{\xi,h}^v(0,0)$ small enough. But since $\eta_\xi^h$ is uniformly bounded in $H^1(\mathbb{R})$, there exists a constant $K>0$ such that for all $h<h_0$, $v\in BL^2_h$, $\xi \in \Omega$,
\[  |g_{\xi,h}^v(0,0)| \leq K \| \eta_\xi^h - v\|_{H^1(\mathbb{R})}.\]
So, if $\| \eta_\xi^h - v\|_{H^1(\mathbb{R})}\leq \frac{r}{K}$, we can choose $\nu = -g_{\xi,h}^v(0,0)$ and the lemma is proven.
\end{proof}

In the following Theorem, we focus on a change of variable. Usually, NLS traveling waves are not indexed by $\xi$ but by their $L^2$ norm and their momentum. It would be possible to do the same here. Here, we prove that it is possible to index them by their $L^2$ norm and their speed of advection (i.e. $\xi_2$).
\begin{theo}
\label{thm_change_variable}
Let $h_0,C,\rho,\alpha>0$ be the constants given by Theorem \ref{thm_ExTW} and let $\widetilde{\Omega}$ be a relatively compact open subset of $\Omega$. Then there exist $h_1<h_0,\delta>0,k>0$ such that for all $h<h_1$, for all $\xi \in \widetilde{\Omega}$ and for all $u\in BL^2_h$, if $\|u - \eta_{\xi}^h \|_{H^1(\mathbb{R})}< \delta$ then there exists $\zeta \in \Omega$ such that
\begin{equation}
\left\{ \begin{array}{lll} \xi_2 &=& \zeta_2 \\
				\|  \eta_{\zeta}^h \|_{L^2(\mathbb{R})}^2 &=& \| u \|_{L^2(\mathbb{R})}^2
\end{array}\right.
\qquad\mbox{and} \qquad
 |\zeta-\xi | \leq k | \|  \eta_\xi^h \|_{L^2(\mathbb{R})}^2 -  \| u \|_{L^2(\mathbb{R})}^2 |.
\end{equation}
\end{theo}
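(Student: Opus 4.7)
The plan is to reduce to a scalar equation in $\zeta_1$ by fixing $\zeta_2 = \xi_2$, and to invert it quantitatively via the fact that the map $\zeta_1 \mapsto \|\eta_{(\zeta_1,\xi_2)}^h\|_{L^2(\mathbb{R})}^2$ has a derivative bounded away from zero, uniformly in $\xi \in \widetilde\Omega$ and $h$ small. Since $\widetilde\Omega$ is relatively compact in $\Omega$, I would first pick $r_0>0$ such that the closed segment $[\xi_1 - r_0, \xi_1 + r_0] \times \{\xi_2\}$ lies in $\Omega$ for every $\xi \in \widetilde\Omega$, and define on it the $C^1$ function
\[
F^h_\xi(s) \ := \ \|\eta^h_{(s,\xi_2)}\|_{L^2(\mathbb{R})}^2.
\]

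The crucial step is a uniform lower bound on $(F^h_\xi)'$. In the continuous case, the explicit formula \eqref{def_CTW} gives $\|\psi_\zeta\|_{L^2(\mathbb{R})}^2 = 4 m_\zeta$, hence $\partial_{\zeta_1}\|\psi_\zeta\|_{L^2(\mathbb{R})}^2 = 2/m_\zeta$, which is bounded above and below by positive constants on the closure of an $r_0$-neighbourhood of $\widetilde\Omega$. Using the identity
\[
(F^h_\xi)'(s) \ = \ 2 \langle \eta^h_{(s,\xi_2)} , \diff_\zeta \eta^h_{(s,\xi_2)}(1,0) \rangle_{L^2(\mathbb{R})}
\]
together with the uniform $\mathcal{O}(h^2)$ convergence $\diff_\zeta \eta^h_\zeta \to \diff_\zeta \psi_\zeta$ in $H^1(\mathbb{R})$ provided by part $e)$ of Theorem \ref{thm_ExTW} (and the bound \eqref{err_cons_thm} to compare $\eta^h_\zeta$ and $\psi_\zeta$), I would obtain that $(F^h_\xi)'(s) \to 2/m_{(s,\xi_2)}$ uniformly in $s$ and $\xi \in \widetilde\Omega$ as $h \to 0$. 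Shrinking $h_0$ to some $h_1 \in (0,h_0)$ accordingly, this yields positive constants $m_- \leq m_+$ with $m_- \leq (F^h_\xi)' \leq m_+$ on the whole segment, uniformly in $h < h_1$ and $\xi \in \widetilde\Omega$.

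With this coercivity in hand, $F^h_\xi$ is a strictly increasing $C^1$ diffeomorphism onto its image on $(\xi_1 - r_0, \xi_1 + r_0)$, whose inverse is $m_-^{-1}$-Lipschitz. Hence, whenever $|\|u\|_{L^2}^2 - \|\eta^h_\xi\|_{L^2}^2| < m_- r_0$, there is a unique $\zeta_1 \in (\xi_1 - r_0, \xi_1 + r_0)$ with $F^h_\xi(\zeta_1) = \|u\|_{L^2}^2$, and $|\zeta_1 - \xi_1| \leq m_-^{-1}\,|\|u\|_{L^2}^2 - \|\eta^h_\xi\|_{L^2}^2|$. To enter this regime, I would use the uniform $L^2$-bound on $\eta^h_\xi$ together with the elementary inequality $|\|u\|_{L^2}^2 - \|\eta^h_\xi\|_{L^2}^2| \leq (\|u\|_{L^2} + \|\eta^h_\xi\|_{L^2})\,\|u - \eta^h_\xi\|_{L^2(\mathbb{R})}$ to pick $\delta>0$ small enough so that $\|u - \eta^h_\xi\|_{H^1(\mathbb{R})} < \delta$ forces $|\|u\|_{L^2}^2 - \|\eta^h_\xi\|_{L^2}^2| < m_- r_0$. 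Setting $\zeta := (\zeta_1, \xi_2) \in \Omega$ then delivers the three conclusions with $k = 1/m_-$.

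The only genuinely delicate point in this program is transferring the continuous identity $\partial_{\zeta_1}\|\psi_\zeta\|_{L^2(\mathbb{R})}^2 = 2/m_\zeta > 0$ into a uniform discrete lower bound on $(F^h_\xi)'$ across the two-parameter family $(\xi,h)$; this is precisely what the uniform $\mathcal{O}(h^2)$-approximation of derivatives in Theorem \ref{thm_ExTW}$(e)$ is tailored to supply. Everything else is a one-dimensional monotone inversion with explicit Lipschitz constant.
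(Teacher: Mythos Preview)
Your proposal is correct and follows essentially the same approach as the paper: both arguments fix $\zeta_2=\xi_2$, use the explicit formula $\|\psi_\zeta\|_{L^2}^2=4m_\zeta$ to get a uniform positive lower bound on $\partial_{\zeta_1}\|\psi_\zeta\|_{L^2}^2$, transfer this to $\partial_{\zeta_1}\|\eta^h_\zeta\|_{L^2}^2$ via the $\mathcal{O}(h^2)$ approximation of Theorem~\ref{thm_ExTW}, and then invert the resulting monotone scalar map on a segment contained in $\Omega$. The only cosmetic difference is that the paper uses the intermediate value theorem plus the mean value inequality where you invoke the Lipschitz inverse of a strictly monotone $C^1$ map directly.
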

\begin{proof}
From the definition of $\psi_\xi$ (see \eqref{def_CTW}), we observe that for all $\xi \in \Omega$,
\[ \| \psi_\xi \|_{L^2(\mathbb{R})}^2 = m_{\xi} \| \psi_{1,0} \|_{L^2(\mathbb{R})}^2 = 4 m_{\xi} = 4 \sqrt{\xi_1^2 - \left(  \frac{\xi_2}2 \right)^2}.\]
Consequently, there exists $\beta>0$ such that for all $\xi \in \Omega$,
\[ \partial_{\xi_1} \| \psi_\xi \|_{L^2(\mathbb{R})}^2 = \frac2{m_\xi} \geq 2\beta. \]
Let $h<h_0$. Applying Theorem \ref{thm_ExTW}, we know that $\xi\mapsto \eta_\xi^h$ is a $C^1$ approximation of $\xi \mapsto \psi_\xi$ up to an second order error term.
Consequently, we have
\begin{align*}
| \partial_{\xi_1} \| \psi_\xi \|_{L^2(\mathbb{R})}^2 - \partial_{\xi_1} \| \eta_\xi^h \|_{L^2(\mathbb{R})}^2  |
&= 2 | \langle \partial_{\xi_1} \psi_\xi - \partial_{\xi_1} \eta_\xi^h  , \psi_\xi \rangle_{L^2(\mathbb{R})} +  \langle \partial_{\xi_1} \eta_\xi^h , \psi_\xi - \eta_\xi^h \rangle_{L^2(\mathbb{R})}  | \\
&\leq 2 C h^2 \left(  \| \psi_\xi  \|_{L^2(\mathbb{R})} + \|  \partial_{\xi_1} \eta_\xi^h \|_{L^2(\mathbb{R})}  \right)\\
&\leq 2 C h^2 \left(  \| \psi_\xi  \|_{L^2(\mathbb{R})} + Ch^2 + \|  \partial_{\xi_1} \psi_\xi \|_{L^2(\mathbb{R})}  \right) \\
&\leq 2 C h^2 \sup_{\xi \in \Omega} \left(  \| \psi_\xi  \|_{L^2(\mathbb{R})} + C h_0^2 + \|  \partial_{\xi_1} \psi_\xi \|_{L^2(\mathbb{R})}  \right) \\
&=: M h^2.
\end{align*}
Let $h_1 = \min(h_0 , \beta \sqrt{M})$. If $h<h_0$ and $\xi \in \Omega$, we have
\[ \partial_{\xi_1} \| \eta_\xi^h \|_{L^2(\mathbb{R})}^2 \geq \beta. \]

Since $\widetilde{\Omega}$ is relatively compact open subset of $\Omega$, there exists $r>0$ such that
\[ \widetilde{\Omega} + \overline{B_{\mathbb{R}^2}(0,r)} \subset \Omega.\]
Let $\xi \in \widetilde{\Omega}$, $h<h_1$ and let $g$ be the following function
\[ g: \left\{  \begin{array}{cccc} [\xi_1 - r,\xi_1 + r] & \to & \mathbb{R} \\
						\zeta_1 & \mapsto &   \| \eta_{\zeta_1,\xi_2}^h \|_{L^2(\mathbb{R})}^2.
\end{array}\right. \]
Since $g$ is a continuous map, we have
\begin{equation}
\label{surj_mass}
  [ \| \eta_{\xi_1 - r,\xi_2}^h \|_{L^2(\mathbb{R})}^2 , \| \eta_{\xi_1 + r,\xi_2}^h \|_{L^2(\mathbb{R})}^2]  \subset g( [\xi_1 - r,\xi_1 + r] ).
\end{equation}
But applying the mean value equality, we have
\[  \| \eta_{\xi_1 - r,\xi_2}^h \|_{L^2(\mathbb{R})}^2 < \| \eta_\xi^h \|_{L^2(\mathbb{R})}^2 - \beta r < \| \eta_\xi^h \|_{L^2(\mathbb{R})}^2 + \beta r< \| \eta_{\xi_1 + r,\xi_2}^h \|_{L^2(\mathbb{R})}^2.\]

Let $u\in BL^2_h$ be such that $\| u - \eta_\xi^h \|_{H^1(\mathbb{R})} < \delta$, where $\delta \in (0,1)$ is a positive constant that will be fixed later. Applying triangle inequality, we get
\begin{align*}
\left|  \| u\|_{L^2(\mathbb{R})}^2 - \| \eta_\xi^h\|_{L^2(\mathbb{R})}^2 \right| & \leq \delta (  \| u\|_{L^2(\mathbb{R})} + \| \eta_\xi^h\|_{L^2(\mathbb{R})} )\\
														&\leq \delta (\delta + 2 \| \eta_\xi^h\|_{L^2(\mathbb{R})}  ) \\
														&\leq \delta (1+ 2 \sup_{\xi \in \Omega, \ h<h_0}  \| \eta_\xi^h\|_{L^2(\mathbb{R})}  ) \\
														&= : \delta \kappa.
\end{align*}
So, choosing $\delta = \frac{\beta r}{\kappa}$, we deduce from \eqref{surj_mass} that there exists $\zeta_1 \in [\xi_1 - r,\xi_1 + r] $ such that
\[ \| u\|_{L^2(\mathbb{R})}^2 = g(\zeta_1) = \| \eta_\zeta^h \|_{L^2(\mathbb{R})}^2  ,\]
where $\zeta_2 := \xi_2.$
Applying the mean value equality, we obtain
\[ |\xi - \zeta| \leq \beta^{-1} \left|  \| u\|_{L^2(\mathbb{R})}^2 - \| \eta_\zeta^h \|_{L^2(\mathbb{R})}^2 \right|. \]
which proves the result. 
\end{proof}

\section{Control of the instabilities and modulation }
In the last section we have constructed approximate traveling waves $\eta_\xi^h$. In order to prove Theorem \ref{Thm_DTW}, we now study the dynamics of DNLS around these approximate traveling waves.

We are going to use many results established in the previous section about $\eta_\xi^h$ and its properties. In a first paragraph, we summarize the results that will be useful and fix most of the constants.

\underline{ Step 1: variational properties around the equilibria}

Let $\widetilde{\Omega}$ be a relatively compact open subset of $\left\{ \xi \in \mathbb{R}^2 \ | \ \xi_1>\left( \frac{\xi_2}2 \right)^2 \right\}$ and $\Omega$  a relatively compact open subset of $ \widetilde{\Omega}$.
In the previous section, we have proven there exist some constants $h_0,\varepsilon,C,\rho>0$ and, for all $\xi \in \widetilde{\Omega}$ and all $h<h_0$, a function $\eta_\xi^h \in BL^2_h$ satisfying the following properties.
\begin{itemize} 
\item From Theorem \ref{thm_ExTW}, $\eta_\xi^h$ is a critical point of $\Lag_\xi^h$ and it is an approximation of $\psi_\xi$
\[ \| \eta_\xi^h - \psi_\xi\|_{H^1(\mathbb{R})} \leq C h^2.\]
\item From Theorem \ref{thm_anareg}, $\eta_\xi^h$ is regular function
\begin{equation}
\label{summ_reg}
 \forall \omega \in \mathbb{R}, \ |\widehat{\eta_\xi^h}(\omega)| \leq C e^{- \varepsilon |\omega|}. 
\end{equation}
Consequently, we also have $\|\widehat{\eta_\xi^h}\|_{H^3(\mathbb{R})}\leq C$.
\item From Lemma \ref{lem_orb_stab}, if $u\in BL^2_h \cap \Span(i\eta_\xi^h,\partial_x \eta_\xi^h)^{\perp_{L^2}}$, $\| u\|_{L^2(\mathbb{R})}^2 = \| \eta_\xi^h\|_{L^2(\mathbb{R})}^2$ and $\|u - \eta_\xi^h\|_{H^1(\mathbb{R})} \leq \rho$ then 
\begin{equation}
\label{summ_est_lyap}
\frac1C \| u -\eta_\xi^h \|_{H^1(\mathbb{R})}^2 \leq  \Lag_\xi^h(u) -\Lag_\xi^h(\eta_\xi^h).
\end{equation}
\item From Theorem \ref{thm_change_variable}, if $u \in BL^2_h(\mathbb{R})$, $\xi \in \Omega$ and $\|u - \eta_\xi^h\|_{H^1(\mathbb{R})} \leq \rho$ then there exists $\zeta\in \widetilde{\Omega}$ such that
\begin{equation}
\label{summ_fix_mass}
\left\{ \begin{array}{lll} \xi_2 &=& \zeta_2 \\
				\|  \eta_{\zeta}^h \|_{L^2(\mathbb{R})}^2 &=& \| u \|_{L^2(\mathbb{R})}^2
\end{array}\right.
\end{equation}
and (using regularity of $\xi \mapsto \eta_{\xi}^h$ uniformly with respect to $h$, see Theorem \ref{thm_ExTW})
\begin{equation}
\label{summ_est_fix_mass}
 |\zeta-\xi | +  \|u - \eta_{\zeta}^h \|_{H^1(\mathbb{R})}  \leq C \|u - \eta_{\xi}^h \|_{H^1(\mathbb{R})}.
\end{equation}
\item From Lemma  \ref{lem_why_orb}, for all $u \in BL^2_h(\mathbb{R})$, if $\|u - \eta_\xi^h\|_{H^1(\mathbb{R})} \leq \rho$ then there exists $\gamma,\xO\in \mathbb{R}$ such that
\begin{equation}
\label{summ_why_orb}
\max(|\gamma|,|\xO|)\leq C \| u- \eta_\xi^h \|_{H^1(\mathbb{R})} \quad \textrm{ and } \quad e^{i\gamma}u(\, \cdot\, -\xO) -\eta_\xi^h \in \Span(i\eta_\xi^h,\partial_x \eta_\xi^h)^{\perp_{L^2}}.
\end{equation}
\item From Lemma \ref{lem_matrix_inv}, for all $u \in BL^2_h(\mathbb{R})$, if $\|u - \eta_\xi^h\|_{H^1(\mathbb{R})} \leq \rho$ and  $A_{h,\xi}[u]$ is the matrix defined in \eqref{def_A} then
\begin{equation}
\label{summ_lem_matrix_inv}
A_{h,\xi}[u] \textrm{ is is invertible and } \| (A_{h,\xi}[u])^{-1} \|_1 \leq C.
\end{equation} 
\item From Lemma \ref{lem_d2_unif_bound} and Lemma \ref{est_lips}, for all $u \in BL^2_h(\mathbb{R})$, if $\|u - \eta_\xi^h\|_{H^1(\mathbb{R})} \leq \rho$ then
\begin{equation}
\label{summ_d2_unif_bound}
 \forall v,w\in BL^2_h, \ \quad  \left| \diff^2 \Lag_\xi^h (u)(v,w) \right| \leq C \|v\|_{H^1(\mathbb{R})} \|w\|_{H^1(\mathbb{R})} .
 \end{equation}
\end{itemize}

We finish this paragraph by a remark. In Theorem \ref{Thm_DTW}, we compare a solution $\ug$ of DNLS with some discretizations of $\eta_\xi^h$ using discrete Sobolev norms. However, as we explain in Lemma \ref{lem_compare_norms}, it is equivalent to compare directly the Shannon interpolation $u$ of the discrete solution with $\eta_\xi^h$ using continuous Sobolev norms.

\underline{ Step 2: Lyapunov estimation and modulation}

Let $r>0$ be a positive constant independent of $\xi$ and $h$ that will be determined at the end of this paragraph.
Recall that for  $v:\mathbb{R} \to \mathbb{R}$ we have 
\[  \forall x\in \mathbb{R},\ \quad  T_{\gamma,\xO} v(x) := e^{i\gamma}v(x - \xO).\]
and note that  $T_{\gamma,\xO}^{-1} = T_{-\gamma,-\xO}$.
Let $u_0 \in BL^2_h$ be such that $\delta(0) =\|  u_0 - T_{\gamma_0,\xO_0} \eta_\xi^h\|_{H^1(\mathbb{R})}< r$ where $\xi \in \Omega$,  $\xO_0,\gamma_0\in \mathbb{R}$. Let $u$ be the solution of DNLS in $BL^2_h$ (see Lemma \ref{lem_DNLS_BL2})  such that $u(0) = u_0$.

Assume that $r<\rho$. Applying \eqref{summ_fix_mass} and \eqref{summ_est_fix_mass}, there exists $\zeta \in \widetilde{\Omega}$ such that 
\[ \left\{ \begin{array}{lll} \xi_2 &=& \zeta_2 \\
				\|  \eta_{\zeta}^h \|_{L^2(\mathbb{R})}^2 &=& \| u_0 \|_{L^2(\mathbb{R})}^2
\end{array}\right. \qquad \mbox{and} \qquad |\zeta-\xi | +  \|u_0 - T_{\gamma_0,\xO_0} \eta_\xi^h \|_{H^1(\mathbb{R})} \leq C \delta(0). \]
Consequently, we have
\[  \| \eta_{\xi}^h - \eta_{\zeta}^h \|_{H^1(\mathbb{R})} \leq (1+C) \delta(0).\]
Now, assume that $Cr < \rho$, then applying \eqref{summ_why_orb}, there exist $\delta_\gamma,\delta_{\xO}\in \mathbb{R}$ such that
\begin{equation}
\label{prokofiev} 
\left\{
\begin{array}{rcl}
\theta_0 &=& \gamma_0+\delta_\gamma \\
p_0 &=& \xO_0+\delta_{\xO}
\end{array}
\right.  
\quad\mbox{with}\quad 
\max(|\delta_\gamma|,|\delta_{\xO}|) \leq C^2 \delta(0)\quad  \textrm{ and } \quad  T_{\theta_0,p_0}^{-1} u_0 \in \Span(i \eta_{\zeta}^h, \partial_x \eta_\zeta^h)^{\perp_{L^2}}.  
\end{equation}
We would like to get some functions $\theta,p \in C^1(\mathbb{R}_+)$ such that as long as $u(t)$ is close to the orbit of $\eta_\zeta^h$ (up to gauge transform and advection), we have $ T_{\theta(t),p(t)}^{-1} u(t) \in \Span(i \eta_{\zeta}^h, \partial_x \eta_\zeta^h)^{\perp_{L^2}}$.
We are going to construct them by solving a differential equation. Taking a time derivative,  if such functions exist they have to  satisfy 
\begin{equation}
\label{edo_modulation}
 A_{\zeta,h}[T_{\theta(t),p(t)}^{-1} u(t)] \begin{pmatrix} \dot{\theta}(t) \\
										 \dot{p}(t)
\end{pmatrix}  = \begin{pmatrix} \langle T_{\theta(t),p(t)}^{-1} \partial_t u(t) , i \eta_{\zeta}^h\rangle_{L^2(\mathbb{R})} \\
						\langle T_{\theta(t),p(t)}^{-1} \partial_t u(t) , \partial_x \eta_{\zeta}^h\rangle_{L^2(\mathbb{R})}
\end{pmatrix} .
\end{equation}
We would like to solve the Cauchy problem associated with this ordinary differential equation with $\theta(0)=\theta_0$ and $p(0) = p_0$. 
Note that all the terms depend smoothly on $t,p(t),\theta(t)$, hence to get the existence of a local solution, 
we need to invert $ A_{\zeta,h}[T_{\theta(t),p(t)}^{-1} u(t)] $. Using the regularity of $\eta_{\zeta}^h$ (see \eqref{summ_reg}), we have
\[ \|  u_0 - T_{\theta_0,p_0}\eta_{\zeta}^h\|_{H^1(\mathbb{R})} \leq C^3 \delta(0). \]
Assuming that $C^3 r < \rho$, we get from \eqref{summ_lem_matrix_inv} that $A_{\zeta,h}[T_{\theta_0,p_0}^{-1} u_0]$ is invertible and 
\[ \| (A_{\zeta,h}[ T_{\theta_0,p_0}^{-1} u_0])^{-1} \|_1 \leq C. \]
So (applying, for example, Cauchy-Lipschitz theorem or the implicit functions theorem), there exist $T_{\max}\in(0,\infty]$ and a solution $\theta,p\in C^1([0,T_{\max}))$ of \eqref{edo_modulation} on $[0,T_{\max})$ such that
\begin{itemize}
\item $\theta(0)=\theta_0$ and  $p(0) = p_0$,
\item for all $t\in [0,T_{\max})$, $ A_{\zeta,h}[T_{\theta(t),p(t)}^{-1} u(t)] $ is invertible,
\item $\displaystyle \lim_{t \to T_{\max}} |\theta(t)|+ |p(t)| + \| ( A_{\zeta,h}[T_{\theta(t),p(t)}^{-1} u(t)] )^{-1}\|_1   = \infty$ 
\end{itemize}

We would like to prove that while $\|  u(t) - T_{\gamma(t),\xO(t)} \eta_\xi^h\|_{H^1(\mathbb{R})}< r$, with $\gamma = \theta - \delta_{\gamma}  $ and $\xO = p - \delta_{\xO} $ where $\delta_\gamma$ and $\delta_{\xO}$ are given in \eqref{prokofiev},  the last condition is not satisfied and so $\gamma(t)$ and $\xO(t)$ are well defined. 
This is done by the following Lemma,  whose proof is given in Section \ref{sec_proof_lem_modulation_constructor} of the Appendix.

\begin{lem}
\label{lem_modulation_constructor}
There exist $\gamma,\xO\in C^1(\mathbb{R}_+)$ such that $\gamma(0) = \gamma_0$, $\xO(0)=\xO_0$ and if $T>0$ satisfies
\[ \forall t\in (0,T), \ \quad  \|  u(t) - T_{\gamma(t),\xO(t)} \eta_\xi^h\|_{H^1(\mathbb{R})}< r,\]
then $T<T_{\max}$ and $\gamma = \theta - \delta_{\gamma}  $, $\xO = p - \delta_{\xO} $ on $(0,T)$, where $\delta_\gamma$ and $\delta_{\xO}$ are defined in \eqref{prokofiev}. 
\end{lem}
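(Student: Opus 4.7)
The plan is to set $\gamma(t) := \theta(t) - \delta_\gamma$ and $\xO(t) := p(t) - \delta_\xO$ on $[0,T_{\max})$, extend both arbitrarily to $\mathbb{R}_+$ by any convenient $C^1$ prolongation past $T_{\max}$ (if finite), and then establish by a contradiction argument that any bootstrap time $T>0$ on which $\|u(t)-T_{\gamma(t),\xO(t)}\eta_\xi^h\|_{H^1(\mathbb{R})}<r$ persists must satisfy $T<T_{\max}$; this automatically delivers the advertised identity $\gamma=\theta-\delta_\gamma$, $\xO=p-\delta_\xO$ on $(0,T)$, and by construction $\gamma(0)=\gamma_0$, $\xO(0)=\xO_0$ because of \eqref{prokofiev}.

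Suppose for contradiction $T_{\max}\le T$. Using the abelian group law $T_{\gamma,\xO}=T_{\theta,p}T_{-\delta_\gamma,-\delta_\xO}$ and the $H^1$-unitarity of $T_{\theta,p}$, the bootstrap rewrites as $\|T_{\theta(t),p(t)}^{-1}u(t)-T_{-\delta_\gamma,-\delta_\xO}\eta_\xi^h\|_{H^1(\mathbb{R})}<r$. I would then insert $\eta_\zeta^h$ via two triangle splits: $\|\eta_\xi^h-\eta_\zeta^h\|_{H^1(\mathbb{R})}\le(1+C)\delta(0)$ (from \eqref{summ_est_fix_mass}) and $\|T_{-\delta_\gamma,-\delta_\xO}\eta_\xi^h-\eta_\xi^h\|_{H^1(\mathbb{R})}=O(\max(|\delta_\gamma|,|\delta_\xO|)\|\eta_\xi^h\|_{H^2(\mathbb{R})})=O(\delta(0))$, where the uniform $H^2$-bound comes from the Gevrey decay \eqref{summ_reg}. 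Choosing $r$ small enough (say $r\le\rho/(2C^3)$) and invoking $\delta(0)<r$, the aggregate remains below $\rho$, so \eqref{summ_lem_matrix_inv} applies along the whole interval $[0,T_{\max})$ with $\|A_{\zeta,h}[T_{\theta(t),p(t)}^{-1}u(t)]^{-1}\|_1\le C$.

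For the right-hand side of \eqref{edo_modulation}, I would transfer $T_{\theta(t),p(t)}^{-1}$ onto the test functions (legitimate since $T_{\theta,p}$ is $L^2$-unitary) and then use that $u$ solves DNLS in the weak form $\langle i\partial_t u,v\rangle_{L^2(\mathbb{R})}=\diff H_{\mathrm{DNLS}}^h(u)(v)$ of Lemma \ref{lem_DNLS_BL2}. The only term carrying derivatives of $u$ is the kinetic part $\langle\Delta_h u,\cdot\rangle_{L^2(\mathbb{R})}$; moving $\Delta_h$ onto the smooth test functions $T_{\theta,p}(i\eta_\zeta^h)$ and $T_{\theta,p}(\partial_x\eta_\zeta^h)$ via self-adjointness, and using the uniform $H^3$-bound on $\eta_\zeta^h$ from \eqref{summ_reg} together with the $L^2$ conservation law $\|u(t)\|_{L^2(\mathbb{R})}=\|u(0)\|_{L^2(\mathbb{R})}$, yields $|\dot\theta(t)|+|\dot p(t)|\le K$ on $[0,T_{\max})$ with $K$ independent of $t$. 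Since $T_{\max}\le T<\infty$, this forces $|\theta(t)|+|p(t)|$ to stay bounded as $t\to T_{\max}^-$, and the matrix inverse norm is already controlled, contradicting the blow-up criterion defining $T_{\max}$. Hence $T<T_{\max}$, concluding the proof.

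The main obstacle is to make the pairings $\langle T_{\theta,p}^{-1}\partial_t u,\cdot\rangle_{L^2(\mathbb{R})}$ in \eqref{edo_modulation} well-defined and uniformly bounded despite $\partial_t u$ involving the second-order finite-difference operator $\Delta_h$ applied to $u$ (which is only $H^1$); this is absorbed by pushing $\Delta_h$ onto the smooth test function via self-adjointness, exploiting the very strong (Gevrey) regularity of $\eta_\zeta^h$ provided by Theorem \ref{thm_anareg}.
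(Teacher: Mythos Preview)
Your strategy matches the paper's: transfer the bootstrap to $T_{\theta,p}^{-1}u$, bound $\|T_{\theta,p}^{-1}u-\eta_\zeta^h\|_{H^1}$ below $\rho$ by the triangle inequality, invoke \eqref{summ_lem_matrix_inv} for the matrix inverse, push $\Delta_h$ onto the smooth $\eta_\zeta^h$ by self-adjointness to bound the right-hand side of \eqref{edo_modulation}, and contradict the blow-up alternative. However, there is a genuine circularity in your construction of $\gamma,\xO$. You set $\gamma=\theta-\delta_\gamma$, $\xO=p-\delta_\xO$ on $[0,T_{\max})$ and then take ``any convenient $C^1$ prolongation past $T_{\max}$''; but if $T_{\max}<\infty$, the blow-up criterion forces either $|\theta|+|p|\to\infty$ or $\|A^{-1}\|\to\infty$ as $t\to T_{\max}^-$, and in either case $\theta,p$ (respectively $\dot\theta,\dot p$, via the ODE) need not have limits there, so no $C^1$ extension exists. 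You cannot appeal to the bootstrap to rule this out, because the bootstrap hypothesis is phrased in terms of the very functions $\gamma,\xO$ you are trying to construct.

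The paper breaks this circularity by inserting an intermediate time $T_{\rm crit}\le T_{\max}$, defined as the supremum of times on which the \emph{a priori} bounds $\|A^{-1}\|\le 2C$ and $|\theta-\theta_0|+|p-p_0|\le 1+c_2 t$ both persist (for a constant $c_2$ fixed later). These two bounds make no reference to $\gamma,\xO$ and guarantee that $\theta,p,\dot\theta,\dot p$ extend continuously to $T_{\rm crit}$, so the $C^1$ prolongation past $T_{\rm crit}$ is legitimate. The contradiction argument---identical to yours in substance---then yields $T\le T_{\rm crit}$ (the bootstrap forces the strict bounds $\|A^{-1}\|\le C$ and $|\dot\theta|+|\dot p|\le c_2/2$, so equality in the defining conditions of $T_{\rm crit}$ cannot be reached), hence $T<T_{\max}$. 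One further small point: in bounding the cubic pairing $\langle(1+2\cos)|u|^2u,T_{\theta,p}\eta_\zeta^h\rangle_{L^2}$, the $L^2$ conservation law alone is not enough; you need the $H^1$ bound $\|u(t)\|_{H^1}\le r+\|\eta_\xi^h\|_{H^1}$ coming from the bootstrap, which the paper invokes explicitly at that step.
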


From now on, we consider the functions $\gamma,\xO$ given by Lemma \ref{lem_modulation_constructor} and $T>0$ satisfying the bootstrap condition
 \[ \forall t\in (0,T), \ \quad \delta(t) :=\|  u(t) - T_{\gamma(t),\xO(t)} \eta_\xi^h\|_{H^1(\mathbb{R})}< r.\] 
By construction, we have
\begin{align*}
\|  u(t) - T_{\theta(t),p(t)} \eta_{\zeta}^h\|_{H^1(\mathbb{R})} &\leq  \| u(t) - T_{\gamma(t),\xO(t)} \eta_\xi^h \|_{H^1(\mathbb{R})} + \| \eta_{\xi}^h - T_{\delta_\gamma,\delta_{\xO}} \eta_\xi^h \|_{H^1(\mathbb{R})} + \| \eta_{\zeta}^h - \eta_\xi^h \|_{H^1(\mathbb{R})} \\
												&\leq \delta(t) + C^3 \delta(0) + (1+C) \delta(0) \\
												&< (2+C+C^3)r.
\end{align*}
We assume that $(2+C+C^3)r \leq \rho$. Since $\| u\|_{L^2(\mathbb{R})}^2$ is a constant of the motion, we have  $\|u(t)\|_{L^2(\mathbb{R})}^2 = \| \eta_{\zeta}^h \|_{L^2(\mathbb{R})}^2$. Furthermore, by construction $T_{\theta(t),p(t)}^{-1}u \in \Span(i \eta_{\zeta}^h,\partial_x \eta_{\zeta}^h)^{\perp_{L^2}}$, so we can apply \eqref{summ_est_lyap} to get the Lyapunov control of the stability
\begin{equation}
\label{true_coer}
\frac1{C} \|  u(t) - T_{\theta(t),p(t)} \eta_{\zeta}^h\|_{H^1(\mathbb{R})}^2 \leq \Lag_{\zeta}^h(u(t)) - \Lag_{\zeta}^h(\eta_{\zeta}^h).
\end{equation}

To be rigorous, we can verify our assumptions on $r$ and observe that $r = \frac{\rho}{2+C+C^3}$ is a possible choice.

\underline{Step 3: Estimation of $\delta(t)$}

Usually, when we apply the energy-momentum method, the Lagrange function is a constant of the motion of DNLS. An estimate of the form \eqref{true_coer} allows to control
$\|  u(t) - T_{\theta(t),p(t)} \eta_{\zeta}^h\|_{H^1(\mathbb{R})}^2$ by $\Lag_{\zeta}^h(u_0) - \Lag_{\zeta}^h(\eta_{\zeta}^h)$. This latter quantity can be controlled by 
using a Taylor expansion
\begin{align*}
\Lag_{\zeta}^h(u_0) - \Lag_{\zeta}^h(\eta_{\zeta}^h) &= \Lag_{\zeta}^h(T_{\theta_0,p_0}^{-1}  u_0) - \Lag_{\zeta}^h(\eta_{\zeta}^h) \\
									      &\leq \frac12 \sup_{\| v -  \eta_{\zeta}^h \|_{H^1(\mathbb{R})}\leq \rho} \left| \diff^2 \Lag_{\zeta}^h(v)(T_{\theta_0,p_0}^{-1}  u_0 - \eta_{\zeta}^h) \right| \\
									      &\leq \frac{C}2 \|  u_0 - T_{\theta_0,p_0} \eta_{\zeta}^h\|_{H^1(\mathbb{R})}^2,
\end{align*}
where the last estimate is given by \eqref{summ_d2_unif_bound}.

In our case, because of the aliasing terms,  $\Lag_\zeta^h(u(t))$ is not a constant of the motion. So we have to control its variations. Let $t<T$, since $H_{\rm DNLS}^h(u(t))$ and $\|u(t)\|_{L^2(\mathbb{R})}^2$ are constant of the motion, applying the formula of Lemma \ref{lem_eq_HDNLS}, we obtain the following decomposition
\begin{align}
\nonumber
  \Lag_\zeta^h(u(t)) -  \Lag_\zeta^h(u(0)) &= H_{\rm DNLS}^h(u(t)) -H_{\rm DNLS}^h(u(0)) + \frac{\zeta_1}2 \left( \|u(t)\|_{L^2(\mathbb{R})}^2 - \|u(0)\|_{L^2(\mathbb{R})}^2 \right) \\ \nonumber
  & - \frac12\int_{\mathbb{R}} \cos \left(  \frac{2\pi x}h \right) (|u(t,x)|^4 - |u(0,x)|^4 ) \dx  \\ \nonumber
  							     &+ \frac{\zeta_2}2 \left(  \langle i\partial_x u(t), u(t) \rangle_{L^2(\mathbb{R})}-\langle i\partial_x u(0), u(0) \rangle_{L^2(\mathbb{R})} \right)\\ \label{nougaro}
							     &= E_1(0) -  E_1(t) + \frac12E_2(t),
\end{align}
where
\[ E_1(t) = \frac12\int_{\mathbb{R}} \cos \left(  \frac{2\pi x}h \right) |u(t,x)|^4  \dx  \]
and
\[  E_2(t) =  \xi_2 \left(  \langle i\partial_x u(t), u(t) \rangle_{L^2(\mathbb{R})}-\langle i\partial_x u(0), u(0) \rangle_{L^2(\mathbb{R})} \right).  \]
Note that we write $\xi_2$ instead of $\zeta_2$ as these two numbers are equal by construction (see \eqref{summ_fix_mass}).

First, we explain how to bound $E_1(t)$. It can be decomposed as follow
\[ E_1(t) =  \frac14 \left( E_3(u(t)) + \overline{  E_3(u(t)) } \right),
\quad\mbox{with} \quad E_3(v) = \int_{\mathbb{R}} e^{\frac{2i \pi}h} |u(t,x)|^4 \dx. \]
Since $E_3$ is a $4-$homogeneous continuous function, its Taylor expansion is exact. So, we have
\begin{equation}
\label{Taylor_expansion}
 E_3(u(t)) = \sum_{j=0}^4 \frac1{j!} \diff^j E_3 (T_{\theta(t),p(t)} \eta_{\zeta}^h)  (\underbrace{ u(t) - T_{\theta(t),p(t)}\eta_{\zeta,h},\dots,  u(t) - T_{\theta(t),p(t)}\eta_{\zeta,h}}_{ j \textrm{ times}} )  .
\end{equation}
To control these derivatives, we use the following lemma.
\begin{lem}
\label{lem_estA}
 If $u_1,u_2,u_3,u_4\in  BL^2_h$ and
\[ M_h(u_1,u_2,u_3,u_4) = \int_{\mathbb{R}} e^{\frac{2i\pi x}h} u_1(x) u_2(x) u_3(x) u_4(x) \dx, \]
then we have
\[ |M_h (u_1,u_2,u_3,u_4)|\leq  \frac14 \sum_{\sigma\in S_4}  \|\widehat{u_{\sigma_1}} \mathbb{1}_{\omega \geq \frac{\pi}{3h}}\|_{L^2(\mathbb{R})} \|\widehat{u_{\sigma_2}} \mathbb{1}_{\omega \geq\frac{\pi}{3h}}\|_{L^2(\mathbb{R})} \|\widehat{u_{\sigma_3}}\|_{L^1(\mathbb{R})} \|\widehat{u_{\sigma_4}} \|_{L^1(\mathbb{R})}  .  \]
\end{lem}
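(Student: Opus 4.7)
My plan is to Fourier-dualize $M_h$ into a four-fold convolution evaluated at $2\pi/h$, exploit the support constraint $\supp \widehat{u_i} \subset [-\pi/h, \pi/h]$ coming from $u_i \in BL^2_h$ together with a pigeonhole argument to force at least two of the convolution frequencies above $\pi/(3h)$, and then apply iterated Young-type inequalities with the two high-frequency factors measured in $L^2$ and the remaining two in $L^1$, symmetrizing at the end to obtain the sum over $S_4$.

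Concretely, using the paper's convention $\widehat{u}(\omega) = \int u(x) e^{ix\omega}\,dx$, I would first write
\[
M_h(u_1,u_2,u_3,u_4) \;=\; \widehat{u_1 u_2 u_3 u_4}\!\left(\tfrac{2\pi}{h}\right) \;\propto\; (\widehat{u_1}*\widehat{u_2}*\widehat{u_3}*\widehat{u_4})\!\left(\tfrac{2\pi}{h}\right),
\]
up to a fixed numerical prefactor from the Fourier normalization. Since each $\widehat{u_i}$ is supported in $[-\pi/h, \pi/h]$, the integrand of the convolution at $2\pi/h$ vanishes outside the simplex
$\Sigma := \{(\omega_1,\omega_2,\omega_3,\omega_4) : \sum_i \omega_i = 2\pi/h,\ |\omega_i| \leq \pi/h\}$.
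The geometric key is: on $\Sigma$, at least two of the $|\omega_i|$ satisfy $|\omega_i| \geq \pi/(3h)$. Indeed, if only $\omega_1$ (say) exceeded this threshold, then using $\omega_1 = 2\pi/h - \omega_2 - \omega_3 - \omega_4$ together with $|\omega_j| < \pi/(3h)$ for $j = 2,3,4$ would give $\omega_1 > 2\pi/h - 3\cdot\pi/(3h) = \pi/h$, contradicting $|\omega_1| \leq \pi/h$. Hence $\mathbb{1}_\Sigma \leq \sum_{\{i,j\}} \mathbb{1}_{|\omega_i|, |\omega_j| \geq \pi/(3h)}$, summed over unordered pairs $\{i,j\} \subset \{1,2,3,4\}$.

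Applying this indicator bound and the triangle inequality, the convolution splits into at most $\binom{4}{2} = 6$ pieces; in the $\{i,j\}$-piece, $\widehat{u_i}$ and $\widehat{u_j}$ are effectively replaced by $a_i := \widehat{u_i}\mathbb{1}_{|\omega|\geq \pi/(3h)}$ and $a_j$ respectively. For each such piece I would group the four-fold convolution as $(a_i * a_j)*(\widehat{u_k}*\widehat{u_l})$ with $\{k,l\}$ the complementary pair, and combine $\|f * g\|_{L^\infty} \leq \|f\|_{L^2}\|g\|_{L^2}$ with $\|F * G\|_{L^\infty} \leq \|F\|_{L^\infty}\|G\|_{L^1}$ and $\|F * G\|_{L^1} \leq \|F\|_{L^1}\|G\|_{L^1}$ to obtain
\[
\bigl|(a_i * a_j * \widehat{u_k} * \widehat{u_l})\!\left(\tfrac{2\pi}{h}\right)\bigr| \;\leq\; \|a_i\|_{L^2}\|a_j\|_{L^2}\|\widehat{u_k}\|_{L^1}\|\widehat{u_l}\|_{L^1}.
\]
Summing over the $6$ unordered pairs and noting that in $\sum_{\sigma\in S_4}$ each unordered pair $\{\sigma_1,\sigma_2\}$ appears exactly $4$ times (two orderings inside $\{\sigma_1,\sigma_2\}$ and two inside $\{\sigma_3,\sigma_4\}$), one rewrites $\sum_{\{i,j\}}$ as $\tfrac{1}{4}\sum_{\sigma \in S_4}$, matching the stated form.

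The main obstacle is the pigeonhole step that pins down the threshold $\pi/(3h)$: it is precisely the support geometry of $\Sigma$ (one high-frequency index can never compensate a shift of $2\pi/h$ on its own) that forces two high frequencies and makes the $L^2$-$L^2$-$L^1$-$L^1$ Young grouping admissible. The remaining convolution inequalities and the symmetry count are routine; any numerical factor from the $(2\pi)^{-3}$ Fourier normalization would only affect the explicit prefactor, not the structural shape of the estimate.
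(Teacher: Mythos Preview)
Your approach is essentially identical to the paper's: express $M_h$ as the four-fold convolution $\widehat{u_1}*\widehat{u_2}*\widehat{u_3}*\widehat{u_4}$ evaluated at $2\pi/h$, use the pigeonhole on the support constraint to force two high frequencies, apply Young's convolution inequalities with two $L^2$ and two $L^1$ factors, and symmetrize over $S_4$. The only adjustment needed is that the lemma uses the one-sided cutoff $\mathbb{1}_{\omega\geq\pi/(3h)}$ rather than your $\mathbb{1}_{|\omega|\geq\pi/(3h)}$; the same pigeonhole actually gives this stronger one-sided conclusion (if at most one index satisfies $\omega_i\geq\pi/(3h)$ then $\omega_j<\pi/(3h)$ for the other three, forcing that index above $\pi/h$), so simply drop the absolute values.
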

\begin{proof}
We identify $M_h$ with a convolution product
\[ M_h (u_1,u_2,u_3,u_4) = \widehat{u_1}* \widehat{u_2}*\widehat{u_3}*\widehat{u_4} ( \frac{2\pi}{h} ). \]
But \it if the sum of four numbers, all smaller than $1$, is equals to $2$, then at least $2$ of them are larger than $\frac13$\rm. Consequently, since $\supp \widehat{u_j} \subset \left[ -\frac{\pi}h,\frac{\pi}h \right]$, it comes
\[  |M_h (u_1,u_2,u_3,u_4)|\leq \frac1{4} \sum_{\sigma\in S_4}  |\mathbb{1}_{\omega \geq  \frac{\pi}{3h}}\widehat{u_{\sigma_1}}|* |\mathbb{1}_{\omega \geq  \frac{\pi}{3h}} \widehat{u_{\sigma_2}}| *|\widehat{u_{\sigma_3}}|*|\widehat{u_{\sigma_4}}|  (  \frac{2\pi}{h} ) .   \]
Then, we conclude the proof using Young convolution inequalities.
\end{proof}
Applying this Lemma to estimate the terms of \eqref{Taylor_expansion} we obtain four types of contributions. 
\begin{itemize}
\item Applying \eqref{summ_reg} and defining $\ell = \frac{\pi \varepsilon}{3}$, we have
\[ \|\mathscr{F}[T_{\theta(t),p(t)} \eta_{\zeta}^h] \mathbb{1}_{\omega \geq \frac{\pi}{3h}}\|_{L^2(\mathbb{R})}^2 \leq C^2 \int_{\omega \geq \frac{\pi}{3h}} e^{-2\varepsilon \omega} \domega = \frac{C^2}{\varepsilon} e^{- 2\varepsilon \frac{\pi}{3h}} = \frac{C^2}{\varepsilon} e^{- \frac{2\ell}{h}  }. \]
\item Up to an universal constant $c>0$, we have
\[ \|\mathscr{F}[T_{\theta(t),p(t)} \eta_{\zeta}^h] \|_{L^1(\mathbb{R})} \leq c\,  C. \]
\item Up to an universal constant $c>0$, we have
\[    \|\mathscr{F}[u(t)-T_{\theta(t),p(t)} \eta_{\zeta}^h] \mathbb{1}_{\omega \geq \frac{\pi}{3h}}\|_{L^2(\mathbb{R})} \leq \frac{3 h}{\pi} \|  \mathscr{F}[u(t)-T_{\theta(t),p(t)} \eta_{\zeta}^h] \ \omega \|_{L^2(\mathbb{R})} \leq c h \|  u(t)-T_{\theta(t),p(t)} \eta_{\zeta}^h \|_{H^1(\mathbb{R})}.\]
\item Up to an universal constant $c>0$, we have
\[ \|\mathscr{F}[u(t)-T_{\theta(t),p(t)} \eta_{\zeta}^h] \|_{L^1(\mathbb{R})} \leq c \|  u(t)-T_{\theta(t),p(t)} \eta_{\zeta}^h \|_{H^1(\mathbb{R})}.\]
Sometimes, it is also useful to control it by $c \rho$.
\end{itemize}
With these estimates, we get a constant $M>0$ (depending only of $\varepsilon,C,\rho,h_0$) such that
\begin{equation}
\label{estimation_of_E3}
 |E_3(u(t))| \leq 2M  e^{- \frac{\ell}{h}  } + 2M  h^2  \|  u(t)-T_{\theta(t),p(t)} \eta_{\zeta}^h \|_{H^1(\mathbb{R})}^2  .
\end{equation}
So we deduce that
\begin{equation}
\label{estimation_of_E1}
 |E_1(t)| \leq M  e^{- \frac{\ell}{h}  } + h^2 M  \|  u(t)-T_{\theta(t),p(t)} \eta_{\zeta}^h \|_{H^1(\mathbb{R})}^2  .
\end{equation}

We show now how to control the term $E_2$ in \eqref{nougaro}. It is precisely the error generated by the default of invariance by advection. First, we give a more adapted expression of $E_2$:
\begin{multline*}
E_2(t) = \xi_2 \int_{0}^t  \partial_s   \langle i\partial_x u(s), u(s) \rangle_{L^2(\mathbb{R})}   \ds 
	= 2\xi_2 \int_{0}^t   \langle i\partial_x u(s), \partial_s   u(s) \rangle_{L^2(\mathbb{R})}   \ds \\
	= -4 \xi_2 \int_{0}^t   \langle \partial_x u(s), \cos \left( \frac{2\pi x}h \right) |u(s)|^2 u(s) \rangle_{L^2(\mathbb{R})}   \ds\\
	= - \xi_2 \frac{2\pi}h \int_0^t  \int_{\mathbb{R}} \sin \left( \frac{2\pi x}h \right)  |u(s,x)|^4 \dx \ds 
	= - \xi_2 \frac{\pi}h \int_0^t E_3(u(s)) - \overline{  E_3(u(s)) }  \ds.
\end{multline*}
Applying Estimate of $E_3(u(s))$ \eqref{estimation_of_E3}, we obtain
\[  |E_2(t)| \leq 4M \pi | \xi_2 | h \int_0^t    \frac{e^{- \frac{\ell}{h} } }{h^2} +   \|  u(s)-T_{\theta(s),p(s)} \eta_{\zeta}^h \|_{H^1(\mathbb{R})}^2 \ds.\]

Finally, we apply estimate \eqref{true_coer} and we get
\begin{align*}
& \frac1{C} \|  u(t)-T_{\theta(t),p(t)} \eta_{\zeta}^h \|_{H^1(\mathbb{R})}^2\\
 &\leq \Lag_\xi^h( u(t) ) - \Lag_\xi^h(  \eta_{\zeta}^h )  \\
 &=  \Lag_\xi^h( u(0) ) - \Lag_\xi^h(  \eta_{\zeta}^h ) + \Lag_\xi^h( u(t) ) - \Lag_\xi^h( u(0) )   \\
&=  \Lag_\xi^h( u(0) ) - \Lag_\xi^h(  \eta_{\zeta}^h ) + E_1(0) -E_1(t) + E_2(t) \\
&\leq \frac{C}2  \|  u(0)-T_{\theta(0),p(0)} \eta_{\zeta}^h \|_{H^1(\mathbb{R})}^2 + M  e^{- \frac{\ell}{h}  } + h^2 M  \|  u(t)-T_{\theta(t),p(t)} \eta_{\zeta}^h \|_{H^1(\mathbb{R})}^2 \\
&+ M  e^{- \frac{\ell}{h}  } + h^2 M  \|  u(0)-T_{\theta(0),p(0)} \eta_{\zeta}^h \|_{H^1(\mathbb{R})}^2 + 4M \pi | \xi_2 | h \int_0^t    \frac{e^{- \frac{\ell}{h} } }{h^2} +   \|  u(s)-T_{\theta(s),p(s)} \eta_{\zeta}^h \|_{H^1(\mathbb{R})}^2 \ds.
\end{align*}

So there exist some constants $h_1<h_0$, $c>0$ and $\lambda>0$  (depending only of $\varepsilon,C,\rho,h_0$) such that, for all $h<h_1$, we have
\[   \|  u(t)-T_{\theta(t),p(t)} \eta_{\zeta}^h \|_{H^1(\mathbb{R})}^2 \leq c e^{-\frac{\ell}{2h}} + c \|  u(0)-T_{\theta(0),p(0)} \eta_{\zeta}^h \|_{H^1(\mathbb{R})}^2 + 2\lambda h |\xi_2| \int_0^t   e^{- \frac{\ell}{2h} } +   \|  u(s)-T_{\theta(s),p(s)} \eta_{\zeta}^h \|_{H^1(\mathbb{R})}^2 \ds   .\]
Applying Gr\"onwall's lemma, we obtain the estimate 
\[ \|  u(t)-T_{\theta(t),p(t)} \eta_{\zeta}^h \|_{H^1(\mathbb{R})}^2 + e^{- \frac{\ell}{2h} } \leq e^{ 2\lambda |\xi_2| h t  }  \left[ (1+c) e^{-\frac{\ell}{2h}} + c \|  u(0)-T_{\theta(0),p(0)} \eta_{\zeta}^h \|_{H^1(\mathbb{R})}^2 \right] .\]
Now applying Minkowski inequality, we get
\[  \|  u(t)-T_{\theta(t),p(t)} \eta_{\zeta}^h \|_{H^1(\mathbb{R})}    \leq  \sqrt{1+c} \  e^{ \lambda |\xi_2| h t  }  \left[ e^{-\frac{\ell}{4h}} + \|  u(0)-T_{\theta(0),p(0)} \eta_{\zeta}^h \|_{H^1(\mathbb{R})} \right].\]

We want to deduce a bound on $\delta$ from this inequality. Applying the inequalities established in the previous paragraph, we have
\begin{align*}
 \|  u(0) - T_{\theta(0),p(0)} \eta_{\zeta}^h\|_{H^1(\mathbb{R})} &\leq  \| u(0) - T_{\gamma(0),\xO(0)} \eta_\xi^h \|_{H^1(\mathbb{R})} + \| \eta_{\xi}^h - T_{\delta_\gamma,\delta_{\xO}} \eta_\xi^h \|_{H^1(\mathbb{R})} + \| \eta_{\zeta}^h - \eta_\xi^h \|_{H^1(\mathbb{R})} \\
												&\leq \delta(0) + C^3 \delta(0) + (1+C) \delta(0)  .
\end{align*}
On the other hand, applying the same inequalities, we have
\begin{align*}
 \delta(t) = \| u(t) - T_{\gamma(t),\xO(t)} \eta_\xi^h \|_{H^1(\mathbb{R})}  &\leq   \|  u(t) - T_{\theta(t),p(t)} \eta_{\zeta}^h\|_{H^1(\mathbb{R})} + \| \eta_{\xi}^h - T_{\delta_\gamma,\delta_{\xO}} \eta_\xi^h \|_{H^1(\mathbb{R})} + \| \eta_{\zeta}^h - \eta_\xi^h \|_{H^1(\mathbb{R})} \\
												&\leq  \|  u(t) - T_{\theta(t),p(t)} \eta_{\zeta}^h\|_{H^1(\mathbb{R})} + C^3 \delta(0) + (1+C) \delta(0) .
\end{align*}
Consequently, we have proven our estimate:
\begin{equation}
 \delta(t) \leq \sqrt{1+c} \  e^{ \lambda |\xi_2| h t  }  \left[ e^{-\frac{\ell}{4h}} + \delta(0) \left(  2+C+C^3 + \frac{1+C+C^3}{\sqrt{1+c}}  \right) \right]. 
 \end{equation}
 
\begin{rem}
\label{rem_if_reg} We could get an other kind of estimate of $\delta(t)$ based on the high order Sobolev norms of $u(t)$. Indeed, if $n\in \mathbb{N}^*$, using Lemma \ref{lem_estA}, we have
\[ |E_3(u(t))| \lesssim \| \widehat{u}(\omega) \mathbb{1}_{|\omega|\geq \frac{\pi}{3h}}\|_{L^2(\mathbb{R})}^2 \lesssim  h^{2n} \| u(t) \|_{\dot{H}^n(\mathbb{R})}^2  .\]   
Applying this inequality for $E_2$ and realizing the same proof without applying Gr\"onwall's lemma, we get
\[ \delta(t) \lesssim \delta(0) + e^{-\frac{\ell}h}+ \sqrt{t|\xi_2|} h^{n - \frac12} \sup_{0<s<t} \| u(s) \|_{\dot{H}^n(\mathbb{R})}.\]
\end{rem} 
 
\underline{Step 4: Control of $\dot{\gamma}$ and $\dot{\xO}$}

The idea to obtain the estimate \eqref{modulation_control} is that $\xi$ is the solution of a perturbed linear equation whose $(\dot{\gamma},\dot{\xO})$ is a solution (i.e. \eqref{edo_modulation}).
We work with a fixed $t<T$. To simplify the notation, we assume that $\theta(t)=p(t)=0$. We introduce a notation : for $v\in BL^2_h$, we define
\begin{equation}
\label{chopin} b_{\zeta,h}[v] := \begin{pmatrix}   \langle  \Delta_h v + |v|^2 v , \eta_{\zeta}^h \rangle_{L^2(\mathbb{R})}        \\
						     -\langle \Delta_h v + |v|^2 v  , i\partial_x \eta_{\zeta}^h \rangle_{L^2(\mathbb{R})}  
			 \end{pmatrix} .\end{equation}
With this formalism, equation \eqref{edo_modulation} becomes (see Lemma \ref{lem_DNLS_BL2})
\[ A_{\zeta,h}[u(t)]    \begin{pmatrix}  \dot{\theta}(t)      \\
						     \dot{p}(t)
			 \end{pmatrix} =  b_{\zeta,h}[u(t)] + 2 E_4  ,\quad \mbox{where}\quad  E_4 = \begin{pmatrix}   \langle  \cos \left(  \frac{2\pi x}h \right) |v|^2 v , \eta_{\zeta}^h \rangle_{L^2(\mathbb{R})}        \\
						     -\langle \cos \left(  \frac{2\pi x}h \right) |v|^2 v  , i\partial_x \eta_{\zeta}^h \rangle_{L^2(\mathbb{R})}  
			 \end{pmatrix}.  \]

By construction $\eta_{\zeta}^h$ generates a traveling wave of the perturbation of DNLS whose speed is $\zeta$. It means we can apply Proposition \ref{prop_what_TW_DNLSA} with $\mathfrak{u}(t,x):=e^{i\zeta_1}\eta_\zeta^h(x-\zeta_2 t)$. However, we have $e^{-i\zeta_1}\mathfrak{u}(t,\cdot+\xi_2 t) = \eta_\zeta^h \in \Span (i \eta_\zeta^h,\partial_x \eta_\zeta^h)^{\perp_{L^2}}$. So calculating $\partial_t \mathfrak{u}$ with Equation \eqref{stat5} of Proposition \ref{prop_what_TW_DNLSA}, we get
\[  A_{\zeta,h}[\eta_{\zeta}^h]   \zeta =  b_{\zeta,h}[\eta_{\zeta}^h] .\]
Consequently, we have
\begin{equation}
\label{dec_speed_error}
  A_{\zeta,h}[u(t)]    \begin{pmatrix}  \dot{\theta}(t)-\zeta_1      \\
						     \dot{p}(t) - \zeta_2
			 \end{pmatrix} =  \left( b_{\zeta,h}[u(t)] - b_{\zeta,h}[\eta_{\zeta}^h] \right) - A_{\zeta,h}[u(t)-\eta_{\zeta}^h]\zeta + 2 E_4 .
\end{equation}

It is with this equation that we will obtain an estimate on $\dot{\theta}(t)-\zeta_1$ and $ \dot{p}(t) - \zeta_2$. Indeed, as we have seen in the second step, since $t<T$, $A_{\zeta,h}[u(t)]$ is invertible and $\|A_{\zeta,h}[u(t)]^{-1}\|_1 \leq C$. So we just need to control the three terms in the right-hand side of the previous equation. 
\begin{itemize}
\item We first prove that $b_{\zeta,h}$ is a Lipschitz function on bounded subsets of $BL^2_h$, for the norm $\| \cdot \|_{H^1(\mathbb{R})}$, uniformly with respect to $\zeta$ and $h$. Considering the first coordinate (see \eqref{chopin}), we have
\[ (b_{\zeta,h}[v])_1 =  \langle  \Delta_h v + |v|^2 v , \eta_{\zeta}^h \rangle_{L^2(\mathbb{R})}  =   \langle   v , \Delta_h \eta_{\zeta}^h \rangle_{L^2(\mathbb{R})} + \langle    |v|^2 v , \eta_{\zeta}^h \rangle_{L^2(\mathbb{R})}.\]
But $\| \Delta_h \eta_{\zeta}^h \|_{L^2(\mathbb{R})} \leq \| \partial_x^2 \eta_{\zeta}^h \|_{L^2(\mathbb{R})} \leq C$ (see \eqref{summ_reg}) and $v\mapsto |v|^2 v$ is a Lipschitz function on bounded subsets of $H^1(\mathbb{R})$. So, since $\| \eta_{\zeta}^h\|_{H^1(\mathbb{R})} \leq C$ and $\| u(t)- \eta_{\zeta}^h\|_{H^1(\mathbb{R})} \leq \rho$, there exists a constant $k>0$ (depending only of $C$ and $\rho$) such that 
\[ |(b_{\zeta,h}[u(t)] -  b_{\zeta,h}[\eta_{\zeta}^h])_1| \leq k \| u(t) - \eta_{\zeta}^h\|_{H^1(\mathbb{R})}. \]
Since $\| \eta_{\zeta}^h\|_{H^3(\mathbb{R})}\leq C$, the second coordinate of $(b_{\zeta,h}[u(t)] -  b_{\zeta,h}[\eta_{\zeta}^h])_1$ clearly enjoys the same estimate.
\item Since $\| \eta_{\zeta}^h\|_{H^1(\mathbb{R})}\leq C$, it is obvious, from the definition of $A_{\zeta,h}$ (see \eqref{def_A}) that there exists an universal constant $c>0$ such that
\[  \|  A_{\zeta,h}[u(t)-\eta_{\zeta}^h]  \|_1 \leq c C \| u(t) - \eta_{\zeta}^h\|_{H^1(\mathbb{R})} .\]
\item We can estimate $E_4$ as we have estimated $E_1(t)$ in the previous paragraph. Consequently, we get some constants $M,\ell$ independent of $h$ and $\zeta$ such that
\[  |E_4| \leq M e^{-\frac{\ell}h} + M \| u(t) - \eta_{\zeta}^h\|_{H^1(\mathbb{R})}. \] 
\end{itemize}
Applying these three estimates and the control of the norm of the invert of $A_{\zeta,h}[u(t)]$, we get from \eqref{dec_speed_error}
\[  | \dot{\theta}(t)-\zeta_1   | + | \dot{p}(t)-\zeta_2 | \leq CM e^{-\frac{\ell}h} + C(M+k+cC) \| u(t) - \eta_{\zeta}^h\|_{H^1(\mathbb{R})}.\]
However, we have proven that $\| u(t) - \eta_{\zeta}^h\|_{H^1(\mathbb{R})}\leq \delta(t) + (1+C+C^3)\delta(0)$ and $|\xi - \zeta|\leq C \delta(0)$. So, since $\dot{\theta} = \dot{\gamma}$ and $\dot{p} = \dot{\xO}$, we have proven that
\[  | \dot{\gamma}(t)-\xi_1   | + | \dot{\xO}(t)-\xi_2 | \leq K (e^{-\frac{\ell}h} + \delta(t) + \delta(0)) ,\]
where $K$ depends only of $C,M,c$ and $k$.

\section{Appendix}
\subsection{Proof of Theorem \ref{thm_stab_reg}}
\label{sec_app_proof}
Let $s>0$, $\varepsilon\in(0,2)$ and $n\in \mathbb{N}^*$ be such that $n\geq n_0 \geq 2$ where $n_0\in \mathbb{N}^*$ will be determined later to be large enough. Let $\rho>0$ and $v\in H^n(\mathbb{R})$ be such that
\[ \| v \|_{\dot{H}^n(\mathbb{R})} \leq \rho \quad \textrm{ and } \quad \|  \psi_\xi - v  \|_{H^1(\mathbb{R})} \leq \frac{r}{2(1+\kappa)},\]
with $\xi\in \Omega$. Let $h_1<h_0$ a constant that we will determine later.

Now consider $h<h_1$ and $\ug$ a solution of DNLS such that 
\[   \exists \xO_0,\gamma_0\in \mathbb{R}, \quad \forall g\in h\mathbb{Z}, \quad \ \ug_g(0) = e^{i\gamma_0}v(g - \xO_0).\]
We denote by $u$ the Shannon interpolation of $\ug$. Without loss of generality, since DNLS is invariant by gauge transform, we can assume $\gamma_0 = 0$.

\begin{lem} \label{lem_jaliaze} The following inequality holds:
 \[   \| u_0 - \eta_\xi^h(\cdot- \xO_0 ) \|_{H^1(\mathbb{R})} \leq \| v -  \eta_\xi^h \|_{H^1(\mathbb{R})} + h^{n-1} \rho. \]
\end{lem}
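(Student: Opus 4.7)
My plan is to reduce the claim to an aliasing estimate for a smooth function. Let $\tilde v(x) = v(x - \xO_0)$, so that $\|\tilde v\|_{\dot{H}^n(\mathbb{R})} = \|v\|_{\dot{H}^n(\mathbb{R})} \leq \rho$ by translation invariance. Since $\eta_\xi^h(\cdot - \xO_0) \in BL^2_h$ (Fourier support is preserved by translation), Proposition \ref{prop_seq_is_rest} gives $\eta_\xi^h(\cdot - \xO_0) = \mathcal{I}_h[\eta_\xi^h(\cdot - \xO_0)|_{h\mathbb{Z}}]$. By definition $u_0 = \mathcal{I}_h[\tilde v|_{h\mathbb{Z}}]$. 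Adding and subtracting $\tilde v$, together with the linearity of $\mathcal{I}_h$ and translation invariance of the $H^1(\mathbb{R})$ norm,
\begin{equation*}
\|u_0 - \eta_\xi^h(\cdot-\xO_0)\|_{H^1(\mathbb{R})} \leq \|\mathcal{I}_h[\tilde v|_{h\mathbb{Z}}] - \tilde v\|_{H^1(\mathbb{R})} + \|v - \eta_\xi^h\|_{H^1(\mathbb{R})}.
\end{equation*}
It thus suffices to show $\|\mathcal{I}_h[\tilde v|_{h\mathbb{Z}}] - \tilde v\|_{H^1(\mathbb{R})} \leq h^{n-1}\rho$ (possibly after enlarging $n_0$ so that the implicit $n$-dependent constant is absorbed, provided $h<h_1$ is small enough).

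To prove this aliasing estimate, I would split $\tilde v = \tilde v_L + \tilde v_H$ with $\widehat{\tilde v_L} = \mathbb{1}_{(-\pi/h,\pi/h)}\widehat{\tilde v}$; then $\tilde v_L \in BL^2_h$ is fixed by $\mathcal{I}_h(\cdot|_{h\mathbb{Z}})$, so
\begin{equation*}
\mathcal{I}_h[\tilde v|_{h\mathbb{Z}}] - \tilde v = -\tilde v_H + \mathcal{I}_h[\tilde v_H|_{h\mathbb{Z}}],
\end{equation*}
and using Proposition \ref{prop_per}, the Fourier transform of this difference equals $-\widehat{\tilde v}$ on $|\omega|\geq \pi/h$ and $\sum_{k\neq 0}\widehat{\tilde v}(\omega + 2\pi k/h)$ on $|\omega|<\pi/h$. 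By Plancherel, the $H^1$-norm squared is the sum of two pieces.

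For the high-frequency piece, for $|\omega|\geq \pi/h$ (assuming $h \leq \pi$) we have $1+\omega^2 \leq 2\omega^2 \leq 2(h/\pi)^{2n-2}\omega^{2n}$, which immediately produces a bound $\lesssim_n h^{2n-2}\|\tilde v\|_{\dot H^n}^2$. The aliasing piece is the key estimate: I apply Cauchy--Schwarz against the weight $|\omega+2\pi k/h|^{-n}$, using $|\omega+2\pi k/h|\geq (2|k|-1)\pi/h$ to bound $\sum_{k\neq 0}|\omega+2\pi k/h|^{-2n}\leq 2(h/\pi)^{2n}\sum_{k\geq 1}(2k-1)^{-2n}$ (a convergent sum for $n\geq 1$), and then use the fact that the translates $\omega \mapsto \omega+2\pi k/h$ with $\omega\in(-\pi/h,\pi/h)$ and $k\neq 0$ tile $\{|\omega|\geq \pi/h\}$ disjointly, so that after integration the sum collapses to $\int_{|\omega|\geq \pi/h}|\omega|^{2n}|\widehat{\tilde v}(\omega)|^2\,\domega \leq 2\pi\|\tilde v\|_{\dot H^n}^2$. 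Combined with the factor $1+\omega^2\leq 2(\pi/h)^2$ on the low-frequency interval, this yields a bound of the same order $\lesssim_n h^{2n-2}\|\tilde v\|_{\dot H^n}^2$.

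The only subtle step is the aliasing estimate above: one must match the Cauchy--Schwarz weights to the shift parameter $k$ so that summing over $k$ and integrating in $\omega$ recovers exactly $\|\tilde v\|_{\dot H^n}^2$ on $|\omega|\geq\pi/h$. Everything else (triangle inequality, translation invariance, using that $\eta_\xi^h(\cdot-\xO_0)\in BL^2_h$) is immediate, and the $n$-dependent multiplicative constant coming from $\sqrt{2+4S_n}/\pi^{n-1}$ is made harmless either by taking $n$ large (so $\pi^{n-1}$ dominates) or by the freedom to shrink $h_1$, which is consistent with the later use of the lemma in the proof of Theorem \ref{thm_stab_reg}.
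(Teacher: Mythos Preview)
Your argument is correct and close in spirit to the paper's, but the decomposition is genuinely different. You insert $\tilde v=v(\cdot-\xO_0)$ and then have to bound the full interpolation error $\|\mathcal I_h[\tilde v_{|h\mathbb Z}]-\tilde v\|_{H^1}$, which splits into a high-frequency piece $\tilde v_H$ and the aliasing piece; both are controlled by $\|\tilde v\|_{\dot H^n}$ with a factor $h^{n-1}$. The paper instead inserts the band-limited projection $v_h(\cdot-\xO_0)$: since $\eta_\xi^h\in BL^2_h$, $v-v_h$ is $H^1$-orthogonal to $\eta_\xi^h$, so $\|v_h-\eta_\xi^h\|_{H^1}\le\|v-\eta_\xi^h\|_{H^1}$ for free, and only the pure aliasing term $w_h=u_0-v_h(\cdot-\xO_0)$ remains (one piece instead of two). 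On that piece the paper uses Minkowski on the aliasing series and then Cauchy--Schwarz in $k$, while you use Cauchy--Schwarz pointwise in $\omega$ with the weight $|\omega+2\pi k/h|^{-n}$ and the tiling; both estimates are equivalent. Your route avoids the orthogonality trick at the cost of one extra elementary high-frequency bound.

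One remark on your final caveat: the worry about the constant is unnecessary. Your computation gives the prefactor $\sqrt{2+4S_n}/\pi^{n-1}$ with $S_n=\sum_{k\ge1}(2k-1)^{-2n}\le S_2=\pi^4/96$, which is already strictly less than $1$ for every $n\ge2$ (the regime fixed in the proof of Theorem~\ref{thm_stab_reg}), so the inequality holds exactly as stated. Also, ``shrinking $h_1$'' by itself would not turn $C h^{n-1}$ into $h^{n-1}$ when $C>1$; the viable fallback would be to lose an $\epsilon$ in the exponent or to carry the constant through to the final estimate, but none of this is needed here.
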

This lemma is a classical estimate of aliasing, it will be proven at the end of this subsection.

Since $u_0,\eta_\xi^h\in BL^2_h$, we can apply Lemma \ref{lem_compare_norms} to obtain
\begin{equation}
\label{est_delta0_end}
 \delta(0):=\| \ug(0) -  \left( \eta_\xi^h(\cdot- \xO_0 ) \right)_{| h\mathbb{Z}} \|_{H^1(h\mathbb{Z})} \leq  \| u_0 - \eta_\xi^h(\cdot- \xO_0 ) \|_{H^1(\mathbb{R})} \leq \| v -  \eta_\xi^h \|_{H^1(\mathbb{R})} + h^{n-1} \rho.  
\end{equation}
Applying the triangle inequality, we deduce of Theorem \ref{Thm_DTW} that
\[ \delta(0) \leq  \| v -  \psi_\xi \|_{H^1(\mathbb{R})} + \| \psi_\xi -  \eta_\xi^h \|_{H^1(\mathbb{R})} + h^{n-1} \rho \leq  \frac{r}{2(1+\kappa)} + \kappa h^2 + h^{n-1} \rho. \]
Consequently, if $h_1$ is small enough then $\delta(0)\leq \frac{r}{1+\kappa}$. So we can apply Theorem \ref{Thm_DTW} and Theorem \ref{thm_stab_Hn}. In particular, we get functions $\gamma,\xO \in C^1(\mathbb{R}_+)$ such that, if for all $t\in (0,T)$
\begin{equation}
\label{boot_cond}
 \delta(t) := \| \ug(t) - (e^{i\gamma(t)}\eta_\xi^h(\cdot-\xO(t)))_{| h\mathbb{Z}} \|_{H^1(h\mathbb{Z})} \leq r, 
\end{equation}
then we have for all $t\in (0,T)$
\begin{equation}
\label{growth_delta}
\delta(t) \leq \kappa \ \left(  \delta(0) + e^{-\frac{\ell}h}+\sqrt{t |\xi_2|} h^{n - \frac32} \sup_{0<s<t} \| \ug(s) \|_{\dot{H}^{n-1}(h\mathbb{Z})} \right), 
\end{equation}
and 
\begin{equation}
\label{mod_control_end}
 | \dot{ \gamma }(t) -\xi_1 | +  | \dot{ \xO }(t) -\xi_2 | \leq \kappa \ (\delta(0)+\delta(t) + e^{-\frac{\ell}h }). 
\end{equation}

Applying Theorem \ref{thm_growth}, we deduce that if \eqref{growth_delta} is satisfied then 
\begin{equation}
\label{growth_delta_improved}
\delta(t) \leq \kappa \ \left(  \delta(0) + e^{-\frac{\ell}h}+   C \sqrt{|\xi_2|} t^{\frac{n}2} h^{n - \frac12} M_{\ug(0)}^{\frac{4n-1}3} + C\sqrt{|\xi_2|}\sqrt{t} h^{n - \frac12} \left( \| \ug(0) \|_{\dot{H}^{n}(h\mathbb{Z})} +   M_{\ug(0)}^{\frac{2n+1}3}   \right) \right),
\end{equation}
where
\[  M_{\ug(0)} = \|\ug(0) \|_{\dot{H}^1(h\mathbb{Z})} +\|\ug(0) \|_{L^2(h\mathbb{Z})}^3  .\]

So, to use \eqref{growth_delta_improved}, we have to estimate $M_{\ug(0)}$ and $\| \ug(0) \|_{\dot{H}^{n}(h\mathbb{Z})}$ uniformly with respect to $h$ and $\xi$. We get these bounds in the following lemma that will be proven at the end of this subsection. 
\begin{lem}
\label{lem_aliasing_again}
There exists a constant $K>0$, depending only of $\Omega,\rho$ and $n$ such that for all $h<h_0$,
\[ \kappa C  M_{\ug(0)}^{\frac{4n-1}3} \leq K \quad \textrm{ and }\quad  \kappa C  \left( \| \ug(0) \|_{\dot{H}^{n}(h\mathbb{Z})} +   M_{\ug(0)}^{\frac{2n+1}3} \right) \leq K. \]
\end{lem}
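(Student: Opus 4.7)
The key ingredient I would isolate is a uniform aliasing estimate: for every $w \in H^n(\mathbb{R})$ with $n\geq 2$, and $\wg := w_{|h\mathbb{Z}}$,
\[
\|\wg\|_{\dot H^n(h\mathbb{Z})} \leq C_n \|w\|_{\dot H^n(\mathbb{R})},
\]
with $C_n$ independent of $h<h_0$. To establish it, \eqref{eq_dis_cont_sobolev} reduces the task to bounding $\|\mathcal I_h \wg\|_{\dot H^n(\mathbb{R})}$, and by Proposition \ref{prop_per} one has, on $(-\pi/h, \pi/h)$,
\[
|\omega|^n |\widehat{\mathcal I_h \wg}(\omega)| \leq \sum_{\ell\in\mathbb{Z}} |\omega|^n |\widehat{w}(\omega + 2\pi\ell/h)|.
\]
Minkowski's inequality in $L^2(-\pi/h, \pi/h)$ reduces the bound to estimating each summand. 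For $\ell\neq 0$ the change of variable $\eta = \omega + 2\pi\ell/h$, combined with the trivial inequality $|\eta - 2\pi\ell/h|\leq |\eta|/(2|\ell|-1)$ valid on the range of integration, bounds the $\ell$-th summand by $(2|\ell|-1)^{-n}(2\pi)^{1/2}\|w\|_{\dot H^n(\mathbb{R})}$. The series $\sum_{\ell\geq 1}(2\ell-1)^{-n}$ converges precisely because $n\geq 2$, which is exactly why the hypothesis $n_0\geq 2$ was imposed in Theorem \ref{thm_stab_reg}.

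Applying this estimate to $w = v(\,\cdot\, - \xO_0)$, and using $\|w\|_{\dot H^n(\mathbb{R})} = \|v\|_{\dot H^n(\mathbb{R})} \leq \rho$, yields
\[
\|\ug(0)\|_{\dot H^n(h\mathbb{Z})} \leq C_n\rho.
\]
To control $M_{\ug(0)}$ I would run the same Minkowski scheme for the $L^2$ and $\dot H^1$ norms (this is essentially the computation already performed in the proof of Lemma \ref{lem_jaliaze} earlier in the appendix), yielding
\[
\|\ug(0)\|_{H^1(h\mathbb{Z})} \leq C\bigl(\|v\|_{H^1(\mathbb{R})} + h_0^{n-1}\rho\bigr).
\]
The triangle inequality
\[
\|v\|_{H^1(\mathbb{R})} \leq \|v - \psi_\xi\|_{H^1(\mathbb{R})} + \|\psi_\xi\|_{H^1(\mathbb{R})} \leq \tfrac{r}{2(1+\kappa)} + \sup_{\xi\in\overline{\Omega}}\|\psi_\xi\|_{H^1(\mathbb{R})}
\]
closes this bound: the supremum is finite by compactness of $\overline{\Omega}$ and the explicit formula \eqref{def_CTW}. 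Consequently $M_{\ug(0)}$ is bounded by a constant depending only on $\Omega$ and $\rho$.

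Raising the two displays to the powers $\frac{4n-1}{3}$ and $\frac{2n+1}{3}$ and absorbing the universal prefactor $\kappa C$ produces the announced constant $K = K(\Omega, \rho, n)$. I expect the only real analytic step to be the Minkowski series estimate above — note its failure at $n=1$ (where the bound degrades to a divergent $\sum 1/\ell$), which is what forces the assumption $n_0 \geq 2$ in the global statement; everything else is triangle inequality, compactness of $\overline{\Omega}$, and direct use of the already-established uniform bounds on $\eta_\xi^h$ and $\psi_\xi$.
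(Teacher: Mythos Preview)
Your proof is correct and follows essentially the same route as the paper: both arguments control $\|\ug(0)\|_{\dot H^n(h\mathbb{Z})}$ via Proposition~\ref{prop_per} and the frequency-ratio bound $|\omega|/|\omega+2\pi k/h|\leq (2|k|-1)^{-1}$, then control $M_{\ug(0)}$ by bounding $\|\ug(0)\|_{H^1(h\mathbb{Z})}$ uniformly. Two minor remarks. First, after your Minkowski step the paper inserts an additional Cauchy--Schwarz over the index $k$, which yields the sharper tail $\sqrt{\sum_{k\neq 0}(2|k|-1)^{-2n}}$ rather than your $\sum_{\ell\geq 1}(2\ell-1)^{-n}$; in particular the paper's estimate actually converges already at $n=1$, so your attribution of the hypothesis $n_0\geq 2$ to this step is not quite right (that constraint enters elsewhere, e.g.\ in Lemma~\ref{lem_jaliaze}). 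Second, for $M_{\ug(0)}$ you bound $\|v\|_{H^1}$ directly via $\psi_\xi$, whereas the paper instead routes through the already-established bound $\delta(0)\leq r/(1+\kappa)$ and the uniform control of $\|\eta_\xi^h\|_{H^1}$; both are equally valid.
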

With the estimate, \eqref{growth_delta_improved} becomes 
\begin{equation}
\label{growth_delta_plusplus}
\delta(t) \leq \kappa   \delta(0) + \kappa e^{-\frac{\ell}h}+   K \sqrt{|\xi_2|}  t^{\frac{n}2} h^{n - \frac12} + K \sqrt{|\xi_2|}  \sqrt{t} h^{n - \frac12} .
\end{equation}

Now, we overcome the bootstrap condition \eqref{boot_cond}. Let $T_0\in (0,\infty]$ be a function of $|\xi_2|$ that will be fixed later. Consider $t\in(0,T_0 h^{-2 + \varepsilon})$ such that for all $\tau \leq t$, $\delta(\tau)\leq r$.
We deduce from \eqref{growth_delta_plusplus} that
\[  \delta(t) \leq  \kappa   \delta(0) + \kappa e^{-\frac{\ell}h}+   K T_0^{\frac{n}2} \sqrt{|\xi_2|}  h^{ \frac{n \varepsilon -1}2} + K \sqrt{T_0 |\xi_2|} h^{n - \frac32 + \frac{\epsilon}2}. \]
Assuming $n_0\geq \max(2,\frac{1+2s}{\varepsilon},\frac{s+3-\varepsilon}2)$, $h_1\leq1$ and $T_0 = \min(|\xi_2|^{-1},|\xi_2|^{-\frac1n})$, we deduce
\begin{equation}
\label{eq_win}
 \delta(t) \leq  \kappa   \delta(0) + \kappa e^{-\frac{\ell}h}+  2 K  h^{s}  \leq  \kappa   \delta(0) + \left( \kappa \left( \frac{s}{\ell e}\right)^s + 2K \right)    h^{s}  .
 \end{equation}
 So assuming $h_1 <  \left[ \frac{r}{1+\kappa} \left( \kappa \left( \frac{s}{\ell e}\right)^s + 2K \right)^{-1} \right]^{-s}$, we get 
 $ \delta(t) < r .$
 Consequently, proceeding as usual by contradiction, we deduce that it was useless to assume that for all $\tau \leq t$, $\delta(\tau)\leq r$.

Finally, to conclude rigorously this proof, we have to explain how to get \eqref{eq_delta_hs} and \eqref{eq_mod_hs}. On the one hand, to get \eqref{eq_delta_hs}, we just have to estimate $\delta(0)$ by \eqref{est_delta0_end} in \eqref{eq_win} (and to assume that $n_0-1\geq s$). On the other hand, we have to estimate the terms of \eqref{mod_control_end}. We control $\delta(0)$ as previously, $\delta(t)$ by \eqref{eq_delta_hs} and $e^{-\frac{\ell}h}$ by $\left( \frac{hs}{\ell e}\right)^s$.

\medskip 

\noindent{\bf Proof of Lemma \ref{lem_jaliaze}.}
Let $v_h$ be the $L^2$ orthogonal projection of $v$ on $BL^2_h$, $i.e.$
\[ \widehat{v_h} = \mathbb{1}_{\left(-\frac{\pi}h, \frac{\pi}h \right)} \widehat{v}. \]
We introduce $w_h = u_0 - v_h(\cdot- \xO_0)$. Since the $H^1$ norm is invariant by advection, we have
\[  \| u_0 - \eta_\xi^h(\cdot- \xO_0 ) \|_{H^1(\mathbb{R})} \leq \| v_h - \eta_\xi^h \|_{H^1(\mathbb{R})} + \|w_h\|_{H^1(\mathbb{R})}.\]
Since $\eta_\xi^h \in BL^2_h$, $v-v_h$ is orthogonal to $\eta_\xi^h$ in $H^1(\mathbb{R})$. Consequently, we have $\| v_h - \eta_\xi^h \|_{H^1(\mathbb{R})}\leq \| v -  \eta_\xi^h \|_{H^1(\mathbb{R})} $. So we just have to prove that $ \|w_h\|_{H^1(\mathbb{R})}\leq \rho h^{n-1}.$

 Applying Proposition \ref{prop_per}, we have
\[  \forall \omega \in \left(-\frac{\pi}h, \frac{\pi}h \right),\quad  \ \widehat{w_h}(\omega) = \sum_{k\in \mathbb{Z}^*} e^{-i\left(\omega + \frac{2k\pi}h\right)\xO_0} \widehat{v}(\omega + \frac{2k\pi}h). \]
Consequently, we have
$$
\| w_h \|_{H^1(\mathbb{R})} \leq \frac1{\sqrt{2\pi}}  \sum_{k\in \mathbb{Z}^*} \| \widehat{v}(\omega + \frac{2k\pi}h) \sqrt{1+\omega^2} \|_{L^2  \left(-\frac{\pi}h, \frac{\pi}h \right) } 
							\leq \frac1{\sqrt{2\pi}} \sum_{k\in \mathbb{Z}^*} \left\| \widehat{\partial_x v}(\omega + \frac{2k\pi}h) \frac{\sqrt{1+\omega^2}}{\omega + \frac{2k\pi}h}  \right\|_{L^2  \left(-\frac{\pi}h, \frac{\pi}h \right) }  .
$$
Assuming $h_1 \leq 2\pi$, we have $\left| \frac{\sqrt{1+\omega^2}}{\omega + \frac{2k\pi}h} \right| \leq \frac2{2|k|-1}$ for $\omega \in  \left(-\frac{\pi}h, \frac{\pi}h \right)$. Consequently, applying Cauchy-Schwarz inequality, we get
\[ \| w_h \|_{H^1(\mathbb{R})} \leq \| \partial_x (v-v_h) \|_{L^2(\mathbb{R})} \sqrt{\sum_{k\in \mathbb{Z}^*} \frac{4}{(2|k|-1)^2}} = \frac{\pi}{\sqrt2}\| \partial_x (v-v_h) \|_{L^2(\mathbb{R})} .    \]
Since the Fourier support of $v-v_h$ is localized outsize $\left[ -\frac{\pi}h, \frac{\pi}h \right]$ and $n\geq 2$, we have
\[  \| w_h \|_{H^1(\mathbb{R})} \leq  \frac{\pi}{\sqrt2}\| \partial_x (v-v_h) \|_{L^2(\mathbb{R})} \leq \left( \frac{h}{\pi} \right)^{n-1}\frac{\pi}{\sqrt2}\| \partial_x^n (v-v_h) \|_{L^2(\mathbb{R})} \leq h^{n-1} \frac{\pi^{2-n}}{\sqrt2} \rho \leq h^{n-1} \rho.  \]
 
 \medskip

\noindent{\bf Proof of Lemma \ref{lem_aliasing_again}.} There are two quantities to control, $\| \ug(0)\|_{\dot{H}^n(h\mathbb{Z})}$ and $M_{\ug(0)}$.
To control $\| \ug(0)\|_{\dot{H}^n(h\mathbb{Z})}$, it is enough to prove that the restriction to $h\mathbb{Z}$ is a continuous map from $\dot{H}^n(\mathbb{R})$ to $\dot{H}^n(h\mathbb{Z})$, uniformly with respect to $h$. Indeed, denote $w = v(\cdot - \xO_0)$. Then applying Proposition \ref{prop_per}, we have, for all $\omega\in \left( - \frac{\pi}h,\frac{\pi}h \right)$,
\[ \widehat{u_0}(\omega) = \sum_{k\in \mathbb{Z}^*} \widehat{w}(\omega + \frac{2k \pi}h).\]
Since for $k\neq 0$ and $\omega \in \left( -\frac{\pi}h,\frac{\pi}h\right)$, we have
\[  \left|\frac{\omega}{\omega+ \frac{2k\pi}h } \right| \leq \frac1{2|k|-1},\]
applying Cauchy Schwarz inequality (and \eqref{eq_dis_cont_sobolev} ), we get
\begin{align*}
\| \ug(0) \|_{\dot{H}^n(h\mathbb{Z})} & \leq \| \omega^n \widehat{u_0} \|_{L^2\left( -\frac{\pi}h,\frac{\pi}h\right)} \\
&\leq \| \omega^n \widehat{w} (\omega)\|_{L^2\left( -\frac{\pi}h,\frac{\pi}h\right)} + \sum_{k\in \mathbb{Z}^*}  \left\| \left( \frac{\omega}{\omega + \frac{2k\pi}h }\right)^n \widehat{\partial_x^n w} (\omega+ \frac{2k\pi}h )\right\|_{L^2\left( -\frac{\pi}h,\frac{\pi}h\right)} \\
&\leq \| \partial_x^n w \|_{L^2(\mathbb{R})} + \sum_{k\in \mathbb{Z}^*}  \|  \widehat{\partial_x^n w} (\omega+ \frac{2k\pi}h )\|_{L^2\left( -\frac{\pi}h,\frac{\pi}h\right)} \frac1{(2|k|-1)^n} \\
&\leq \| \partial_x^n w \|_{L^2(\mathbb{R})} + \| \partial_x^n w \|_{L^2(\mathbb{R})} \sqrt{ \sum_{k \in  \mathbb{Z}^* }\frac1{(2|k|-1)^{2n}} } \\
&= \left(1+ \sqrt{2\left(1-\frac1{4^n}\right)\zeta(2n)} \right) \| \partial_x^n v \|_{L^2(\mathbb{R})} \leq \left(1+ \sqrt{2\left(1-\frac1{4^n}\right)\zeta(2n)} \right) \rho,
\end{align*}
where $\zeta$ is the Riemann zeta function.

Finally, to control $M_{\ug(0)}$, we just have to control $\| \ug(0) \|_{H^{1}(h\mathbb{Z})}$. But since we have proven that $\delta(0) \leq \frac{r}{1+\kappa}$, we just need to control $\| \eta_\xi^h\|_{H^1(\mathbb{R})}$ uniformly with respect to $\xi\in \Omega$ and $h<h_0$. Such an estimate can be obtained by using the bound $\|\eta_\xi^h - \psi_\xi^h\|_{H^1(\mathbb{R})} \leq \kappa h^2$ of Theorem \ref{Thm_DTW}.

\subsection{Proof of Lemma \ref{lem_modulation_constructor}}
\label{sec_proof_lem_modulation_constructor}

We would like to define the functions $\gamma$ and $\xO$ from $\theta$ and $p$. So we introduce a new time: $T_{{\rm crit}}$. It is the largest time, smaller than $T_{\max}$, such that for all $t\in (0,T_{{\rm crit}})$, we have
\begin{equation}
\label{assump1}
  \| ( A_{\zeta,h}[T_{\theta(t),p(t)}^{-1} u(t)] )^{-1}\| \leq 2C 
\end{equation}
and
\begin{equation}
\label{assump2}
 |\theta(t)-\theta_0| + |p(t)-p_0| \leq 1 + c_2 t,
\end{equation}
where $c_2>0$ is a real constant that will be determine later.

Now we define $\gamma$ and $\xO$ as $C^1$ functions on $\mathbb{R}_+$ such that 
\begin{equation}
\label{def_gamma_x0}
\forall t \in (0,T_{{\rm crit}}), \ \gamma(t) = \theta(t) - \delta_{\gamma}  \textrm{ and } \xO(t) = p(t) - \delta_{\xO}  . 
\end{equation}
Let $T>0$ be such that for all $t<T$, $\delta(t) =\|  u(t) - T_{\gamma(t),\xO(t)} \eta_\xi^h\|_{H^1(\mathbb{R})}< r$. To prove Lemma \ref{lem_modulation_constructor}, it is enough to prove that $T\leq T_{{\rm crit}}$. 
We proceed by contradiction. Assume that $T_{{\rm crit}} < T$. So if $t<T_{{\rm crit}}$, we have 
\[ \|  u(t) - T_{\theta(t),p(t)} \eta_{\zeta}^h\|_{H^1(\mathbb{R})}\leq (2+C+C^3)r \leq \rho.\]
Applying \eqref{summ_lem_matrix_inv}, we know that
\begin{equation}
\label{tempo}
A_{\zeta,h}[T_{\theta(t),p(t)}^{-1} u(t)] \quad \textrm{ is invertible and } \quad  \| ( A_{\zeta,h}[T_{\theta(t),p(t)}^{-1} u(t)] )^{-1}\|_1 \leq C .
\end{equation}
Furthermore, we can estimate $ \langle T_{\theta(t),p(t)}^{-1} \partial_t u(t) , i \eta_{\zeta}^h\rangle_{L^2(\mathbb{R})}$ and $\langle T_{\theta(t),p(t)}^{-1} \partial_t u(t) , \partial_x \eta_{\zeta}^h\rangle_{L^2(\mathbb{R})}$. Indeed, since $u$ is a solution of DLNS in $BL^2_h$ (see Lemma \ref{stop_discrete}), we have
\[ \langle T_{\theta(t),p(t)}^{-1} \partial_t u(t) , i \eta_{\zeta}^h\rangle_{L^2(\mathbb{R})} = - \left\langle  \Delta_h u(t) + \left( 1+2\cos\left( \frac{2\pi x}h \right) \right) |u(t)|^2 u(t) ,  T_{\theta(t),p(t)}^{-1} \eta_{\zeta}^h\right\rangle_{L^2(\mathbb{R})}.\]
Since this operator is symmetric for the $L^2$ norm, we have
\[ \langle T_{\theta(t),p(t)}^{-1} \partial_t u(t) , i \eta_{\zeta}^h\rangle_{L^2(\mathbb{R})} = - \langle   u(t) ,  T_{\theta(t),p(t)}^{-1} \Delta_h \eta_{\zeta}^h\rangle_{L^2(\mathbb{R})} - \left\langle   \left( 1+2\cos\left( \frac{2\pi x}h \right) \right) |u(t)|^2 u(t) ,  T_{\theta(t),p(t)}^{-1} \eta_{\zeta}^h\right\rangle_{L^2(\mathbb{R})}.\]
We are going to estimate these terms. Since $t<T$, by definition, we have $\|  u(t) - T_{\gamma(t),\xO(t)} \eta_\xi^h\|_{H^1(\mathbb{R})}<r$ and so
\[ \|  u(t)  \|_{H^1(\mathbb{R})} \leq r + C .\] Consequently, we have 
\[  \| |u(t)|^2 u(t)\|_{L^2(\mathbb{R})} \leq \| u(t) \|_{L^{\infty}}^2 \| u(t)\|_{L^2(\mathbb{R})} \leq (r + C)^3.\]
Furthermore, we have seen in \eqref{eq_dis_cont_sobolev} that $\| \Delta_h \eta_{\zeta}^h \|_{L^2(\mathbb{R})} \leq  \| \partial_x^2 \eta_{\zeta}^h \|_{L^2(\mathbb{R})}  \leq  C$.
Consequently, we have
\[ |\langle T_{\theta(t),p(t)}^{-1} \partial_t u(t) , i \eta_{\zeta}^h\rangle_{L^2(\mathbb{R})}| \leq C(r + C)^3 + C(r+C).  \]
Similarly, we could prove that 
\[ |\langle T_{\theta(t),p(t)}^{-1} \partial_t u(t) , \partial_x \eta_{\zeta}^h\rangle_{L^2(\mathbb{R})}| \leq C(r + C)^3 + C(r+C).  \]
So, we have proven that
\[ \max(|\dot{\theta}(t)| , |\dot{p}(t)| ) \leq C^2 (r+C) (1+ (r+C)^2).\]
Defining $c_2 = 2 C^2 (r+C) (1+ (r+C)^2)$, we have
\[   |\theta(t)-\theta_0| + |p(t)-p_0| \leq  c_2 t.\]

We can apply this inequality and \eqref{tempo} for $t=T_{{\rm crit}} $, so we have
\[ \| ( A_{\zeta,h}[T_{\theta(T_{\rm crit}),p(T_{\rm crit})}^{-1} u(t)] )^{-1}\|_1 \leq C  \textrm{ and } |\theta(T_{\rm crit})-\theta_0| + |p(T_{\rm crit})-p_0| \leq c_2 T_{\rm crit}. \]
But it is impossible because by definition of $T_{{\rm crit}}$ we should have
\[ \| ( A_{\zeta,h}[T_{\theta(T_{\rm crit}),p(T_{\rm crit})}^{-1} u(t)] )^{-1}\|_1 = 2C  \textrm{ or } |\theta(T_{\rm crit})-\theta_0| + |p(T_{\rm crit})-p_0| = 1 + c_2 T_{\rm crit}.\]
So, here is the contradiction and we have proven that $T\leq T_{{\rm crit}}$.

\subsection{Inverse function Theorem}
In this subsection, we give a version of the inverse function theorem.
\begin{theo}
\label{Thm_Inv_loc}
Let $X,Y$ be some Banach spaces, $\Omega$ be an open convex subset of $X$ such that $0\in \Omega$.\\
 If $g:\Omega \to Y$ is a $C^1$ function such that
\begin{itemize}
\item $\diff g(0)$ is invertible,
\item $\diff g$ is a $k$-Lipschitz function,
\end{itemize} 
then, defining $\beta = \|  \diff g(0)^{-1} \|^{-1}$ and $r = \frac{\beta}k$, we have
\begin{itemize}
\item $g$ is a $C^1$ diffeomorphism from $B_{X}(0,r) \cap \Omega$ to $g(B_{X}(0,r) \cap \Omega)$,
\item for all $x \in B_{X}(0,r) \cap \Omega$, $ \| \diff g(x) ^{-1} \| \leq \frac{r}{\beta(r- \|x\|)} $,
\item for all $0<\rho \leq r$, if  $B_X(0,\rho)\subset \Omega$ then $B_Y(g(0),  \frac{\beta}2 \rho ) \subset g(B_X(0,\rho))$.
\end{itemize}
\end{theo}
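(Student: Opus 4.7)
The plan is to prove this via a quantitative fixed-point argument, following the standard proof of the inverse function theorem but tracking all constants carefully. Throughout, write $L := \diff g(0)$, so that $\|L^{-1}\| = \beta^{-1}$ and $r = \beta/k$.

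First I would establish the pointwise invertibility of $\diff g(x)$ on $B_X(0,r)\cap \Omega$. By the Lipschitz hypothesis, $\|\diff g(x)-L\|\leq k\|x\|$, so writing $\diff g(x)=L\bigl(I+L^{-1}(\diff g(x)-L)\bigr)$ and noting that $\|L^{-1}(\diff g(x)-L)\|\leq k\|x\|/\beta=\|x\|/r<1$, a Neumann series inversion yields invertibility and the stated bound
\[
\|\diff g(x)^{-1}\|\leq \frac{\|L^{-1}\|}{1-\|x\|/r}=\frac{1/\beta}{(r-\|x\|)/r}=\frac{r}{\beta(r-\|x\|)}.
\]

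Next I would prove the surjectivity onto $B_Y(g(0),\beta\rho/2)$, which is the crux of the statement. For fixed $y\in B_Y(g(0),\beta\rho/2)$, introduce the Picard map $T_y(x):=x-L^{-1}(g(x)-y)$ on $\overline{B_X(0,\rho)}$. Since $\diff T_y(x)=-L^{-1}(\diff g(x)-L)$, the estimate $\|\diff T_y(x)\|\leq \|x\|/r$ holds and, integrating along segments, gives contraction with constant $\rho/r\leq 1$. To check self-mapping, use the second-order Taylor bound obtained from $g(x)-g(0)-Lx=\int_0^1(\diff g(tx)-L)x\,dt$, which yields $\|g(x)-g(0)-Lx\|\leq k\|x\|^2/2$. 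Hence
\[
\|T_y(x)\|\leq \tfrac{1}{\beta}\bigl(\|y-g(0)\|+k\|x\|^2/2\bigr)\leq \frac{\|y-g(0)\|}{\beta}+\frac{\rho^2}{2r}<\frac{\rho}{2}+\frac{\rho}{2}=\rho,
\]
using $\rho\leq r$ in the last step. This is precisely where the specific constant $\beta\rho/2$ is tight. The Banach fixed point theorem (applied on $\overline{B_X(0,\rho')}$ for $\rho'<\rho$ and then on $\overline{B_X(0,\rho)}$ by approximation to handle the case $\rho=r$ where the contraction rate is only $\leq 1$) produces the desired preimage.

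Finally I would assemble the diffeomorphism statement: injectivity on $B_X(0,r)\cap\Omega$ follows from the lower bound $\|g(x_1)-g(x_2)\|\geq \bigl(\beta-\tfrac{k}{2}(\|x_1\|+\|x_2\|)\bigr)\|x_1-x_2\|$ derived from the same Taylor-type estimate, so $g^{-1}$ exists on $g(B_X(0,r)\cap\Omega)$. Continuity of $g^{-1}$ is immediate from this lower bound, and the $C^1$ character then follows by the usual argument $\diff g^{-1}(y)=\diff g(g^{-1}(y))^{-1}$ combined with Step~1. The main subtlety is the boundary case $\rho=r$ in the surjectivity step, where the contraction rate degenerates to $1$; this is handled either by passing to the limit from $\rho'\uparrow r$ or by noting that the strict inequality $\|y-g(0)\|<\beta\rho/2$ leaves room to work inside an open ball of radius $\rho(1-\varepsilon)$ for $\varepsilon$ small enough.
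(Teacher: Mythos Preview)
Your proposal is correct and follows essentially the same route as the paper: Neumann series for the bound on $\|\diff g(x)^{-1}\|$, and the Picard map $x\mapsto x-L^{-1}(g(x)-y)$ together with the Taylor-remainder estimate $\|g(x)-g(0)-Lx\|\leq k\|x\|^2/2$ for the self-mapping check in the surjectivity step. The only cosmetic differences are that the paper obtains injectivity by observing that $\Phi_y$ has a unique fixed point (rather than your direct lower bound on $\|g(x_1)-g(x_2)\|$), and handles the degenerate contraction at $\rho=r$ by working with $\delta<\rho$ throughout and taking the union $\bigcup_{\delta<\rho}\overline{B(g(0),\beta\delta/2)}$ at the end.
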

\begin{proof}
First, we prove that $g$ is injective on $B_{X}(0,r) \cap \Omega$. Let $y\in B_{X}(0,r) \cap \Omega$. We introduce the application
\begin{equation*}
\Phi_y :\left\{ \begin{array}{lll} B_{X}(0,r) \cap \Omega & \to & X \\
									x & \mapsto & x - \diff g(0)^{-1} (g(x) - g(y)).
\end{array} \right.
\end{equation*}
It is enough to prove that $y$ is the only fix point of $\Phi_y$. But if $x\in B_{X}(0,r) \cap \Omega $ then
\begin{equation}
\label{contrac}
 \| \diff  \Phi_y(x) \| = \| I_{X} - \diff g(0)^{-1} \diff g(x)  \| \leq  \|  \diff g(0)^{-1}  \| \|  \diff g(0) - \diff g(x)  \| \leq  \frac{\|x\| k}{\beta} < \frac{r k}{\beta} =1.  
\end{equation}
Consequently, we deduce that if $x\neq y$ then $\| \Phi_y(x) - y \| < \| x - y \|$ and so $y$ is the only fix point of $\Phi_y$.

Then, we prove that $\diff g(x)$ is invertible for any $x\in B_{X}(0,r) \cap \Omega $. Indeed, we have
\[  \diff g(x) = \diff g(0) + \diff g(x) - \diff g(0)  = \diff g(0) \left[ I_{X} + \diff g(0)^{-1} (\diff g(x) -\diff g(0) )  \right]  \]
with
\[ \| \diff g(0)^{-1}  (\diff g(x) -\diff g(0) )  \| \leq \frac{k}{\beta} \|x \| < 1. \]
So we also deduce the second point of the theorem through the classical estimate of the Von Neumann series.

Now, applying the classical inverse function theorem, we have proven that $g$ is a $C^1$ diffeomorphism from $B_{X}(0,r) \cap \Omega$ to $g(B_{X}(0,r) \cap \Omega)$.
Finally, we just need to prove the last assertion of the theorem. Let $\rho>0$ be such that $0<\rho \leq r$,  $B_X(0,\rho)\subset \Omega$. We introduce $\delta \in (0,\rho)$ to prove that $\overline{B_Y(g(0),  \frac{\beta}2 \delta )} \subset g(\overline{B_X(0,\delta)})$.
It is enough to prove the last point because
\[ B_Y(g(0),  \frac{\beta}2 \rho ) =\bigcup_{0<\delta < \rho} \overline{B_Y(g(0),  \frac{\beta}2 \delta )}  \textrm{ and }  g(B_X(0,\rho)) = \bigcup_{0<\delta < \rho}  g(\overline{B_X(0,\delta)}). \]

Let $y\in \overline{B_Y(g(0),  \frac{\beta}2 \delta )} $, we want to solve $g(x)=y$. So, we introduce the application $\Psi = \Phi_{y \ | \overline{B_X(0,  \delta )} }$. We want to apply the Banach fix point theorem. We have proven in $\eqref{contrac}$ that $\Psi$ is $\frac{\delta k}{\beta}< 1$ Lipschitz, so we just need to prove that it preserves $ \overline{B_X(0,  \delta )} $. Indeed, we have
\begin{align*}
\| \Psi(x) \| &\leq \| \Psi(0) \| + \| \Psi(x) -  \Psi(0) \| \\
							&\leq \frac{\beta}2 \delta \| \diff g(0)^{-1} \| + \| \diff g(0)^{-1} \| \| g(x) - g(0) - \diff g(0)x  \| \\
							&\leq \frac{\delta}2 + \frac1{\beta} \left\| \int_0^1 \diff g(sx)x \ds - \diff g(0)x \right\| 
							\leq  \frac{\delta}2 + \frac{k \delta}{\beta} \frac{\delta}2  
							\leq  \delta.
\end{align*}
\end{proof}

\subsection{A result of coercivity}

\begin{lem}[A reformulation of a Weinstein result in \cite{MR820338}]
\label{est_wein}
If $\Omega$ is a relatively compact open subset of the set $\left\{ \xi \in \mathbb{R}^2 \ | \ \xi_1> \left(\frac{\xi_2}2 \right)^2\right\}$ then there exists $c>0$ such that for all $\xi \in \Omega$ we have
\begin{equation}
\label{coc}
 \forall v \in H^1(\mathbb{R})\cap \Span( \psi_\xi,i\psi_\xi,\partial_x \psi_\xi)^{\perp_{L^2}}, \  \diff^2\Lag_\xi(\psi_\xi)(v,v)\geq c\|v\|_{H^1(\mathbb{R})}^2.
\end{equation}
\end{lem}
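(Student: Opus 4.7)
The plan is to reduce the uniform coercivity claim to Weinstein's original pointwise coercivity at the canonical parameter $\xi = (1,0)$ via two symmetries of NLS (a Galilean boost and a parabolic scaling), and then use the compactness of $\overline\Omega$ to produce a single constant valid throughout $\Omega$.

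First, I would perform a Galilean reduction to the case $\xi_2 = 0$. Define the $L^2$-unitary $(U_\xi v)(x) := e^{-ix\xi_2/2}v(x)$ and set $\widetilde\psi_\xi := \psi_{(m_\xi^2,0)}$ with $m_\xi^2 := \xi_1 - (\xi_2/2)^2$, so that $\psi_\xi = e^{ix\xi_2/2}\widetilde\psi_\xi$. A direct computation---conjugating $\partial_x$ by $U_\xi$ shifts it by $-i\xi_2/2$, while the quartic and $L^2$ pieces of $\Lag_\xi$ are gauge-invariant---gives
\[ \diff^2\Lag_\xi(\psi_\xi)(v,v) = \diff^2\Lag_{(m_\xi^2,0)}(\widetilde\psi_\xi)(U_\xi v,\, U_\xi v). \]
Moreover $U_\xi$ sends $\Span(\psi_\xi, i\psi_\xi, \partial_x\psi_\xi)$ onto $\Span(\widetilde\psi_\xi, i\widetilde\psi_\xi, \partial_x\widetilde\psi_\xi)$: the only nontrivial check is that $U_\xi(\partial_x\psi_\xi) = \partial_x\widetilde\psi_\xi + i(\xi_2/2)\widetilde\psi_\xi$ still lies in the target span. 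Since $|\xi_2|$ is bounded on $\overline\Omega$ the $H^1$-norms of $v$ and $U_\xi v$ are uniformly equivalent.

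Second, I would apply the parabolic scaling $(S_\mu w)(x) := \sqrt\mu\,w(\mu x)$, which is an $L^2$-isometry satisfying $\widetilde\psi_\xi = \sqrt{m_\xi}\,S_{m_\xi}\psi_{(1,0)}$. Using the homogeneities $\|\partial_x(S_\mu w)\|_{L^2}^2 = \mu^2\|\partial_x w\|_{L^2}^2$ and $\|S_\mu w\|_{L^4}^4 = \mu\|w\|_{L^4}^4$, one checks that $\diff^2\Lag_{(m_\xi^2,0)}(\widetilde\psi_\xi)$ is conjugate via $S_{m_\xi}$ to $m_\xi^2$ times $\diff^2\Lag_{(1,0)}(\psi_{(1,0)})$, and orthogonality transports to the analogous span at the canonical parameter. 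Because $\overline\Omega$ lies in the open set $\{\xi_1 > (\xi_2/2)^2\}$, the quantity $m_\xi$ stays in a compact subinterval $[m_-,m_+]\subset(0,\infty)$ over $\overline\Omega$, so the rescaling induces only bounded $H^1$-distortion. It then suffices to prove coercivity at $\xi = (1,0)$, which is Weinstein's original setting in \cite{MR820338}: splitting real and imaginary parts, $\diff^2\Lag_{(1,0)}(\psi_{(1,0)})$ decomposes into the block $L_+ := -\partial_x^2 + 1 - 3\psi_{(1,0)}^2$ on real perturbations and $L_- := -\partial_x^2 + 1 - \psi_{(1,0)}^2$ on imaginary ones, whose kernels are $\partial_x\psi_{(1,0)}$ and $\psi_{(1,0)}$ respectively, and the single unstable direction of $L_+$ is not $L^2$-orthogonal to $\psi_{(1,0)}$; the three orthogonality conditions $w\perp\psi_{(1,0)},\partial_x\psi_{(1,0)},i\psi_{(1,0)}$ quotient out exactly these directions and give $H^1$-coercivity with some absolute $c_0>0$.

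I expect the main obstacle to be the bookkeeping around the Galilean step: one must verify carefully that the non-commutation of $U_\xi$ with $\partial_x$ is absorbed into the target three-dimensional span, and that the modulation piece $\tfrac{\xi_2}{2}\langle i\partial_x u, u\rangle$ combines correctly with the kinetic term so that $\diff^2\Lag_\xi(\psi_\xi)$ becomes, after gauge change, the second variation of $\Lag_{(m_\xi^2,0)}$ at $\widetilde\psi_\xi$ (this is where the identity $m_\xi^2 = \xi_1 - (\xi_2/2)^2$ gets used). Once these algebraic identities are in place, the scaling step is purely routine homogeneity bookkeeping, Weinstein's estimate is invoked as a black box, and the uniform bounds on $|\xi_2|$ and on $m_\xi\in[m_-,m_+]$ over $\overline\Omega$ collapse the chain of equivalences into a single constant $c>0$.
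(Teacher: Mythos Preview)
Your approach is essentially the same as the paper's: reduce to $\xi=(1,0)$ via a Galilean boost and a parabolic scaling, then invoke Weinstein at the canonical parameter, tracking uniform constants over $\overline\Omega$.

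The one substantive difference is in how the base case $\xi=(1,0)$ is handled. The paper points out that Weinstein's result in \cite{MR820338} is literally stated with orthogonality to $\psi_{(1,0)}$, $i\psi_{(1,0)}^3$, and $\partial_x(\psi_{(1,0)}^3)$ (cubes, arising from his variational characterisation), not to $\psi_{(1,0)}$, $i\psi_{(1,0)}$, $\partial_x\psi_{(1,0)}$. The paper therefore spends its Step~1 invoking an abstract transfer lemma: since $i\psi_{(1,0)}$ and $\partial_x\psi_{(1,0)}$ lie in the kernel of $\diff^2\Lag_{(1,0)}(\psi_{(1,0)})$, coercivity on one complement of a two-dimensional space implies coercivity on any other complement of the same space. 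You instead bypass this by appealing directly to the $L_+/L_-$ spectral picture (kernels $\partial_x\psi_{(1,0)}$ and $\psi_{(1,0)}$, the single negative direction of $L_+$ not orthogonal to $\psi_{(1,0)}$), which gives the desired orthogonality conditions immediately. Both are valid; your route is slightly more direct if one takes those spectral facts as known, while the paper's route is more faithful to the exact formulation it cites.
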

\begin{proof}
Weinstein has proven in \cite{MR820338} that there exists $c>0$ such that for all $v\in H^1(\mathbb{R})$,
\begin{equation}
\label{W_res}
 v \in  \Span( \psi_{(1,0)},i\psi_{(1,0)}^3,\partial_x(\psi_{(1,0)}^3) )^{\perp_{L^2}} \Rightarrow  \diff^2\Lag_{(1,0)}(\psi_{(1,0)})(v,v)\geq c \| v\|_{H^1}^2.
\end{equation}
First, we will deduce from this estimate and Lemma \ref{lem_evn} that \eqref{coc} holds true for $\xi=(1,0)$. Then we will extend this result applying two transformations: \it dilatation and boost. \rm

\underline{Step 1: The case $\xi=(1,0)$.} $\empty$
We apply Lemma \ref{lem_evn} below, with the spaces $E = H^1(\mathbb{R})\cap \Span( \psi_{(1,0)})^{\perp_{L^2}}$, $G =H^1(\mathbb{R})\cap \Span( \psi_{(1,0)},i\psi_{(1,0)}^3,\partial_x(\psi_{(1,0)}^3) )^{\perp_{L^2}}$,  $F =H^1(\mathbb{R})\cap \Span( \psi_{(1,0)},i\psi_{(1,0)},\partial_x \psi_{(1,0)} )^{\perp_{L^2}}$ and eventually $H = \Span( i\psi_{(1,0)},\partial_x \psi_{(1,0)} )$. We equipped all these spaces with the $H^1(\mathbb{R})$ norm for which they are closed. By construction, $F$ and $H$ are obviously complementary spaces. However, we have to prove that $G$ and $H$ are complementary spaces.

First, we prove that $H\cap G= \{0\}$. If $g=\alpha i\psi_{(1,0)} + \beta \partial_x \psi_{(1,0)} \in G$ then $\langle g ,  i\psi_{(1,0)}^3 \rangle_{L^2(\mathbb{R})} = \langle g ,  \partial_x(\psi_{(1,0)}^3) \rangle_{L^2(\mathbb{R})}=0$. However, since $\psi_{(1,0)} $ is a real valued function, we have 
\begin{equation}
\label{rel_ortho}
\langle \partial_x \psi_{(1,0)} ,  i\psi_{(1,0)}^3 \rangle_{L^2(\mathbb{R})} =\langle \partial_x (\psi_{(1,0)}^3) ,  i\psi_{(1,0)} \rangle_{L^2(\mathbb{R})}=0.
\end{equation}
Consequently, we deduce that $\alpha \| \psi_{(1,0)}  \|_{L^4(\mathbb{R})}^4= \beta \langle \partial_x (\psi_{(1,0)}^3) ,  \partial_x \psi_{(1,0)} \rangle_{L^2(\mathbb{R})} =0$. So we just need to verify from \eqref{def_CTW} that $\langle \partial_x (\psi_{(1,0)}^3) ,  \partial_x \psi_{(1,0)} \rangle_{L^2(\mathbb{R})}\neq 0 $ which yields $\alpha=\beta=0$.

Now, we  prove that $H+G = E$. Since, by construction $G + \Span ( i\psi_{(1,0)}^3,\partial_x(\psi_{(1,0)}^3)) = E$, we just need to prove that $ i\psi_{(1,0)}^3,\partial_x(\psi_{(1,0)}^3) \in H+G$. Since $ i\psi_{(1,0)}^3$ and $\partial_x(\psi_{(1,0)}^3)$ are orthogonal, we can decompose $i\psi_{(1,0)}$ and $\partial_x \psi_{(1,0)}$ through the decomposition $E = G + \Span ( i\psi_{(1,0)}^3,\partial_x(\psi_{(1,0)}^3)) $ to get (with \eqref{rel_ortho})
\[ \left\{  \begin{array}{lll}  i \psi_{(1,0)} \| \psi_{(1,0)}  \|_{L^6(\mathbb{R})}^6  - \| \psi_{(1,0)}  \|_{L^4(\mathbb{R})}^4 i\psi_{(1,0)}^3 \in G, \\
									\partial_x \psi_{(1,0)} \| \partial_x(\psi_{(1,0)}^3) \|_{L^2(\mathbb{R})}^2  - \langle \partial_x (\psi_{(1,0)}^3) ,  \partial_x \psi_{(1,0)} \rangle_{L^2(\mathbb{R})} \partial_x(\psi_{(1,0)}^3) \in G.
\end{array} \right. \]
Since the coefficients associated with $ i\psi_{(1,0)}^3$ and $ \partial_x (\psi_{(1,0)}^3)$ are not zero, we deduce that $ i\psi_{(1,0)}^3,\partial_x(\psi_{(1,0)}^3) \in H+G$.

In order to apply Lemma \ref{lem_evn}, with $b=\diff^2\Lag_{(1,0)}(\psi_{(1,0)})$ we have to prove that $\partial_x \psi_{(1,0)}$ and $i \psi_{(1,0)}$ belong to the kernel of $\diff^2\Lag_{(1,0)}(\psi_{(1,0)})$. Indeed, since $\Lag_{(1,0)}(\psi_{(1,0)})$ is invariant by gauge transform and dilatation, the set of its critical points are also invariant by these transform, i.e.
\[ \forall t\in \mathbb{R}, \forall v\in H^1(\mathbb{R}), \ \diff \Lag_{(1,0)}(e^{it} \psi_{(1,0)})(v) = \diff \Lag_{(1,0)}( \psi_{(1,0)}(.- t))(v)=0.  \]
However, since $\psi_{(1,0)}$ is a very regular function (see Lemma \ref{reg_CTW} or directly \eqref{def_CTW}), we can compute the derivative in $t=0$ to get
\[ \forall t\in \mathbb{R}, \forall v\in H^1(\mathbb{R}), \ \diff^2 \Lag_{(1,0)}(\psi_{(1,0)})(i\psi_{(1,0)},v) = \diff^2 \Lag_{(1,0)}(\psi_{(1,0)})(\partial_x \psi_{(1,0)},v)=0.  \]

Now to apply Lemma \ref{lem_evn}, we observe that the required assumption of coercivity of $b$ on $G$ is the result of Weinstein \eqref{W_res}, and we obtain the result. 

\underline{Step 2: Extension by dilatation and boost} 

Denote by $T$ the dilatation action defined by $T_m(u)(x) = mu(mx)$ for all $x\in \mathbb{R}$, $u\in H^1(\mathbb{R})$ and $m>0$, and let $B$ bz the boost action defined by $B_\nu u:= e^{i\nu x} u $ for all $x\in \mathbb{R}$, $u\in H^1(\mathbb{R})$ and $\nu \in \mathbb{R}$. These transformations are useful because we have the following relations 
\[\forall m,\mu>0, \forall \nu\in \mathbb{R}, \ \Lag_{(1,0)}\circ T_m = m^3 \Lag_{(m^{-2} ,0)} \textrm{ and }  \Lag_{(\mu,0)}\circ B_\nu = \Lag_{(\mu+\nu^2,-2\nu)}\]
With these relations a straightforward calculation shows that
\begin{equation}
\label{transp}
 \Lag_\xi = m_\xi^3 \Lag_{(1,0)} \circ T_{m_\xi^{-1}} \circ B_{-\frac{\xi_2}2} \textrm{ with } m_\xi=\sqrt{\xi_1-\left( \frac{\xi_2}2 \right)^2}.
\end{equation}
Furthermore,  using the definition of $\psi_{\xi}$, we have
\[ \psi_\xi = B_{\frac{\xi_2}2}\circ T_{m_\xi}\psi_{(1,0)}.\]

Consequently, we are able to transport the coercivity property from $\xi=(1,0)$ to any $\xi$, provided  that $\xi_1 >\left( \frac{\xi_2}2 \right)^2$. First, we observe that if $v \in H^1(\mathbb{R})\cap \Span(\psi_{\xi},i\psi_{\xi},\psi_{\xi}')^{\perp_{L^2}}$ then
\[ T_{m_\xi^{-1}} \circ B_{-\frac{\xi_2}2} v \in  H^1(\mathbb{R})\cap \Span(\psi_{(1,0)},i\psi_{(1,0)},\psi_{(1,0)}')^{\perp_{L^2}}. \]
Second, we calculate the derivative of the Lagrange function through the transport relation \eqref{transp},
\[  \diff \Lag_{\xi} (\psi_\xi)(v) = m_\xi^3 \diff [\Lag_{(1,0)} \circ T_{m_\xi^{-1}} \circ B_{-\frac{\xi_2}2} ] (\psi_\xi)(v) = m_\xi^3 \diff\Lag_{(1,0)} (\psi_{1,0} ) (  T_{m_\xi^{-1}} \circ B_{-\frac{\xi_2}2} v)=0.\]
Then we deduce a property of coercivity
\[  \diff^2 \Lag_\xi (\psi_\xi)(v,v) = m_\xi^3 \diff^2 \Lag_{(1,0)} (\psi_{1,0})(T_{m_\xi^{-1}} \circ B_{-\frac{\xi_2}2} v,T_{m_\xi^{-1}} \circ B_{-\frac{\xi_2}2} v) \geq c m_\xi^3 \left\|T_{m_\xi^{-1}} \circ B_{-\frac{\xi_2}2} v\right\|_{H^1}^2 . \] 
This inequality implies Estimate \eqref{coc} because applying Peetre inequality \footnote{ If $x,y\in \mathbb{R}$ then $1+(x-y)^2\geq \frac12 (1+x^2)(1+y^2)^{-1}$.}, we get 
\[  \left\|T_{m_\xi^{-1}} \circ B_{-\frac{\xi_2}2} v\right\|_{H^1}^2 = \left\| B_{-\frac{m_\xi^{-1} \xi_2}2} \circ T_{m_\xi^{-1}}  v\right\|_{H^1}^2 \geq \frac12 \frac{ \| T_{m_\xi^{-1}}  v\|_{H^1}^2 }{1+\left( \frac{m_\xi^{-1} \xi_2}2\right)^2} = \frac12 \frac{ m_{\xi}^{-1} \|v\|_{L^2}^2 +m_{\xi}^{-3} \|\partial_x v\|_{L^2}^2 }{1+\left( \frac{m_\xi^{-1} \xi_2}2\right)^2} .  \]
\end{proof}

\subsection{Functional analysis lemmas}
%
%

\begin{lem}
\label{lem_evn}
Let $F$, $G$ be two closed subspaces of a normed space $E$. If $F$ and $H$ admit a same finite dimensional complementary space $H$, denote by $\Pi$ the projection onto $G$ of kernel $H$. Then $\Pi_{| F}$ is a normed space vector isomorphism. \\
Furthermore, if $b$ is a bilinear symmetric form on $E$, $H$ is a subspace of its kernel and if there exists $\alpha>0$ such that
\[ \forall x\in G, \ b(x,x) \geq \alpha \|x\|^2 \]
then there exists $\beta>0$ such that
\[ \forall x\in F, \ b(x,x) \geq \beta \|x\|^2.\]
\end{lem}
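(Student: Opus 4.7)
The plan has two independent parts, corresponding to the two assertions of the lemma.

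For the isomorphism claim, the first step is to check that the projection $\Pi : E\to G$ with kernel $H$ is well-defined and bounded: since $H$ is finite dimensional, the decomposition $E = G\oplus H$ is topological, so $\Pi$ is a bounded linear map. I would then verify injectivity of $\Pi_{|F}$ by noting that $\ker \Pi_{|F} = H \cap F = \{0\}$ (because $F$ and $H$ are complementary), and surjectivity by writing any $g\in G$ as $g = f+h$ with $f\in F$ and $h\in H$, which gives $\Pi(f)=\Pi(g)-\Pi(h)=g-0=g$. Continuity of $\Pi_{|F}$ is inherited from $\Pi$; continuity of the inverse follows by identifying $(\Pi_{|F})^{-1}$ with the restriction to $G$ of the projection $P_F : E\to F$ of kernel $H$ (which is also bounded because $H$ is finite dimensional). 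The identification is a short check: for $g\in G$ with $g=P_F(g)+(g-P_F(g))$, one has $g-P_F(g)\in H\subset \ker \Pi$, hence $\Pi(P_F(g))=\Pi(g)=g$.

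For the coercivity transfer, the key observation is that every $x\in F$ decomposes as
\[ x = \Pi(x) + (x-\Pi(x)), \qquad \Pi(x)\in G, \quad x-\Pi(x) \in H. \]
Since $H$ is a subspace of $\ker b$, symmetry of $b$ gives $b(h,y)=b(y,h)=0$ for all $h\in H$ and $y\in E$. Expanding by bilinearity, all cross terms and the $H\times H$ term vanish, leaving
\[ b(x,x) = b(\Pi(x),\Pi(x)) \geq \alpha\,\|\Pi(x)\|^2. \]
By the first part of the lemma, $\Pi_{|F}$ has a bounded inverse, so there exists $c>0$ with $\|\Pi(x)\|\geq c\|x\|$ for every $x\in F$. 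Setting $\beta := \alpha c^2$ yields the claimed estimate.

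The arguments are elementary; the only mild subtlety is the continuity of $(\Pi_{|F})^{-1}$, which I handle by exhibiting it as the restriction of the finite-codimension projection $P_F$ (rather than invoking the open mapping theorem, since $E$ is only assumed normed). Everything else is routine expansion.
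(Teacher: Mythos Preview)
Your proof is correct and follows essentially the same route as the paper: both identify $(\Pi_{|F})^{-1}$ with the restriction to $G$ of the projection onto $F$ along $H$, and both derive the coercivity on $F$ from $b(x,x)=b(\Pi x,\Pi x)\geq \alpha\|\Pi x\|^2\geq \alpha\|\Pi_{|F}^{-1}\|^{-2}\|x\|^2$. Your treatment is slightly more explicit about why the projections are continuous (finite-dimensional complement forces a topological decomposition), whereas the paper simply cites this as a classical compactness argument.
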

\begin{proof}
Let $P$ be the projection onto $F$ of kernel $H$. If $f\in F$ then $P \Pi f = f$. Indeed, if $f=g+h$ with $g\in G$ and $h\in H$ then $g=\Pi f = f-h$. Consequently, we would have $f = P g = P\Pi f $.  Similarly, we can prove that $\Pi P g=g$, for any $g\in G$. So, we have proven that $\Pi_{| F}^{-1} = P_{|G}$.\\
To prove the first part of the lemma, we just have to prove that $\Pi$ and $P$ are continuous to conclude this proof. This is a very classical exercise of normed space vector, whose proof is based on compactness.

The second part of the lemma is a straightforward calculation. Indeed, if $x\in F$ then
\[ b(x,x) = b(\Pi x,\Pi x) \geq \alpha \|  \Pi x \|^2 \geq  \alpha \| \Pi_{| F}^{-1} \|^{-2} \|x\|^2.  \]
\end{proof}

\begin{lem} 
\label{perturb_coer}
Let $E$ be a real vector space whose $(x_j)_{j=1,\dots,n}$ is a free family. Define $X = \Span(x_j)_{j=1,\dots,n}$ the subspace generated by this family. Let $\langle \cdot,\cdot \rangle_1$,$\langle \cdot,\cdot \rangle_2$ be two scalar products on $E$ such that the induced norms satisfy $\|  \cdot\|_1 \leq c \| \cdot\|_2$. Define $G \in M_n(\mathbb{R})$ the Gram matrix associated to $(x_j)_{j=1,\dots,n}$ for the scalar product $\langle \cdot,\cdot\rangle_1$, i.e.
\[ G = \begin{pmatrix} \langle x_1,x_1\rangle_1 & \dots & \langle x_1,x_n\rangle_1 \\
						\vdots & & \vdots \\
						 \langle x_n,x_1 \rangle_1 & \dots & \langle x_n,x_n\rangle_1
\end{pmatrix}. \]
For any $u \in E$, let $b(u)$ be a bilinear symmetric form continuous for the $\|\cdot \|_2$ norm. Assume that $b$ is $k$ Lipschitz on a ball of radius $R>0$, i.e.
\[  \forall u,v \in B_2(0,R),\quad \forall y,z\in E,\quad  \ | b(u)(y,z) -b(v)(y,z)| \leq k \| u- v\|_2 \|y\|_2 \|z\|_2   \]
and that there exists $\alpha>0$ such that
\[\forall y \in X^{\perp_1}, \quad \ b(0)(y,y) \geq \alpha \|y\|_2^2. \]

Define two constants $c_1,c_2>0$ by the explicit formulas
\[  c_1 = \max(R, \frac{\alpha }{8 k} )  \textrm{ and } c_2 = \frac{\alpha}4 \left[ \left( \sum_{j=1}^n \| x_j \|_{2} \right) \| G^{-1} \|_{\infty} \left(\frac{\alpha}2 + \| b(0) \|_2+ \frac{2 \| b(0) \|_2^2}{\alpha}\right)  \right]^{-1} .\]

If $\| u\|_2 \leq c_1$ and $\displaystyle \sup_{j=1,\dots,n} |\langle x_j,y \rangle_1 |\leq c_2 \|y\|_2$ then
\[  b(u)(y,y) \geq \frac{\alpha}8 \| y\|_2^2. \]
\end{lem}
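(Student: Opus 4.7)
The plan is to decompose $y$ along $X$ and its $\langle\cdot,\cdot\rangle_1$-orthogonal complement, transport the coercivity of $b(0)$ on $X^{\perp_1}$ to $b(u)$ by Lipschitz continuity, and absorb the cross terms using the almost-orthogonality hypothesis on $y$. Concretely, for any $y\in E$, write $y = y_\parallel + y_\perp$ with $y_\parallel = \sum_{j=1}^n \lambda_j x_j \in X$ and $y_\perp \in X^{\perp_1}$; the coefficients satisfy $G\lambda = (\langle x_j,y\rangle_1)_j$, so $\|\lambda\|_\infty \le \|G^{-1}\|_\infty \sup_j |\langle x_j,y\rangle_1|$. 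Under the hypothesis $\sup_j|\langle x_j,y\rangle_1|\le c_2\|y\|_2$, this yields
\[
\|y_\parallel\|_2 \le \Bigl(\sum_j \|x_j\|_2\Bigr)\|G^{-1}\|_\infty c_2\,\|y\|_2 =: \mu\,c_2\,\|y\|_2,
\]
and consequently $\|y_\perp\|_2 \le (1+\mu c_2)\|y\|_2$.

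The next step is to expand
\[
b(u)(y,y) = b(u)(y_\perp,y_\perp) + 2\,b(u)(y_\perp,y_\parallel) + b(u)(y_\parallel,y_\parallel)
\]
and estimate each piece by the triangle inequality around $u=0$. Using the assumed $k$-Lipschitz bound (valid because $\|u\|_2\le c_1\le R$) together with the coercivity $b(0)(y_\perp,y_\perp)\ge \alpha \|y_\perp\|_2^2$, I would obtain
\[
b(u)(y_\perp,y_\perp) \ge (\alpha - k\|u\|_2)\|y_\perp\|_2^2 \ge \tfrac{7\alpha}{8}\|y_\perp\|_2^2
\]
by the choice $c_1\le \alpha/(8k)$. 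The two remaining pieces are controlled by $\|b(0)\|_2$ plus a Lipschitz remainder of size $k\|u\|_2\le \alpha/8$, giving bounds of the form $(\|b(0)\|_2+\alpha/8)\|y_\parallel\|_2\|y_\perp\|_2$ and $(\|b(0)\|_2+\alpha/8)\|y_\parallel\|_2^2$.

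At this point, the only obstruction is a bookkeeping exercise: I have to combine $\tfrac{7\alpha}{8}\|y_\perp\|_2^2$ with negative contributions of order $\|y_\parallel\|_2\|y_\perp\|_2$ and $\|y_\parallel\|_2^2$ and still retain $\tfrac{\alpha}{8}\|y\|_2^2$. Using Young's inequality $2AB \le \tfrac{\alpha}{2\|b(0)\|_2^2}A^2 \cdot\|b(0)\|_2^2 + \tfrac{2\|b(0)\|_2^2}{\alpha}B^2$ to split the cross term, the $\|y_\perp\|_2^2$ part of the cross term is absorbed into $\tfrac{7\alpha}{8}\|y_\perp\|_2^2$, while everything falling on $y_\parallel$ is controlled by $\|y_\parallel\|_2^2$ with coefficient $\tfrac{\alpha}{2}+\|b(0)\|_2+\tfrac{2\|b(0)\|_2^2}{\alpha}$. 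Because $\|y_\parallel\|_2 \le \mu c_2\|y\|_2$, the definition of $c_2$ ensures this entire contribution is at most $\tfrac{\alpha}{4}\|y\|_2^2$, leaving $b(u)(y,y)\ge \tfrac{\alpha}{2}\|y_\perp\|_2^2 - \tfrac{\alpha}{4}\|y\|_2^2 \ge \tfrac{\alpha}{8}\|y\|_2^2$ once we use $\|y_\perp\|_2^2 \ge \|y\|_2^2 - 2\|y_\parallel\|_2\|y\|_2$ and absorb again. The delicate part, hence the main obstacle, is precisely to tune the split in Young's inequality so that the explicit constant $c_2$ stated in the lemma is the correct threshold; everything else is linear algebra and routine application of the Lipschitz hypothesis.
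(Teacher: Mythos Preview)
Your approach is essentially the same as the paper's: the same orthogonal decomposition $y=y_\parallel+y_\perp$ with respect to $\langle\cdot,\cdot\rangle_1$, the same Gram-matrix inversion to bound $\|y_\parallel\|_2$, and the same Young split on the cross term. The one organizational difference is that the paper first proves the full statement for $b(0)$, obtaining $b(0)(y,y)\ge\tfrac{\alpha}{4}\|y\|_2^2$, and only \emph{then} applies the Lipschitz hypothesis a single time to pass from $b(0)$ to $b(u)$, losing exactly $k\|u\|_2\|y\|_2^2\le\tfrac{\alpha}{8}\|y\|_2^2$. Doing it in this order means the Lipschitz remainder never contaminates the cross-term coefficients, so the constant $c_2$ in the statement falls out directly without the extra tuning you identify as ``the main obstacle''. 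Your interleaved version is correct but forces you to carry the $\alpha/8$ correction through every term, which is why the bookkeeping feels delicate; reordering the two steps removes that difficulty entirely. (As a side remark, $c_1=\max(R,\alpha/(8k))$ in the statement is evidently a typo for $\min$; both the paper's argument and yours implicitly use $\|u\|_2\le\min(R,\alpha/(8k))$.)
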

\begin{proof}
Let $y=y_{\parallel} +y_{\perp}$ be the decomposition of $y$ associated to the algebraic decomposition $E = X \oplus X^{\perp_1}$. So, we get
\begin{align*}	b(0)(y,y)  &= b(0)(y_{\parallel} +y_{\perp}  ,y_{\parallel} +y_{\perp}) \\
					               &=  b(0)( y_{\perp}  ,y_{\perp})b(0)+ 2 b(0)(y_{\parallel}  ,y_{\perp}) + (y_{\parallel},y_{\parallel} )   \\
					               &\geq \alpha \| y_{\perp} \|_2^2 - 2 \| b(0) \|_2  \|  y_{\parallel} \|_2 \| y_{\perp}\|_2 -| b(0) \|_2  \| y_{\parallel}\|_2 \\
					               &\geq \frac{\alpha}2 \| y_{\perp} \|_2^2 -   \left(\| b(0) \|_2+ \frac{2 \| b(0) \|_2^2}{\alpha} \right) \| y_{\parallel}\|_2^2 \\
					               &\geq \frac{\alpha}2 \| y \|_2^2 - \left(\frac{\alpha}2 + \| b(0) \|_2+ \frac{2 \| b(0) \|_2^2}{\alpha}\right)  \| y_{\parallel}\|_2^2.
\end{align*}
Consequently, we just need to control $\| y_{\parallel}\|_2$ with $\| y \|_2$ to get the result when $u=0$. However, using basis linear algebra we can prove that
\[    y_{\parallel} = \sum_{j=1}^n a_j x_j \textrm{ with } (a_j)_{j=1,\dots,n} = G^{-1} ( \langle x_j,y \rangle_1 )_{j=1,\dots,n}.\]
So, we get
\[  \| y_{\parallel} \|_2 \leq c_2 \left( \sum_{j=1}^n \| x_j \|_{2} \right) \| G^{-1} \|_{\infty} \|y\|_{2}.   \]
Finally, by definition of $c_2$, we get $b(0)(y,y) \geq \frac{\alpha}4 \| y \|_2^2$. Furthermore, since $b$ is k Lipschitz on $B(0,R)$, we deduce directly that if $\| u \|_1 \leq c_1$ then $b(u)(y,y) \geq \frac{\alpha}8 \| y \|_2^2$.

\end{proof}

\begin{lem}
\label{lem_exRiesz}
Let $E$ be a Banach space of dual space $E'$. Consider a algebraic decomposition of $E$, $E=E_p\oplus E_m$, and a continuous linear application $T:E\to E'$ such that 
\begin{enumerate}[i)]
\item $\forall x,y\in E, \ \langle Tx,y \rangle_{E',E}=\langle Ty,x \rangle_{E',E}$,
\item $\exists \alpha_p>0, \ \forall x\in E_p, \ \langle Tx,x \rangle_{E',E} \geq \alpha_p \|x\|^2$,
\item $\exists \alpha_m>0, \ \forall x\in E_m, \ \langle Tx,x \rangle_{E',E} \leq - \alpha_m \|x\|^2$.
\end{enumerate}
Then $T$ is invertible and we have
\begin{equation}
\label{est_linv}
 \|T^{-1}\|\leq \left( \frac1{\alpha_p} + \frac1{\alpha_m}  +  \frac{2\|T\|}{\alpha_m \alpha_p} + \frac{\|T\|^2}{\alpha_m (\alpha_p)^2} \right) .
\end{equation}
\end{lem}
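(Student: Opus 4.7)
The plan is a Schur--complement argument with respect to the decomposition $E = E_p \oplus E_m$. For $x\in E$, write uniquely $x = x_p + x_m$ with $x_p\in E_p$, $x_m\in E_m$, and for $\phi\in E'$ denote by $\phi_p \in E_p'$ and $\phi_m \in E_m'$ the restrictions of $\phi$ to $E_p$ and $E_m$. Introduce the four bounded operators $T_{pp}:E_p\to E_p'$, $T_{pm}:E_m\to E_p'$, $T_{mp}:E_p\to E_m'$, $T_{mm}:E_m\to E_m'$ obtained by restricting $T$ and the test vector to the appropriate subspaces, all of operator norm bounded by $\|T\|$; hypothesis i) yields $\langle T_{pm} x_m, x_p\rangle = \langle T_{mp} x_p, x_m\rangle$. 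Solving $Tx = \phi$ is then equivalent to the block system
\begin{equation*}
T_{pp} x_p + T_{pm} x_m = \phi_p,\qquad T_{mp} x_p + T_{mm} x_m = \phi_m .
\end{equation*}
Hypothesis ii) (resp.\ iii)) says that $T_{pp}$ (resp.\ $-T_{mm}$) is symmetric and coercive with constant $\alpha_p$ (resp.\ $\alpha_m$), hence invertible, with $\|T_{pp}^{-1}\| \leq 1/\alpha_p$ and $\|T_{mm}^{-1}\| \leq 1/\alpha_m$.

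Eliminating $x_p = T_{pp}^{-1}(\phi_p - T_{pm} x_m)$ from the first equation and substituting into the second reduces the problem to the Schur complement equation $S x_m = \phi_m - T_{mp} T_{pp}^{-1}\phi_p$, where $S := T_{mm} - T_{mp} T_{pp}^{-1} T_{pm}$. The crucial observation is that the correction term in $S$ has the right sign: for $x_m\in E_m$, setting $y := T_{pp}^{-1} T_{pm} x_m \in E_p$ and using symmetry,
\begin{equation*}
\langle T_{mp} T_{pp}^{-1} T_{pm}\, x_m, x_m\rangle = \langle T_{pm} x_m, y\rangle = \langle T_{pp}\, y, y\rangle \geq \alpha_p \|y\|^2 \geq 0,
\end{equation*}
so hypothesis iii) gives $\langle S x_m, x_m\rangle \leq -\alpha_m \|x_m\|^2$. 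Thus $S$ is symmetric and coercive on $E_m$, hence invertible with $\|S^{-1}\|\leq 1/\alpha_m$.

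It remains to keep book of the norms. Using $\|T_{pm}\|,\|T_{mp}\| \leq \|T\|$ and the obvious bounds $\|\phi_p\|_{E_p'}, \|\phi_m\|_{E_m'} \leq \|\phi\|_{E'}$, the explicit formulas yield
\begin{equation*}
\|x_m\| \leq \tfrac{1}{\alpha_m}\|\phi_m\| + \tfrac{\|T\|}{\alpha_m\alpha_p}\|\phi_p\|, \qquad \|x_p\| \leq \tfrac{1}{\alpha_p}\|\phi_p\| + \tfrac{\|T\|}{\alpha_p}\|x_m\|,
\end{equation*}
and summing via the triangle inequality produces exactly the four terms in \eqref{est_linv}, the asymmetric factor $\|T\|^2/(\alpha_m\alpha_p^2)$ arising from the back-substitution of $x_m$ into the formula for $x_p$. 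The only delicate point is the invertibility of $T_{pp}$ and $S$ from symmetric coercivity alone in the Banach setting: in the Hilbert-space applications of this lemma (where $E \subset BL^2_h$) this is Lax--Milgram; in general, symmetric coercivity yields injectivity and closed range, and surjectivity follows by applying the same coercivity to the transpose map, which is legitimate thanks to condition i).
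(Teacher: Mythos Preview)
Your proof is correct and follows essentially the same Schur--complement route as the paper: both invert $T_{pp}$ by symmetric coercivity, form the Schur complement $S = T_{mm} - T_{mp}T_{pp}^{-1}T_{pm}$, observe that the correction term has the favorable sign so that $-S$ remains coercive with constant $\alpha_m$, and then back-substitute to obtain the four-term bound. The paper packages the elimination via the auxiliary map $Px = x_p + T_{pp}^{-1}T_{pm}x_m$ and the identity $\langle Tx,y\rangle = \langle T_{pp}Px,Py\rangle + \langle Sx_m,y_m\rangle$, whereas you write the block system directly, but this is purely cosmetic; your closing remark on the Banach-space version of Lax--Milgram is exactly the content of the paper's auxiliary Lemma~\ref{lem_Riesz}.
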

\begin{proof}
In the proof we omit the index $E',E$ for all the duality brackets. We define by restrictions  $T_{\epsilon_1\epsilon_2}\in \Lag(E_{\epsilon_2};E_{\epsilon_1})$ for $\epsilon_1,\epsilon_2\in\{p,m\}$. Then we use a direct corollary of Riesz Theorem to prove that $T_{pp}$ is invertible. This corollary is the following.
\begin{lem}
\label{lem_Riesz}
Let $E$ be a Banach space of dual $E'$. Consider a continuous linear application $T:E\to E'$ such that 
\begin{enumerate}[i)]
\item $\exists \alpha>0,\forall x\in E, \ \langle Tx,x \rangle\geq \alpha\|x\|^2$,
\item $\forall x,y\in E, \ \langle Tx,y \rangle=\langle Ty,x \rangle$,
\end{enumerate}
then $T$ is invertible and $ \|T^{-1} \|\leq \alpha^{-1}$.
\end{lem}
Now, decomposing $x=x_p+x_m$ with $x_p\in E_p$ and $x_m\in E_m$, we introduce operators $P:E\to E_p$ and $S:E_m \to E_m'$ defined by
\[   Px=x_p+T_{pp}^{-1}T_{pm}x_m \textrm{ and } S=T_{mm} - T_{mp}T_{pp}^{-1}T_{pm}.\]
Then we verify by symmetry of $T$  (with the same decomposition for $y$) that
\[ \forall x,y\in E, \ \langle Tx,y \rangle = \langle T_{pp}Px,Py\rangle +   \langle Sx_m,y_m\rangle.\]

To prove the Lemma, we have to solve,
\begin{equation}
\label{eq_to_solve}
 \forall y\in E, \  \langle Tx,y \rangle=\phi(y) \textrm{ with }  \phi\in E'.
\end{equation}

Let $z\in E_m$ and denote $y = z - T_{pp}^{-1}T_{pm}z$. First, we verify that $Py=0$. Consequently, we deduce from \eqref{eq_to_solve} that
\[  \phi(y) = \phi( z - T_{pp}^{-1}T_{pm}z  )  =    \langle Sx_m,z \rangle .\]
However, we verify that $-S$ verifies assumptions  Lemma \ref{lem_Riesz} with $\alpha=\alpha_m$. Consequently, $S$ is invertible and so we have
\[ x_m = S^{-1}\phi_{| E_m} - S^{-1}\phi T_{pp}T_{pm}. \]

Now if we apply \eqref{eq_to_solve} for $y=y_p\in E_p$, we have
\[ \phi(y) =  \langle T_{pp}Px,y\rangle = \langle T_{pp}x_p,y\rangle  +   \langle T_{pm}x_m,y\rangle.\]
Consequently, we have
\[  x_p=T_{pp}^{-1} \phi_{| E_p} - T_{pp}^{-1}T_{pm}x_m.\]

Finally, we have solved \eqref{eq_to_solve}. So $T$ is bijective and we verify \eqref{est_linv} using the estimate given by Lemma \ref{lem_Riesz}.
\end{proof}

%
%
%

\bibliographystyle{plain}
\bibliography{traveling_waves_dnls}

\end{document}